\newtheorem{thm}{Theorem}[section]
\newtheorem{theorem}[thm]{Theorem}
\newtheorem{claim}[thm]{Claim}
\newtheorem{prop}[thm]{Proposition}
\newtheorem{proposition}[thm]{Proposition}
\newtheorem{lemma}[thm]{Lemma}
\newtheorem{cor}[thm]{Corollary}
\newtheorem{conj}[thm]{Conjecture}
\newtheorem{q}[thm]{Question}
\newtheorem{rmk}[thm]{Remark}
\numberwithin{equation}{subsection}
\newcommand{\C}{\mathbb{C}}
\newcommand{\R}{\mathbb{R}}
\newcommand{\Z}{\mathbb{Z}}
\newcommand{\Q}{\mathbb{Q}}
\newcommand{\N}{\mathbb{N}}
\newcommand{\F}{\mathcal{F}}
\renewcommand{\S}{\mathcal{S}}
\newcommand{\A}{\EuScript{A}}
\newcommand{\bdry}{\partial}
\newcommand{\s}{\vskip.1in}
\newcommand{\n}{\noindent}
\newcommand{\wt}{\widetilde}
\newcommand{\be}{\begin{enumerate}}
\newcommand{\ee}{\end{enumerate}}
\begin{document}

\title{Reeb vector fields and open book decompositions}

\author{Vincent Colin}
\address{Universit\'e de Nantes, UMR 6629 du CNRS, 44322 Nantes, France}
\email{Vincent.Colin@math.univ-nantes.fr}

\author{Ko Honda}
\address{University of Southern California, Los Angeles, CA 90089}
\email{khonda@usc.edu} \urladdr{http://rcf.usc.edu/\char126 khonda}

\date{This version: September 29, 2008.}

\keywords{tight, contact structure, open book decomposition, mapping
class group, Reeb dynamics, pseudo-Anosov, contact homology.}

\subjclass{Primary 57M50; Secondary 53C15.}

\thanks{VC supported by the Institut
Universitaire de France and the ANR Symplexe. KH supported by an
Alfred P.\ Sloan Fellowship and an NSF CAREER Award (DMS-0237386)
and NSF Grant DMS-0805352.}

\begin{abstract}
We determine parts of the contact homology of certain contact
3-manifolds in the framework of open book decompositions, due to
Giroux. We study two cases: when the monodromy map of the compatible
open book is periodic and when it is pseudo-Anosov.  For an open
book with periodic monodromy, we verify the Weinstein conjecture. In
the case of an open book with pseudo-Anosov monodromy, suppose the
boundary of a page of the open book is connected and the fractional
Dehn twist coefficient $c={k\over n}$, where $n$ is the number of
prongs along the boundary. If $k\geq 2$, then there is a
well-defined linearized contact homology group.  If $k\geq 3$, then
the linearized contact homology is exponentially growing with
respect to the action, and every Reeb vector field of the
corresponding contact structure admits an infinite number of simple
periodic orbits.
\end{abstract}

\maketitle

\tableofcontents

\section{Background and introduction}

About ten years ago, Emmanuel Giroux \cite{Gi} described a 1-1
correspondence between isotopy classes of contact structures and
equivalence classes of open book decompositions (in any odd
dimension).  This point of view has been extremely fruitful,
particularly in dimension three.  Open book decompositions were the
conduit for defining the contact invariant in Heegaard-Floer
homology (due to Ozsv\'ath-Szab\'o \cite{OSz}).  This contact
invariant has been studied by Lisca-Stipsicz \cite{LS} and others
with great success, and has contributed considerably to the
understanding of tight contact structures on Seifert fibered spaces.
It was also open book decompositions that enabled the construction
of concave symplectic fillings for any contact 3-manifold (due to
Eliashberg~\cite{El} and Etnyre~\cite{Et1}); this in turn was the
missing ingredient in Kronheimer-Mrowka's proof of Property P for
knots \cite{KM}.  In higher dimensions, the full potential of the
open book framework is certainly not yet realized, but we mention
Bourgeois' existence theorem for contact structures on any
odd-dimensional torus $T^{2n+1}$ \cite{Bo2}.

The goal of this paper is to use the open book framework to
calculate parts of the {\em contact homology} $HC(M,\xi)$ of a
contact manifold $(M,\xi)$ adapted to an open book decomposition, in
dimension three.  Giroux had already indicated that there exists a
Reeb vector field $R$ which is in a particularly nice form with
respect to the open book: $R$ is transverse to the interior of each
page $S$, and is tangent to and agrees with the orientation of the
binding $\bdry S$ of the open book. (Here the orientation of $\bdry
S$ is induced from $S$.)  The difficulty that we encounter is that
this Reeb vector field is not {\em nice enough} in general, e.g., it
is not easy to see whether the contact homology is cylindrical, and
boundary maps are difficult to determine.  (Some results towards
understanding $HC(M,\xi)$ were obtained by Yau~\cite{Y2,Y3}.) In
this paper we prove that, for large classes of tight contact
3-manifolds, $HC(M,\xi)$ is cylindrical, and moreover that
$HC(M,\xi)\not=0$.  What enables us to get a handle on the contact
homology is a better understanding of tightness in the open book
framework.  The second author, together with Kazez and Mati\'c
\cite{HKM}, showed a contact manifold $(M,\xi)$ is tight if and only
if all its compatible open books have {\em right-veering} monodromy.
We will see that there is a distinct advantage to restricting our
attention to right-veering monodromy maps.

In this section we review some notions around open book
decompositions in dimension three.

\subsection{Fractional Dehn twist coefficients}
Let $S$ be a compact oriented surface with nonempty boundary $\bdry
S$.  Fix a reference hyperbolic metric on $S$ so that $\bdry S$ is
geodesic.  (This excludes the cases where $S$ is a disk or an
annulus, which we understand well.) Suppose that $\bdry S$ is
connected. Let $h:S\rightarrow S$ be a diffeomorphism for which
$h|_{\bdry S}=id$. If $h$ is not reducible, then $h$ is freely
homotopic to homeomorphism $\psi$ of one of the following two types:

\be
\item A {\em periodic} diffeomorphism, i.e., there is an integer
$n>0$ such that $\psi^n=id$.
\item A {\em pseudo-Anosov} homeomorphism.
\ee

Let $H: S\times[0,1]\rightarrow S$ be the free isotopy from
$h(x)=H(x,0)$ to its periodic or pseudo-Anosov representative
$\psi(x)=H(x,1)$.  We can then define $\beta: \bdry S \times [0,1]
\to \bdry S \times [0,1]$ by sending $(x,t)\mapsto (H(x,t),t)$,
i.e., $\beta$ is the trace of the isotopy $H$ along $\bdry S$. Form
the union of $\bdry S \times [0,1]$ and $S$ by gluing $\bdry S
\times \{1\}$ and $\bdry S$. By identifying this union with $S$, we
construct the homeomorphism $\beta\cup\psi$ on $S$ which is isotopic
to $h$ relative to $\bdry S$. We will assume that $h=\beta\cup\psi$,
although $\psi$ is usually just a homeomorphism in the pseudo-Anosov
case.  (More precisely, $\psi$ is smooth away from the singularities
of the stable/unstable foliations.)

If we choose an oriented identification $\bdry S\simeq \R/\Z$, then
we can define an orientation-preserving homeomorphism $f:
\R\rightarrow \R$ as follows: lift
$\beta:\R/\Z\times[0,1]\rightarrow \R/\Z\times[0,1]$ to
$\tilde\beta: \R\times[0,1]\rightarrow \R\times [0,1]$ and set
$f(x)= \tilde\beta(x,1)-\tilde\beta(x,0)+x$. We then call $\beta$ a
{\it fractional Dehn twist} by an amount $c\in \Q$, where $c$ is the
{\em rotation number} of $f$, i.e., $c=\lim_{n\rightarrow \infty}
{f^n(x)-x\over n}$ for any $x$.  In the case $\psi$ is periodic, $c$
is simply $f(x)-x$ for any $x$.  In the pseudo-Anosov case, $c$ can
be described as in the next paragraphs.

A pseudo-Anosov homeomorphism $\psi$ is equipped with a pair of
laminations --- the {\em stable and unstable measured geodesic
laminations} $(\Lambda^s,\mu^s)$ and $(\Lambda^u,\mu^u)$ ---  which
satisfy $\psi(\Lambda^s,\mu^s)=(\Lambda^s,\tau \mu^s)$ and
$\psi(\Lambda^u,\mu^u)=(\Lambda^u,\tau^{-1}\mu^u)$ for some
$\tau>1$.  (Here $\Lambda^s$ and $\Lambda^u$ are the laminations and
$\mu^s$ and $\mu^u$ are the transverse measures.)  The lamination
$\Lambda$ ($=$ $\Lambda^s$ or $\Lambda^u$) is {\em minimal} (i.e.,
does not contain any sublaminations), does not have closed or
isolated leaves, is disjoint from the boundary $\bdry S$, and every
component of $S-\Lambda$ is either an open disk or a semi-open
annulus containing a component of $\bdry S$. In particular, every
leaf of $\Lambda$ is dense in $\Lambda$.

Now the connected component of $S-\Lambda^s$ containing $\bdry S$ is
a semi-open annulus $A$ whose metric completion $\hat{A}$ has
geodesic boundary consisting of $n$ infinite geodesics
$\lambda_1,\dots,\lambda_n$. Suppose that the $\lambda_i$ are
numbered so that $i$ increases (modulo $n$) in the direction given
by the orientation on $\bdry S$.  Now let $P_i\subset A$ be a
semi-infinite geodesic which begins on $\bdry S$, is perpendicular
to $\bdry S$, and runs parallel to $\gamma_i$ and $\gamma_{i+1}$
(modulo $n$) along the ``spike'' that is ``bounded'' by $\gamma_i$
and $\gamma_{i+1}$. These $P_i$ will be referred to as the {\em
prongs}. Let $x_i=P_i\cap \bdry S$ be the endpoint of $P_i$ on
$\bdry S$.   We may assume that $\psi$ permutes (rotates) the prongs
and, in particular, there exists an integer $k$ so that $\psi:
x_i\mapsto x_{i+k}$ for all $i$.  It then follows that $c$ is a lift
of ${k\over n}\in \R/\Z$ to $\Q$.

If $\bdry S$ is not connected, then one can similarly define a
fractional Dehn twist coefficient $c_i$ for the $i$th boundary
component of $S$.

\subsection{Open book decompositions and tightness}
In this paper, the ambient 3-manifold $M$ is oriented and the
contact structure $\xi$ is cooriented.

Let $(S,h)$ be a pair consisting of a compact oriented surface $S$
and a diffeomorphism $h:S \stackrel\sim\rightarrow S$ which
restricts to the identity on $\partial S$, and let $K$ be a link in
a closed oriented 3-manifold $M$. An {\em open book decomposition}
for $M$ with {\em binding} $K$ is a homeomorphism between $((S
\times[0,1]) /_{\sim _h}, (\partial S \times[0,1]) /_{\sim_h})$ and
$(M,K)$. The equivalence relation $\sim_h$ is generated by $(x,1)
\sim_h (h(x),0)$ for all $x\in S$ and $(y,t) \sim_h (y,t')$ for all
$y \in
\partial S$ and $t,t'\in[0,1]$. We will often identify $M$ with $(S \times[0,1]) /
_{\sim _h}$; with this identification $S_t= S \times \{t\}, t\in
[0,1]$, is called a {\em page} of the open book decomposition and
$h$ is called the {\em monodromy map}. Two open book decompositions
are {\em equivalent} if there is an ambient isotopy taking binding
to binding and pages to pages. We will denote an open book
decomposition by $(S,h)$, although, strictly speaking, an open book
decomposition is determined by the triple $(S,h,K)$. There is a
slight difference between the two --- if we do not specify $K\subset
M$, we are referring to isomorphism classes of open books instead of
isotopy classes.

Every closed 3-manifold has an open book decomposition, but it is
not unique. One way of obtaining inequivalent open book
decompositions is to perform a {\em positive} or {\em negative
stabilization}: $(S',h')$ is a {\em stabilization} of $(S,h)$ if
$S'$ is the union of the surface $S$ and a band $B$ attached along
the boundary of $S$ (i.e., $S'$ is obtained from $S$ by attaching a
1-handle along $\bdry S$), and $h'$ is defined as follows. Let
$\gamma$ be a simple closed curve in $S'$ ``dual'' to the cocore of
$B$ (i.e., $\gamma$ intersects the cocore of $B$ at exactly one
point) and let $id_B \cup h$ be the extension of $h$ by the identity
map to $B \cup S$.  Also let $R_\gamma$ be the positive (or
right-handed) Dehn twist about $\gamma$.  Then for a {\em positive}
stabilization $h'=R_\gamma \circ (id_B \cup h)$, and for a {\em
negative} stabilization $h'=R_\gamma^{-1} \circ (id_B \cup h)$. It
is well-known that if $(S',h')$ is a positive (negative)
stabilization of $(S,h)$, and $(S,h)$ is an open book decomposition
of $(M,K)$, then $(S',h')$ is an open book decomposition of $(M,
K')$ where $K'$ is obtained by a Murasugi sum of $K$ (also called
the {\em plumbing} of $K$) with a positive (negative) Hopf link.

According to Giroux~\cite{Gi}, a contact structure $\xi$ is {\em
supported} by the open book decomposition $(S,h,K)$ if there is a
contact 1-form $\alpha$ which: \be
\item induces a symplectic form $d\alpha$ on each page $S_t$;
\item $K$ is transverse to $\xi$, and the orientation on $K$ given by
$\alpha$ is the same as the boundary orientation induced from $S$
coming from the symplectic structure. \ee In the 1970's, Thurston
and Winkelnkemper~\cite{TW} showed that (in Giroux's terminology)
any open book decomposition $(S,h,K)$ of $M$ supports a contact
structure $\xi$. Moreover, the contact planes can be made
arbitrarily close to the tangent planes of the pages, away from the
binding.

The following result is the converse (and more), due to
Giroux~\cite{Gi}.

\begin{thm}[Giroux]
Every contact structure $(M,\xi)$ on a closed 3-manifold $M$ is
supported by some open book decomposition $(S,h,K)$. Moreover, two
open book decompositions $(S,h,K)$ and $(S',h',K')$ which support
the same contact structure $(M,\xi)$ become equivalent after
applying a sequence of positive stabilizations to each.
\end{thm}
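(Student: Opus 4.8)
The plan is to prove the two halves separately: the existence of a supporting open book comes from Giroux's theory of convex surfaces via a \emph{contact cell decomposition} of $(M,\xi)$, while the uniqueness-up-to-positive-stabilization statement comes from analysing how two contact cell decompositions are related by elementary moves, each of which corresponds to a positive stabilization of the associated open book.

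For existence, I would first produce a finite CW decomposition of $M$ adapted to $\xi$: one in which (i) the $1$-skeleton $G$ is a Legendrian graph, (ii) each $2$-cell $D$ is convex with Legendrian boundary satisfying $tb(\partial D)=-1$, and (iii) each $3$-cell is an embedded Darboux ball (so $\xi$ is tight there). Starting from any smooth triangulation, one makes the $1$-skeleton Legendrian by a $C^{0}$-small isotopy (flexibility of Legendrian graphs inside a Darboux chart), perturbs the $2$-cells to be convex (genericity of convexity), and then subdivides repeatedly: stabilizing the Legendrian boundary of a $2$-cell, by inserting vertices and zig-zags along its edges, forces $tb(\partial D)<0$, after which Legendrian arcs realizing the dividing set cut $D$ into subcells each with $tb=-1$; finally one subdivides the $3$-cells until each lies in a Darboux chart. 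Given such a decomposition, let $S$ be a \emph{ribbon} of $G$, i.e.\ a compact surface that deformation retracts onto $G$ with $T_{p}S=\xi_{p}$ for $p\in G$ and $T_{p}S\neq\xi_{p}$ for $p\in S\setminus G$. One then checks, using that the $2$-cells are convex with $tb(\partial D)=-1$ and the $3$-cells are Darboux balls, that $M$ is recovered as the open book with page $S$ and binding $\partial S$, with monodromy $h$ a product of right-handed Dehn twists (in particular right-veering), and that this open book supports $\xi$: $\partial S$ is positively transverse to $\xi$, and $\xi$ is carried by a contact form positive on the pages. This proves the first assertion.

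For uniqueness, the essential points are: (a) subdividing an edge of $G$ by inserting a bivalent vertex leaves both $S$ and the open book unchanged; (b) attaching a new Legendrian edge to $G$ together with a compatible $2$-cell (subdividing a $3$-cell) attaches a $1$-handle to $S$ and composes $h$ with a single right-handed Dehn twist, i.e.\ performs a positive stabilization; and (c) two ribbons of the same Legendrian graph give open books that become equivalent after finitely many positive stabilizations of each. Granting these, it suffices to connect the $1$-skeleta of any two contact cell decompositions of $(M,\xi)$, up to contact isotopy, by a sequence of moves of types (a) and (b). This is done by constructing a contact cell decomposition whose $1$-skeleton contains, up to isotopy, those of both given decompositions --- obtained by putting the two in general position, taking a common refinement, and adding edges and subdividing $2$-cells to restore properties (i)--(iii) --- and observing that passing from each original decomposition to this common one is a sequence of edge-subdivisions and edge-additions. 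Combining with (c) gives two open books with a common positive stabilization.

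The main obstacle is the uniqueness half. The existence half is a careful but essentially routine application of convex surface theory. Controlling the passage between two contact cell decompositions is not: one must show that the common-refinement procedure can be carried out inside the contact category, keeping the $2$-cells convex with $tb=-1$ and the $3$-cells Darboux, and crucially that only \emph{positive} stabilizations (never negative ones, nor any more exotic move) are needed. This is precisely the part that is merely sketched in \cite{Gi}, and a fully rigorous treatment --- including the comparison of different ribbons in (c) --- requires a substantial amount of additional work, finer than (though parallel to) the topological stable-equivalence results of Giroux--Goodman. Pinning down a clean, complete list of moves on contact cell decompositions and proving that it suffices is where the real difficulty lies.
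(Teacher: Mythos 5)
The paper does not prove this theorem; it is stated in the background section and attributed to Giroux~\cite{Gi}, so there is no in-paper argument to compare against. Your sketch is, in outline, the standard Giroux strategy (contact cell decomposition with Legendrian $1$-skeleton, convex $2$-cells of $tb=-1$, Darboux $3$-cells, and the ribbon of the $1$-skeleton as page; stable equivalence tracked via moves on cell decompositions), and your assessment that the real work is concentrated in the uniqueness half is accurate.

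One concrete error in the existence half: you claim that the open book produced from a contact cell decomposition has monodromy $h$ a product of right-handed Dehn twists ``(in particular right-veering)''. This cannot hold in general. By Corollary~\ref{holo}, a contact structure supported by an open book whose monodromy is a product of positive Dehn twists is holomorphically fillable; your construction is supposed to apply to \emph{every} closed contact $3$-manifold, including overtwisted ones, which are never fillable. So if your claim were true it would prove something false. What is actually true is weaker: each $tb=-1$ disk contributes a controlled right-handed twist as the complementary handlebody is reassembled, and refining the contact cell decomposition changes the open book by a \emph{positive} stabilization --- but the monodromy of the initial open book is, in general, just some diffeomorphism of the page, with no positivity. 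You should delete that sentence; it is not needed anywhere in your argument. Apart from this, the gaps you flag yourself (a clean complete list of moves between contact cell decompositions, the comparison of ribbons in your point (c), and the verification that only positive stabilizations occur) are exactly the places where a careful writeup is substantial, and your plan for them is plausible but not carried out.
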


Akbulut-Ozbagci~\cite{AO} and Giroux (independently) also clarified
the role of Stein fillability, inspired by the work of
Loi-Piergallini \cite{LP}:

\begin{cor}[Loi-Piergallini, Akbulut-Ozbagci, Giroux] \label{holo}
A contact structure $\xi$ on $M$ is holomorphically fillable if and
only if  $\xi$ is supported by some open book $(S,h,K)$ with $h$ a
product of positive Dehn twists.
\end{cor}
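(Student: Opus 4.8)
The plan is to establish the two implications separately, using Giroux's correspondence (the preceding theorem) together with Eliashberg's handle-theoretic characterization of Stein domains and the theory of Lefschetz fibrations. Throughout I use that for contact $3$-manifolds holomorphic fillability and Stein fillability coincide (Bogomolov-de Oliveira), so ``holomorphically fillable'' may be read as ``Stein fillable.''

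\emph{Sufficiency.} Suppose $\xi$ is supported by $(S,h,K)$ with $h=R_{\gamma_1}\circ\cdots\circ R_{\gamma_k}$ for simple closed curves $\gamma_i\subset S$. The idea is to exhibit an explicit Stein filling. Regard $S$ as a compact Stein domain of complex dimension one, i.e., as a $0$-handle with $1$-handles attached; then $S\times D^2$ is obtained from $D^4$ by attaching subcritical $1$-handles and so is Stein, and its boundary is the tight contact manifold compatible with the open book $(S,\mathrm{id})$. Now realize each $\gamma_i$ as a Legendrian curve lying on a convex page and attach a Weinstein $2$-handle along it (with framing one less than the page framing, as Eliashberg's theorem prescribes). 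By the standard dictionary between $2$-handle attachments along curves on pages and Dehn twists, each such attachment replaces the boundary monodromy by its composition with the right-handed Dehn twist $R_{\gamma_i}$; after all $k$ handles one obtains, by Eliashberg's theorem, a Stein domain $W$ whose boundary carries the open book $(S,h)$. Since a supported contact structure is determined up to isotopy (Giroux), $\bdry W$ is $(M,\xi)$, and $\xi$ is Stein fillable.

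\emph{Necessity.} Conversely, suppose $(M,\xi)=\bdry(W,J)$ for a Stein domain $(W,J)$. By the theorem of Loi-Piergallini, and independently Akbulut-Ozbagci, $W$ admits a Lefschetz fibration $\pi\colon W\to D^2$ whose fibers form a compact surface $F$ with nonempty boundary. Restricting $\pi$ to $\bdry W$ yields an open book decomposition of $M$ with page $F$, and a local analysis near a critical point of $\pi$ (the model $z_1^2+z_2^2$, as in the sufficiency argument) shows that the monodromy of this open book is $R_{c_1}\circ\cdots\circ R_{c_m}$, the product of right-handed Dehn twists about the vanishing cycles $c_1,\dots,c_m$. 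Finally, one must check that the contact structure compatible with $(F,R_{c_1}\circ\cdots\circ R_{c_m})$ is $\xi$: the Lefschetz fibration equips $W$ with a Stein structure built, exactly as in the sufficiency argument, by attaching Weinstein $2$-handles along the $c_j$ to $F\times D^2$, and the induced contact structure on its boundary is the compatible one; since this Stein structure is deformation equivalent to $J$, the two boundary contact structures agree up to isotopy. Hence $(F,R_{c_1}\circ\cdots\circ R_{c_m})$ is a supporting open book with positive monodromy.

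The main obstacle is the Loi-Piergallini step in the necessity direction --- producing a Lefschetz fibration on an \emph{arbitrary} Stein filling. The crux is to bring a Stein handle decomposition of $W$ into a form in which the Legendrian attaching curves of all the $2$-handles can, after Legendrian isotopy and stabilization, be simultaneously placed on pages of an open book adapted to the subcritical $1$-handlebody --- that is, ``braided'' with respect to a fibration --- which requires a careful analysis of front projections and of the interaction between stabilization and the open book. A secondary point, used implicitly above, is Giroux's uniqueness theorem that the contact structures compatible with a fixed open book form a single isotopy class, together with the fact that the Stein structure extracted from the Lefschetz fibration is deformation equivalent to the given one, so that the two boundary contact structures do coincide.
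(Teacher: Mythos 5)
The paper gives no proof of this corollary at all---it is quoted as a theorem of Loi--Piergallini, Akbulut--Ozbagci and Giroux---so there is no internal argument to compare yours against. Your outline is exactly the standard argument of those references: for sufficiency, Eliashberg--Weinstein handle attachment along Legendrian realizations of the twisting curves on pages of the open book for $(S,\mathrm{id})$, together with Giroux's uniqueness of the supported contact structure; for necessity, the Loi--Piergallini/Akbulut--Ozbagci Lefschetz fibration on an arbitrary Stein filling (with Bogomolov--de Oliveira to pass between holomorphic and Stein fillability). As a sketch it is correct, with one caveat worth making explicit: the ``standard dictionary''---that a Weinstein $2$-handle attached along a Legendrian curve sitting on a page, with framing one less than the page framing, produces the contact structure supported by the open book whose monodromy is composed with the corresponding positive Dehn twist---is a genuine lemma that does not follow from Eliashberg's handle theorem alone (it is where Giroux's correspondence and a compatibility check enter), and likewise the compatibility of the boundary open book of a Lefschetz fibration with the induced contact structure needs its own argument; both are precisely the content of the cited literature, and you correctly identify the braiding step of Loi--Piergallini as the real work.
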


The second author, together with Kazez and Mati\'c \cite{HKM},
partially clarified the role of tightness in the open book
framework.   In particular, the following theorem was obtained:

\begin{thm}\label{veer}
A contact structure $(M,\xi)$ is tight if and only if all of its
open book decompositions $(S,h)$ have {\em right-veering} $h$.
\end{thm}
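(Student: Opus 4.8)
The plan is to prove both directions via the characterization of tightness through right-veering. For the forward direction, suppose $(M,\xi)$ is tight and let $(S,h)$ be any supporting open book; I want to show $h$ is right-veering. I would argue by contrapositive: if $h$ fails to be right-veering, there is a properly embedded arc $\gamma$ in $S$ such that $h(\gamma)$ is to the left of $\gamma$ at a common endpoint on $\bdry S$ (after isotopy rel endpoints to minimize intersection). From such an arc one constructs an overtwisted disk, or more precisely a "bypass" or a Legendrian arc violating the Legendrian realization / tightness criterion. The key technical step is the local model near $\bdry S$: the page $S_t$, together with the foliation of $M$ by pages near the binding, carries a characteristic-foliation-type structure, and an arc whose image veers left produces a disk $D$ in $M$ whose boundary is Legendrian with the wrong sign in its Thurston--Bennequin framing, i.e., $tb(\bdry D)\geq 0$, contradicting tightness via the Bennequin inequality (Eliashberg). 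So the first direction reduces to building this disk and computing its framing.

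For the converse, suppose every supporting open book of $(M,\xi)$ is right-veering; I want to conclude $(M,\xi)$ is tight. Again by contrapositive: assume $(M,\xi)$ is overtwisted. The main tool is Giroux's correspondence (Theorem of Giroux above): $\xi$ is supported by \emph{some} open book, and after positive stabilizations any two supporting open books become equivalent. Since $\xi$ is overtwisted, I would use the classification of overtwisted contact structures (Eliashberg) together with an explicit open book description — for instance, the open book obtained from a negative stabilization of a trivial open book — to exhibit one supporting open book $(S,h)$ with $h$ not right-veering. More carefully: an overtwisted disk gives, after putting $\xi$ in a suitable form adapted to an open book, a page-arc $\gamma$ with $h(\gamma)$ to the left of $\gamma$, so $h$ is not right-veering, contradicting the hypothesis. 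The subtle point here is that right-veering is \emph{not} preserved under negative stabilization but \emph{is} preserved under positive stabilization, so one must be careful that the hypothesis ("all open books are right-veering") is strong enough: it directly contradicts the existence of any non-right-veering supporting open book.

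The main obstacle, and the heart of the argument, is the forward direction's local-to-global step: translating the combinatorial/topological condition "$h(\gamma)$ lies to the left of $\gamma$" into a genuine geometric violation of tightness. This requires (i) choosing a contact form $\alpha$ adapted to $(S,h)$ with good control of the characteristic foliation on a page and near the binding, (ii) isotoping the arc $\gamma$ and its image to be Legendrian, and (iii) identifying the resulting embedded disk and showing its boundary has non-negative Thurston--Bennequin invariant, so that Eliashberg's inequality $tb(L) + |r(L)| \leq -\chi(\Sigma)$ for Legendrian knots bounding a surface $\Sigma$ in a tight manifold is violated. Equivalently, one can phrase this via bypasses: a left-veering arc yields a bypass along a convex page, and iterating or directly produces an overtwisted disk. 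I expect step (iii) — the precise framing computation showing the disk destroys tightness — to be where most of the work lies; everything else is assembling standard machinery (Giroux forms, convex surface theory, and the Bennequin inequality).
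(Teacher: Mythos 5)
Note first that the paper you are reading does not prove Theorem~\ref{veer}: the theorem is stated and immediately attributed to \cite{HKM} (``the following theorem was obtained''), so there is no in-paper proof to compare against. I therefore compare your sketch with the argument in that reference.

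Your plan for the direction ``tight $\Rightarrow$ right-veering,'' proved contrapositively by turning a left-veering arc $\gamma$ into an embedded disk whose Legendrianized boundary violates the Bennequin inequality, broadly matches the strategy in \cite{HKM}, and you correctly flag the framing computation and the normal form for the contact structure near the binding and the pages as the technical heart.

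The converse direction is where there is a genuine gap. To show that an overtwisted $(M,\xi)$ admits \emph{some} non-right-veering compatible open book, your route --- ``an overtwisted disk gives, after putting $\xi$ in a suitable form adapted to an open book, a page-arc $\gamma$ with $h(\gamma)$ to the left of $\gamma$'' --- is not how the argument works and is unlikely to succeed as stated: a generic compatible open book for an overtwisted $\xi$ will not visibly display a left-veering arc, and there is no general normal form that places an overtwisted disk on a page. The ingredient your sketch is missing, and which the ``negative stabilization of a trivial open book'' example covers only in a special case, is the nontrivial lemma from \cite{HKM} (resting on Eliashberg's classification of overtwisted contact structures and the Giroux correspondence) that \emph{every} overtwisted contact structure on a closed $3$-manifold is supported by an open book which is a negative stabilization of some other open book. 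Once that is in hand, the rest is elementary: the arc dual to the cocore of the added band is carried strictly to its left by the negative Dehn twist, so the stabilized monodromy is not right-veering. Your sketch of this direction should be reorganized around stating and invoking that lemma, rather than around extracting a left-veering arc from an arbitrary supporting open book of $\xi$.
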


We will briefly describe the notion of {\em right-veering}. Let
$\alpha$ and $\beta$ be isotopy classes, rel endpoints, of properly
embedded oriented arcs $[0,1]\rightarrow S$ with a common initial
point $\alpha(0)=\beta(0)=x\in \partial S$. Assume
$\alpha\not=\beta$.  Choose representatives $a, b$ of $\alpha,
\beta$ so that they intersect transversely (this include the
endpoints) and {\em efficiently}, i.e., with the fewest possible
number of intersections.  Then we say $\beta$ is {\em strictly to
the right of} $\alpha$ if the tangent vectors $(\dot b(0), \dot
a(0))$ define the orientation on $S$ at $x$. A monodromy map $h$ is
{\em right-veering} if for every choice of basepoint $x \in \partial
S$ and every choice of arc $\alpha$ based at $x$, $h(\alpha)=\alpha$
or is strictly to the right of $\alpha$.

\begin{thm}  \label{ot}
Suppose $h$ is freely homotopic to $\psi$ which is periodic or
pseudo-Anosov, and $c_i$ is the fractional Dehn twist coefficient
corresponding to the $i$th boundary component of $S$. \be \item If
$\psi$ is periodic, then $h$ is right-veering if and only if all
$c_i\geq 0$.  Hence $(M,\xi)$ is overtwisted if some $c_i<0$. \item
If $\psi$ is pseudo-Anosov, then $h$ is right-veering if and only if
all $c_i>0$. Hence $(M,\xi)$ is overtwisted if some $c_i\leq 0$. \ee
\end{thm}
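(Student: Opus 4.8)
The plan is to recast right-veering as a one-sided condition on a canonical lift of $h$ to the universal cover, and then to read the answer off from the local model of $h$ near $\bdry S$. First I would reduce to $\bdry S$ connected: right-veering is by definition checked component by component, and the argument below only uses $h$ near the component under consideration, so the general case follows by applying the connected case to each $C_i$ in turn; also the two ``hence $(M,\xi)$ is overtwisted'' clauses are immediate from the equivalences together with Theorem~\ref{veer}, so only the equivalences need proof. So fix the reference hyperbolic metric (in the periodic case, replace it by a $\psi$-invariant hyperbolic metric with geodesic boundary, which exists since $\psi$ has finite order), let $\delta\in\pi_1(S)$ generate the peripheral subgroup, let $\ell_0\subset\widetilde S\subset\H$ be its axis (a lift of $\bdry S$) with endpoints $p_\pm\in S^1_\infty$, and let $J\subset S^1_\infty$ be the complementary arc cut off by $\ell_0$ on the side away from $\widetilde S$. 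Since $h|_{\bdry S}=\mathrm{id}$ there is a unique lift $\tilde h$ with $\tilde h|_{\ell_0}=\mathrm{id}$; it commutes with $\delta$, extends to $S^1_\infty$, fixes $p_\pm$, and preserves the limit set $\Lambda\subset\overline{S^1_\infty\setminus J}$. Straightening arcs to geodesics and comparing their initial directions at a lift $\tilde x\in\ell_0$ (the angular order at $\tilde x$ of two points lying on the $\Lambda$-side of $\ell_0$ does not depend on $\tilde x\in\ell_0$), one checks that $h$ is right-veering along $\bdry S$ if and only if $\tilde h$ sends every point of $\overline{S^1_\infty\setminus J}$ to the right of, or to, itself. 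Everything then comes down to understanding $\tilde h$ on that closed arc, which near $p_\pm$ is governed precisely by the fractional twist $\beta$.

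\emph{Periodic case.} Here $\psi$ is an isometry, so $\tilde\psi$ is elliptic: it rotates $S^1_\infty$ rigidly by $2\pi c$ about the lift of a fixed point of $\psi$; for $c>0$ this moves every point strictly to the right, and for $c=0$ it is the identity (a finite-order orientation-preserving isometry fixing a boundary geodesic pointwise is the identity, so $c=0$ forces $\psi=\mathrm{id}$, whence $h$ is isotopic to the identity rel $\bdry S$ and is trivially right-veering). For $c\neq 0$ the decomposition $h=\beta\cup\psi$ shows $\tilde h$ equals $\tilde\psi$ outside the lifts of the boundary collar and equals the fractional Dehn twist $\tilde\beta$ --- a shear that is monotone, and rightward exactly when $c>0$ --- inside them; a direct comparison then shows $\tilde h$ moves $\overline{S^1_\infty\setminus J}$ weakly to the right iff $c\ge 0$, and strictly to the left for some short collar arc iff $c<0$. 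This yields ``right-veering $\iff c\ge 0$.''

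\emph{Pseudo-Anosov case.} Now there is no invariant metric, but $\psi$ still preserves $\Lambda^s$ and $\Lambda^u$ and acts on the metric completion $\hat A$ of the boundary annulus of $S\setminus\Lambda^s$ by permuting the $n$ prongs via $x_i\mapsto x_{i+k}$, with $c$ a lift of $k/n$. If $c\le 0$, then either the net twist across the collar is negative (and a short collar arc straightens to the left) or $c=0$, in which case the pseudo-Anosov dynamics along $\bdry S$ is two-sided --- it has both attracting and repelling fixed points, coming from the stable and unstable separatrices at the singularity ``at infinity'' --- and produces an arc (near a prong $P_i$, or closely following a leaf of $\Lambda$ out of a repelling fixed point) whose straightened image is strictly to the left; either way $h$ is not right-veering. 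If $c>0$, the positive prong shift by $k$ already moves arcs to the right through the prong structure of $\hat A$, the collar twist of amount $c$ only reinforces this, and one shows that $\tilde h$ sends every leaf-endpoint of $\Lambda$, and every complementary arc $J_{g\ell_0}$, weakly to the right along $\overline{S^1_\infty\setminus J}$, so $h$ is right-veering. I expect this last implication to be the main obstacle: one must check that passing to the efficient (geodesic) representative of $h(\alpha)$ cannot cancel the rightward displacement imposed at the basepoint, and one must locate the threshold exactly at $c=0$ rather than at some positive $j/n$. That requires the precise dictionary among ``shift by $k$ prongs,'' the perpendicular prongs $P_i$ on $\bdry S$, and angular displacement at $\bdry S$, together with the density of leaves of $\Lambda$ to propagate the estimate from a neighbourhood of $p_\pm$ to all of $\overline{S^1_\infty\setminus J}$.
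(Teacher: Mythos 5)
First, a point of reference: the paper itself does not prove Theorem~\ref{ot}; it is quoted as background from \cite{HKM}, and your universal-cover framework (lift $\tilde h$ fixing a lift $\ell_0$ of the boundary pointwise, and right-veering recast as weak rightward motion of the relevant boundary/ideal points) is essentially the framework of that source. But as written your argument has one outright error and one acknowledged hole at the crucial step. In the periodic case, the lift of $\psi$ that is relevant --- the one joined to $\tilde h$ by the lifted free isotopy, equivalently the one agreeing with $\tilde h$ outside the lifts of the collar --- preserves $\ell_0$ and translates along it with translation number $c$; for $c\neq 0$ it is therefore a \emph{hyperbolic} isometry with axis $\ell_0$, not an elliptic element, and it certainly does not rotate $S^1_\infty$ rigidly by $2\pi c$ (a rigid rotation would not even fix $p_\pm$). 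The correct mechanism is: the free isotopy displaces points a uniformly bounded distance, so $\tilde h$ and this lift induce the same homeomorphism of $S^1_\infty$, which for $c\neq 0$ has North--South dynamics with fixed points exactly $p_\pm$, sweeping the arc on the $\widetilde S$-side toward the attracting endpoint; the sign of $c$ decides whether this is rightward, and since no other boundary lift is preserved, arcs ending on boundary lifts follow suit. Your conclusion is right, but the stated reason is wrong. Relatedly, ``reduce to $\bdry S$ connected'' is not a reduction: the criterion on $\overline{S^1_\infty\setminus J}$ involves the global lift, and your own treatment of $c=0$ (``$\psi=\mathrm{id}$, hence $h$ is isotopic to the identity rel $\bdry S$'') silently uses connectedness; in the disconnected case you must instead argue component by component, using all the hypotheses $c_j\ge 0$ in the ``if'' direction.

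The more serious issue is the pseudo-Anosov ``if'' direction, $c>0\Rightarrow$ right-veering, which is the heart of the theorem and is exactly the step you defer as ``the main obstacle'' without supplying the idea that closes it. What is missing is the equivariant prong bookkeeping: the lifts of the prongs $P_1,\dots,P_n$ with feet on $\ell_0$ form a $\delta$-invariant bi-infinite family ($n$ per period) whose ideal endpoints subdivide the open arc between $p_-$ and $p_+$ into consecutive intervals, and the lift $\tilde\psi$ determined by the isotopy shifts this family forward by $k$ places (this is precisely what translation number $k/n$ on $\ell_0$ together with $\psi(x_i)=x_{i+k}$ means). For $k\ge 1$ each interval is carried into an interval weakly to its right (sharing at most an endpoint when $k=1$), so every ideal point, and hence every endpoint of a lifted arc on another boundary lift, moves weakly to the right. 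This single observation disposes of both worries you raise: the threshold sits at $c=0$ because $k\ge 1$ already forces the interval shift (no density or propagation estimate is needed), and ``pulling tight cannot cancel the displacement'' is automatic because the comparison is made on endpoints of lifted arcs, which are invariants of the isotopy class rel endpoints --- no geodesic straightening enters. Your discussion of $c<0$ and of $c=0$ via the alternating attracting/repelling boundary fixed points is fine as a sketch, but without the shift-by-$k$ argument (or an equivalent) the forward implication is not established, so the proposal as it stands does not prove the theorem.
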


\section{Main results}
In this article, we prove the existence and nontriviality of
cylindrical contact homology for a contact structure $(M,\xi)$ given
by an open book decomposition $(S,h)$ with periodic or pseudo-Anosov
monodromy, under favorable conditions. Here $S$ is a compact,
oriented surface with nonempty boundary $\bdry S$ (often called a
``bordered surface''), and $h:S\stackrel\sim\rightarrow S$ is an
orientation-preserving diffeomorphism which restricts to the
identity on the boundary.

One of the motivating problems in 3-dimensional contact geometry is
the following Weinstein conjecture:

\begin{conj}[Weinstein conjecture]
Let $(M,\xi)$ be a contact 3-manifold.  Then for any contact form
$\alpha$ with $\ker \alpha=\xi$, the corresponding Reeb vector field
$R=R_\alpha$ admits a periodic orbit.
\end{conj}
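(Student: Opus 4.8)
In full generality the three-dimensional Weinstein conjecture is a theorem of Taubes, proved via Seiberg--Witten theory; our contribution, for the class of contact manifolds considered in this paper, is an independent proof within the open book framework that in addition locates a periodic orbit explicitly. We describe the plan in the periodic case, the pseudo-Anosov case being treated separately. So let $(M,\xi)$ be supported by an open book $(S,h)$ whose monodromy $h$ is freely homotopic to some $\psi$ with $\psi^n=\mathrm{id}$; the low-complexity cases in which $S$ is a disk or an annulus give $(S^3,\xi_{\mathrm{std}})$ or the standard $S^1\times S^2$, where the conjecture is classical, so assume $S$ is neither. If some fractional Dehn twist coefficient $c_i$ is negative, then $\xi$ is overtwisted by Theorem~\ref{ot} and the conjecture is due to Hofer; hence we may assume $h$ is right-veering, i.e.\ all $c_i\ge 0$, equivalently (Theorem~\ref{veer}) $\xi$ is tight. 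The plan is then: (i) exhibit a model adapted contact form whose Reeb flow is as simple as possible; (ii) perturb it to a nondegenerate form and show $HC(M,\xi)\neq 0$; (iii) deduce from the invariance of $HC(M,\xi)$, together with a compactness argument, that every Reeb vector field for $\xi$ has a periodic orbit.

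Step (i). Since $\psi^n=\mathrm{id}$, averaging a primitive of an area form on $S$ over the cyclic group $\langle\psi\rangle$ yields a $\psi$-invariant Liouville form $\lambda$ on $S$ which is standard in a collar of $\partial S$. Then $\alpha_0=\lambda+C\,dt$ (for $C>0$) descends to the mapping torus of $\psi$, is a contact form supporting the open book, and after gluing in a rotationally symmetric model near the binding $K$ one obtains a global adapted contact form. Its Reeb vector field $R_0$ is, up to a positive reparametrization, the suspension flow of $\psi$; because $\psi$ is periodic, every orbit of this flow is closed, and in particular the components of $K$ are closed $R_0$-orbits. Packaged differently, $M$ is Seifert fibered, $R_0$ generates a reparametrization of the Seifert $S^1$-action, and $R_0$ is of Morse--Bott type, the orbit spaces being the closed orbifold $S/\psi$ and the binding circles. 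This already settles the conjecture for $\alpha_0$ itself, but says nothing about an arbitrary $\alpha$ with $\ker\alpha=\xi$ --- which is the real content of steps (ii)--(iii).

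Steps (ii)--(iii). Perturb $\alpha_0$ in the manner of Bourgeois, using a Morse function on the orbifold $S/\psi$ and on the binding circles, to obtain a nondegenerate contact form; its closed Reeb orbits of bounded action are iterates of finitely many simple orbits, whose Conley--Zehnder indices are read off from the Morse indices together with rotation numbers governed by the coefficients $c_i$ and the Seifert data. Right-veering-ness enters here: the nonnegativity of the $c_i$ forces the binding orbit and the nearby short orbits into the ``good'' index range, so that the cylindrical differential is defined on the relevant part of the chain complex, and a Morse--Bott count then shows $HC(M,\xi)\neq 0$ --- for instance the generator corresponding to the maximum of the chosen Morse function on $S/\psi$ survives in the action window under consideration. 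Since cylindrical contact homology, once well-defined, is an invariant of $\xi$ that does not depend on the nondegenerate contact form computing it, no nondegenerate $\alpha$ with $\ker\alpha=\xi$ can be free of closed Reeb orbits, for otherwise its chain complex, and hence $HC(M,\xi)$, would vanish. A standard argument with the action filtration, approximating a possibly degenerate $\alpha$ by nondegenerate forms and extracting a limit orbit, then removes the nondegeneracy hypothesis.

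The main obstacle is the usual one for cylindrical contact homology: establishing that the theory is genuinely well-defined on these manifolds --- that there are no contractible Reeb orbits of Conley--Zehnder index in $\{1,2,3\}$ obstructing $\partial^2=0$, and that the transversality required for the moduli spaces of holomorphic cylinders can be achieved (via generic or domain-dependent almost complex structures, or the appropriate abstract perturbation scheme). This is exactly where tightness, that is right-veering monodromy, is indispensable: it is the hypothesis $c_i\ge 0$ that makes it possible to control the contractible orbits created near the binding, whereas in the overtwisted case no such control is available and the would-be invariant genuinely degenerates. A secondary technical point, needed for the index bookkeeping in step (ii), is to make the Morse--Bott model near the binding and near the exceptional Seifert fibers sufficiently explicit; and the limiting step in (iii), though standard, relies on the action estimates furnished by the filtered theory.
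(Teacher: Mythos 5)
Your main line---average a primitive of an area form over $\langle\psi\rangle$ to get an $S^1$-invariant contact form adapted to the open book, compute cylindrical contact homology by Bourgeois' Morse--Bott method, then use invariance plus an approximation/action argument to produce orbits for an arbitrary (possibly degenerate) form---is exactly the paper's route (Theorems~\ref{thm:invariant} and~\ref{thm:cylindrical}). However, your case analysis has a genuine hole at $c_i=0$. You reduce to ``all $c_i\geq 0$'' and then run an argument that in fact needs all $c_i>0$: since $\psi$ is periodic, $c_i=0$ forces $h=id$, and then $M$ is a connected sum of copies of $S^1\times S^2$. In that case $M$ is not Seifert fibered, the rotationally symmetric solid-torus model you glue in at the binding does not exist (the boundary condition asks the Reeb field to have slope $c_i=0$), the Lisca--Mati\'c criterion $e(M)<0$ that underlies the $S^1$-invariant picture is unavailable, and $\pi_2(M)\neq 0$, which also undercuts the grading conventions your index bookkeeping relies on. So steps (i)--(iii) all fail there. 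The paper disposes of this case by quoting Hofer: the Weinstein conjecture holds whenever $\pi_2(M)\neq 0$. Your reduction needs that extra sentence; the contact-homology argument should only be run when all $c_i>0$.

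A smaller but real discrepancy concerns the case, with all $c_i>0$, in which the base orbifold pulls back to a sphere, i.e.\ $M$ is covered by $S^3$. There the paper again falls back on Hofer (the conjecture holds for manifolds covered by $S^3$) rather than on nontriviality of $HC$: it proves well-definedness by lifting to the Hopf fibration and checking that no contractible orbit has Conley--Zehnder index $2$, and it proves nontriviality only when the universal cover is $\R^3$, where fibers lift to lines and there are no contractible orbits at all. Your stated mechanism---that right-veering ``controls the contractible orbits created near the binding''---is not what does the work; right-veering enters only by forcing $c_i\geq 0$, after which the control of contractible orbits is a covering-space/index computation (or is avoided entirely via Hofer). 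As written, your step (ii) asserts nontriviality of $HC$ uniformly for all $c_i>0$, which would require spelling out that lifting argument in the $S^3$-covered case, or simply bypassing it as the paper does.
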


During the preparation of this paper, Taubes~\cite{Ta} gave a
complete proof of the Weinstein conjecture in dimension three.  Our
methods are completely different from those of Taubes, who uses
Seiberg-Witten Floer homology instead of contact homology.  In some
situations (i.e., Theorem~\ref{main} and Corollary~\ref{cor:
infinitely many simple orbits}), we prove a better result which
guarantees an infinite number of simple periodic orbits.

Prior to the work of Taubes, the Weinstein conjecture in dimension
three was verified for contact structures which admit planar open
book decompositions \cite{ACH} (also see related work of
Etnyre~\cite{Et2}), for certain Stein fillable contact structures
\cite{Ch,Ze}, and for certain universally tight contact structures
on toroidal manifolds \cite{BC}.  We also refer the reader to the
survey article by Hofer~\cite{H2}.

\subsection{The periodic case}
Our first result is the following:

\begin{thm}
The Weinstein conjecture holds when $(S,h)$ has periodic monodromy.
\end{thm}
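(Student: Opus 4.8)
The plan is to exploit the periodicity of $h$ to write down an especially transparent contact form adapted to $(S,h)$, to use it to compute the cylindrical contact homology $HC(M,\xi)$, and then to invoke the isotopy invariance of $HC$ to conclude that \emph{every} Reeb vector field of $\xi$ carries a closed orbit. The underlying geometric fact is that when $\psi^n=\mathrm{id}$ the mapping torus of $\psi$ carries a circle action (free away from finitely many exceptional fibers), so that $M\setminus K$ is Seifert fibered; capping off the binding $K$ presents $M$ itself as a Seifert fibered space over a closed orbifold, with $K$ a (possibly multiple) fiber.

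First I would produce a contact $1$-form $\alpha_0$ supported by the open book --- a mild modification of the Thurston--Winkelnkemper form --- whose Reeb field $R_0$ is everywhere tangent to the Seifert fibration: on $M\setminus K$ it is a positive multiple of the generator of the $S^1$-action, and on a standard solid-torus neighborhood $D^2\times S^1$ of each binding component it is a positive combination of the meridional rotation $\partial_\theta$ and the longitudinal (fiber) direction, with $K$ itself a closed orbit. The slope of this model near $K$ is dictated by the fractional Dehn twist coefficient $c$, i.e.\ by the rotation number $f(x)-x$. With these choices $R_0$ is of Morse--Bott type: its closed orbits occur in $S^1$-families given by the regular fibers, together with the isolated short orbits over the exceptional fibers and along $K$.

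Next I would pass to a nondegenerate contact form $\alpha$ with $\ker\alpha=\xi$ by the usual Morse--Bott perturbation, each circle family of $R_0$-orbits splitting into two nondegenerate orbits whose Conley--Zehnder indices are read off from the linearized return maps together with a Morse-index shift. The point is then to show that these data assemble into a well-defined cylindrical contact homology: one must check $\partial^2=0$ and independence of auxiliary choices, which reduces to excluding contractible Reeb orbits of low Conley--Zehnder index. This is exactly where a positivity hypothesis enters via Theorem~\ref{ot}: when all $c_i\ge 0$ the relevant low-index contractible orbits do not occur, while if some $c_i<0$ then $(M,\xi)$ is overtwisted and the Weinstein conjecture already holds by Hofer's theorem, so one may assume $c_i\ge 0$. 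Granting well-definedness, the grading of the generators coming from the lowest-action families shows that the differential vanishes on them, whence $HC(M,\xi)\neq 0$.

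To finish, suppose some contact form $\alpha'$ with $\ker\alpha'=\xi$ had no closed Reeb orbit. Then the cylindrical contact homology computed from $\alpha'$ would be the homology of the zero complex, so $HC(M,\xi)=0$, contradicting the previous paragraph; hence $R_{\alpha'}$ has a periodic orbit. I expect the genuinely delicate step to be the third one: establishing transversality for the moduli spaces of holomorphic cylinders involved and ruling out the bad contractible orbits, so that $HC(M,\xi)$ is an honest isotopy invariant --- the construction of $\alpha_0$ is soft and the index bookkeeping, while fiddly, is routine.
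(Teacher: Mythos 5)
Your strategy for the case when all the fractional Dehn twist coefficients are strictly positive is essentially the paper's: Theorem~\ref{thm:invariant} produces exactly the $S^1$-invariant transverse contact form you describe, Theorem~\ref{thm:cylindrical} carries out the Morse--Bott computation and nonvanishing, and invariance of cylindrical contact homology then yields orbits for every Reeb field, while the case of some $c_i<0$ is dispatched by overtwistedness and Hofer, just as you say. The genuine gap is your treatment of the boundary case $c_i=0$. Since $\psi$ is periodic, $c_i=0$ for some $i$ forces $h=id$, and then $M$ is a connected sum of copies of $S^1\times S^2$. In this case your opening premise fails: the fibration of $M\setminus K$ by circles does \emph{not} extend over the binding as a Seifert fibration with $K$ a fiber, because the nearby fiber slope is meridional (this is precisely why Theorem~\ref{thm:invariant} requires all $c_i>0$). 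Consequently your blanket claim that ``when all $c_i\ge 0$ the relevant low-index contractible orbits do not occur'' is unjustified for $c_i=0$: on $\#(S^1\times S^2)$ contractible Reeb orbits and finite energy planes are a real obstruction to defining cylindrical contact homology, and the lowest-action argument has no Seifert structure to lean on. The paper avoids this entirely by observing that $\pi_2(M)\neq 0$ there and invoking Hofer's theorem, not contact homology.

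A smaller but related point: even when all $c_i>0$, the paper proves nontriviality of the cylindrical contact homology only when the universal cover of $M$ is $\R^3$; when $M$ is covered by $S^3$ it again falls back on Hofer's result rather than on a computation. Your sketch asserts nonvanishing uniformly from ``the lowest-action families,'' but the paper's nonvanishing argument uses a homological separation of orbits winding different numbers of times around the fibers (via finite covers with $e(M')<0$), which is set up only in the $\wt M=\R^3$ case. So either restrict your contact-homology argument to that case and quote Hofer for spherical quotients, as the paper does, or supply the missing index/differential analysis for the lens-space-type quotients.
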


\begin{proof}
By the work of Hofer~\cite{H1}, the Weinstein conjecture holds for
overtwisted contact structures, on $S^3$ and manifolds which are
covered by $S^3$, and on manifolds for which $\pi_2(M)\not=0$.  If
any of the fractional Dehn twist coefficients $c_i$ are negative,
then $(M,\xi)$ is overtwisted by Theorem~\ref{ot}.  If any $c_i=0$,
then $h=id$.  In this case, $M$ is a connected sum of $(S^1\times
S^2)$'s, and has $\pi_2(M)\not=0$.

When all the $c_i>0$ and the universal cover of $M$ is $\R^3$, then
the cylindrical contact homology is well-defined and nontrivial by
Theorem~\ref{thm:cylindrical}.
\end{proof}

In the periodic case, we also prove Theorem~\ref{thm:right-veering
is exact}, which states that $(M,\xi)$ is tight if and only if $h$
is right-veering.  Moreover, the tight contact structures are
$S^1$-invariant and also Stein fillable.

\subsection{The pseudo-Anosov case}
We now turn our attention to the pseudo-Anosov case.  For
simplicity, suppose $S$ has only one boundary component. Then $h$
gives rise to two invariants: the fractional Dehn twist coefficient
$c$ and the pseudo-Anosov homeomorphism $\psi$ which is freely
homotopic to $h$. If $c\leq 0$, then the contact manifold $(M,\xi)$
is overtwisted by Theorem~\ref{ot}.  Hence we restrict our attention
to $c>0$. Let us write $c={k\over n}$, where $n$ is the number of
prongs about the unique boundary component $\bdry S$.  Our main
theorem is the following:

\begin{thm}  \label{main}
Suppose $\bdry S$ is connected and $c={k\over n}$ is the fractional
Dehn twist coefficient.
\begin{enumerate}
\item If $k\geq 2$, then any chain group $(\A(\alpha,J),\bdry)$ of the
full contact homology admits an augmentation $\varepsilon$. Hence
there is a well-defined linearized contact homology group
$HC^\varepsilon(M,\alpha,J)$.
\item If $k\geq 3$, then the linearized contact homology group $HC^\varepsilon(M,\alpha,J)$
has {\em exponential growth with respect to the action}.  (In
particular, $HC^\varepsilon(M,\alpha,J)$ is nontrivial.)
\end{enumerate}
\end{thm}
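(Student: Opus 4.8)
The plan is to base everything on an explicit Reeb vector field adapted to the open book $(S,h)$, in the spirit of Thurston--Winkelnkemper but normalized near the binding using the pseudo-Anosov data. First I would build a contact form $\alpha$ supporting $\xi$ whose Reeb field $R$ is positively transverse to the interior of each page, equals $\bdry S$ (with its boundary orientation) along the binding $K$, and whose first return map to a page $S_0$ is isotopic to $h$, hence freely homotopic to the pseudo-Anosov representative $\psi$; moreover I would impose a standard rotation model on a solid-torus neighborhood $N(K)$ of the binding, so that inside $N(K)$ the Reeb flow is a linear flow whose slope is dictated by the fractional Dehn twist coefficient $c={k\over n}$ and the $n$ prongs. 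After a nondegenerate perturbation, the closed Reeb orbits fall into two families: (a) the binding orbit $\gamma_0=K$, which becomes elliptic with rotation number governed by $c$, so that the Conley--Zehnder index of $\gamma_0^j$ is controlled and grows with $j$ at a rate forced by $k$ and $n$; and (b) a family of orbits in the mapping-torus region $M\setminus N(K)$ in bijection with periodic points of a perturbation of $\psi$, all hyperbolic, with action roughly proportional to the period. The pseudo-Anosov dynamics then force the number of interior orbits of action $\le T$ to grow like $\tau^{T}$, where $\tau>1$ is the dilatation; this is the ultimate source of the exponential growth. A technical point to be handled here is that $\psi$ is only a homeomorphism near the singularities of its invariant foliations, so the construction must resolve the $n$ boundary prongs and the interior singularities into a definite, controlled set of orbits, using the explicit local models from the earlier sections.

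The central analytic input, needed for both parts, is that the pages constrain holomorphic curves. Because $R$ is positively transverse to the pages, in the symplectization $(\R\times M, d(e^s\alpha))$ the pages sweep out a family of hypersurfaces adapted to a suitable $J$, and a positivity-of-intersection / monotonicity argument in the $[0,1]$--coordinate shows that any finite-energy holomorphic curve asymptotic to orbits of bounded action has bounded genus and a bounded number of negative punctures; in particular, cylinders between orbits of low action cannot bubble off planes or multiply-covered components. This is what makes the full contact DGA $(\A(\alpha,J),\bdry)$ and, after linearization, the cylindrical homology well-defined window by window in the action. The right-veering hypothesis (Theorems \ref{veer} and \ref{ot}), valid since $c={k\over n}>0$, enters here to exclude the ``negative'' holomorphic planes that would otherwise obstruct this.

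For part (1) I would produce the augmentation $\varepsilon$ by working below the action $T_0$ of the shortest interior orbit. Below $T_0$ the only generators are the iterates $\gamma_0^j$ of the binding, and the page constraint shows they span a sub-DGA. The hypothesis $k\ge 2$ makes the Conley--Zehnder indices of the relevant $\gamma_0^j$ large enough that no rigid holomorphic plane is asymptotic to any $\gamma_0^j$ (the expected dimension is negative), so this sub-DGA has vanishing differential and the unital map sending every $\gamma_0^j$ to $0$ is an augmentation of it; I would then extend it over all of $\A(\alpha,J)$ by an obstruction-free induction on action, each higher generator's $\varepsilon\bdry$ vanishing because the only words that can occur are products of strictly lower generators already satisfying the chain condition. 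The delicate point is precisely the claim that no plane is asymptotic to $\gamma_0^j$ in the relevant range, which is where $k\ge 2$ rather than $k\ge 1$ is essential; I expect this to be the \emph{main obstacle} in part (1).

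For part (2), having fixed $\varepsilon$, I would filter the linearized complex $HC^\varepsilon(M,\alpha,J)$ by action and compare consecutive windows through the resulting long exact sequences. In the window of action $\asymp m$, the generators coming from interior orbits must decouple from the binding iterates: when $k\ge 3$, the Conley--Zehnder indices of the $\gamma_0^j$ occupy a range disjoint from (or differing by a controlled, constant shift from) those of the period-$m$ interior orbits, so the linearized differential cannot pair a binding generator with an interior generator, and the interior generators can only cancel among themselves. A Lefschetz-type count then finishes the argument: the alternating sum of the index contributions of the period-$m$ interior orbits equals, up to lower-order terms, the Lefschetz number $L(\psi^m)$, which for a pseudo-Anosov map grows like $\pm\tau^{m}$; since this alternating sum is bounded in absolute value by the total rank of the corresponding graded piece of $HC^\varepsilon$, the homology in action $\le T$ has rank $\gtrsim \tau^{T}$, hence is nonzero for all large $T$. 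Unwinding the bijection between interior orbits and periodic points of $\psi$ then yields infinitely many simple periodic Reeb orbits for every Reeb field of $\xi$ (Corollary \ref{cor: infinitely many simple orbits}). The main obstacle here is the ``no pairing'' claim --- making the action/index bookkeeping genuinely decouple the binding from the interior, which is exactly where the improvement from $k\ge 2$ to $k\ge 3$ is spent --- together with checking that the page constraint confines the linearized differential to a single window's worth of interior orbits so that the Lefschetz count applies.
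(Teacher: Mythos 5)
Your outline founders at two points that are exactly where the paper has to work hardest. First, your family (b) of interior orbits ``in bijection with periodic points of a perturbation of $\psi$, all hyperbolic, with action roughly proportional to the period'' presupposes that $\psi$ (suitably smoothed) can be realized as the first return map of a Reeb flow; by Fact 2 of Section~\ref{subsection: flux} (from \cite{CHL}) this is impossible in general, and the paper's entire construction in Section~\ref{section: construction} (the walls $W_{i,L},W_{i,R}$, the modified map $\psi'$, the Rademacher function $\Phi$) exists precisely because one only gets a return map \emph{freely homotopic} to $\psi$, whose periodic points are neither in bijection with those of $\psi$ nor hyperbolic; the paper recovers the exponential count through Nielsen classes and the nonvanishing Lefschetz index sum within each class, not through a literal orbit bijection. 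Second, your analytic mechanism for confining the differential is Conley--Zehnder index arithmetic, and that does not suffice. For part (1) the augmentation you propose is the trivial one, so the condition $\varepsilon\circ\bdry=0$ is exactly the statement that \emph{no} good orbit of bounded action --- interior orbits included, not just the binding covers $\gamma_0^j$ --- is the positive limit of a finite energy plane; your ``obstruction-free induction on action'' skips precisely the constant term of $\bdry\gamma$ for interior $\gamma$, which no index computation controls (the perturbed interior orbits have uncontrollable indices). The paper's Theorem~\ref{theorem: nodisks} rules these planes out by cutting the plane along the pages and computing the Rademacher function of the resulting boundary word, where $k\geq 2$ enters via $\Phi(h_0^{-1}(\eta^{-1})\eta)\geq k-1$ (Lemma~\ref{lemma: estimate2}), together with the wall/genuine-trajectory estimate $\Phi(Q)=0$ (Proposition~\ref{prop: genuine trajectory}).

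The same issue undermines part (2): the ``no pairing'' between binding covers and interior orbits, and between interior orbits with different page-intersection numbers, is not an index-range phenomenon but the content of Theorem~\ref{theorem: cylinder}, proved again by the cutting/Rademacher argument, and this is exactly where $k\geq 3$ (rather than $2$) is spent, via the estimate $\Phi(\Gamma_1)\geq (b-b')(k-1)-(b-b')+1$. Your global Lefschetz count in an action window could in principle replace the paper's finer Nielsen-class decomposition once that decoupling is established, but without the holomorphic-curve input it has no foundation. Finally, note that the paper's estimates hold only for orbits of action $\leq N$ for contact forms depending on $N$ (with $\varepsilon,\varepsilon'\to 0$), so the passage to the honest invariant requires the exhaustive-sequence/direct-limit formalism of Section~\ref{section: direct limits}, which is absent from your proposal and cannot be dispensed with unless $\psi$ is realizable as a first return map --- the very assumption that fails.
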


For the notions of {\em full contact homology}, {\em linearized
contact homology}, and {\em augmentations}, see
Section~\ref{section:contact homology}. The {\em action}
$A_\alpha(\gamma)$ of a closed orbit $\gamma$ with respect to a
contact $1$-form $\alpha$ is $\int_\gamma \alpha$. The linearized
contact homology group $HC^\varepsilon(M,\alpha,J)$ with respect to
the contact $1$-form $\alpha$ and adapted almost complex structure
$J$ on the symplectization is said to have {\em exponential growth
with respect to the action}, if there exist constants $c_1,c_2>0$ so
that the number of linearly independent generators in
$HC^\varepsilon(M,\alpha,J)$ which are represented by $\sum_i
a_i(\gamma_i-\varepsilon(\gamma_i))$, $a_i\in \Q$, with
$A_\alpha(\gamma_i)<L$ for all $i$ is greater than $c_1 e^{c_2L}$.
Here the contact homology groups are defined over $\Q$. The notion
of exponential growth with respect to the action is independent of
the choice of contact $1$-form $\alpha$ and almost complex structure
$J$ in the following sense: Given another $\A(\alpha',J')$, there is
a chain map $\Phi:\A(\alpha',J')\rightarrow \A(\alpha,J)$ which
pulls back the augmentation $\varepsilon$ on $\A(\alpha,J)$ to
$\Phi^*\varepsilon$ on $\A(\alpha',J')$, so that
$HC^{\Phi^*\varepsilon}(M,\alpha',J')\simeq
HC^\varepsilon(M,\alpha,J)$ and
$HC^{\Phi^*\varepsilon}(M,\alpha',J')$ has exponential growth if and
only if $HC^\varepsilon(M,\alpha,J)$ does.

Theorem~\ref{main}, together with Theorem~\ref{ot}, implies the
following:

\begin{cor}\label{thm: Weinstein}
The Weinstein conjecture holds for $(M,\xi)$ which admits an open
book with pseudo-Anosov monodromy if either $c\leq 0$ or $c\geq
{3\over n}$.
\end{cor}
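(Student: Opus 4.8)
The plan is to assemble the corollary from Theorem~\ref{main}, Theorem~\ref{ot}, and the classical Weinstein-conjecture results of Hofer~\cite{H1}, exactly as was done in the periodic case. First I would split according to the sign of the fractional Dehn twist coefficient $c$. If $c\leq 0$, then by Theorem~\ref{ot}(2) the monodromy $h$ fails to be right-veering, so $(M,\xi)$ is overtwisted; for overtwisted contact structures on closed $3$-manifolds the Weinstein conjecture is known by Hofer~\cite{H1}, who produced a contractible periodic Reeb orbit via a Bishop family of holomorphic disks emanating from an overtwisted disk. This disposes of the first range.

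Next, suppose $c\geq \tfrac{3}{n}$, i.e.\ $k\geq 3$ in the notation $c=\tfrac{k}{n}$. Here I would invoke Theorem~\ref{main}(2): the linearized contact homology group $HC^\varepsilon(M,\alpha,J)$ is defined (since $k\geq 3\geq 2$, part (1) applies) and has exponential growth with respect to the action, and in particular is nontrivial. The key point is that nontriviality of cylindrical/linearized contact homology forces the existence of a periodic Reeb orbit for \emph{every} contact form defining $\xi$: if some Reeb vector field had no closed orbit at all, the chain complex computing $HC$ would be the zero complex, contradicting $HC^\varepsilon(M,\alpha,J)\neq 0$ together with the invariance of contact homology under change of contact form (the chain-map/homotopy machinery recalled in the paragraph following Theorem~\ref{main}, which identifies $HC^{\Phi^*\varepsilon}(M,\alpha',J')$ with $HC^\varepsilon(M,\alpha,J)$). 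Hence every Reeb vector field of $(M,\xi)$ admits a periodic orbit, which is precisely the Weinstein conjecture for this $(M,\xi)$.

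Combining the two cases covers all $(M,\xi)$ admitting a pseudo-Anosov open book with $c\leq 0$ or $c\geq \tfrac{3}{n}$, proving the corollary. The one subtlety worth flagging is the transition range $0<c<\tfrac{3}{n}$ (i.e.\ $k\in\{1,2\}$): there Theorem~\ref{main} does not yield exponential growth, and indeed for $k=1,2$ one only knows $HC^\varepsilon$ is well-defined (when $k=2$) but not that it is nonzero, so this corollary deliberately excludes that window rather than attempting to patch it. The main obstacle in the argument is therefore not in the present corollary but is entirely absorbed into Theorem~\ref{main}: establishing that the perturbed Reeb dynamics coming from the open book has enough controlled periodic orbits, a well-defined differential, and an augmentation — everything here is a clean formal deduction once that theorem is in hand.
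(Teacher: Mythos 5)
Your argument is exactly the one the paper intends: the corollary is stated as an immediate consequence of Theorem~\ref{main} together with Theorem~\ref{ot}, with the $c\leq 0$ case handled by overtwistedness plus Hofer~\cite{H1} and the $c\geq \tfrac{3}{n}$ case by nontriviality of the linearized contact homology from Theorem~\ref{main}(2) and its invariance. Your reconstruction, including the remark that the range $0<c<\tfrac{3}{n}$ is deliberately excluded, matches the paper's (implicit) proof.
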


In the paper~\cite{CH2}, we prove that every open book $(S,h)$ can
be stabilized (after a finite number of stabilizations) to $(S',h')$
so that $h'$ is freely homotopic to a pseudo-Anosov homeomorphism
and $\bdry S'$ is connected. This proves that ``almost all'' contact
3-manifolds satisfy the Weinstein conjecture.

\begin{rmk}
With our approach, it remains to prove the Weinstein conjecture for
$c={1\over n}$ and ${2\over n}$. The $c={1\over n}$ case is
fundamentally different, and requires a different strategy; the
$c={2\over n}$ case might be possible by a more careful analysis of
Conley-Zehnder indices.
\end{rmk}

We also have the following:

\begin{cor} \label{cor: infinitely many simple orbits}
Let $\alpha$ be a contact $1$-form for $(M,\xi)$ which admits an
open book with pseudo-Anosov monodromy and $c\geq {3\over n}$. Then
the corresponding Reeb vector field $R_\alpha$ admits an infinite
number of simple periodic orbits.
\end{cor}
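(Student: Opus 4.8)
The plan is to deduce the corollary directly from part~(2) of Theorem~\ref{main} together with the invariance of exponential growth stated immediately after it, via an elementary counting argument on the action filtration. Theorem~\ref{main}(2) exhibits a contact form $\alpha_0$ for $\xi$ (the one adapted to the open book) and an adapted almost complex structure $J_0$ so that $HC^\varepsilon(M,\alpha_0,J_0)$ has exponential growth with respect to the action. Now fix an arbitrary contact form $\alpha$ with $\ker\alpha=\xi$ and any adapted $J$. The invariance paragraph provides a chain map $\Phi:\A(\alpha,J)\to\A(\alpha_0,J_0)$ with $HC^{\Phi^*\varepsilon}(M,\alpha,J)\simeq HC^\varepsilon(M,\alpha_0,J_0)$ such that $HC^{\Phi^*\varepsilon}(M,\alpha,J)$ again has exponential growth with respect to the action: there are constants $c_1,c_2>0$ so that for every $L$ the number of $\Q$-linearly independent classes in $HC^{\Phi^*\varepsilon}(M,\alpha,J)$ representable by cycles $\sum_i a_i(\gamma_i-\Phi^*\varepsilon(\gamma_i))$, $a_i\in\Q$, with $A_\alpha(\gamma_i)<L$ for all $i$ exceeds $c_1 e^{c_2 L}$.

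Next I would run the standard counting argument. The underlying chain group of linearized contact homology is, by construction, the $\Q$-vector space spanned by the good closed Reeb orbits of $R_\alpha$ (not by words in them), and the part of the action filtration below $L$ is spanned by those good orbits $\gamma$ with $A_\alpha(\gamma)<L$. Hence the dimension of the span of the cycles appearing in the exponential-growth statement, and a fortiori the number of linearly independent classes they represent, is bounded above by the number $N(L)$ of closed Reeb orbits of $R_\alpha$ of action less than $L$.

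Suppose, for contradiction, that $R_\alpha$ had only finitely many simple periodic orbits, say $\gamma_1,\dots,\gamma_m$, of actions $T_1,\dots,T_m>0$. Then every closed orbit of $R_\alpha$ is a multiple cover $\gamma_i^{\,j}$, of action $jT_i$, so $N(L)\le\sum_{i=1}^m\lfloor L/T_i\rfloor\le L\sum_{i=1}^m T_i^{-1}$; that is, $N(L)$ grows at most linearly in $L$. Combining with the previous paragraph gives $c_1 e^{c_2 L}\le N(L)\le L\sum_{i=1}^m T_i^{-1}$ for all $L$, which is impossible for $L$ large. Therefore $R_\alpha$ admits infinitely many simple periodic orbits, which is the assertion of Corollary~\ref{cor: infinitely many simple orbits}.

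The one point requiring care — and it is already packaged into the statements we are permitted to cite — is the invariance of exponential growth under replacing the adapted form $\alpha_0$ by an arbitrary $\alpha$: the cobordism chain map $\Phi$ does not literally preserve the action, so one needs to know (as asserted after Theorem~\ref{main}) that it intertwines the action filtrations up to a bounded rescaling, which suffices to transport the exponential lower bound from $HC^\varepsilon(M,\alpha_0,J_0)$ to $HC^{\Phi^*\varepsilon}(M,\alpha,J)$. Everything else is routine: that the linearized complex is linear in the orbits rather than in monomials, and that finitely many simple orbits force at most linear growth of $N(L)$, are immediate.
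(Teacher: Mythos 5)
Your argument for the nondegenerate case is essentially the paper's: exponential growth of linearized contact homology versus at most linear (polynomial) growth of the number of orbits of action $<L$ when there are only finitely many simple orbits. That part is fine.

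The genuine gap is the degenerate case, which the corollary is explicitly stated to cover (``In the corollary we do not require that the contact $1$-form $\alpha$ be nondegenerate''). Your argument begins by invoking a chain map $\Phi\colon\A(\alpha,J)\to\A(\alpha_0,J_0)$ and the invariance statement that packages exponential growth. But $\A(\alpha,J)$ is only defined when $R_\alpha$ is nondegenerate --- its generators are the good \emph{nondegenerate} closed orbits. If $\alpha$ is degenerate, there is no chain complex to map, so the invariance statement does not apply and your counting of orbits of action $<L$ is not meaningful: a degenerate Reeb field with finitely many simple orbits could, a priori, produce arbitrarily many nondegenerate orbits of small action after an arbitrary small perturbation. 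The paper handles this case separately: it uses Lemma~\ref{lemma: perturb finite number of orbits} to produce perturbations $\alpha_N$ whose periodic orbits of action $\leq N$ are all \emph{freely homotopic} to multiple covers of the original $\gamma_1,\dots,\gamma_l$, so only finitely many free homotopy classes of orbits can appear below any action threshold; it then argues (via the Gabai--Oertel essential-lamination argument, using $c>2/n$ so that closed orbits in distinct Nielsen classes are not freely homotopic) that the exponentially growing family of generators constructed in the proof of Theorem~\ref{main}(2) must occupy infinitely many distinct free homotopy classes. The contradiction is between infinitely many free homotopy classes versus finitely many, not simply a count of orbits by action. To repair your proof you would need to add this second branch, or restrict the corollary to nondegenerate $\alpha$, which would weaken it below what the paper claims.
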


In the corollary we do not require that the contact $1$-form
$\alpha$ be nondegenerate. The proof of Corollary~\ref{cor:
infinitely many simple orbits} will be given in
Section~\ref{subsection: proof of corollary}.

The main guiding philosophy of the paper is that a Reeb flow is not
too unlike a pseudo-Anosov flow on a 3-manifold, since both types of
flows are transversely area-preserving.  The difference between the
two will be summarized briefly in Section~\ref{subsection: flux} and
discussed more thoroughly in the companion paper~\cite{CHL}.

%The main difference is that, in the pseudo-Anosov case, the
%Poincar\'e return map must have real eigenvalues, whereas the Reeb
%case also admits complex eigenvalues that lie on the unit circle in
%the complex plane, i.e., rotations. Of course, in practice, the
%technicalities can be a bit unpleasant, but the philosophy has been
%useful for us.

We are also guided by the fundamental work of Gabai-Oertel~\cite{GO}
on essential laminations, which we now describe as it pertains to
our situation. Suppose $S$ is hyperbolic with geodesic boundary.
Suspending the stable geodesic lamination $\Lambda^s$ of $\psi$, for
example, we obtain a codimension 1 lamination $\mathcal{L}$ on $M$,
which easily satisfies the conditions of an {\em essential
lamination}, provided $k>1$.  In particular, the universal cover
$\widetilde M$ of $M$ is $\R^3$ and each leaf of $\mathcal{L}$ has
fundamental group which injects into $\pi_1(M)$.

The following is an immediate corollary of the proof of
Theorem~\ref{main}:

\begin{cor}
A contact structure $(M,\xi)$ supported by an open book with
pseudo-Anosov monodromy with $k>1$ is universally tight with
universal cover $\R^3$.
\end{cor}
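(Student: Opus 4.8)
The plan is to deduce this corollary from the geometric input assembled in the proof of Theorem~\ref{main}, rather than re-prove anything about contact homology. The statement splits cleanly into two assertions: that $\widetilde M=\R^3$, and that each leaf of the suspended lamination $\mathcal L$ is $\pi_1$-injective (which, together with atoroidality, is what ``universally tight'' will follow from here).

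First I would invoke the Gabai--Oertel machinery \cite{GO} in the precise form already flagged in the paragraph preceding the corollary: when $S$ is hyperbolic with geodesic boundary and $k>1$, the suspension $\mathcal L$ of the stable geodesic lamination $\Lambda^s$ of $\psi$ is an essential lamination in $M$. The point where $k>1$ enters is in verifying that $\mathcal L$ has no half-reducing sphere and, more importantly, that the complementary region containing the binding --- a solid torus neighborhood of $\partial S$ fibered by the prong structure --- meets $\mathcal L$ in a way that is essential rather than boundary-parallel; a fractional Dehn twist coefficient $c=k/n$ with $k\geq 2$ guarantees the suspension flow wraps the leaves around the binding torus enough times that no leaf is peripheral and no monogon is created. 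Given essentiality, the Gabai--Oertel theorem gives at once that $\widetilde M$ is homeomorphic to $\R^3$ and that $\pi_1$ of each leaf injects into $\pi_1(M)$, which is exactly the second displayed sentence of the paragraph above the corollary; these are the two facts I will quote.

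Next I would pass from ``essential lamination present'' to ``universally tight''. Here I would use the contact-geometric half of the argument from the proof of Theorem~\ref{main}: the contact form $\alpha$ adapted to the open book can be chosen so that $\xi$ is arbitrarily close to the tangent planes of the pages away from the binding (Thurston--Winkelnkemper), and in particular $\xi$ is transverse to $\mathcal L$, so the lamination $\mathcal L$ (or rather a suitable thickening / associated branched surface) ``carries'' the contact structure in the sense that $\xi$ restricted to each leaf is nowhere tangent. A contact structure transverse to an essential lamination has no overtwisted disk even after pulling back to any cover: an overtwisted disk would have to intersect the lifted lamination $\widetilde{\mathcal L}$ in $\widetilde M$, and since each lifted leaf is a properly embedded plane (no $\pi_1$, and $\widetilde M=\R^3$), the Legendrian boundary of the disk together with the transversality forces a contradiction with the characteristic foliation on the leaves --- this is the standard ``tautness obstructs overtwistedness'' mechanism, and it applies verbatim in the universal cover because essentiality lifts. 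Hence $(\widetilde M,\widetilde\xi)$ is tight, i.e., $(M,\xi)$ is universally tight.

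I expect the main obstacle to be the first step: checking with care that the suspended lamination genuinely satisfies the Gabai--Oertel axioms, particularly the no-monogon and no-peripheral-leaf conditions in the solid-torus complementary region around the binding, and that the quantitative hypothesis needed there is exactly $k>1$ rather than $k\geq 2$ or $k\geq 3$. One has to be attentive that the annular component $A$ of $S-\Lambda^s$ containing $\partial S$ suspends to a region whose intersection with $\mathcal L$ consists of the $n$ prong-leaves spiraling into the binding, and that the rotation by $k$ prongs per return makes these spiral essentially; if $k=0$ this fails (indeed then $h=\mathrm{id}$-like behavior on the boundary and $\mathcal L$ would be boundary-parallel), and if $k=1$ one must still rule out a monogon, which is where the ``$k>1$'' in the corollary's hypothesis is used. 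Everything after that --- the transversality of $\xi$ to $\mathcal L$ and the lift to the universal cover --- is routine given the form of $\alpha$ already constructed in the proof of Theorem~\ref{main}.
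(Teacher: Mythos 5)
Your first half is the paper's own route: for $k>1$ the suspension $\mathcal L$ of $\Lambda^s$ is an essential lamination, and Gabai--Oertel gives $\widetilde M\simeq\R^3$ together with $\pi_1$-injective leaves. The gap is in the second half. There is no ``standard tautness obstructs overtwistedness'' theorem saying that a contact structure merely \emph{transverse} to an essential lamination is tight, let alone universally tight, and your sketch does not supply one. First, an overtwisted disk in $\widetilde M$ need not meet $\widetilde{\mathcal L}$ at all: the complement of the lamination is open and nonempty (e.g.\ the solid-torus region around the binding swept out by the semi-open annulus $A\subset S-\Lambda^s$), and a priori the disk could sit entirely inside a lift of such a complementary region; your argument never engages with the complementary regions, which is exactly where more than essentiality must be used. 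Second, even when the disk does meet the lamination, transversality of $\xi$ to the leaves only says the characteristic foliation of each leaf is nonsingular, which contradicts nothing about a disk lying off the leaves. Indeed the principle as you state it is false in general: a single incompressible torus is an essential lamination, and any overtwisted contact structure can be isotoped so that a given incompressible torus has nonsingular characteristic foliation, i.e.\ is transverse to that ``lamination''. The genuine results in this direction (Eliashberg--Thurston and its laminar refinements) require $\xi$ to be a small deformation of, or carried by, the foliation/lamination with controlled behavior on complementary regions, and they derive tightness via fillability or holomorphic curves, not via pointwise transversality.

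The corollary is announced as a corollary of the \emph{proof} of Theorem~\ref{main}, and that is where the tightness actually comes from, which you explicitly set aside. For $k\geq 2$ the Rademacher-function/noncontractibility argument (Proposition~\ref{prop: not contractible}), which is carried out in the universal cover $\widetilde S$ and so is insensitive to passing to covers of $M$, shows that no closed Reeb orbit of bounded action bounds a finite energy plane (Theorem~\ref{theorem: nodisks}); combined with Hofer's theorem that an overtwisted disk forces a finite energy plane --- applied to the pullback of the contact form to $\widetilde M$, whose finite energy planes project to finite energy planes in $\R\times M$ asymptotic to contractible orbits --- this excludes an overtwisted disk in $\widetilde M$. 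A telltale sign of the problem with your write-up is that the hypothesis $k\geq 2$ enters only in making $\mathcal L$ essential (hence only in $\widetilde M=\R^3$), while your tightness argument floats free of it; in the paper the inequality $k\geq 2$ is precisely what powers the exclusion of finite energy planes, which is the tightness mechanism.
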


\subsection{Growth rates of contact homology}

Theorem~\ref{main} opens the door to questions about the growth
rates of linearized contact homology groups on various contact
manifolds.

\s\n {\bf Example 1.} The standard tight contact structure on $S^3$.
Modulo taking direct limits, there is a contact $1$-form with two
simple periodic orbits, both of elliptic type. The two simple
orbits, together with their multiple covers, generate the
cylindrical contact homology group. Hence the growth is linear with
respect to the action.

\s\n {\bf Example 2.} The unique Stein fillable tight contact
structure $(T^3,\xi)$, given by $\alpha= \sin (2\pi z)dx -\cos (2\pi
z)dy$ on $T^3=\R^3/\Z^3$ with coordinates $(x,y,z)$. Modulo direct
limits, the closed orbits are in $2-1$ correspondence with
$\Z^2-\{(0,0)\}$. Hence the cylindrical contact homology grows
quadratically with respect to the action.

\s\n {\bf Example 3.} The set of periodic orbits of the geodesic
flow on the unit cotangent bundle of a closed hyperbolic surface
$\Sigma$ is in 1-1 correspondence with the set of closed geodesics
of $\Sigma$. Hence the cylindrical contact homology of the
corresponding contact structure grows exponentially with respect to
the action.

\begin{q}
Which contact manifolds $(M,\xi)$ have linearized contact homology
with exponential growth? Are there contact manifolds which have
linearized contact homology with polynomial growth, where the degree
of the polynomial is greater than $2$?
\end{q}

A special case of the question is:

\begin{q}
What happens to contact structures on circle bundles over closed
hyperbolic surfaces $\Sigma$ with Euler number between $0$ and
$2g(\Sigma)-3$, which are transverse to the fibers?  Here
$g(\Sigma)$ is the genus of $\Sigma$.
\end{q}

Euler number $2g(\Sigma)-2$ corresponds to the unit cotangent bundle
case; on the other hand, Euler number $\leq-1$ corresponds to the
$S^1$-invariant case, and has linear growth.

We also conjecture the following:

\begin{conj}
Universally tight contact structures on hyperbolic manifolds have
exponential growth.
\end{conj}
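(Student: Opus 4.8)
\noindent\textit{Proof proposal.}
The plan is to reduce, by stabilization, to a supporting open book with pseudo-Anosov monodromy and fractional Dehn twist coefficient $c=k/n$ with $k\geq 3$, and then to quote Theorem~\ref{main}(2) together with the contact-invariance of exponential growth recorded after its statement. So let $(M,\xi)$ be universally tight with $M$ hyperbolic. First I would invoke Giroux's theorem to produce a supporting open book, and then the stabilization result of~\cite{CH2}, applied with positive stabilizations so as to preserve $(M,\xi)$, to arrange that the monodromy $h$ of the resulting open book $(S,h)$ is freely homotopic to a pseudo-Anosov homeomorphism $\psi$ and that $\bdry S$ is connected, so that $c=k/n$ is defined with $n$ the number of prongs along $\bdry S$. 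Since $\xi$ is tight, Theorem~\ref{veer} gives that $h$ is right-veering, whence Theorem~\ref{ot}(2) forces $c>0$, i.e.\ $k\geq 1$. (Note the Corollary above gives the converse implication $k\geq 2\Rightarrow$ universally tight; what is genuinely needed here is to \emph{produce} an open book realizing $k\geq 3$.) If the open book we land on already has $k\geq 3$, we are done: Theorem~\ref{main}(2) yields a linearized contact homology group with exponential growth, and by the invariance statement this property transfers to any chain complex computing the contact homology of $(M,\xi)$.

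The heart of the matter is therefore to promote $k$ from $\{1,2\}$ up to $\geq 3$ by a further controlled sequence of positive stabilizations. The underlying intuition, consistent with the paper's philosophy that $c$ measures the amount of boundary (``fractional Dehn'') twisting, is that a positive stabilization can only make the monodromy more right-veering, and that stabilizing along arcs that wind around $\bdry S$ increases this twisting in a quantifiable way. Concretely I would: (i) check that a positive stabilization of a pseudo-Anosov open book with connected binding may again be taken with connected binding and with monodromy freely homotopic to a pseudo-Anosov map $\psi'$, of some prong number $n'$ and coefficient $c'=k'/n'$; and (ii) establish a monotonicity/growth estimate expressing $k'$ (or at least $\lceil c'\rceil$) in terms of $k$, $n$ and the stabilizing arc, strong enough that finitely many well-chosen stabilizations force $k'\geq 3$. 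Hyperbolicity of $M$ should enter exactly here, to exclude the degenerate scenarios — periodic-like or reducible-like behaviour of the stabilized monodromies — in which $k$ could fail to grow: a hyperbolic $3$-manifold carries no Seifert fibration and no essential torus, so one expects no obstruction to building open books with arbitrarily large twisting. Carrying out (i) and (ii) — in particular, tracking the pseudo-Anosov representative and its prong count through a stabilization, and proving the required monotonicity of the fractional Dehn twist coefficient — is the main obstacle, and is the content one would have to extract from, or add to, the circle of ideas in~\cite{CH2}.

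Once $k\geq 3$ has been achieved, the exponential lower bound is essentially Theorem~\ref{main}(2): the periodic Reeb orbits produced in its proof are organized, near the binding and across the pages, by the suspension of the stable/unstable measured laminations of $\psi$, so that the number of generators of $HC^\varepsilon$ of action at most $L$ is bounded below by the number of closed orbits of period $\leq L$ of the suspension pseudo-Anosov flow, a quantity that grows at least exponentially in $L$ (the dilatation $\tau>1$ supplying the rate, e.g.\ via a Markov partition); with the invariance statement this furnishes constants $c_1,c_2>0$ for $(M,\xi)$ itself. Should the stabilization-boosting step prove intractable, the alternative is to attack the residual cases directly: for $k=2$ one would refine the Conley--Zehnder index estimates behind Theorem~\ref{main}, as the Remark above suggests, to recover both the augmentation and the exponential count, while the $k=1$ case appears to require genuinely new input, replacing the essential-lamination argument — which fails for $k=1$, the suspended lamination no longer being essential — by a direct study of the Reeb return map on the pages.
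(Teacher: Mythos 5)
This statement is a \emph{conjecture} in the paper: the authors offer no proof, and their own Remark (after Corollary~\ref{thm: Weinstein}) explicitly concedes that their methods do not reach the cases $c={1\over n}$ and $c={2\over n}$. So your proposal cannot be measured against a proof in the paper; it has to stand on its own, and it does not. The entire argument funnels through the step you yourself flag as ``the heart of the matter'': producing, by positive stabilizations that preserve $(M,\xi)$, an open book whose pseudo-Anosov monodromy has fractional Dehn twist coefficient $c={k\over n}$ with $k\geq 3$. Nothing in the paper, and nothing in \cite{CH2}, supplies this. The stabilization theorem of \cite{CH2} only arranges that the monodromy is freely homotopic to a pseudo-Anosov map and that $\bdry S'$ is connected; it gives no lower bound on $k$ beyond what tightness already forces via Theorems~\ref{veer} and~\ref{ot}, namely $k\geq 1$. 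Your proposed remedy, a ``monotonicity/growth estimate'' saying that positive stabilization (along suitably winding arcs) increases the boundary twisting, is asserted on intuition only: a stabilization replaces the page, the pseudo-Anosov representative, its dilatation, and its prong number $n$ all at once, so there is no meaningful sense in which $k$ or $c$ is carried along, let alone monotonically increased; and the heuristic ``stabilization makes the monodromy more right-veering'' does not translate into any bound on the new coefficient $k'/n'$. This is precisely why the statement is a conjecture rather than a corollary of Theorem~\ref{main}(2).

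Two smaller points. First, the role you assign to hyperbolicity (ruling out ``periodic-like or reducible-like behaviour of the stabilized monodromies'') is not where hyperbolicity is needed: \cite{CH2} already yields pseudo-Anosov monodromy after stabilization for \emph{any} $(M,\xi)$, hyperbolic or not; hyperbolicity is expected to matter in excluding the Seifert-fibered/toroidal situations where growth is genuinely polynomial, which your argument never uses. Second, your closing ``alternative'' — refining Conley--Zehnder estimates for $k=2$ and finding new input for $k=1$ — is not a fallback argument but a restatement of exactly the open problems the authors identify. In sum, the proposal is a correct reduction of the conjecture to Theorem~\ref{main}(2) \emph{in the favorable case} $k\geq 3$, but the passage from an arbitrary universally tight $(M,\xi)$ on a hyperbolic manifold to that favorable case is missing, and no known mechanism is offered to close it.
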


\s\n {\em Organization of the paper.} The notions of {\em contact
homology} will be described in Section~\ref{section:contact
homology}. In particular we quickly review the notions of
augmentations and linearizations. Section~\ref{section:periodic} is
devoted to the periodic case. In particular, we show that a periodic
$(S,h)$ is tight if and only if $h$ is right-veering
(Theorem~\ref{thm:right-veering is exact}); moreover, a tight
$(S,h)$ with periodic monodromy is Stein fillable. In
Section~\ref{section: Rademacher}, we present the Rademacher
function and its generalizations, adapted to periodic and
pseudo-Anosov homeomorphisms. In Section~\ref{section: construction}
we construct the desired Reeb vector field $R$ which closely hews to
the suspension lamination.  This section is the technical heart of
the paper, and is unfortunately rather involved.
Section~\ref{section: genericity} is devoted to some discussions on
perturbing the contact form to make it nondegenerate. Then in
Sections~\ref{disks} and ~\ref{cylinders}, we give restrictions on
the holomorphic disks and cylinders.  In particular, we prove
Theorem~\ref{theorem: nodisks}, which states that for any $N\gg 0$
there is a contact $1$-form $\alpha$ for a contact structure which
is supported by an open book with pseudo-Anosov monodromy and
fractional Dehn twist coefficient $c>{1\over n}$, so that none of
the closed orbits $\gamma$ of action $\leq N$ are positive
asymptotic limits of (holomorphic) finite energy planes $\tilde u$.
%In Section~\ref{section: dim 3}, we collect some important facts
%about contact homology which are special to dimension three.
The actual calculation of contact homology with such contact
$1$-forms will involve direct limits, discussed in
Section~\ref{section: direct limits}. We then prove
Theorem~\ref{main}(1) in Section~\ref{subsection: verification of
exhaustive}. Finally, we discuss the growth rate of periodic points
of a pseudo-Anosov homeomorphism and use it to conclude the proofs
of Theorem~\ref{main}(2) and Corollary~\ref{cor: infinitely many
simple orbits} in Section~\ref{section: nonvanishing}.

\section{Contact homology}\label{section:contact homology}

In this section we briefly describe the full contact homology, its
linearizations, and Morse-Bott theory. Contact homology theory is
part of the symplectic field theory of
Eliashberg-Givental-Hofer~\cite{EGH}. For a readable account, see
Bourgeois' lecture notes~\cite{Bo3}.

\s\n {\bf Disclosure.} The full details of contact homology have not
yet appeared.  In particular, the gluing argument and, more
importantly, the treatment of multiply-covered orbits are not
written anywhere. However, various portions of the theory are
available. For the asymptotics, refer to Hofer-Wysocki-Zehnder
~\cite{HWZ1}. Compactness was explained in \cite{BEHWZ}. The
Fredholm theory and transversality (for non-multiply-covered curves)
were treated by Dragnev~\cite{Dr}. For the Morse-Bott approach,
refer to \cite{Bo1}. Examples of contact homology calculations were
done by Bourgeois-Colin~\cite{BC}, Ustilovsky~\cite{U} and
Yau~\cite{Y1}.

\subsection{Definitions} \label{subsection: definitions}

Let $(M,\xi)$ be a contact manifold, $\alpha$ a contact form for
$\xi$, and $R=R_\alpha$ the corresponding Reeb vector field, i.e.,
$i_R d\alpha=0$ and $i_R \alpha=1$. Consider the symplectization
$(\R\times M,d(e^t\alpha))$, where $t$ is the coordinate for $\R$.
We will restrict our attention to almost complex structures $J$ on
$\R\times M$ which are {\em adapted to the symplectization}: If we
write $T_{(t,x)} (\R\times M)=\R{\bdry\over \bdry t} \oplus \R
R\oplus \xi$, then $J$ maps $\xi$ to itself and sends ${\bdry\over
\bdry t}\mapsto R$ and $R\mapsto -{\bdry\over \bdry t}$.

Let $\gamma$ be a closed orbit of $R$ with period $T$. The closed
orbit $\gamma$ is {\em nondegenerate} if the derivative
$\xi_{\gamma(0)}\rightarrow \xi_{\gamma(T)}$ of the first return map
does not have $1$ as an eigenvalue.   A Reeb vector field $R$ is
said to be {\em nondegenerate} if all its closed orbits $\gamma$ are
nondegenerate. Suppose $\alpha$ is a contact 1-form for which $R$ is
nondegenerate.

A closed orbit is said to be {\em good} if it does not cover a
simple orbit $\gamma$ an even number of times, where the first
return map $\xi_{\gamma(0)}\rightarrow \xi_{\gamma(T)}$ has an odd
number of eigenvalues in the interval $(-1,0)$. Here $T$ is the
period of the orbit $\gamma$. Let $\mathcal{P}=\mathcal{P}_\alpha$
be the collection of {\em good} closed orbits of $R=R_\alpha$. We
emphasize that $\mathcal{P}$ includes multiple covers of simple
periodic orbits, as long as they are good.

In dimension three, a closed orbit $\gamma$ has {\em even parity}
(resp.\ {\em odd parity}) if the derivative of the first return map
is of hyperbolic type with positive eigenvalues (resp.\ is either of
hyperbolic type with negative eigenvalues or of elliptic type). The
{\em Conley-Zehnder index} is a lift of the parity from $\Z/2\Z$ to
$\Z$. If $\gamma$ is a contractible periodic orbit which bounds a
disk $D$, then we trivialize $\xi|_D$ and define the Conley-Zehnder
index $\mu(\gamma,D)$ to be the Conley-Zehnder index of the path of
symplectic maps $\{d\phi_t: \xi_{\gamma(0)}\rightarrow
\xi_{\gamma(t)}, t\in[0,T]\}$ with respect to this trivialization,
where $\phi_t$ is the time $t$ flow of the Reeb vector field $R$. In
our cases of interest, $\pi_2(M)=0$, so $\mu(\gamma)$ is independent
of the choice of $D$. We will also sometimes write
$|\gamma|=\mu(\gamma)-1$.  If $\gamma$,
$\gamma'_1,\dots,\gamma'_m\in \mathcal{P}$ and
$[\gamma]=[\gamma'_1]+\dots+[\gamma'_m]\in H_1(M;\Z)$, then let $Z$
be a surface whose boundary is $\gamma-\gamma'_1-\dots-\gamma'_m$.
Trivialize $\xi|_Z$ and define the {\em Conley-Zehnder index
$\mu_{[Z]}(\gamma,\gamma'_1,\dots,\gamma'_m)$ with respect to the
relative homology class $[Z]\in H_2(M,\gamma\cup(\cup_i\gamma_i'))$}
to be the Conley-Zehnder index of $\gamma$ minus the sum from $i=1$
to $m$ of the Conley-Zehnder indices of $\gamma_i'$, all calculated
with respect to the trivialization on $Z$.

Now, we fix a point $m_\gamma$, called a {\em marker}, on each
simple periodic orbit $\gamma$.  Also, an {\it asymptotic marker} at
$z\in S^2$ is a ray $r$ originating from $z$.

Define $Hol_{[Z]}(J,\gamma,\gamma_1', \dots ,\gamma_m')$ to be the
set of all holomorphic maps
$$\tilde u=(a,u):(\Sigma=S^2-\{x,y_1,\dots,y_m\},j)
\rightarrow (\R\times M,J),$$ together with asymptotic markers $r$
at $x$ and $r_i$ at $y_i$, $i=1,\dots, m$, subject to the following:
\begin{itemize}
\item $\lim_{\rho \rightarrow 0} u(\rho ,\theta )= \gamma (\theta )$
near $x$;
\item $\lim_{\rho \rightarrow 0} u(\rho ,\theta )= \gamma_i' (\theta )$ near $y_1', \dots,
y_m'$;
\item the limit of $u$ as $\rho \rightarrow 0$ along $r$ is
$m_{\gamma}$;
\item the limit of $u$ as $\rho \rightarrow 0$ along $r_i$ is
$m_{\gamma_i'}$;
\item $\lim_{\rho \rightarrow 0} a(\rho ,\theta )= + \infty$ near
$x$;
\item $\lim_{\rho \rightarrow 0} a(\rho ,\theta )= - \infty$
near $y_1', \dots, y_m'$.
\end{itemize}
Here, $x,y_1,\dots,y_m\in S^2$, $j$ is a complex structure on
$\Sigma$, $u$ is in the class $[Z]$, we are using polar coordinates
$(\rho,\theta)$ near each puncture, and $\gamma (\theta )$ and
$\gamma_i' (\theta )$, $i=1,\dots, m$, refer to some parametrization
of the trajectories $\gamma$ and $\gamma_i'$.  The convergence for
$u(\rho ,\theta )$ and $a(\rho,\theta )$ is in the $C^0$-topology.
In the current situation, the punctures $x, y_1 ,\dots ,y_m$, the
complex structure $j$, and the asymptotic markers $r,r_1 ,\dots
,r_m$ are allowed to vary, while the markers $m_\gamma$ stay fixed.

Next, two curves $\tilde u :(\Sigma=S^2-\{x,y_1,\dots,y_m\},j)
\rightarrow (\R\times M,J)$ and $\tilde u'
:(\Sigma=S^2-\{x',y_1',\dots,y_m'\},j') \rightarrow (\R\times M,J)$
in $Hol_{[Z]}(J,\gamma,\gamma_1', \dots ,\gamma_m')$ are equivalent
if $\tilde u'=\tilde u\circ H$, where $H$ is a biholomorphism of
$\Sigma$ which takes the asymptotically marked punctures
$$((x',r'),(y_1' ,r_1' ),\dots (y_m' ,r_m' )) \rightarrow
 ((x,r),(y_1 ,r_1 ),\dots ,(y_m ,r_m )) .$$
%No extra relation is required between the polar coordinates.

We define the moduli space $\mathcal{M}_{[Z]}(J,\gamma,\gamma_1',
\dots, \gamma_m')$ to be the quotient of
$Hol_{[Z]}(J,\gamma,\gamma_1', \dots ,\gamma_m')$ under the
equivalence relation.  The space
$\mathcal{M}_{[Z]}(J,\gamma,\gamma_1', \dots, \gamma_m')$ supports
an $\R$-action (in the target), obtained by translating a curve
along the $\R$-direction of $\R \times M$. Assuming sufficient
transversality,
$\mathcal{M}_{[Z]}(J,\gamma,\gamma_1',\dots,\gamma_m')/\R$ is
endowed with the structure of a weighted branched manifold with
rational weights.
%One has
%to perturb the Cauchy-Riemann equation $\overline{\partial}
%\tilde{u} -\frac{1}{2} (d\tilde u+J\circ d\tilde u\circ j )=0$.
For that one can use the Kuranishi perturbation theory of
Fukaya-Ono~\cite{FO} or the multi-valued perturbation of Liu and
Tian~\cite{LT}; see also McDuff~\cite{McD}. In that case,
$\mathcal{M}_{[Z]}(J,\gamma,\gamma_1', \dots ,\gamma_m')/\R$ is a
union of manifolds with corners along a codimension one branching
locus, each piece having the expected dimension
$\mu_{[Z]}(\gamma,\gamma_1',\dots, \gamma_m')-(1-m)-1$. When this
dimension is $0$, we find a finite collection of points, according
to the ``Gromov compactness theorem'' due to \cite{BEHWZ}.

We now define the {\em full contact homology} groups
$FHC(M,\alpha,J)$. The contact homology groups are necessarily
defined over $\Q$, since we must treat multiply covered orbits. The
chain group is the supercommutative $\Q$-algebra $\A=\A(\alpha,J)$
with unit, which is freely generated by the elements of
$\mathcal{P}$. Here supercommutative means that $\gamma_1$ and
$\gamma_2$ commute if one of them has odd parity (and hence even
degree $|\cdot|$) and anticommute otherwise. Now the boundary map
$\bdry: \A\rightarrow \A$ is given on elements
$\gamma\in\mathcal{P}$ by:
$$\bdry\gamma= \sum
{n_{\gamma,\gamma_1', \dots, \gamma_m'}\over (i_1)! \dots
(i_l)!\kappa(\gamma_1')\dots \kappa (\gamma_m')} ~~\gamma_1' \dots
\gamma_m',$$ where the sum is over all unordered tuples
$\overline{\gamma'}=(\gamma_1',\dots,\gamma_m')$ of orbits of
$\mathcal{P}$ and homology classes $[Z]\in
H_2(M,\gamma\cup(\cup_i\gamma_i'))$ so that, for any given ordering
$\gamma_1' ,\dots , \gamma_m'$ of $\overline{\gamma'}$, the expected
dimension of the moduli space $\mathcal{M}_{[Z]}(J,\gamma,\gamma_1',
\dots ,\gamma_m')/\R$ is zero. Here $\kappa(\gamma)$ is the
multiplicity of $\gamma$. The integers $i_1,\dots ,i_l$ denote the
number of occurrences of each orbit $\gamma_i'$  in the list
$\gamma_1',\dots, \gamma_m'$. Also, we denote by
$n_{\gamma,\gamma_1', \dots,\gamma_m'}$ the signed weighted count of
points in $\mathcal{M}_{[Z]}(\gamma,\gamma_1', \dots,
\gamma_m')/\R$, for the corresponding ordering  of
$\overline{\gamma'}$, following a coherent orientation scheme as
given in \cite{EGH}. This definition does not depend on the ordering
of $\overline{\gamma'}$, since if we permute $\gamma_i'$ and
$\gamma_{i+1}'$, the coefficient $n_{\gamma ,\gamma_1' ,\dots
,\gamma_m'}$ is multiplied by $(-1)^{| \gamma_i' ||\gamma_{i+1}'
|}$, which is annihilated by the sign coming from the
supercommutativity of $\A$. If $\gamma$, $\gamma_1', \dots,
\gamma_m'$ are multiply covered, then each non-multiply-covered
holomorphic curve $\tilde u\in \mathcal{M}_{[Z]}(\gamma,\gamma_1',
\dots, \gamma_m')/\R$ contributes $\pm \kappa(\gamma)
\kappa(\gamma_1')\dots \kappa (\gamma_m')$ to $n_{\gamma,\gamma_1',
\dots, \gamma_m'}$. This is due to the fact that, for the puncture
$x$ (resp.\ $y_i$), there are $\kappa(\gamma)$ (resp.\ $\kappa
(\gamma_i' )$) possible positions for the asymptotic marker $r$
(resp.\ $r_i$). If $\tilde u$ is a cover of a somewhere injective
holomorphic curve, then it is counted as $\pm \frac{1}{k}
(\kappa(\gamma) \kappa(\gamma_1') \dots \kappa (\gamma_m'))$, where
$k$ is the number of automorphisms of the cover, since this group of
automorphisms acts freely  on the set of asymptotic markers and thus
allows to identify different positions.
%We see here that the corresponding coefficient is no longer
%necessarily an integer ({\bf in fact it seems to me that it is}).
The coefficient $i_1 !\dots i_l !$ takes into account the following
overcounting: if, for example, $y_1 ,\dots ,y_{i_1}$ go to
$\gamma_1'$, then, for any permutation of these indices, the
corresponding permutation of the punctures will give rise to
different maps in $\mathcal{M}_{[Z]}(\gamma,\gamma_1', \dots,
\gamma_m')/\R$. The definition of $\bdry$ is then extended to all of
$\A$ using the graded Leibniz rule.

\begin{theorem}(Eliashberg-Givental-Hofer)
\begin{enumerate}
\item$\bdry^2 =0$,
so that $(\A(\alpha,J), \bdry)$ is a differential graded algebra.
\item $FHC(M,\alpha,J)=H_* (\A(\alpha,J),\bdry )$ does
not depend on the choice of the contact form $\alpha$ for $\xi$, the
complex structure $J$ and the multi-valued perturbation.
\end{enumerate}
\end{theorem}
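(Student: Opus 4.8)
The plan is to treat both assertions as consequences of the standard analytic package behind symplectic field theory --- SFT compactness \cite{BEHWZ}, the Fredholm theory of \cite{Dr}, coherent orientations \cite{EGH}, and abstract transversality --- organized exactly as in \cite{EGH} and Bourgeois' notes \cite{Bo3}; I only indicate the structure, taking the gluing theorem and the virtual perturbation scheme as given (cf.\ the Disclosure above). For $\bdry^2=0$: since $\bdry$ is an odd graded derivation of $\A$, its square is an even derivation, so it suffices to check $\bdry^2\gamma=0$ for $\gamma\in\mathcal{P}$. I would fix an ordering of the output orbits and examine the moduli spaces $\mathcal{M}_{[Z]}(J,\gamma,\gamma_1',\dots,\gamma_m')/\R$ of expected dimension one. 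By SFT compactness each such space is a compact weighted branched $1$-manifold with boundary, and --- since the $\R$-action has been divided out and no curve has negative expected dimension --- its boundary strata are exactly two-level holomorphic buildings, with the unused negative punctures passing through one level as trivial cylinders. A boundary point is therefore precisely a pair consisting of a rigid curve contributing to $\bdry\gamma$ and a rigid curve contributing to $\bdry$ of one of its output orbits, i.e.\ a term of $\bdry^2\gamma$. As the total signed weighted count of the boundary of a compact oriented $1$-manifold vanishes, summing over all $[Z]$ and all output tuples yields $\bdry^2\gamma=0$ --- provided the bookkeeping matches. Checking that the combinatorial normalization in the definition of $\bdry$ does match is the one genuinely delicate point: when the top level has a negative end on an orbit $\eta$ of multiplicity $\kappa(\eta)$ and the bottom level has its positive end on $\eta$, the asymptotic markers must agree, and the $\kappa(\eta)$ rotational choices for that marker are cancelled by the factor $1/\kappa(\eta)$ in $\bdry$; the factorials $i_1!\cdots i_l!$ absorb permutations of coincident output punctures, the automorphism factors $1/k$ handle multiply covered curves, and the signs from the coherent orientation scheme of \cite{EGH} combine with the supercommutativity of $\A$ so that all terms cancel in pairs. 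This is precisely the computation of \cite{EGH}.

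For independence of $(\alpha,J)$: given contact forms $\alpha_0,\alpha_1$ for $\xi$ with adapted $J_0,J_1$, I would choose an exact symplectic cobordism on $\R\times M$ that agrees with the two symplectizations near its ends, together with a compatible almost complex structure $\bar J$ interpolating $J_0$ and $J_1$. Counting rigid $\bar J$-holomorphic curves in this cobordism, with the same marker and weighting conventions, defines a unital $\Q$-algebra homomorphism $\Phi\colon\A(\alpha_1,J_1)\to\A(\alpha_0,J_0)$; that $\Phi$ is a chain map follows, as before, from the codimension-one boundary of the one-dimensional cobordism moduli spaces, which now breaks either as a symplectization curve (from $\bdry$ of the top) glued onto a cobordism curve, or as a cobordism curve glued onto a symplectization curve (from $\bdry$ of the bottom). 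Running the construction with the roles of the two ends reversed produces $\Psi$ in the opposite direction, and a generic one-parameter family of cobordisms joining the concatenation of the two to the trivial cobordism gives a chain homotopy $\Phi\circ\Psi\simeq\mathrm{id}$, and likewise $\Psi\circ\Phi\simeq\mathrm{id}$; hence $FHC(M,\alpha,J)$ is independent of $(\alpha,J)$. Independence of the multi-valued perturbation is the same argument applied to a homotopy of perturbations on a fixed $(\alpha,J)$.

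The main obstacle throughout is transversality. For a generic geometric $J$ one cannot arrange that all the relevant moduli spaces are cut out transversally --- above all those containing multiply covered curves, which are unavoidable here since $\mathcal{P}$ contains multiple covers of simple orbits --- so every ``count'' above must be interpreted virtually, via the weighted branched manifold/Kuranishi structure furnished by the perturbation theory of Fukaya-Ono \cite{FO} or Liu-Tian \cite{LT} (see also McDuff \cite{McD}). One then has to verify that gluing, the cobordism maps, and coherent orientations are all compatible with these virtual structures. This is exactly the portion of contact homology whose complete details, as noted above, have not yet appeared in the literature, and which we invoke here as a black box.
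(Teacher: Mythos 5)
The paper offers no proof of this statement: it is quoted as a theorem of Eliashberg--Givental--Hofer, and the surrounding ``Disclosure'' explicitly treats gluing, multiply covered orbits, and the virtual perturbation scheme as material not yet written up, exactly the points you defer to as black boxes. Your outline is the standard SFT argument (compactness, breaking of one-dimensional moduli spaces with the marker/multiplicity/factorial bookkeeping, cobordism chain maps and chain homotopies for invariance) and is consistent with the paper's treatment and its cited sources, so there is nothing to compare beyond that.
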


\s The action $A_\alpha(\gamma)=\int_\gamma\alpha$ of a closed orbit
$\gamma$ with respect to the $1$-form $\alpha$ gives rise to a
filtration, which we call the {\em action filtration}. Define the
action of $\gamma_1 \dots \gamma_m$ to be $A_\alpha (\gamma_1 \dots
\gamma_m )=\sum_{i=1}^m A_\alpha (\gamma_i )$. The boundary map is
action-decreasing, since every nontrivial holomorphic curve has
positive $d\alpha$-energy. There is a second filtration which comes
from an open book decomposition, which we call the {\em open book
filtration}, and is given by the number of times an orbit intersects
a given page.  This will be described in more detail in
Section~\ref{cylinders}.

\subsection{Linearized contact homology}
In this subsection we discuss augmentations, as well linearizations
of contact homology induced by the augmentations. We have learned
what is written here from Tobias Ekholm~\cite{Ek}. Details of the
assertions are to appear in \cite{BEE}. The notion of an
augmentation first appeared in \cite{Chk}, in the context of
Legendrian contact homology.

Let $(\A=\A(\alpha,J),\bdry)$ be the chain group for the full
contact homology as defined above. An {\em augmentation} for $\A$ is
a $\Q$-algebra homomorphism $\varepsilon: \A\rightarrow \Q$ which is
also a chain map. (Here we are assuming that the boundary map
$\bdry'$ for $\Q$ satisfies $\bdry' a=0$ for all $a\in \Q$. This
means that $\varepsilon\bdry=0$.)  In this paper, we will assume
that $\varepsilon(a)=0$ if $a$ not contractible or if $a$ is
contractible but $|a|\not=0$.  (Recall that $\pi_2(M)=0$ in this
paper.)  Let $\mbox{Aug}(\A,\bdry)$ denote the set of augmentations
of $(\A,\bdry)$.

An augmentation $\varepsilon$ for $(\A,\bdry)$ induces a ``change of
coordinates'' $a\mapsto \overline{a}= a-\varepsilon(a)$ of $\A$,
where $a\in \mathcal{P}$. Then $\bdry \overline{a}$ has the property
that it does not have any constant terms when expressed in terms of
sums of words in $\overline{a}_i$. (Proof by example: Suppose $\bdry
a= 1+a_1+a_2a_3$. Then $$\bdry \overline{a}=1+(\overline
a_1+\varepsilon(a_1))+(\overline a_2+\varepsilon(a_2))(\overline
a_3+\varepsilon(a_3))=\varepsilon(1+a_1+a_2a_3)+\mbox{h.o.}=\mbox{h.o.}$$
Here `h.o.' means higher order terms in the word length filtration.)
In other words, with respect to the new generators $\overline{a}$,
$\bdry$ is nondecreasing with respect to the word length filtration,
i.e., $\bdry = \bdry_1+\bdry_2+\dots$, where $\bdry_j$ is the part
of the boundary map which counts words of length $j$ in the
$\overline{a}_i$'s. Therefore it is possible to define the {\em
linearized contact homology} group $HC^\varepsilon(M,\alpha,J)$ with
respect to $\varepsilon$ to be the homology of $(\A_1,\bdry_1)$,
where $\A_1$ is the $\Q$-vector space generated by the $\overline
a_i$ and $a_i\in \mathcal{P}$.

\s\n {\bf Example 1: cylindrical contact homology.} When $\bdry a$
does not have a constant term for all $a\in \mathcal{P}$, then it
admits the {\em trivial augmentation} $\varepsilon$ which satisfies
$\varepsilon(1)=1$ and $\varepsilon(a)=0$ for all $a\in
\mathcal{P}$. The linearized contact homology with respect to the
trivial augmentation $\varepsilon$ is usually called {\em
cylindrical contact homology}, and will be denoted $HC(M,\alpha,J)$.
If we restrict to the class of nondegenerate Reeb vector fields
$R_\alpha$ with trivial augmentations, then $HC(M,\alpha,J)$ does
not depend on $\alpha$ (or on $J$) and will be written as
$HC(M,\xi=\ker \alpha)$.

\s
We make two remarks about cylindrical contact homology. First, the
trivial augmentation does not always exist.  Second, it is possible
to have finite energy planes which asymptotically limit to $a$ at
the positive end and still have $\bdry a$ without a constant term,
as long as the total signed count is zero.

\s\n {\bf Example 2: augmentations from cobordisms.} Suppose
$(X^4,\omega)$ is an exact symplectic cobordism with $(M,\alpha)$ at
the positive end and $(M',\alpha')$ at the negative end, and $J$ be
a compatible almost complex structure on $(X^4,\omega)$. If
$\A(M',\alpha',J|_{\ker \alpha'})$ admits an augmentation
$$\varepsilon':\A(M',\alpha',J|_{\ker \alpha'})\rightarrow \Q,$$ then
we can compose it with the chain map $$\Phi_{(X,J)}:
\A(M,\alpha,J|_{\ker \alpha})\rightarrow \A(M',\alpha',J|_{\ker
\alpha'})$$ to obtain the pullback augmentation
$\varepsilon=\Phi_{(X,J)}^*\varepsilon'=\varepsilon'\circ
\Phi_{(X,J)}$.  Moreover, we have an induced map
$$HC^\varepsilon(M,\alpha)\rightarrow
HC^{\varepsilon'}(M',\alpha')$$ between the linearized contact
homology groups.

\s Two augmentations
$\varepsilon_0,\varepsilon_1:(\A,\bdry)\rightarrow \Q$ are said to
be {\em homotopic} if there is a derivation $K:
(\A,\bdry)\rightarrow (\A,\bdry)$ of degree $1$ satisfying
$$\varepsilon_1=\varepsilon_0\circ e^{\bdry \circ K +K\circ \bdry}.$$
%Given a $1$-parameter family of compatible almost complex structures
%$J_t$, $t\in[0,1]$, on $(X^4,\omega)$ which agree near $\bdry X^4$
%and an augmentation $\varepsilon'$ on $\A(M',\alpha',J_t|_{\ker
%\alpha'})$ as in Example~2, the pullback augmentations
%$\varepsilon_0=\varepsilon'\circ \Phi_{(X,J_0)}$ and
%$\varepsilon_1=\varepsilon'\circ \Phi_{(X,J_1)}$ are homotopic.

\begin{thm}[Bourgeois-Ekholm-Eliashberg] \label{thm: aug} $\mbox{}$
\be
\item If $\varepsilon_0,\varepsilon_1$ are homotopic augmentations of
$(\A(M,\alpha,J),\bdry)$, then
$$HC^{\varepsilon_0}(M,\alpha,J)\simeq
HC^{\varepsilon_1}(M,\alpha,J).$$
\item Given a $1$-parameter family of compatible almost complex
structures $J_t$, $t\in[0,1]$, on the exact symplectic cobordism
$(X^4,\omega)$ from $(M,\alpha)$ to $(M',\alpha')$ which agrees with
$J$ on $M$ and $J'$ on $M'$, and an augmentation $\varepsilon'$ on
$\A(M',\alpha',J')$, the pullback augmentations
$\varepsilon_0=\varepsilon'\circ \Phi_{(X,J_0)}$ and
$\varepsilon_1=\varepsilon'\circ \Phi_{(X,J_1)}$ are homotopic and
induce the same map $$HC^{\varepsilon_i}(M,\alpha,J)\rightarrow
HC^{\varepsilon'}(M',\alpha',J').$$
\item
The set
$$\{HC^\varepsilon(M,\alpha,J)~|~ \varepsilon\in
\mbox{\em Aug}(\A(M,\alpha,J),\bdry)\}$$ of linearized contact
homologies up to isomorphism is an invariant of
$(M,\xi=\ker\alpha)$. \ee
\end{thm}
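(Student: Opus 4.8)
The plan is to prove the three parts of Theorem~\ref{thm: aug} in order, reducing everything to a careful bookkeeping of chain homotopies in the differential graded algebra $\A$ and the behavior of these under the word-length filtration. Throughout I work with the filtration $\A = \A_0 \oplus \A_1 \oplus \A_2 \oplus \cdots$ by word length in the shifted generators $\overline a_i = a_i - \varepsilon(a_i)$, and exploit the crucial fact established in the text that, after the change of coordinates determined by an augmentation, $\bdry$ is nondecreasing with respect to this filtration, so that $\bdry = \bdry_1 + \bdry_2 + \cdots$. The linearized differential $\bdry_1$ is then the induced map on the associated graded piece $\A_1$, and $(\A_1, \bdry_1)$ is a genuine chain complex because $\bdry^2 = 0$ forces $\bdry_1^2 = 0$ on the graded level.

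For part (1), given homotopic augmentations $\varepsilon_0, \varepsilon_1$ with $\varepsilon_1 = \varepsilon_0 \circ e^{\bdry K + K \bdry}$ for a degree-$1$ derivation $K$, I would first observe that each $\varepsilon_i$ gives its own change of coordinates $\overline a^{(i)} = a - \varepsilon_i(a)$, so I need to compare the two filtered complexes. The natural move is to write down the algebra automorphism $\Psi = e^{\bdry K + K \bdry}$ of $(\A, \bdry)$: it is a chain map (it commutes with $\bdry$ since $\bdry K + K \bdry$ does, being an anti-derivation whose commutator with $\bdry$ vanishes after passing to the Lie bracket computation), it is a filtered map up to the constant ambiguity absorbed by $\varepsilon$, and it intertwines the two augmented pictures. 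Concretely, in the $\varepsilon_0$-coordinates $\Psi$ carries the $\varepsilon_1$-shifted generators to combinations with no constant term modulo higher order, hence $\Psi$ preserves the word-length filtration and descends to an isomorphism of associated graded complexes $(\A_1, \bdry_1^{\varepsilon_0}) \xrightarrow{\sim} (\A_1, \bdry_1^{\varepsilon_1})$. Taking homology gives the claimed isomorphism $HC^{\varepsilon_0} \simeq HC^{\varepsilon_1}$.

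For part (2), I would use the standard fact from SFT that a homotopy of almost complex structures $J_t$ on the exact cobordism $(X^4,\omega)$ produces a chain homotopy between $\Phi_{(X,J_0)}$ and $\Phi_{(X,J_1)}$, via counting index-$(-1)$ curves in the parametrized moduli space $\bigcup_t \mathcal{M}(J_t)$; that is, there is a degree-$1$ map $T: \A(M,\alpha,J) \to \A(M',\alpha',J')$ (a $\Phi$-derivation) with $\Phi_{(X,J_1)} - \Phi_{(X,J_0)} = \bdry' T + T \bdry$. Composing with $\varepsilon'$ and using $\varepsilon' \bdry' = 0$ shows $\varepsilon_1 - \varepsilon_0 = \varepsilon' T \bdry$, which after exponentiating the derivation $K = $ (the $\Q$-linear extension built from $\varepsilon' T$) gives precisely the homotopy relation $\varepsilon_1 = \varepsilon_0 \circ e^{\bdry K + K \bdry}$ needed to invoke part (1); simultaneously, the same chain homotopy $T$, passed to the linearized level, shows the two induced maps $HC^{\varepsilon_i}(M,\alpha,J) \to HC^{\varepsilon'}(M',\alpha',J')$ agree. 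Part (3) is then essentially formal: invariance of $FHC$ under change of $(\alpha, J)$ and perturbation (the cited Eliashberg-Givental-Hofer theorem) gives a chain homotopy equivalence $\Phi: \A(\alpha',J') \to \A(\alpha,J)$; pullback sends $\mathrm{Aug}(\A(\alpha,J)) \to \mathrm{Aug}(\A(\alpha',J'))$ bijectively up to homotopy, and by part (1) plus the pullback construction of Example~2, $HC^{\Phi^*\varepsilon}(M,\alpha',J') \simeq HC^{\varepsilon}(M,\alpha,J)$, so the two sets of linearized homologies coincide up to isomorphism.

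The main obstacle is part (2): making rigorous the existence of the chain homotopy $T$ and the precise derivation property it satisfies requires the full transversality and gluing package for cobordism maps in SFT, including the treatment of multiply-covered curves in the parametrized moduli spaces — exactly the machinery flagged as not yet fully written in the \textbf{Disclosure} paragraph. In practice I would state the needed input as a black box attributed to \cite{BEE}/\cite{EGH}, and concentrate the written argument on the purely algebraic reduction: verifying that $e^{\bdry K + K \bdry}$ is a filtered automorphism of $(\A, \bdry)$, that it intertwines the augmentation-twisted filtrations, and that the induced map on associated graded complexes is an isomorphism of chain complexes. The one subtlety worth spelling out carefully is that $\bdry K + K \bdry$ need not preserve word length on the nose, only that it is filtration-nondecreasing relative to the augmented coordinates, so that its exponential is well-defined order by order and descends correctly to $\A_1$; this is where the hypothesis $\varepsilon(a) = 0$ unless $a$ is contractible with $|a| = 0$ is used to keep the constant terms under control.
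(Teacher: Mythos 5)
The paper does not prove Theorem~\ref{thm: aug}. It is stated and then used as a black box; the text immediately preceding it says the content was learned from Tobias Ekholm and that ``Details of the assertions are to appear in \cite{BEE}.'' So there is no in-paper argument to compare your proposal against, and your task of reconstructing a proof was necessarily aimed at a target the authors deliberately left to the cited reference.

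That said, a few remarks on the substance of your sketch, since you asked for a route. Your treatment of part~(1) is sound: $[\bdry,K]=\bdry K+K\bdry$ is an even derivation commuting with $\bdry$ (because $\bdry^2=0$), so $\Psi=e^{\bdry K+K\bdry}$ is a DGA automorphism; and since $\varepsilon_1=\varepsilon_0\circ\Psi$, one has $\Psi(\ker\varepsilon_1)=\ker\varepsilon_0$, so $\Psi$ respects the two augmentation-ideal filtrations and descends to a chain isomorphism on the degree-one associated graded pieces. The weak point is in part~(2): from a naive linear chain homotopy $\Phi_1-\Phi_0=\bdry' T+T\bdry$ one gets only the \emph{additive} identity $\varepsilon_1-\varepsilon_0=\varepsilon' T\bdry$, and this does not by itself produce the \emph{multiplicative} relation $\varepsilon_1=\varepsilon_0\circ e^{\bdry K+K\bdry}$ required by the definition of homotopy of augmentations. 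In the DGA setting the correct geometric input is a chain homotopy already cast in exponential/derivation form (the SFT ``homotopy of homomorphisms''), which is precisely the analytic package deferred to \cite{BEE}; you would need to invoke that formulation rather than the linear one. Part~(3) also quietly uses cobordism maps in both directions together with the composition law, which again sits on top of the same deferred machinery, so flagging it as a black box, as you do, is the right posture. None of this contradicts the paper, because the paper offers no argument here to agree or disagree with.
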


\subsection{Morse-Bott theory} \label{subsection: Morse-Bott}

We briefly describe how to compute the cylindrical contact homology
of a degenerate contact form of Morse-Bott type. For more details,
we refer the reader to Bourgeois' thesis~\cite{Bo2}. Again, let
$\phi_t$ be the time $t$ flow of the Reeb vector field $R_\alpha$ of
$\alpha$.

A contact form $\alpha$ is of {\it Morse-Bott type} if:
\begin{enumerate}
\item the {\it action spectrum} $\sigma (\alpha )=\{ A_\alpha
(\gamma ) ~|~\gamma {\rm \; periodic \; orbit} \}$ is discrete;
\item the union $N_T$ of fixed points of $\phi_T$
is a closed submanifold of $M$;
\item the rank of $d\alpha \vert_{N_T}$
is locally constant and $T_p N_T =\ker (d\phi_T (p)-I)$.
\end{enumerate}
The submanifold $N_T$ is foliated by orbits of $R_\alpha$. In the
case where $\dim N_T =3$, the manifold $N_T$ is a Seifert fibered
space. The quotient space $S_T$ is thus an orbifold, whose
singularities with singularity groups $\Z/m\Z$ are the projections
of orbits of actions ${T\over m}$.

Choose a complex structure $J$ on $\xi$ which is invariant under the
$S^1$-action on $N_T$ induced by the flow $\phi_t$. Now, for each
$T$, pick a Morse function $f_T : S_T \rightarrow \R$ so that the
downward gradient trajectory of $f_T$ with respect to the metric
induced from $d\alpha (\cdot,J\cdot)$ (by quotienting out the
$S^1$-direction) is of Morse-Smale type. We also assume that, if
$S_T \subset S_{kT}$, then $f_{kT}$ extends the function $f_T$ so
that $f_{kT}$ has positive definite Hessian in the normal directions
to $S_T$.

Let $\gamma\in S_T$. We choose a trivialization of $\xi
\vert_{\gamma}$. As before, define the Conley-Zehnder index
$\mu(\gamma)$ to be the Conley-Zehnder index of the path
$\{d\phi_t(\gamma(0)): \xi_{\gamma(0)}\rightarrow \xi_{\gamma(T)},
t\in[0,T]\}$ with respect to the trivialization, using the
Robbin-Salamon definition~\cite{RS}. (Note that the value $1$
belongs to the spectrum of the map $d\phi_T(\gamma(0))$, with
eigenspace isomorphic to the tangent space of $S_T$.)

If $\gamma\in S_T$ is a critical point of $f_T$, then define the
grading $|\gamma|$ of $\gamma$ as:
$$|\gamma| = \mu (\gamma)-{1\over 2} \dim S_T +\mbox{index}_\gamma
(f_T)-1.$$ The parity of $|\gamma|$ does not depend on the choice of
framing of $\xi$ along $\gamma$. Also, if $\gamma\in S_T$, then a
choice of framing for $\xi$ along $\gamma$ induces a framing along
any $\gamma' \in S_T$, by isotoping through fibers. The index $\mu
(\gamma)$ then does not depend on $\gamma\in S_T$ for this
particular family of framings. If $\gamma\in S_T$, then let
$m\gamma$ denote the $m$-fold cover of $\gamma$ in $S_{mT}$. A
critical point $\gamma$ of $f_T$ is {\em bad} if it is an even
multiple $2k\gamma'$ of a point $\gamma'$ whose parity differs from
the one of $\gamma$, and is {\em good} otherwise.

Let $MBC(\alpha,J)$ be the free $\Q$-vector space generated by the
good critical points of $f_T$, for all $T\in \sigma (\alpha)$. We
now briefly describe the differential $\partial$ on $MBC$. Let
$\gamma^+$ and $\gamma^-$ be good critical points of $\sqcup_T f_T$.
The coefficient $\langle \bdry \gamma^+,\gamma^-\rangle$ of
$\gamma^-$ in the differential $\partial \gamma^+$ is a signed count
of points of $0$-dimensional moduli spaces of generalized
holomorphic cylinders from $\gamma^+$ to $\gamma^-$.  A {\em
generalized holomorphic cylinder} from $\gamma^+$ to $\gamma^-$ is a
finite collection of $J$-holomorphic cylinders $\{ C_1 ,\dots ,C_k
\}$, together with downward gradient trajectories
$\{a_0,a_1,\dots,a_{k+1}\}$ of $f_T$ on $S_T$, satisfying the
following:
\begin{itemize}
\item The holomorphic cylinder $C_i$, $i=1,\dots,k$, is
asymptotic to $\gamma^+_i$ at $+\infty$ and $\gamma^-_i$ at
$-\infty$;
\item $\gamma^+=\gamma^-_0$ and $\gamma^-=\gamma^+_{k+1}$;
\item The orbits $\gamma^-_i$ and $\gamma^+_{i+1}$, $i=0,\dots, k$, lie on the same component of
$S_T$, and $a_i$ connects $\gamma_i^-$ to $\gamma_{i+1}^+$;
%\item $\gamma^+_1$ is in the stable manifold of $\gamma^+$
%and $\gamma^-_k$ is in the unstable manifold of $\gamma^-$.
\end{itemize}
The map $\partial$ is extended linearly to all of $MBC(\alpha,J)$.
The main theorem of Bourgeois' thesis is the following:

\begin{theorem}
If no orbit of $R_\alpha$ is the asymptotic limit of a finite energy
plane, then $(MBC(\alpha,J),\partial )$ is a chain complex and its
homology is isomorphic to $HC(M,\xi)$.
\end{theorem}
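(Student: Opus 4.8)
The plan is to deduce this from the already-established invariance of cylindrical contact homology for \emph{nondegenerate} contact forms by approximating the Morse-Bott form $\alpha$ by nondegenerate ones. First I would fix an action bound $L$ and, using the Morse functions $f_T$ on the orbit spaces $S_T$ for $T\in\sigma(\alpha)$ with $T\le L$, construct a family $\alpha_\delta=(1+\delta g)\alpha$ of contact forms, where $g$ is supported near $\bigcup_{T\le L}N_T$ and built from the $f_T$, so that for small $\delta>0$ the Reeb field $R_{\alpha_\delta}$ is nondegenerate up to action $L$ and its closed orbits of action $<L$ are exactly small perturbations of the critical points of the $f_T$. The standard index computation --- comparing the Conley-Zehnder index of the perturbed orbit to the Robbin-Salamon index of the degenerate family plus the Morse index contribution --- shows that the grading of the perturbed orbit equals $\mu(\gamma)-\frac{1}{2}\dim S_T+\mbox{index}_\gamma(f_T)-1$, matching the grading on $MBC$, and that the parity agrees with the good/bad dichotomy (the bad orbits being precisely those whose parity would change under multiple cover, and hence excluded in both pictures).

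Next I would identify the differentials. For $\delta>0$ the cylindrical differential counts rigid $J_\delta$-holomorphic cylinders for $\alpha_\delta$ in $\R\times M$. The core analytic input is the Morse-Bott compactness and gluing package (Bourgeois~\cite{Bo2}, building on the asymptotics of Hofer-Wysocki-Zehnder~\cite{HWZ1} and the compactness of \cite{BEHWZ}): as $\delta\to0$, a sequence of such cylinders converges to a \emph{cascade}, i.e.\ a finite chain of honest $J$-holomorphic cylinders $C_1,\dots,C_k$ for the unperturbed $\alpha$, whose negative and positive ends lie on the same orbit spaces $S_T$ and are joined by finite-length downward gradient trajectories of the $f_T$, capped at the two ends by semi-infinite gradient rays --- precisely the ``generalized holomorphic cylinders'' of the statement. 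Conversely, a transversality argument for the unperturbed cylinder moduli, together with the Morse-Smale condition on the $f_T$, makes the relevant cascade spaces cut out transversally, and a gluing theorem reconstructs, for each rigid cascade, a unique rigid $\alpha_\delta$-cylinder for small $\delta$. This gives a chain isomorphism $MBC(\alpha,J)\cong CC(\alpha_\delta,J_\delta)$ below action $L$; letting $L\to\infty$ exhausts the action filtration, so the isomorphism holds for the full complexes.

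It then remains to check that everything is well-defined. The hypothesis that no Reeb orbit of $R_\alpha$ is the asymptotic limit of a finite energy plane guarantees --- after verifying that the same holds for $R_{\alpha_\delta}$ with $\delta$ small, below any fixed action --- that no cascade contains a plane component, so the only breakings of rigid cascades in one-parameter families are two-level cylinder-over-cylinder configurations; hence $\partial^2=0$ and $(MBC,\partial)$ is genuinely a chain complex, computing the cylindrical (not full) contact homology of $\alpha_\delta$. Finally, by the invariance already recorded for cylindrical contact homology of nondegenerate forms with trivial augmentation, $H_*(MBC(\alpha,J),\partial)\cong HC(M,\alpha_\delta,J_\delta)=HC(M,\xi)$.

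The main obstacle is the gluing step: establishing the bijection between rigid $\alpha_\delta$-holomorphic cylinders and rigid cascades for all sufficiently small $\delta$. This demands uniform exponential decay estimates at the punctures, a pre-gluing and Newton-iteration scheme adapted to the nearly-degenerate linearized operator --- whose small eigenvalues are governed by the Hessians of the $f_T$ --- and compatibility of the induced orientations. This is exactly the technical heart of Morse-Bott contact homology carried out in Bourgeois' thesis.
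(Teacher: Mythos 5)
The paper does not prove this statement at all: it is quoted verbatim as the main theorem of Bourgeois' thesis \cite{Bo2}, and the text around it only recalls the definitions needed to state it. Your outline — perturbing the Morse-Bott form by the Morse functions $f_T$, matching gradings of perturbed orbits with $\mu(\gamma)-\tfrac12\dim S_T+\mbox{index}_\gamma(f_T)-1$, identifying degenerating cylinders with cascades, and using the no-finite-energy-plane hypothesis to rule out plane components so that only cylinder-over-cylinder breaking occurs and $\partial^2=0$ — is precisely the strategy of that cited source, so it is the intended argument rather than an alternative; as you acknowledge, the actual content (uniform asymptotics, transversality of cascade moduli, and the gluing bijection for small $\delta$) is deferred to \cite{Bo2} rather than supplied.
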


\s\n {\bf Example.} If $M$ is fibered by Reeb periodic orbits of
action $T$, then $S_T$ is a smooth surface. Since all the orbits in
$N_T =M$ have the same action $T$, the only generalized cylinders
between orbits in $S_T$ are the gradient flow lines of $f_T$. Thus,
if $\gamma,\gamma' \in S_T$, then $\langle\partial
\gamma,\gamma'\rangle$ will be the same as that given by the Morse
differential.

\s\n {\bf Example.} If $M$ is a Seifert fibered space with singular
fibers of orders $s_1,s_2,\dots,s_n$ so that all its fibers are Reeb
orbits, then all regular orbits have the same action $T$, and the
singular orbits have actions ${T\over s_1} ,\dots {T\over s_n}$. If
$\gamma_i$ denotes the singular fiber of action ${T\over s_i}$, then
$S_{T/s_i}$ is $\gamma_i$.  Moreover, $\mu(\gamma_i )$ is odd since
the regular fibers rotate about the singular fiber. Hence
$|\gamma_i|=\mu (\gamma_i )-1$ is even.

\s The above examples will be explored in more detail in
Section~\ref{section:periodic}.

\section{The periodic case}\label{section:periodic}

Suppose the contact 3-manifold $(M,\xi)$ admits an open book
decomposition $(S,h)$ with periodic monodromy.  Let $c_i$ be the
fractional Dehn twist coefficient of the $i$th boundary component
and $\psi$ be the periodic representative of $h$.

\begin{thm}\label{thm:invariant}
If all the $c_i$ are positive, then $(M,\xi)$ is an $S^1$-invariant
contact structure which is transverse to the $S^1$-fibers.
\end{thm}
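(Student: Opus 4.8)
The plan is to build the $S^1$-action directly on the open book model $M=(S\times[0,1])/\!\sim_h$ by exploiting the periodicity of $\psi$. Since $h$ is freely homotopic (rel $\bdry S$) to a periodic $\psi$ with $\psi^n=\mathrm{id}$, and the fractional Dehn twist coefficients are $c_i=k_i/n>0$, the first step is to replace $(S,h)$ by an equivalent open book whose monodromy is genuinely periodic. Concretely, one realizes $M$ as a quotient of $S\times\R$ by the $\Z$-action generated by $(x,t)\mapsto(\psi(x),t-1)$, after collapsing $\bdry S\times\{t\}$ appropriately; the positivity of the $c_i$ is exactly what makes the binding $\bdry S$ sit inside this mapping torus as a genuine Seifert fiber rather than forcing a stabilization. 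I would first verify that the mapping torus $M_\psi=(S\times\R)/(x,t)\sim(\psi(x),t-1)$, with the boundary $\bdry S\times\R$ Dehn-filled according to the slope dictated by $c_i$, is homeomorphic to $M$ as an open book. The $\R$-translation on the $t$-coordinate descends to an $S^1=\R/n\Z$ action on $M_\psi$ (using $\psi^n=\mathrm{id}$), and I must check it extends over the Dehn fillings along the binding — this is where the $c_i>0$ hypothesis is used, so that the filling slope is compatible with the fiber direction and the binding components become exceptional fibers.

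Next, with the $S^1$-action in hand, I would produce an invariant contact form transverse to the fibers. The natural candidate is to average Giroux's supported contact form over the $S^1$-action: start with a contact $1$-form $\alpha_0$ adapted to $(S,h)$ in the sense of the Thurston–Winkelnkemper/Giroux construction, chosen so that it is already invariant on a neighborhood of the binding (possible since near the binding the open book is standard), then set $\alpha=\int_{S^1}(g_\theta^*\alpha_0)\,d\theta$ where $g_\theta$ denotes the action. Averaging preserves the condition $\alpha\wedge d\alpha>0$ only if the $g_\theta^*\alpha_0$ stay in a convex cone of contact forms; the cleaner route is to instead build the invariant contact structure abstractly: on $M_\psi$ minus the binding, write the contact structure as $\ker(\beta+\epsilon\,dt)$ where $\beta$ is a $t$-dependent $1$-form on the fibers $S$ that rotates the characteristic foliation, choose $\beta$ to depend on $t$ only through $\psi$ (hence $n\Z$-periodically, hence $S^1$-invariantly), and then check that for small $\epsilon$ this is contact and transverse to $\bdry_t$. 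The $S^1$-invariance of $\xi$ is then automatic from the construction.

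Finally I would check transversality to the fibers everywhere, including along the binding: away from the binding the form $\beta+\epsilon\,dt$ visibly evaluates nonzero on $\bdry_t$, and near a binding component one uses the standard model (in which the fiber direction is $\bdry_\phi$ for an angular coordinate and the contact structure is $\ker(d z + r^2 d\phi)$-type) to see transversality there too. I expect the \textbf{main obstacle} to be the gluing step along the binding: one must show that the abstract $S^1$-action on the mapping-torus interior, together with the model $S^1$-action on a solid-torus neighborhood of each binding component, fit together into a global smooth $S^1$-action — i.e., that the Seifert-fibered slope determined by the fractional Dehn twist coefficient $c_i$ is precisely the one that closes up under periodicity $\psi^n=\mathrm{id}$. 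This is a computation with the monodromy and the lift $f$ of $\beta$ defining $c_i$, and it is where all the hypotheses (periodicity, connectedness/positivity of $c_i$, and the normalization $h=\beta\cup\psi$) are genuinely needed; once that matching is verified, invariance of both $\xi$ and transversality to fibers follow directly.
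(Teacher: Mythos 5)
Your overall strategy coincides with the paper's: view $M$ minus a neighborhood of the binding as the mapping torus of the periodic representative $\psi$, put a contact form of the shape $dt+\beta$ there whose Reeb direction is $\partial_t$, and then extend over solid-torus neighborhoods of the binding by a rotationally symmetric model $a(r)\,dz+b(r)\,d\theta$, matching the slope dictated by $c_i>0$. The gap is in the central invariance step. Your prescription ``choose $\beta$ to depend on $t$ only through $\psi$ (hence $n\Z$-periodically, hence $S^1$-invariantly)'' does not deliver what the theorem needs: a $t$-dependent family $\beta_t$ with $\beta_1=\psi^*\beta_0$ descends to the mapping torus and is invariant under the deck-type $\Z/n$ shift, but genuine invariance under the whole circle of $t$-translations forces $\partial_t\beta_t=0$, i.e.\ $\beta$ must be a single $1$-form on $S$ satisfying $\psi^*\beta=\beta$ (and $d\beta>0$). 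You never say how to produce such a $\psi$-invariant primitive of an area form, and that is exactly the missing idea. The fix is elementary but essential: average, $\overline\beta=\frac1n\sum_{i=0}^{n-1}(\psi^i)^*\beta$, over the finite cyclic group generated by $\psi$. The condition $d\beta>0$ is convex, so $d\overline\beta>0$ survives the averaging; your worry about averaging destroying contactness applies to averaging contact forms over the circle action on $M$, not to this finite, surface-level averaging of $\beta$, which is where the invariance is actually created. With $\overline\beta$ in hand, $\alpha=dt+\overline\beta$ on $(S\times[0,1])/(x,1)\sim(\psi(x),0)$ has Reeb field $\partial_t$ and is invariant under all $t$-translations, which is the statement you were trying to arrange abstractly.

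Two smaller points for the gluing you correctly identify as the remaining work. First, normalize $\beta$ near $\partial S$ to be the rotationally symmetric form $\frac{C}{2\pi}\,d\phi$ (constant $C\gg0$); since $\psi$ rotates the boundary annulus rigidly, this normalization is preserved by the averaging, and it pins down both slopes on the boundary torus: the Reeb foliation has slope $c_i$ and the characteristic foliation has slope $-\frac1C$. The extension over the solid torus must match \emph{both} of these, which the model $dz+\tfrac12 r^2d\theta$ glued with a rotation by $\theta_0$ (with $c_i=2\pi/\theta_0$) and an appropriate choice of disk radius achieves; positivity of $c_i$ is what makes this possible. Second, the resulting neighborhood of a binding component is a priori only a $\Z/n$-quotient of the standard invariant model (the binding is an exceptional fiber); passing to the $n$-fold cover exhibits it as standard, which is exactly the form of $S^1$-invariance the paper's definition requires. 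With the averaging step inserted and these slope/quotient checks made explicit, your argument closes up and is essentially the paper's proof.
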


A transverse contact structure $\xi$ (= transverse to the fibers) on
a Seifert fibered space $M$ with base $B$ and projection map
$\pi:M\rightarrow B$ is said to be {\em $S^1$-invariant} if there is
a Reeb vector field $R$ of $\xi$ so that (i) each fiber
$\pi^{-1}(p)$ is an orbit of $R$ and (ii) a neighborhood of a
singular fiber is a $\Z/m\Z$-quotient of $S^1\times D^2$ with the
standard contact form $dt+\beta$, where $t$ is the coordinate for
$S^1$, $\beta$ is rotationally invariant and independent of $t$,
$d\beta$ is an area form on $D^2$, and the Reeb vector field is
${\bdry\over \bdry t}$.

\begin{proof}
Suppose $(S,h)$ is periodic. Let $\beta$ be a $1$-form on $S$
satisfying $d\beta>0$. We additionally require that, along each
component of $\bdry S$, $\beta={C\over 2\pi} d\phi$, where $\phi$ is
the angular coordinate of the boundary component equipped with the
boundary orientation, and $C>0$ is a constant. If the periodic
representative $\psi$ of $h$ has order $n$ (here $c_i={k_i\over n}$,
where $k_i$ and $n$ are relatively prime), we average $\beta$ by
taking
$$\overline\beta={1\over n}\sum_{i=0}^{n-1} (\psi^i)^*\beta.$$

Consider the $[0,1]$-invariant contact $1$-form
$\alpha=dt+\overline\beta$ on $S\times[0,1]$.  Here $t$ is the
$[0,1]$-coordinate. By construction, $\alpha$ descends to a contact
form on $N=(S\times[0,1])/(x,1)\sim (\psi(x),0)$ and the
corresponding Reeb vector field $R$ on $S\times[0,1]$ is
${\bdry\over \bdry t}$. The manifold $N$ is a Seifert fibered space
whose fibers are closed orbits of $R$. Observe that a nonsingular
fiber intersects $S\times\{0\}$ at $n$ points. Although $R$ is
probably the most natural Reeb vector field, it is highly
degenerate, i.e., for each $p\in S$ there is a corresponding closed
orbit $\{p\}\times S^1$.  Hence we are in the Morse-Bott situation.

We then extend the contact 1-form to the neighborhood $N(K)\simeq
S^1\times D^2=\R/\Z\times D^2$ of each binding component $K$. Let us
use cylindrical coordinates $(z,(r,\theta))$ on $N(K)$, so that the
pages restrict to $\theta=const$. From the construction of $\alpha$
on $N$, $\bdry N(K)$ is (i) linearly foliated by the Reeb vector
field $R$ of slope $c_i$; and (ii) linearly foliated by the
characteristic foliation of $\xi$ of slope $-{1\over C}$, $C>0$.
Here we are using coordinates $({\theta\over 2\pi},z)$ to identify
$\bdry N(K)\simeq \R^2/\Z^2$.

Start with $[0,1]\times D^2$ with coordinates $(z,(r,\theta))$ and
contact form $\alpha=dz+ {1\over 2} r^2d\theta$.  Here ${1\over
2}r^2d\theta$ is the primitive of an area form for $D^2$ and is
invariant under rotation by $\theta=\theta_0$.  Moreover,
$R={\bdry\over \bdry z}$. Now glue $\{1\}\times D^2$ to $\{0\}\times
D^2$ via a diffeomorphism $\phi$ which sends $(r,\theta)\mapsto
(r,\theta+\theta_0)$ for some constant $\theta_0$. The Reeb vector
field $R$ will then have slope ${2\pi\over \theta_0}$; pick
$\theta_0$ so that $c_i={2\pi \over \theta_0}$.  Furthermore, if we
adjust the size of the disk $D^2$ to have a suitable radius, then
the characteristic foliation on $\bdry (S^1\times D^2)$ would have
slope $-{1\over C}$.  (For another, more or less equivalent,
construction, see Section~\ref{subsub: extension to binding}.)

By taking the $n$-fold cover of $S^1\times D^2$ we obtain a
transverse contact structure on $S^1\times D^2$ which is fibered by
Reeb vector fields and which does not have any singular fibers. This
completes the proof of Theorem~\ref{thm:invariant}.
\end{proof}

\begin{thm}\label{thm:right-veering is exact}
If $(S,h)$ has periodic monodromy, then $(M,\xi)$ is tight if and
only if $h$ is right-veering.  Moreover, the tight contact
structures are Stein fillable.
\end{thm}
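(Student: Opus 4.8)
The plan is to prove both directions of the equivalence and then, for the forward-veering side, exhibit the Stein filling. First I would dispose of the easy direction: if $(M,\xi)$ is tight then $h$ is right-veering. This is immediate from Theorem~\ref{veer} (the Honda--Kazez--Mati\'c criterion), which says tightness is equivalent to \emph{all} compatible open books being right-veering; in particular the given one is. (Alternatively, one can apply the contrapositive of Theorem~\ref{ot}: if $h$ is not right-veering, then some $c_i<0$ in the periodic case, so $(M,\xi)$ is overtwisted.) So the content is the converse.

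For the converse, suppose $h$ is right-veering. By Theorem~\ref{ot}(1), right-veering with periodic $\psi$ is equivalent to all $c_i\ge 0$. I would split into two cases. If some $c_i=0$, then (as in the proof of the periodic Weinstein conjecture above) $h=\mathrm{id}$, so $M$ is a connected sum of copies of $S^1\times S^2$ and $\xi$ is the standard tight contact structure on that manifold --- here one invokes the classification of tight contact structures on $\#_k(S^1\times S^2)$, and the standard fact that the tight structure is Stein fillable (it bounds $\natural_k(S^1\times D^3)$). If all $c_i>0$, then Theorem~\ref{thm:invariant} produces an $S^1$-invariant contact structure transverse to the Seifert fibers, realized by the explicit Reeb vector field constructed there (the one built from the averaged $1$-form $\overline\beta={1\over n}\sum_{i=0}^{n-1}(\psi^i)^*\beta$ on $N$, extended over the binding neighborhoods).

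It then remains to show this $S^1$-invariant transverse contact structure is tight and Stein fillable. For tightness, I would argue that such an $S^1$-invariant transverse contact structure on a Seifert fibered space is universally tight: its pullback to a suitable finite cover is $S^1$-invariant on a circle bundle and transverse to the fibers with nonnegative ``twisting,'' which one recognizes as one of the standard universally tight models (cf.\ the Giroux/Honda classification of tight contact structures on circle bundles and Seifert spaces). More directly, the open book $(S,\psi)$ with $\psi$ periodic is itself, after passing to the $n$-fold cyclic branched structure, built from positive Dehn twists up to conjugation --- this is exactly where Stein fillability enters: I would show that a periodic right-veering monodromy is, after stabilization, a product of positive Dehn twists, so Corollary~\ref{holo} (Loi--Piergallini, Akbulut--Ozbagci, Giroux) gives holomorphic fillability, hence tightness by Gromov--Eliashberg. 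Concretely, the mapping torus of $\psi$ acting on $S$ is a Seifert piece, and positivity of all $c_i$ translates into the fibration having the correct sign of Euler number/Seifert invariants so that the associated open book is isotopic to one with positive monodromy.

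The main obstacle I anticipate is precisely this last step: converting the dynamical/analytic data (all $c_i>0$, $\psi$ periodic) into the combinatorial statement that $h$ is destabilization-equivalent to a product of right Dehn twists, or equivalently directly constructing the Stein filling as a Lefschetz fibration over $D^2$ with fiber $S$. One clean route is to use the $S^1$-invariant Reeb field from Theorem~\ref{thm:invariant} together with the fact that an $S^1$-invariant transverse contact structure on a Seifert fibered space over a hyperbolic (or even just negative-orbifold-Euler-characteristic) base with the compatible sign of twisting is the boundary of a natural symplectic (indeed Stein) filling obtained by filling in the disk bundle / the Milnor-fiber-type $4$-manifold; this avoids the combinatorics but requires care in identifying the filling near the singular fibers using the local model $dt+\beta$ given in the statement of Theorem~\ref{thm:invariant}. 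Either way, the essential input is that \emph{positivity} of the fractional Dehn twist coefficients is exactly the condition that makes the Seifert structure fillable, and the bulk of the work is making that correspondence precise.
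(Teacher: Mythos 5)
You handle the easy parts exactly as the paper does: tight $\Rightarrow$ right-veering via Theorem~\ref{ot}/\ref{veer}, the case some $c_i=0$ giving $h=id$ and the standard Stein fillable structure on $\#(S^1\times S^2)$, and the case all $c_i>0$ reduced via Theorem~\ref{thm:invariant} to an $S^1$-invariant transverse contact structure. But at that point your argument stops being a proof: for the remaining (and essential) step you offer three sketches and yourself flag it as ``the main obstacle,'' without completing any of them. The claim that a right-veering periodic monodromy is, after stabilization, a product of positive Dehn twists is not established anywhere in your text; it is true only \emph{a posteriori} (it follows from the theorem together with Giroux's stabilization theorem and Corollary~\ref{holo}), so invoking it here is circular. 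Likewise, asserting tightness from ``one of the standard universally tight models'' on circle bundles needs an actual identification of your structure with such a model, which you do not carry out.

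The route closest to workable is your last one, and it is essentially the paper's: by Lisca--Mati\'c an $S^1$-invariant transverse contact structure forces $e(M)<0$; by Neumann--Raymond such an $M$ is the link of a surface singularity with a holomorphic $\C^*$-action, so the complex tangencies $\xi'$ form an $S^1$-invariant transverse contact structure that is Stein fillable by Bogomolov--de Oliveira. But even along this route you are missing the step that makes the argument close up: one must show that the contact structure $\xi$ supported by the open book \emph{is} (isotopic to) this fillable $\xi'$. The paper does this with two inputs you never mention: Lemma~\ref{lemma:unique}, that for a fixed Seifert fibering any two $S^1$-invariant transverse contact structures are isotopic, and Hatcher's uniqueness of Seifert fiberings over orientable bases (with the exceptional manifolds $S^3$, $S^1\times S^2$ and lens spaces handled separately by the classification of tight structures there). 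Without this uniqueness/identification step, producing \emph{some} Stein fillable transverse contact structure on $M$ does not prove that the one coming from your open book is tight or Stein fillable, so the gap is genuine.
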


Let $M$ be a Seifert fibered space over an oriented closed surface
of genus $g$ and with $r$ singular fibers, whose Seifert invariants
are ${\beta_1\over \alpha_1},\dots, {\beta_r\over \alpha_r}$.  Then
the {\em Euler number} $e(M)=\sum_{i=1}^r {\beta_i\over \alpha_i}$.

Some of the contact structures will be (universally) tight contact
structures on lens spaces, which we know are Stein fillable.

\begin{proof}
By Theorem~\ref{ot}, $h$ is right-veering if and only if all the
$c_i$ are nonnegative. Moreover, if any coefficient $c_i$ is
negative, then $(M,\xi)$ is overtwisted.  If some $c_i=0$, then $h$
must be the identity, since $\psi$ is periodic. In this case
$(M,\xi)$ is the standard Stein fillable contact structure on
$\#(S^1\times S^2)$.

Hence it remains to consider the case where all $c_i>0$.  According
to Theorem~\ref{thm:invariant}, $(M,\xi)$ is $S^1$-invariant.
According to a result of Lisca and Mati\'c~\cite{LM}, a Seifert
fibered space $M$ carries an $S^1$-invariant transverse contact
structure if and only if the Euler number $e(M)<0$. It is not hard
to see that this $S^1$-invariant contact structure is symplectically
fillable and universally tight.

Neumann and Raymond~\cite[Corollary~5.3]{NR} have shown that, if
$e(M)<0$, then $M$ is the link of an isolated surface singularity
with a holomorphic $\C^*$-action.  Hence $M$ is the oriented,
strictly pseudoconvex boundary of a compact complex surface (with a
singularity).  Let $\xi'$ be the complex tangencies $TM\cap J(TM)$.
The holomorphic $\C^*$-action on the complex surface becomes an
$S^1$-action on $M$.  The vector field $X$ on $M$ generated by the
$S^1$-action is transverse to $\xi'$, since $JX$ is transverse to
$M$.  Hence $X$ is a Reeb vector field for $\xi'$, and $\xi'$ is an
$S^1$-invariant transverse contact structure. Now, by Bogomolov
\cite{Bog} (also Bogomolov-de Oliveira~\cite[Theorem ($2'$)]{Bd}),
$(M,\xi')$ is also a strictly pseudoconvex boundary of a smooth
Stein surface.

It remains to identify the $S^1$-invariant transverse contact
structures $\xi$ and $\xi'$ on $M$.  By Lemma~\ref{lemma:unique},
there is a unique $S^1$-invariant horizontal contact structure on
$M$ up to isotopy, once the fibering is fixed.  By
Hatcher~\cite[Theorem~4.3]{Hat}, Seifert fiberings of closed
orientable Seifert fibered spaces over {\em orientable bases} are
unique up to isomorphism, with the exception of $S^3$, $S^1\times
S^2$, and lens spaces.  (The other items on Hatcher's list consist
of $M$ with boundary or identifications with Seifert fibered spaces
over nonorientable bases.)  All the tight contact structures on
$S^3$, $S^1\times S^2$, and lens spaces are Stein fillable.
\end{proof}

\begin{lemma}\label{lemma:unique}
For any Seifert fibered space $M$ with a fixed fibering, any two
$S^1$-invariant transverse contact structures are isotopic.
\end{lemma}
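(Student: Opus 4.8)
The plan is to exhibit, for the fixed Seifert fibration $\pi\colon M\to B$ (with $B$ the base orbifold and $R_{S^1}$ the infinitesimal generator of the Seifert $S^1$-action), a convex family of contact $1$-forms joining any two $S^1$-invariant transverse contact structures, and then to invoke Gray stability.

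First I would put each such contact structure $\xi$ into a normal form. Starting from any defining $1$-form for $\xi$ with coorientation compatible with condition (ii), average it over the $S^1$-action to obtain an $S^1$-invariant defining form $\alpha'$; since $\xi$ is transverse to the fibers, $a:=\alpha'(R_{S^1})$ is nowhere zero, and after replacing $\alpha'$ by $\alpha:=\alpha'/a$ (same kernel, still $S^1$-invariant) we may assume $\alpha(R_{S^1})\equiv 1$. Then $i_{R_{S^1}}d\alpha=L_{R_{S^1}}\alpha-d(i_{R_{S^1}}\alpha)=0$, so $d\alpha$ is basic, i.e.\ $d\alpha=\pi^*\omega$ for an orbifold $2$-form $\omega$ on $B$, and the contact condition $\alpha\wedge d\alpha>0$ is precisely the statement that $\omega$ is a positive orbifold area form. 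On the regular locus $M_0\to B_0$ this is just a principal connection $1$-form with curvature $\omega$; near each singular fiber the hypothesis provides the local model $dt+\beta$ of the definition, which already has $i_{\partial_t}(dt+\beta)=1$, so no incompatibility arises at the bindings.

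Now, given two $S^1$-invariant transverse contact structures, write them as $\ker\alpha_0$ and $\ker\alpha_1$ with $\alpha_0,\alpha_1$ in the normal form above and curvatures $\pi^*\omega_0$, $\pi^*\omega_1$. The difference $\alpha_1-\alpha_0$ is $S^1$-invariant and annihilates $R_{S^1}$, hence equals $\pi^*\mu$ for an orbifold $1$-form $\mu$ on $B$, with $d\mu=\omega_1-\omega_0$. Set
$$\alpha_s=(1-s)\alpha_0+s\alpha_1=\alpha_0+s\,\pi^*\mu,\qquad s\in[0,1].$$
Then $d\alpha_s=\pi^*\big((1-s)\omega_0+s\omega_1\big)=:\pi^*\omega_s$, where $\omega_s$ is again a positive orbifold area form since positivity of area forms is a convex condition. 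Working in a local trivialization $S^1\times U$ of $M_0$ (and similarly in the quotient model near a singular fiber), $\alpha_s$ satisfies $i_{R_{S^1}}\alpha_s\equiv 1$, so it is transverse to the fibers, and a one-line pointwise computation gives $\alpha_s\wedge d\alpha_s=\alpha_s\wedge\pi^*\omega_s>0$, because a $1$-form transverse to the fibers pairs nondegenerately with the pullback of an area form. Thus $\{\alpha_s\}_{s\in[0,1]}$ is a smooth path of contact forms from $\alpha_0$ to $\alpha_1$; since $M$ is closed, Gray's stability theorem yields an ambient isotopy $(\phi_s)$ of $M$ with $(\phi_s)_*\ker\alpha_0=\ker\alpha_s$, and at $s=1$ this gives the desired isotopy between the two contact structures.

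I expect the only genuinely delicate point to be the bookkeeping in the normal-form step across the singular fibers --- checking that an $S^1$-invariant transverse contact structure is globally (not just on $M_0$) the kernel of a connection-type $1$-form whose curvature is the pullback of a positive orbifold area form, using condition (ii) at the bindings. Everything afterwards is the convexity computation plus Gray stability. Note in particular that the interpolating forms $\alpha_s$ need not themselves lie in the ``standard model'' near the singular fibers; this is harmless, since Gray stability only requires a path of contact structures, not a path within the class of $S^1$-invariant transverse ones.
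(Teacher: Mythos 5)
Your proof is correct and, at its heart, runs on the same engine as the paper's: both arguments come down to the fact that for $S^1$-invariant forms of connection type the contact condition is equivalent to positivity of the curvature area form, which is a convex condition, so the straight-line interpolation between the two normalized forms stays contact. The packaging, however, is genuinely different. The paper first matches $\xi$ and $\xi'$ near each (possibly singular) fiber by a fiber-preserving adjustment so that they agree over the boundary of fibered neighborhoods, and then interpolates $dt+\beta$ against $dt+\beta'$ only over the trivial circle bundle over the complementary surface with boundary $B'$; you instead work globally, normalizing each structure to the kernel of an invariant form with $\alpha(R_{S^1})\equiv 1$ so that $d\alpha$ is basic, interpolating once over the whole orbifold base, and invoking Gray stability explicitly. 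Your route is cleaner in that it dispenses with the preliminary local matching and the holonomy discussion at $\partial U$, but it rests on two hypotheses you should state: (a) the fixed fibering must actually be induced by a circle action, so that $R_{S^1}$ exists globally (true in the situations where the lemma is applied, where the base orbifold is orientable); and (b) the averaging step requires $\xi$ to be invariant under that honest action, not merely to admit a Reeb vector field tangent to the fibers as in the paper's literal definition of ``$S^1$-invariant''---otherwise the averaged form no longer defines $\xi$. The paper makes the same implicit use of genuine invariance when it writes $\xi=\ker(dt+\beta)$ with $\beta$ independent of $t$, so this is a shared (and, for the structures actually fed into the lemma, harmless) reading of the hypothesis rather than a gap peculiar to your argument.
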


\begin{proof}
Let $\pi:M\rightarrow B$ be a fixed fibering and let $\xi$, $\xi'$
be $S^1$-invariant transverse contact structures on $M$. Given any
point $p$ in $B$ ($p$ may be a singular fiber), there exist small
neighborhoods $U, U'\subset B$ of $p$ so that the holonomy of the
characteristic foliation of $\xi$ on $\pi^{-1}(\bdry U)$ and $\xi'$
on $\pi^{-1}(\bdry U')$ agree.  By taking a diffeomorphism of $U$ to
$U'$, we may assume that $U=U'$.  Writing $\pi^{-1}(U)=S^1\times U$
with fibers $S^1\times\{pt\}$ and coordinates $(t,(x,y))$, we may
modify $t\mapsto t+f(x,y)$ in a neighborhood of $\bdry U$ so that
$\xi=\xi'$ along $S^1\times \bdry U$.  The case of a singular fiber
is similar.

The rest of the argument is similar to that which appears in
Giroux~\cite{Gi2}. We now have $S^1$-invariant transverse contact
structures $\xi$ and $\xi'$ on $S^1\times B'$, where $B'$ is a
surface with boundary and $\xi=\xi'$ on $S^1\times \bdry B'$.  We
may then write $\xi=\ker(dt+\beta)$ and $\xi'=\ker(dt+\beta')$,
where $\beta=\beta'$ on $S^1\times \bdry B'$.   Here $\beta$ and
$\beta'$ are 1-forms on $B'$ which are independent of $t$.  We
simply interpolate by taking $\alpha_s=dt+(1-s)\beta+s\beta'$. Since
$d\beta$ and $d\beta'$ are area forms on $B'$, $\alpha_s$ is a
contact form.
\end{proof}

\begin{thm}\label{thm:cylindrical}
If all the $c_i>0$, then the cylindrical contact homology is
well-defined.  If the universal cover of $M$ is $\R^3$, then the
cylindrical contact homology is nontrivial.
\end{thm}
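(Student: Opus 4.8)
The plan is to exploit the explicit model produced by Theorem~\ref{thm:invariant} and reduce the whole computation to Bourgeois' Morse--Bott machinery from Section~\ref{subsection: Morse-Bott}.

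First I would unpack Theorem~\ref{thm:invariant}: when all $c_i>0$, the contact form $\alpha=dt+\overline\beta$ (extended across the binding as in that proof) is of Morse--Bott type, with Reeb vector field $R$ everywhere tangent to the Seifert fibers. Its orbit spaces $S_T$ are the closed base orbifold $B=M/S^1$ when $T$ is a multiple $mT_0$ of the regular-fiber period $T_0$, and are isolated points when $T$ corresponds to a singular fiber (or one of its multiples). By Lisca--Mati\'c~\cite{LM} we also know $e(M)<0$. Then I would fix an $S^1$-invariant complex structure $J$ on $\xi$ (obtained by averaging), so that $J$ descends to an almost complex structure $j$ on $B$ and $d\alpha|_\xi$ descends to the pull-back of a $j$-compatible area form $\omega_B$.

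The heart of the matter is a ``no vertical curves'' statement, proved by projecting holomorphic curves to the base. Suppose $\tilde u=(a,u):\C\to\R\times M$ is a finite energy plane positively asymptotic to a Reeb orbit $\gamma$, which projects to a point $p\in B$. Away from the singular fibers, $\pi_*:\xi\to TB$ is a fiberwise isomorphism intertwining $J$ and $j$, so $v:=\pi\circ u$ is $j$-holomorphic with $\int_\C v^*\omega_B=\int_\C u^*d\alpha<\infty$ and $v(z)\to p$ as $|z|\to\infty$; by removal of singularities $v$ extends to a holomorphic orbifold sphere $\overline v:S^2\to B$. When $\widetilde M\cong\R^3$, that is, when $M$ is not a spherical space form, $B$ is a Euclidean or hyperbolic orbifold, hence aspherical, so $\overline v$ is constant; then $u$ is valued in a single fiber and $\tilde u$ must be a trivial cylinder, a contradiction. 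Hence no Reeb orbit bounds a finite energy plane, and by the Morse--Bott theorem (Section~\ref{subsection: Morse-Bott}, \cite{Bo2}) the complex $(MBC(\alpha,J),\partial)$ is well-defined and computes $HC(M,\xi)$; in particular cylindrical contact homology is well-defined. (In the remaining case $M$ is $S^3$, a lens space, or another spherical space form, for which cylindrical contact homology is classical; I would simply cite those computations.)

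For nontriviality under $\widetilde M\cong\R^3$, I apply the same projection argument to genuine $J$-holomorphic cylinders in $\R\times M$: each one projects to a holomorphic map $S^2\to B$, hence to a constant, so it lies in a single $\R\times S^1$ and is a trivial cylinder. Thus the only cascades contributing to $\partial$ on $MBC(\alpha,J)$ contain no nontrivial holomorphic cylinder, so $\partial$ preserves each $S_T$ and restricts there to the Morse differential of $f_T$ (as in the Examples after the Morse--Bott theorem), with no interaction between distinct periods. For the minimal period $T_0$ the orbit space is the closed base $B$, whose Morse homology $H_*(B;\Q)$ is nonzero; the minimum of $f_{T_0}$ is not a Morse boundary, its orbit is good because $T_0$ is minimal, and $\partial$ cannot reach it from generators over any other period, so it defines a nonzero class in $HC(M,\xi)$. (This in fact identifies $HC(M,\xi)$ with a sum of copies of $H_*(B;\Q)$, one per multiple $mT_0$, together with the singular-fiber contributions, suitably graded.) The step I expect to be the main obstacle is making the projection argument fully rigorous --- in particular controlling the orbifold points near the singular fibers in the removal-of-singularities step and pinning down the exact dictionary between honest holomorphic cylinders and trivial cylinders; after that, everything rests on the transversality and gluing results underlying \cite{Bo2}, which I would use as a black box.
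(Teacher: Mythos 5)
Your reduction to the Morse--Bott picture is the right framework, and your conclusion (the cascade differential is pure Morse, so $H_*(B;\Q)$ survives) is correct, but the pivotal step --- ``every finite energy plane or cylinder projects to a holomorphic sphere in $B$, which is constant because the orbifold $B$ is Euclidean or hyperbolic, hence aspherical'' --- has a genuine gap. The projection $v=\pi\circ u$ is holomorphic for the complex structure on the \emph{underlying} Riemann surface $|B|$, but it is not automatically an orbifold map: near a point of the domain sent to a singular fiber of order $m$, the local monodromy can be absorbed by the local $\Z/m\Z$ deck group, so the local degree of $v$ in the underlying coordinate need not be divisible by $m$ and $v$ need not lift to the orbifold universal cover. (This is exactly what happens for weighted Hopf fibrations/rational ellipsoids, where nonconstant cylinders between covers of the singular orbits project to nonconstant rational maps onto the underlying $S^2$ of the football.) Hence orbifold asphericity, i.e.\ $\pi_2^{orb}(B)=0$, does not force constancy when $|B|\simeq S^2$; nonconstant holomorphic spheres in $|B|=\mathbb{CP}^1$ abound. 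That case genuinely occurs under the hypotheses of the theorem --- e.g.\ base $S^2(2,3,7)$ with $\wt M\simeq \R^3$, and more generally whenever $S/\psi$ closes up to a sphere with at least three cone points --- so your ``no vertical curves'' claim, and with it the vanishing of all nontrivial cylinders, is only proved when $|B|$ has genus $\geq 1$. (For well-definedness in the $\wt M=\R^3$ case there is an easier route you don't use: fibers lift to lines, so there are no contractible orbits at all, hence no planes; and your treatment of the spherical case by ``citing classical computations'' does not by itself give well-definedness for the specific contact form at hand.)

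The paper avoids projecting holomorphic curves altogether. It uses Scott's result that $B$ is a good orbifold to pull the Seifert fibration back over finite orbifold covers $B''$, where the Euler number $e(M'')=n\cdot e(M')$ can be made arbitrarily large in absolute value; then orbits with different (rational) winding numbers around the fiber are distinguished homologically in these covers, so no cylinders (holomorphic or not) connect them, while orbits with equal winding have equal action, so any cylinder between them has zero $d\alpha$-energy and is trivial. Nontriviality then follows from Bourgeois' Morse--Bott theory for the least-action part ($H_*(B;\Q)$ when $B$ is smooth) together with the observation that the singular-fiber orbits are elliptic of even degree, hence neither boundaries nor sources of rigid cylinders; well-definedness in the $S^3$-covered case is handled by noting there is no contractible orbit of Conley--Zehnder index $2$ in $S^3$. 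To repair your argument you would either need this covering-space/homological step, or a careful analysis of the local multiplicities of the projection at orbifold points showing it is a liftable orbifold map --- the latter is precisely the subtle point and is false for naive reasons in local models. Two smaller slips: when singular fibers exist the minimal period is $T_0/s$, not $T_0$ (your surviving class should be located by the sector/action filtration, not by minimality), and the goodness of the simple regular fiber is automatic for any simple orbit, not a consequence of minimality of $T_0$.
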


\begin{proof}
If $M\rightarrow B$ is the Seifert fibration by orbits of the Reeb
vector field, then view the closed, oriented base $B$ as an
orbifold.  Since we are disallowing the case when the page $S=D^2$,
$B$ is always a {\em good orbifold} in the sense of
Scott~\cite[Theorem~2.3]{Sc}, and admits a finite covering which is
a closed surface with no orbifold singularities.  Now, if we view a
Seifert fibered space $M$ as an orbifold circle bundle, then the
pullback bundle of an orbifold cover $\pi: B'\rightarrow B$ is a
genuine covering space $M'$ of $M$ \cite[Lemma~3.1]{Sc}.  Taking a
closed surface $B'$ with no singularities, we see that $M'$ is a
circle bundle over $B'$. The Euler number $e(M)$ lifts to the Euler
number $e(M')$, which is $e(M)$ times the degree of the cover. Since
$e(M)<0$, it follows that $e(M')<0$.

Suppose first that $B'\simeq S^2$.  Then the universal cover $\wt M$
of $M$ must be $S^3$ and the Reeb fibration becomes the Hopf
fibration. In particular, there can be no contractible periodic
orbit $\gamma$ of $M$ with Conley-Zehnder index $\mu(\gamma)=2$,
since there is none in $\wt M$. On the other hand, if $g(B')\geq 1$,
then $\wt M\simeq \R^3$.  Since every fiber of $M$ lifts to $\R$,
there are no contractible periodic orbits $\gamma$ of $M$. In either
case, the cylindrical contact homology is well-defined.

Next we prove that if $\gamma'$ winds $m'$ times around a regular
fiber and $\gamma''$ winds $m''$ times around a regular fiber, then
there are no holomorphic cylinders in the symplectization from
$\gamma'$ to $\gamma''$, provided $m'\not=m''$.  If there is such a
holomorphic cylinder in $\R\times M$, then there would be a cylinder
from $\gamma'$ to $\gamma''$ in $M$. Since the cylinder has the
homotopy type of, say, $\gamma'$, it can be lifted to $M'$ since
regular fibers are not expanded under bundle pullbacks.  Now
$e(M')<0$, so the homology class of a regular fiber is a generator
of $\Z/|e(M')|\Z\subset H_1(M;\Z)$. If we take an $n$-fold cover
$B''$ of $B'$, then the pullback $M''$ satisfies $e(M'')=n\cdot
e(M')$, and we can distinguish $\gamma'$ from $\gamma''$
homologically, provided $n$ is sufficiently large. Analogous
statements can also be made for multiple covers of singular fibers,
by simply viewing a singular fiber as a suitable fraction of a
regular fiber $F$.

Now suppose that $\wt M=\R^3$.  First suppose that $B$ does not have
any orbifold singular points.  Then the orbits of smallest action
are simple orbits around the $S^1$-fibers, parametrized by the base
$B$. Therefore, the portion of $HC(M,\xi)$ with the least action is
$H_*(B;\Q)$, by Bourgeois' Morse-Bott theory sketched in
Section~\ref{subsection: Morse-Bott}. Next suppose that the orbifold
singularities of $S/\psi$ have orders $s_1,\dots,s_m$, arranged in
nonincreasing order (these are the ``interior'' singularities).  The
orbifold singularities coming from the binding all have order $n$,
where $c_i={k_i\over n}$ as before. Hence the simple Reeb orbits
corresponding to the singular fibers are ${1\over s_1},\dots,{1\over
s_m}, {1\over n}$ of a regular fiber $F$.  They are all elliptic
orbits and have even parity, so there are no holomorphic cylinders
amongst them.  Hence simple orbits around the singular fibers
correspond to nontrivial classes in $HC(M,\xi)$.
\end{proof}

\begin{rmk}
The techniques involved in proving Theorem~\ref{thm:cylindrical} are
sufficient to completely determine the cylindrical contact homology
groups of the relevant contact structures.
\end{rmk}

\section{Rademacher functions} \label{section: Rademacher}

We now define the Rademacher function and its generalizations.  The
usual Rademacher function is a beautiful function on the Farey
tessellation, which admits an interpretation as a bounded cohomology
class in $H_b^2(SL(2,\Z))$.  For more details, see
\cite{BG,GG1,GG2}. The (generalized) Rademacher functions are used
to measure certain types of ``lengths" of arcs in the universal
cover $\wt{S}$ of a compact hyperbolic surface $S$ with geodesic
boundary.

In this section we do not make any assumptions about the number of
boundary components of $S$.

\subsection{The usual Rademacher function}
Let $S$ be a compact hyperbolic surface with geodesic boundary. We
first triangulate $S$ with geodesic arcs which begin and end on
$\bdry S$. Here the boundary of each ``triangle'' consists of three
geodesic arcs (which may happen to coincide), together with subarcs
of $\bdry S$. (Henceforth we omit the quotes when referring to {\em
triangles}. In general, when we refer to an {\em $n$-gon}, we will
not be counting the subarcs of $\bdry S$.) Let $\tau$ the set of
geodesic arcs of the triangulation that are not subarcs of $\bdry
S$.  Also let $\tilde \tau=\pi^{-1}(\tau)$, where
$\pi:\wt{S}\rightarrow S$ is the universal covering map.

The {\em Rademacher function} $\Phi$ is a function
$\tilde\tau\rightarrow \Z$, defined as follows:  Pick a reference
arc $a\in \tilde\tau$, and set $\Phi(a)=0$.  Given $a'\in \tilde
\tau$, take an {\em oriented} geodesic arc $\delta$ in $\wt{S}$ from
$a$ to $a'$.  Then $\Phi(a')$ is the number of right turns taken
minus the number of left turns taken along the path $\delta$ from
$a$ to $a'$. In other words, if $a',a'',a'''\in \tilde\tau$ form a
triangle in $\wt{S}$, where the edges are in counterclockwise order
around the triangle, and we have inductively defined $\Phi(a')$ but
not $\Phi(a'')$ and $\Phi(a''')$, then we set $\Phi(a'')=\Phi(a')+1$
and $\Phi(a''')=\Phi(a')-1$. Here the induction is on the distance
of the triangle from the reference arc $a$.

Let us also define $\Phi(\gamma)$, where $\gamma$ is an oriented arc
with endpoints on $a',a''\in \tilde \tau$, to be the number of right
turns minus the number of left turns of a geodesic representative of
$\gamma$.  We will write $\gamma^{-1}$ for $\gamma$ with reversed
orientation, and $\gamma\gamma'$ for the concatenation of $\gamma$,
followed by $\gamma'$.

\begin{figure}[ht]
  {\epsfxsize=3in\centerline{\epsfbox{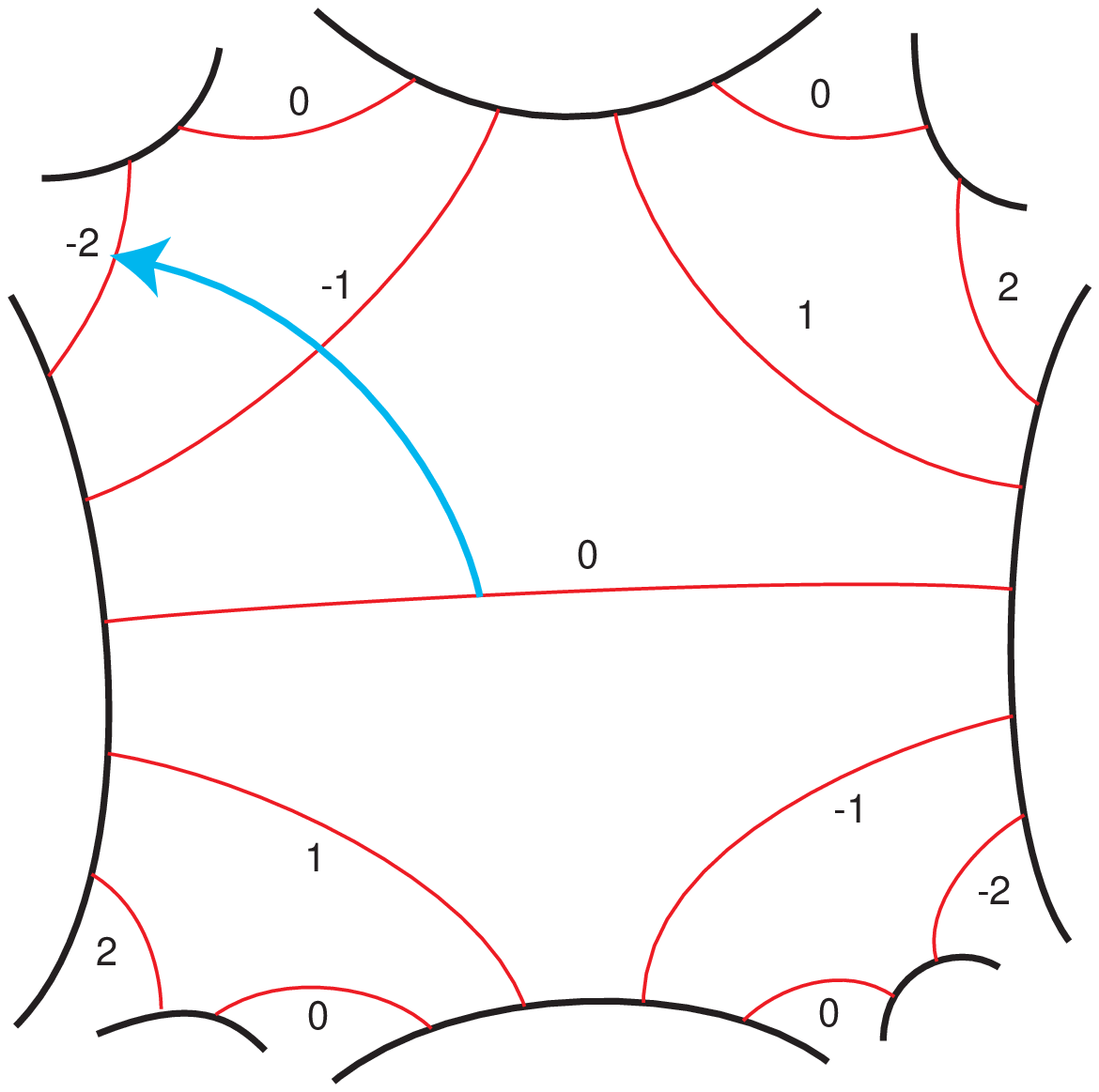}}}
\caption{The tessellation of the universal cover $\wt{S}$ of $S$ and
values of the Rademacher function on the tessellation (given right
next to each edge).} \label{rademacher}
\end{figure}

The Rademacher function has the following useful properties:

\begin{lemma}    \label{Rad}
Let $a',a'',a'''\in \tilde\tau$,  $\gamma$ be a geodesic arc from
$a'$ to $a''$ and $\gamma'$ be a geodesic arc from $a''$ to $a'''$.
Then: \be
\item $\Phi(\gamma^{-1}) = -\Phi(\gamma)$.
\item $\Phi(\gamma)+\Phi(\gamma')=\Phi(\gamma\gamma')+3\varepsilon$, where
$\varepsilon=-1,0,1$, depending on the angles made by $\gamma$ and
$\gamma'$.\ee
\end{lemma}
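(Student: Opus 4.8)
The plan is to work entirely inside the universal cover $\wt{S}$, where the triangulation $\tilde\tau$ gives an ideal triangulation and the Rademacher function is defined by counting signed turns along a geodesic path. First I would observe that both assertions are statements about a geodesic arc $\gamma$ (or a concatenation $\gamma\gamma'$) passing through a chain of triangles: such an arc enters each triangle through one edge and exits through another, and at each triangle it records a $+1$ (right turn) or $-1$ (left turn) according to which of the two remaining edges it exits by. Thus $\Phi(\gamma)$ is literally the sum of $\pm 1$'s over the triangles crossed, and the reference arc plays no role in $\Phi(\gamma)$ itself (it only normalizes the absolute values $\Phi(a')$). This combinatorial description is the key reduction; once it is in place, part (1) is immediate, since reversing the orientation of $\gamma$ traverses the same chain of triangles in the opposite order and swaps the roles of ``right'' and ``left'' at every triangle, negating each contribution.

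For part (2), I would compare the chain of triangles crossed by the concatenation $\gamma\gamma'$ with the union of the chains crossed by $\gamma$ and by $\gamma'$ separately. Away from the common endpoint $a''$, the geodesic representative of $\gamma\gamma'$ is (up to homotopy through triangles, hence without changing the turn count) the concatenation of the geodesic representatives of $\gamma$ and $\gamma'$, so all turns except those in the triangles adjacent to $a''$ contribute identically to both sides. The discrepancy is therefore localized: it is the difference between the turning that $\gamma$ followed by $\gamma'$ makes as it ``pivots'' around the edge $a''$, versus the turning made by the straightened arc $\gamma\gamma'$ in the same region. Because each triangle has exactly three edges, pivoting from the edge by which $\gamma$ arrives at $a''$ around to the edge by which $\gamma'$ departs can be accomplished by $0$, one, or two elementary moves in either the clockwise or counterclockwise sense, and straightening the corner at $a''$ either leaves the triangle count unchanged or shortcuts across one triangle. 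Bookkeeping these finitely many local configurations shows the two sides differ by $3\varepsilon$ with $\varepsilon\in\{-1,0,1\}$; the factor $3$ appears precisely because going all the way around a single edge $a''$ in a triangulation by triangles costs $\pm 3$ (the three triangles around a vertex-like pivot, or equivalently because $\Phi$ changes by $+1$ and $-1$ on the other two edges of each triangle and these must sum coherently around $a''$).

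The main obstacle I anticipate is making the ``localization at $a''$'' argument precise: one must be careful that homotoping the piecewise-geodesic $\gamma\cup\gamma'$ to its geodesic representative $\gamma\gamma'$ does not change the signed turn count except in a controlled neighborhood of $a''$, which requires knowing that a geodesic arc between two edges of $\tilde\tau$ crosses each triangle at most once and that the turn count is a homotopy invariant relative to the endpoints on $a'$ and $a'''$. I would handle this by invoking the standard fact (implicit in the setup of the Farey tessellation and the normal-curve picture) that any essential arc is isotopic rel its endpoint-edges to a unique taut/geodesic representative meeting the triangulation minimally, so that $\Phi$ is well-defined on isotopy classes; then the only genuine computation is the enumeration of the handful of corner configurations at $a''$, which is routine and is exactly what the ``depending on the angles made by $\gamma$ and $\gamma'$'' clause in the statement refers to.
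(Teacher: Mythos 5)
Your proposal is correct in outline and takes essentially the same route as the paper: part (1) is the same observation, and for part (2) the paper likewise localizes the discrepancy at $a''$, first computing the base case in which $a',a'',a'''$ bound a single triangle (where $\Phi(\gamma)+\Phi(\gamma')-\Phi(\gamma\gamma')=1+1-(-1)=3$ in the counterclockwise case, $-3$ in the clockwise case) and then reducing the general case to it by applying (1) to the cancelling terminal/initial subarcs of $\gamma$ and $\gamma'$, with $\varepsilon=0$ precisely when the concatenation is already efficient or the edge sequence crossed by $\gamma'$ reverses that of $\gamma$. One small correction: your heuristic that the $3$ comes from ``three triangles around a vertex-like pivot'' does not fit this triangulation --- the arcs of $\tilde\tau$ end on $\bdry \wt{S}$, there are no interior vertices, and $a''$ is adjacent to only two triangles --- the $3$ is exactly the single-triangle computation above, which is the one piece of bookkeeping your sketch leaves implicit.
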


\begin{proof}
(1) is immediate --- a right turn becomes a left turn when traveling
in the other direction.

To prove (2), suppose first that $a',a'',a'''$ form a triangle in
$\wt{S}$.  If $a',a'',a'''$ are in counterclockwise order, then
$\Phi(\gamma)+\Phi(\gamma') = \Phi(\gamma\gamma') +3$;  if
$a',a'',a'''$ are in clockwise order, then
$\Phi(\gamma)+\Phi(\gamma')=\Phi(\gamma\gamma')-3$.  We can then
reduce to the above situation by applying (1) to subarcs of $\gamma$
and $\gamma'$ that cancel.  Observe that  $\varepsilon=0$ happens
when either (i) the concatenation of $\gamma$ and $\gamma'$ is
already efficient with respect to $\tilde\tau$, i.e.,
$\gamma\gamma'$ and its geodesic representative intersect
$\tilde\tau$ in the same number of times, or (ii) the sequence of
arcs of $\tilde\tau$ intersecting $\gamma'$ is exactly the reverse
of those intersecting $\gamma$ (or vice versa).
\end{proof}

\subsection{Rademacher functions for periodic diffeomorphisms}

We initially envisioned a more complicated proof of
Theorem~\ref{thm:cylindrical} which involved Rademacher functions.
Although no longer logically necessary, in this subsection we
describe Rademacher functions which are adapted to periodic
diffeomorphisms.

Let $\psi$ be a periodic diffeomorphism on $S$ and let $S'$ be the
orbifold obtained by quotienting $S$ by the action of $\psi$. (For
more details on 2-dimensional orbifolds, see \cite{Sc}.) The
orbifold $S'$ will have the same number of boundary components as
$S$, and $m$ orbifold singularities.  Assume $S'$ is not a disk with
$m=1$. Then cut up $S'$ using (not always geodesic) arcs from $\bdry
S'$ to itself which do not pass through any orbifold singularities,
so that the complementary regions are either (1) triangles which do
not contain any singularities, or (2) monogons containing exactly
one singularity.   Denote the union of such arcs on $S'$ by $\tau'$,
their preimage on $S$ by $\tau$, and the preimage in the universal
cover $\wt{S}$ by $\tilde\tau$. The connected components of $\wt{S}-
\tilde\tau$ are $s$-gons, where $s>1$ (no monogons!). In particular,
if we have a connected component of $\wt{S}- \tilde\tau$ which
projects to a monogon containing a singularity of order $s$, then
the component is an $s$-gon.

We now define the (generalized) Rademacher function $\Phi$ on the
oriented geodesic arcs $\gamma$ of $\wt{S}$ which have endpoints on
$\tilde \tau$.  The function $\Phi$ will now take values in $\Q$
instead of $\Z$. We define $\Phi(\gamma)$ to be the sum, over the
set of $s$-gons $P$ intersecting $\gamma$ in their interior, of
$\Phi(\gamma|_{P})$, so we may assume $\gamma$ to be an arc in $P$.
Order the edges of $P$ in $\tau$ in counterclockwise order to be
$a_0,a_1,\dots,a_{s-1}$. If $\gamma$ goes from $a_0$ to $a_i$, then
define
$$\Phi(\gamma|_{P})= 3\left({s-2 -2(i-1)\over s}\right)=3-{6i\over s}.$$
Observe this formula agrees with the previous definition of the
Rademacher function when $\tilde\tau$ consists only of triangles.
Also, it is possible that $s=2$, in which case
$\Phi(\gamma|_{P})=0$. [It is instructive to compute
$\Phi(\gamma|_{P})$ if $\gamma$ connects $a_0$ to $a_i$ and $s=7$.
In that case, the values are, in counterclockwise order, ${15\over
7}, {9\over 7}, {3\over 7}, -{3\over 7}, -{9\over 7}, -{15\over
7}$.]

It is not difficult to see that the generalized $\Phi$ also
satisfies Lemma~\ref{Rad}.  Also observe that $\Phi$ is {\em
invariant} under $\psi$.

\subsection{Rademacher functions for pseudo-Anosov
homeomorphisms}

Let $S$ be a compact oriented surface endowed with a hyperbolic
metric so that $\bdry S$ is geodesic, and let $\psi$ be a
pseudo-Anosov homeomorphism of $S$.  The reader is referred to
\cite{FLP} for the stable/unstable foliation perspective and to
\cite{Bn,CB} for the lamination perspective.

We will first explain the Rademacher function $\Phi$ from the
lamination perspective, and later rephrase the definition in the
language of singular foliations.  The well-definition of the
Rademacher function and its properties are easier to see in the
lamination context, whereas the characteristic foliations that we
construct in Section~\ref{section: construction} will closely hew to
the stable foliation.

In the geodesic lamination setting, $\Phi$ of an oriented arc
$\gamma:[0,1]\rightarrow S$ is defined as follows:  Let
$\Lambda=\Lambda^s$ be the stable lamination and $\S$ be the union
of all the prongs of $S$. Then isotop $\gamma$ relative to its
endpoints so that $\gamma$ is geodesic, or at least intersects
$\S\cup\Lambda$ efficiently (assuming $\gamma$ is not contained in
$\S\cup\Lambda$). Also let $\wt\Lambda$ and $\wt{\S}$ be the
preimages of $\Lambda$ and $\S$ in the universal cover $\pi:\wt
S\rightarrow S$, and $\wt\gamma$ be any lift of $\gamma$. Now
consider the (open) intervals of
$\operatorname{Im}(\wt\gamma)-(\wt{\S}\cup \wt\Lambda)$. Then
$\Phi(\gamma)$ is a signed count of intervals, both of whose
endpoints lie on $\wt{\S}$. (We throw away all other intervals!) The
sign is positive if the interval is oriented in the same direction
as $\bdry \wt{S}$, and negative otherwise. Although there are
infinitely many intervals of
$\operatorname{Im}(\wt\gamma)-(\wt{\S}\cup \wt\Lambda)$, the sum
$\Phi(\gamma)$ is finite. In fact, if $Q$ is a connected component
of $S-(\S\cup\Lambda)$ which nontrivially intersects $\bdry S$, and
$\wt{Q}$ is its lift to the universal cover, then the distance
between two lifts $\wt{P}_j$, $\wt{P}_{j+1}$ of prongs on $\bdry
\wt{Q}$ is bounded below.  See Figure~\ref{rade} for a sample
calculation of $\Phi(\gamma)$. We will usually blur the distinction
between arcs and isotopy classes of arcs.

\begin{figure}[ht]
\begin{overpic}[height=1.5in]{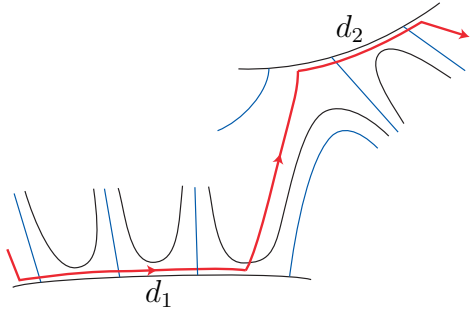}
% To see grid, insert "grid" after "height=?in"
\put(30,-3){$d_1$} \put(71,54) {$d_2$}\end{overpic} \vskip.15in
\caption{The Rademacher function $\Phi$ on the given arc is $1$,
with a contribution of $2$ from the component $d_1$ of $\bdry\wt{S}$
and a contribution of $-1$ from the component $d_2$ of
$\bdry\wt{S}$.  Here the blue arcs are the lifts of the prongs.}
\label{rade}
\end{figure}

\begin{proposition}\label{prop: rademacher}
The Rademacher function $\Phi$ satisfies the following: \be
\item $\Phi$ is invariant under $\psi$;
\item $\Phi(\gamma^{-1})=-\Phi(\gamma)$;
\item $\Phi (\gamma \gamma' )=\Phi (\gamma )+\Phi (\gamma'
)+\varepsilon$, where $\varepsilon =-1,0$ or $1$;
\item Let $\gamma$ be an arc which parametrizes
a component $(\bdry S)_i$ of $\bdry S$, i.e., $\gamma(0)=\gamma(1)$
and $\gamma$ wraps once around $(\bdry S)_i$, in the direction of
the boundary orientation of $S$. If $\gamma(0) \in \S$, then $\Phi
(\gamma )=n_i$, where $n_i$ is the number of prongs along $(\bdry
S)_i$, and if $\gamma(0) \notin \S$, then $\Phi (\gamma )= n_i-1$.
\ee
\end{proposition}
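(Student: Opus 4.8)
The plan is to establish the four properties of the generalized Rademacher function $\Phi$ for a pseudo-Anosov $\psi$ by working directly from its definition as a signed count of complementary intervals of $\wt\gamma - (\wt\S \cup \wt\Lambda)$ whose endpoints both lie on $\wt\S$. First I would dispose of (1): since $\psi$ preserves the stable lamination $\Lambda^s$ and permutes the prongs, it sends $\S$ to $\S$ and $\Lambda$ to $\Lambda$; lifting to $\wt S$, any lift of $\psi$ carries the pattern $\wt\S \cup \wt\Lambda$ to itself and preserves the direction of $\bdry\wt S$, so the signed count attached to $\wt\gamma$ equals that attached to the image lift $\widetilde{\psi(\gamma)}$. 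Property (2) is equally direct: reversing the orientation of $\gamma$ reverses the orientation of every complementary interval of $\wt\gamma - (\wt\S\cup\wt\Lambda)$, so each interval counted with sign $\pm 1$ is now counted with sign $\mp 1$, and the set of intervals with both endpoints on $\wt\S$ is unchanged; hence $\Phi(\gamma^{-1}) = -\Phi(\gamma)$. In both cases one must first check that $\Phi$ is well-defined, i.e. independent of the efficient representative of $\gamma$ and of the choice of lift $\wt\gamma$ — this follows from the finiteness statement already asserted in the text (the lifts of prongs on the boundary of a complementary region meeting $\bdry S$ are a bounded distance apart) together with the fact that an isotopy between two efficient representatives changes the intersection pattern with $\wt\S\cup\wt\Lambda$ only by cancelling bigons, which do not affect the count of doubly-$\wt\S$-anchored intervals.

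The additivity property (3) is the technical core and is where I expect the main difficulty. Choose lifts so that $\wt\gamma$ and $\wt{\gamma'}$ concatenate to a lift $\wt{\gamma\gamma'}$ passing through a common point $p = \gamma(1) = \gamma'(0)$; I would take the representatives to be geodesic (or efficient) on each piece. The intervals of $\operatorname{Im}(\wt{\gamma\gamma'}) - (\wt\S\cup\wt\Lambda)$ that lie entirely in $\wt\gamma$ or entirely in $\wt{\gamma'}$ contribute exactly $\Phi(\gamma) + \Phi(\gamma')$, up to the subtlety of the one interval straddling the junction point $p$. So the error term $\varepsilon$ measures the discrepancy caused at $p$: depending on whether $p$ lies on $\wt\S$, on $\wt\Lambda$, or in a complementary region, and depending on whether the concatenated arc needs to be isotoped (removing a bigon, i.e. a "backtrack") to become efficient, the straddling interval may be anchored on $\wt\S$ on both sides, one side, or neither, and the efficient representative of $\gamma\gamma'$ may lose or merge some intervals near $p$. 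The claim $\varepsilon \in \{-1,0,1\}$ should come from a local analysis at $p$: the only intervals that can be created or destroyed are those immediately adjacent to $p$, and at most one unit of signed count is gained or lost there. I would enumerate the cases — $p \in \S$ versus $p \notin \S$, and efficient junction versus inefficient junction — mirroring the case analysis in Lemma~\ref{Rad}(2), and check that in each the net change is one of $-1, 0, 1$. The analogy with the triangle/polygon argument for the periodic case, where $\varepsilon$ was $3\cdot\{-1,0,1\}$, suggests that in the pseudo-Anosov normalization the jump is a single unit rather than three.

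For property (4), let $\gamma$ parametrize the boundary component $(\bdry S)_i$ once in the positive direction, starting at a point $\gamma(0)$. Lift to $\wt\gamma$, an arc in $\wt S$ running alongside a lift $d$ of $(\bdry S)_i$; as one traverses $d$ once, one passes the $n_i$ lifts of prongs $\wt P_1, \dots, \wt P_{n_i}$ hitting this boundary circle. The relevant intervals of $\wt\gamma - (\wt\S \cup \wt\Lambda)$ anchored on $\wt\S$ at both ends are precisely the $n_i$ gaps between consecutive prong-lifts met along the way (here one uses that $\Lambda$ is disjoint from $\bdry S$, so near $d$ the only pieces of $\wt\S\cup\wt\Lambda$ crossed are the prong-lifts themselves), each oriented in the direction of $\bdry\wt S$ and hence counted $+1$. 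When $\gamma(0) \in \S$, the basepoint sits on a prong-lift, the arc from $\gamma(0)$ back to $\gamma(1)$ sees all $n_i$ consecutive gaps, giving $\Phi(\gamma) = n_i$; when $\gamma(0) \notin \S$, the basepoint lies in the interior of one of these gaps and that gap is split into two intervals, one with only its far endpoint on $\wt\S$ and one with only its near endpoint on $\wt\S$ — neither qualifies — so exactly one gap is lost and $\Phi(\gamma) = n_i - 1$. I would draw the local picture near $d$ to make this count transparent, and remark that this is consistent with (3): going around the loop and applying additivity $n_i$ times to one-prong-segments should recover $n_i$ (or $n_i - 1$) with all the $\varepsilon$'s equal to $0$ in the efficient case. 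The main obstacle throughout remains the careful bookkeeping of intervals near junction points in (3); the rest is a direct reading of the definition.
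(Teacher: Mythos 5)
The arguments you give for (1), (2), and (4) are essentially the paper's; the paper dismisses (2) and (4) as "straightforward," and your interval-by-interval accounting near a boundary lift is precisely what makes (4) work, so no concerns there. For (1), the paper makes one more point explicit than you do — that $\psi$ maps a geodesic $\gamma$ to a curve that may not be geodesic but still intersects $\S\cup\Lambda$ efficiently, and that $\psi$ is "type-preserving" on intervals — but these are exactly the checks your plan calls for, so this is the same approach.

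The gap is in (3), and it is a real one. Your key assertion — ``the only intervals that can be created or destroyed are those immediately adjacent to $p$, and at most one unit of signed count is gained or lost there'' — is false as stated. If $\gamma$ and $\gamma'$ fellow-travel for a long stretch near $p$ (so that $\gamma\gamma'$ has a large backtrack), then pulling $\gamma\gamma'$ tight to its efficient representative destroys many crossings with $\wt\S\cup\wt\Lambda$, and correspondingly many intervals are lost — these losses are not confined to one interval at $p$. What is true, and what your argument needs, is that the destroyed crossings pair up (each is a single leaf or prong crossed once by $\wt\gamma$ and once by $\wt\gamma'$, in reverse orders) and their interval contributions to $\Phi(\gamma)+\Phi(\gamma')$ cancel in pairs, leaving only a bounded local error. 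You neither state nor prove this cancellation. The paper avoids the issue by a different organization: it first iteratively shortens $\gamma$ and $\gamma'$ by contracting the backtracking piece $\gamma_1\gamma_0'$ to a point on the common leaf $\wt L$, invoking (2) to keep $\Phi(\gamma)+\Phi(\gamma')$ unchanged, and only \emph{then} — once $\gamma$ and $\gamma'$ live in a single complementary region and diverge immediately at $p$ — performs the local case analysis (the four cases (i)--(iv)). Either you should adopt this reduction, or you must actually prove the pairwise cancellation of the destroyed intervals; without one of these, the bound $\varepsilon\in\{-1,0,1\}$ does not follow from what you have written.
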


\begin{proof}
(2) and (4) are straightforward.

(1) Recall that $\psi(\Lambda)=\Lambda$.  This implies that $\psi$
maps complementary regions of $\Lambda$, i.e., connected components
of $S-\Lambda$, to complementary regions of $\Lambda$.  In
particular, an interior $n$-gon is either mapped to itself (its
edges might be cyclically permuted) or mapped to another interior
$n$-gon (with the same $n$).  A semi-open annulus region $A$, i.e.,
$n$-gon with a disk removed, is also mapped to itself, and its edges
cyclically permuted.  We may also assume that the prongs along the
boundary component $(\bdry S)_i$ are cyclically permuted.

Therefore, if $\gamma$ is a geodesic arc, then $\psi(\gamma)$ is not
necessarily geodesic, but at least intersects $\S\cup \Lambda$
efficiently. Moreover, $\psi$ is type-preserving: intervals of
$\operatorname{Im}(\wt\gamma)-(\wt{\S}\cup \wt\Lambda)$ with
endpoints on $\wt\S$ get mapped to intervals with endpoints on
$\wt\S$, and intervals without both endpoints on $\wt\S$ get mapped
to intervals without both endpoints on $\wt\S$. This proves (1).

(3)  First isotop $\gamma$ and $\gamma'$ relative to their endpoints
so that they are geodesic. Then lift $\gamma$, $\gamma'$ and
$\gamma\gamma'$ to $\wt{S}$.  We abuse notation and omit tildes,
with the understanding that the terminal point of $\gamma$ is the
initial point of $\gamma'$, even in the universal cover.

Suppose $\gamma$ and $\gamma'$ can be factored into
$\gamma_0\gamma_1$ and $\gamma_0'\gamma_1'$, respectively, where the
initial point of $\gamma_1$ and the terminal point of $\gamma_0'$
lie on the same leaf $\wt L$ of $\wt \Lambda$.  In that case we may
contract $\gamma_1\gamma_0'$ to a point on $\wt L$, using (2) in the
process.  By successively shortening $\gamma$ and $\gamma'$ if
possible, we are reduced to the cases (i), (ii), (iii), or (iv)
below.

Let $\wt Q$ be a connected component of $\wt S-(\wt{\S}\cup
\wt\Lambda)$ which nontrivially intersects $\bdry \wt{S}$, and let
$\wt Q'$ be a lift of an interior $m$-gon.

\s (i). Suppose $\gamma$ and $\gamma'$ are arcs in $\wt Q'$. There
are no contributions from interior $m$-gons, so $\Phi(\gamma)$,
$\Phi(\gamma')$, and $\Phi(\gamma\gamma')$ are all zero.

\s (ii). Suppose $\gamma, \gamma'$ are arcs in $\wt Q$, and all the
endpoints of $\gamma$, $\gamma'$ lie on $\bdry \wt Q\cap
(\wt{\S}\cup \wt\Lambda)$. If $\gamma(0)$ and $\gamma'(1)$ lie on
the same leaf of $\wt\S\cup \wt\Lambda$, then
$\Phi(\gamma\gamma')=\Phi(\gamma)+\Phi(\gamma')$.  Hence we may
assume that $\gamma(0)$ and $\gamma'(1)$ lie on distinct leaves.
Depending on whether $\gamma(0)$, $\gamma(1)=\gamma'(0)$,
$\gamma'(1)$ are in counterclockwise order or not, we have
$\Phi(\gamma\gamma')=\Phi(\gamma)+\Phi(\gamma')\pm 1$.

\s (iii). Suppose $\gamma,\gamma'$ are arcs in $\wt Q$, and
$\gamma(0), \gamma'(1)$ lie on $\bdry \wt Q\cap (\wt\S\cup
\wt\Lambda)$, but $\gamma(1)=\gamma'(0)$ does not. Then
$\Phi(\gamma)=\Phi(\gamma')=0$, and $\Phi(\gamma\gamma')$ is $0$ or
$1$.  (A similar consideration holds if two of $\gamma(0)$,
$\gamma(1)=\gamma'(0)$, $\gamma'(1)$ lie on $\bdry \wt Q\cap
(\wt{\S}\cup\wt\Lambda)$.)

\s (iv). Suppose $\gamma, \gamma'$ are arcs in $\wt Q$, and
$\gamma(0), \gamma'(1)$ do not lie on $\bdry \wt Q\cap (\wt{\S}\cup
\wt\Lambda)$.  Then $\Phi(\gamma)$, $\Phi(\gamma')$, and
$\Phi(\gamma\gamma')$ are all zero.
\end{proof}

Next we translate the definition of $\Phi$ into the singular
foliation language. Let $\F^s$ (resp.\ $\F^u$) be the invariant
stable (resp.\ unstable) foliation of $S$ with respect to $\psi$. We
will take $\F=\F^s$. The boundary of $S$ is tangent to $\F$, and
$\F$ has $n_i$ singular points of saddle type along the $i$th
component of $(\bdry S)_i$ of $\partial S$. Here $n_i$ is also the
number of prongs that end on the $(\bdry S)_i$ in the lamination
picture. Let $\S$ be the union of separatrices of the saddle points
on $\bdry S$ that are not tangent to $\bdry S$.  (This set
corresponds to the union of the prongs in the lamination picture.)
Then $\wt{\S}=\pi^{-1}(\S)$ can be decomposed into a disjoint union
of sets $\wt{\S}_d$, where $d$ is a component of $\partial \wt{S}$
and $\wt{\S}_d$ is the union of components of $\wt {\S}$ which
intersect $d$.

Given an oriented arc $\gamma:[0,1]\rightarrow S$, we isotop it,
relative to its endpoints, to an oriented arc $\gamma'$ so that (i)
$\gamma'$ has efficient intersection with $\S$, (ii) $\gamma'$ is
piecewise smooth, and (iii) each smooth piece is either transversal
to $\F$ away from the interior singularities, or is contained in
$\partial S$.  Such an arc $\gamma'$ is called a {\em
quasi-transversal} arc, in the terminology of \cite[Expos\'e 5
(I.7)]{FLP}. The proof of the existence of the isotopy is given in
\cite[Expos\'e 12 (Lemma 6)]{FLP}.  To pass from the geodesic
lamination $\Lambda$ to the foliation $\F$, we collapse the
interstitial regions of $\Lambda$. If we take a geodesic
representative $\gamma''$ of $\gamma$, then the desired
quasi-transversal arc $\gamma'$ is the image of $\gamma''$ under the
collapsing operation.

We now rephrase $\Phi(\gamma)$ with respect to $\mathcal{F}$. Given
the arc $\gamma$, choose a quasi-transversal representative
$\gamma'$ with the same endpoints, and let $\wt\gamma'$ be any lift
of $\gamma'$ to $\wt{S}$. Then $\Phi (\gamma)$ is the sum, over all
components $d$ of $\partial \wt{S}$, of the signed number of
intervals of $\operatorname{Im}(\wt\gamma')- \wt{\S}_d$ that do not
contain an endpoint of $\wt\gamma'$.  The signs of the intervals are
assigned as follows: positive if $\wt\gamma'$ is oriented in the
same direction as $\bdry \wt S$ along the interval, and negative
otherwise. Alternatively, $\Phi(\gamma)$ is the sum of the signed
number of intersections of $\wt{\S}_d$ with $\wt\gamma'$ minus one,
if we have at least one intersection.

The are also slight variants of $\Phi$ described above.  The
simplest modification is to use the unstable lamination instead of
the stable one.  Also, we can take the universal cover of $S-\cup_i
D_i$, where $D_i$ are small disks removed from interior $n$-gons;
this version then also counts contributions along interior $n$-gons.
However, our $\Phi$ and its variants are ``fake'' Rademacher
functions, which only register boundary rotations and discard all
other intersections with $n$-gons. We close this section with a
question:

\begin{q}
Is there a ``genuine'' Rademacher function $\Phi(\gamma)$ which is
adapted to a stable geodesic lamination $\Lambda^s$ in the sense
that it actually somehow sums the ``left turn'' and ``right turn''
contributions of $\gamma$, where the sum is over all the intervals
$\gamma-\Lambda^s$.
\end{q}

\section{Construction of the Reeb vector field}\label{section: construction}

\subsection{First return maps} \label{subsection: flux}

Let $S$ be a compact oriented surface with nonempty boundary,
$\omega$ be an area form on $S$, and $h$ be an area-preserving
diffeomorphism of $(S,\omega)$.  Suppose for the moment that
$h|_{\bdry S}$ is not necessarily $id$, but does not permute the
boundary components.

Consider the mapping torus $\Sigma (S,h)$ of $(S,h)$, which we
define as $(S\times [0,1])/(x,1) \sim (h(x),0)$. Here $(x,t)$ are
coordinates on $S\times[0,1]$. If there is a contact form $\alpha$
on $\Sigma(S,h)$ for which $d\alpha \vert_{S\times \{0\}} =\omega$
and the corresponding Reeb vector field $R_\alpha$ is directed by
$\bdry_t$, then we say $h$ is {\em the first return map of
$R_\alpha$}.

We are interested in the realizability of a given pseudo-Anosov
$\psi$ as the first return map of some $R_\alpha$, after possibly
perturbing $\psi$ near the singular points to make $\psi$ smooth. We
summarize the following results from \cite{CHL}:

\s\n {\bf Fact 1.} If $h^*-id: H^1(S;\R)\rightarrow H^1(S;\R)$ is
invertible, then $h$ can be realized as the first return map of some
$R_\alpha$.  Hence, a pseudo-Anosov homeomorphism $\psi$ (after a
small perturbation near its singular points) isotopic to such an $h$
can be realized as the first return map of some $R_\alpha$.

\s\n {\bf Fact 2.} On the other hand, there exist pseudo-Anosov
homeomorphisms $\psi$ which (even after a small perturbation near
its singular points) cannot be realized as the first return map of
any $R_\alpha$.

\s If $\psi$ is realizable, then we can use $R_\alpha$ and avoid the
technicalities of the rest of Section~\ref{section: construction}.
If $\psi$ is not realizable as a first return map of a Reeb vector
field $R_\alpha$, then there are two strategies: (1) Enlarge the
class of vector fields to the class of {\em stable Hamiltonian}
ones, as described in \cite{BEHWZ}.  The drawback is that one needs
to prove invariance of the generalized contact homology groups using
the bifurcation strategy, instead of the continuation method which
is usually used in contact homology. (2) Carefully construct a Reeb
vector field for which we have some control over the periodic
orbits.  The drawback of this approach is that the construction is
rather complicated. Since the details of the bifurcation strategy do
not exist in the literature at this moment, we opt for (2).  This
will occupy the rest of the section.

\subsection{Preliminary constructions}

\subsubsection{Construction of contact $1$-form on $S\times[0,1]$}

Let $S$ be a compact oriented surface with nonempty boundary.
Consider $S\times[0,1]$ with coordinates $(x,t)$. We will first
construct a contact $1$-form $\alpha$ and the corresponding Reeb
vector field $R$ on $S\times[0,1]$.

\begin{lemma} \label{interpolation}
Given $1$-forms $\beta_0, \beta_1$ on $S$ which agree near $\bdry S$
and which satisfy $d\beta_i>0$, $i=0,1$, there exist contact
$1$-forms $\alpha=\alpha_\varepsilon$ and Reeb vector fields
$R=R_\varepsilon$ on $S\times[0,1]$, depending on $\varepsilon>0$
sufficiently small, which satisfy the following properties:

\begin{enumerate}
\item $\alpha=dt +\varepsilon\beta_t$, where $\beta_t$,
$t\in[0,1]$, is a $1$-form on $S$ which varies smoothly with $t$.

\item $R$ is directed by ${\bdry\over \bdry t}+Y$,
where $Y=Y_\varepsilon$ is tangent to $\{t=const\}$.

\item $Y=0$ in a neighborhood of $(\bdry S)\times[0,1]$.

\item At points $x\in S$ where $\beta_0$ and $\beta_1$
have the same kernel, $Y$ is tangent to $\ker \beta_0=\ker \beta_1$.

\item The direction of the Reeb vector field $R_\varepsilon$ does not depend on
the choice of $\varepsilon>0$, as long as $\varepsilon$ is
sufficiently small to satisfy the contact condition.

\item By taking $\varepsilon>0$ sufficiently small, $R_\varepsilon$ can be made
arbitrarily close to ${\bdry \over \bdry t}+Y$.

\end{enumerate}
\end{lemma}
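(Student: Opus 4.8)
The plan is to construct $\alpha_\varepsilon = dt + \varepsilon \beta_t$ directly and then verify the listed properties by a short computation, choosing $\beta_t$ to interpolate between $\beta_0$ and $\beta_1$ in a way that is constant near the boundary. Concretely, fix a smooth monotone function $\chi:[0,1]\to[0,1]$ with $\chi\equiv 0$ near $0$ and $\chi\equiv 1$ near $1$, and set $\beta_t = (1-\chi(t))\beta_0 + \chi(t)\beta_1$. Since $\beta_0=\beta_1$ near $\bdry S$, we have $\beta_t$ independent of $t$ near $\bdry S$, which will give property (3) for free. Write $\beta_t = \beta_0 + \chi(t)(\beta_1-\beta_0)$, so $\partial_t\beta_t = \chi'(t)(\beta_1-\beta_0)$, and note $d\beta_t = (1-\chi(t))d\beta_0 + \chi(t)d\beta_1 > 0$ for all $t$ since it is a convex combination of positive area forms.

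Next I would check the contact condition and identify the Reeb field. For $\alpha = dt + \varepsilon\beta_t$ we compute $d\alpha = \varepsilon\, d_S\beta_t + \varepsilon\, dt\wedge \partial_t\beta_t$, so
$$\alpha\wedge d\alpha = \varepsilon\, dt\wedge d_S\beta_t + \varepsilon^2\, \beta_t\wedge dt\wedge\partial_t\beta_t + \varepsilon^2\, \beta_t\wedge d_S\beta_t.$$
On the surface only terms containing $dt\wedge(\text{2-form on }S)$ survive, so $\alpha\wedge d\alpha = \varepsilon\, dt\wedge d_S\beta_t + \varepsilon^2\, dt\wedge(\partial_t\beta_t\wedge\beta_t)$ (reordering), which is $\varepsilon\, dt\wedge(d_S\beta_t + \varepsilon\,\partial_t\beta_t\wedge\beta_t)$. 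Since $d_S\beta_t>0$ is an area form and $\partial_t\beta_t\wedge\beta_t$ is a fixed continuous $2$-form on the compact $S\times[0,1]$, for $\varepsilon>0$ small enough $d_S\beta_t + \varepsilon\,\partial_t\beta_t\wedge\beta_t>0$, so $\alpha$ is contact; this gives property (1) and the smallness condition referenced in (5). To find $R$, write $R = a(\partial_t + Y)$ with $Y$ tangent to the surface slices. From $\alpha(R)=1$ we get $a(1+\varepsilon\beta_t(Y))=1$; from $i_R d\alpha = 0$, the $dt$-component forces $\beta_t$-pairings and the $S$-component gives $\varepsilon\, i_Y d_S\beta_t + \varepsilon\,(\partial_t\beta_t) - \varepsilon\,\beta_t(Y)\cdot 0 \ldots$ — more precisely, splitting $d\alpha = \varepsilon d_S\beta_t + \varepsilon\,dt\wedge\partial_t\beta_t$ and using $i_{\partial_t}d_S\beta_t=0$, the equation $i_R d\alpha=0$ reduces to $i_Y d_S\beta_t = -\partial_t\beta_t + (\text{multiple of }\beta_t)$, with the scalar fixed by consistency. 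Since $d_S\beta_t$ is nondegenerate on the $2$-plane $T S$, this uniquely determines $Y=Y_\varepsilon$; in fact one sees $Y$ solves $i_Y d_S\beta_t = -\partial_t\beta_t \pmod{\beta_t}$, and because $\partial_t\beta_t = \chi'(t)(\beta_1-\beta_0)$, near $\bdry S$ where $\beta_0=\beta_1$ we get $\partial_t\beta_t=0$ hence $Y=0$, giving (3). At points where $\ker\beta_0=\ker\beta_1$, the form $\beta_1-\beta_0$ is a scalar multiple of $\beta_0$ at that point, so $i_Y d_S\beta_t$ is a multiple of $\beta_0=\beta_t$, forcing $Y\in\ker(d_S\beta_t(\cdot)\bmod\beta_t)$, i.e. $Y$ tangent to $\ker\beta_0=\ker\beta_1$; that is (4). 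Property (2) is immediate from $R=a(\partial_t+Y)$ (rescaling the parametrization so that $R$ is literally directed by $\partial_t+Y$), (5) follows because the defining equations for the \emph{direction} of $R$ are $\varepsilon$-independent up to an overall positive scalar (only the contact/nondegeneracy constraint uses smallness), and (6) follows since as $\varepsilon\to 0$ the perturbation term $\varepsilon\,\partial_t\beta_t\wedge\beta_t$ vanishes and $Y_\varepsilon\to Y_0$ solving $i_{Y_0} d_S\beta_t = -\partial_t\beta_t\bmod\beta_t$ in $C^\infty$.

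The main obstacle I expect is bookkeeping in the linear-algebra step that extracts $Y$ and the overall scalar $a$ from $i_R d\alpha=0$ and $\alpha(R)=1$ cleanly enough to see simultaneously that (i) a solution exists and is unique, (ii) it is smooth in $(x,t)$ and in $\varepsilon$, and (iii) the boundary-support and kernel-tangency conditions (3),(4) drop out of the \emph{same} formula. The cleanest route is probably to work fiberwise: on each $2$-plane $TS_x$, $d_S\beta_t$ is a symplectic form, so the map $Z\mapsto i_Z d_S\beta_t$ is an isomorphism $TS_x \to T^*S_x$; one then writes the $S$-part of $i_R d\alpha=0$ as a single linear equation in this symplectic pairing, solves for $Y$ uniquely modulo the $\beta_t$-direction, and pins down the remaining freedom using $\alpha(R)=1$ together with the $\partial_t$-component of $i_R d\alpha=0$. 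Everything else (openness of the contact and positivity conditions on the compact $S\times[0,1]$, smooth dependence, taking $\varepsilon\to 0$) is routine once that fiberwise normal form is in hand.
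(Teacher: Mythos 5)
Your construction is the same as the paper's: the same $\chi$-interpolation $\beta_t$, the same computation $\alpha\wedge d\alpha=\varepsilon\, dt\wedge\omega_t-\varepsilon^2\chi'(t)\,dt\wedge\beta_0\wedge\beta_1$ (hence contact for $\varepsilon$ small), and the same identification of the Reeb direction as $\partial_t+Y$ with $Y$ solved for via the nondegeneracy of $\omega_t=(1-\chi(t))d\beta_0+\chi(t)d\beta_1$ on each slice. The one step that is garbled is the extraction of $Y$: there is no ``multiple of $\beta_t$'' ambiguity to resolve, and $\alpha(R)=1$ could never resolve one in any case (it fixes only the scale $a$, never the direction). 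Contracting $d\alpha=\varepsilon\bigl(\omega_t+\chi'(t)\,dt\wedge(\beta_1-\beta_0)\bigr)$ with $\partial_t+Y$ and separating the terms containing $dt$ from those that do not gives exactly two equations: $i_Y\omega_t+\chi'(t)(\beta_1-\beta_0)=0$ and $\chi'(t)(\beta_1-\beta_0)(Y)=0$. The first determines $Y$ uniquely, smoothly, and with no $\varepsilon$ anywhere in it, because $\omega_t$ is an area form; the second is then automatic (evaluate the first on $Y$ and use antisymmetry of $\omega_t$). So your proposed normal form ``$i_Yd_S\beta_t=-\partial_t\beta_t \bmod \beta_t$,'' which does not pin $Y$ down, should simply be discarded; as written it is also internally inconsistent with your claim of uniqueness.

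With the exact equation $i_Y\omega_t=\chi'(t)(\beta_0-\beta_1)$ in hand, everything you want drops out as in the paper: (3) is immediate since $\beta_0=\beta_1$ near $\partial S$; (4) follows by the same evaluation-on-$Y$ trick (if $\beta_1-\beta_0=c\,\beta_0$ at $x$, then $0=\omega_t(Y,Y)=-\chi'(t)\,c\,\beta_0(Y)$, so either $Y=0$ at $x$ or $\beta_0(Y)=0$), which is the justification missing from your ``forcing $Y\in\ker$'' assertion; (5) is automatic because the equation for $Y$ is $\varepsilon$-independent, not merely ``$\varepsilon$-independent up to scalar''; and (6) follows from $R_\varepsilon=(\partial_t+Y)/(1+\varepsilon\beta_t(Y))$, which is exactly your normalization $a$. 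So the proposal is correct in outline and identical in route to the paper; the fix needed is local, namely replacing the mod-$\beta_t$ bookkeeping by the clean $dt$-versus-slice splitting above.
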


\begin{proof}
Let $\chi:[0,1]\rightarrow[0,1]$ be a smooth map for which
$\chi(0)=0$, $\chi(1)=1$, $\chi'(0)=\chi'(1)=0$, and $\chi'(t)>0$
for $t\in(0,1)$. Consider the form
$$\beta_t= (1-\chi(t))\beta_0 +\chi(t)\beta_1.$$
Let us write $\omega_t= (1-\chi(t))d\beta_0+\chi(t)d\beta_1$.
Observe that $\omega_t$ is an area form on $S$.

We then compute
\begin{eqnarray*} d\alpha &=& \varepsilon((1-\chi
(t))d\beta_0 +\chi(t)d\beta_1 + \chi' (t)dt\wedge (\beta_1 -\beta_0
))\\ &=&\varepsilon(\omega_t +\chi' (t)dt\wedge(\beta_1 -\beta_0
)),\\
\alpha \wedge d\alpha &=& \varepsilon dt\wedge \omega_t
-\varepsilon^2\chi' (t)dt\wedge \beta_0 \wedge \beta_1.
\end{eqnarray*}
If $\varepsilon$ is small enough, $\alpha$ satisfies the contact
condition $\alpha \wedge d\alpha
>0$.

The Reeb vector field $R$ for $\alpha$ is collinear to
$\frac{\partial}{\partial t} +Y$, where $Y$ is tangent to the levels
$\{t=const\}$ and satisfies
\begin{equation}
\label{Y} i_Y
\omega_t =\chi' (t)(\beta_0 -\beta_1 ).
\end{equation}
(Verification: $$ i_{{\bdry\over \bdry t}+Y} d\alpha=
\varepsilon\cdot i_{{\bdry\over \bdry t}+Y}
(\omega_t+\chi'(t)dt\wedge(\beta_1-\beta_0))=0$$ implies that
$$
i_{Y}\omega_t+\chi'(t)(\beta_1-\beta_0) - dt\cdot
\chi'(t)(\beta_1-\beta_0)(Y)=0.
$$
This separates into two equations
\begin{eqnarray*} i_{Y}\omega_t+\chi'(t)(\beta_1-\beta_0) &=&0,\\
\chi'(t)(\beta_1-\beta_0)(Y)&=&0. \end{eqnarray*} The first is
Equation~\ref{Y}, and the second follows from the first.) Observe
that (3), (4), and (5) are consequences of Equation~\ref{Y}. To
prove (6), observe that $\alpha({\bdry\over \bdry t}+Y)=
1+\varepsilon \beta_t(Y)$. Then $R_\varepsilon= { {\bdry\over \bdry
t}+Y\over |1+\varepsilon\beta_t(Y)|}$, which approaches ${\bdry\over
\bdry t}+Y$ as $\varepsilon\rightarrow 0$.
\end{proof}

\subsubsection{Construction of contact $1$-form on $\Sigma(S,g)$.}
\label{subsub: N}

For notational simplicity, assume that $\bdry S$ is connected. Let
$\beta$ be a $1$-form on $S$ satisfying $d\beta>0$.  We say that
$\beta$ {\em exits $\bdry S$ uniformly with respect to} a
diffeomorphism $g: S\stackrel\sim\rightarrow S$ if there exists a
small annular neighborhood $A=S^1\times[0,1]$ of $\bdry S$ with
coordinates $(\theta,y)$ so that \be \item $\bdry S=S^1\times\{0\}$
and $\beta= (C-y)d\theta$, where $C$ is a constant $\gg 0$.
\item $g$ restricts to a rotation $(\theta,y)\mapsto (\theta+C',y)$ on
$S^1\times[0,1]$, where $C'$ is some constant. \ee

Suppose $\beta$ exits $\bdry S$ uniformly with respect to $g$. The
easiest construction of a contact $1$-form on $\Sigma(S,g)$ would be
to set $\beta_0=g_*\beta=(g^{-1})^*\beta$ and $\beta_1=\beta$, and
glue up the contact $1$-form from Lemma~\ref{interpolation}.
However, in this paper we will use a slightly more complicated
$1$-form, given below.

\s\n {\bf Construction.} Let $\beta_0=
g_*(f_{\varepsilon'}\beta)=f_{\varepsilon'} (g_*\beta)$,
$\beta_{1/2}=\beta$, and $\beta_1=f_{\varepsilon'}\beta$, where
$\varepsilon'>0$ is a sufficiently small constant. Here,
$f_{\varepsilon'}:S\rightarrow \R$ is $\varepsilon'$ outside the
small annular neighborhood $A$ of $\partial S$, and, inside $A$, is
independent of $\theta$, equals $1$ for $y\in[0,\varepsilon'']$, and
satisfies ${\bdry f_{\varepsilon'}\over \bdry y}<0$ for
$y\in(\varepsilon'',1)$. We can easily verify that
$df_{\varepsilon'} \wedge \beta \geq 0$; hence
$f_{\varepsilon'}\beta$ is a primitive of an area form on $S$. Then
let $\beta_t$ be the interpolation between $\beta_0$ and
$\beta_{1/2}$ for $t\in[0,{1\over 2}]$, given by
$$\beta_t=(1-\chi_0(t))\beta_0+\chi_0(t)\beta_{1/2},$$
where $\chi_0:[0,{1\over 2}]\rightarrow [0,1]$ is a smooth map for
which $\chi_0(0)=0$, $\chi_0({1\over 2})=1$,
$\chi'_0(0)=\chi'_0({1\over 2})=0$ and $\chi'_0(t)>0$ for
$t\in(0,{1\over 2})$. Similarly define the interpolation $\beta_t$
between $\beta_{1/2}$ and $\beta_1$ for $t\in[{1\over 2},1]$ by
$$\beta_t=(1-\chi_1(t))\beta_{1/2}+\chi_1(t)\beta_1,$$
where $\chi_1:[{1\over 2},1]\rightarrow [0,1]$ is a smooth map for
which $\chi_1({1\over 2})=0$, $\chi_1(1)=1$, $\chi'_1({1\over
2})=\chi'_1(1)=0$ and $\chi'_1(t)>0$ for $t\in({1\over 2},1)$. Then
we set $\alpha_{\varepsilon,\varepsilon'} =dt+\varepsilon\beta_t$ as
in Lemma~\ref{interpolation}. It induces a contact form
$\alpha_{\varepsilon,\varepsilon'}$ on $\Sigma(S,g)$.

Let $\omega_t=d_2\beta_t$, where $d_2$ indicates the exterior
derivative in the $S$-direction. Then the Reeb vector field
$R=R_{\varepsilon,\varepsilon'}$ for
$\alpha=\alpha_{\varepsilon,\varepsilon'}$ is collinear to
$\frac{\partial}{\partial t} +Y$, where $Y=Y_{\varepsilon'}$ is
tangent to the levels $\{ t=const\}$ and satisfies
\begin{equation} \label{equation: Y varepsilon}
i_{Y} \omega_t =-\dot{\beta_t}.
\end{equation}
Here a dot means ${d\over dt}$. Observe that $Y$ does not depend on
$\varepsilon$ (by (5) of Lemma~\ref{interpolation}) and the
direction of $R_{\varepsilon,\varepsilon'}$ does not depend on
$\varepsilon$. By taking $\varepsilon$ sufficiently small as in (6)
of Lemma~\ref{interpolation}, we can make $R$ as close to
${\bdry\over \bdry t}+Y$ as we like.

\s\n {\bf Description of $R_{\varepsilon,\varepsilon'}$.}  Let $Z$
be a vector field which directs $\ker \beta$. Fix a small
neighborhood $U\subset S$ of the singular set of $\beta$. Also let
$A'\subset A$ be the set $\{0\leq y\leq \varepsilon''\}$.

\begin{lemma} \label{lemma:uniform}
The Reeb vector field $R_{\varepsilon,\varepsilon'}$ is directed by
and is arbitrarily close to ${\bdry\over \bdry t}+Y_{\varepsilon'}$,
provided $\varepsilon>0$ is sufficiently small.  The vector field
$Y_{\varepsilon'}$ satisfies the following:
\begin{enumerate}
\item $Y_{\varepsilon'}=0$ on $A'\times[0,1]/\sim$.
In particular, $R_{\varepsilon,\varepsilon'}$ is tangent to $\bdry
\Sigma(S,g)$.
\item $Y_{\varepsilon'}=0$ when $t=0$ and $t={1\over 2}$.
\item On $(S-A'-U)\times (0,{1\over 2})$, ${Y_{\varepsilon'}(x,t)\over
|Y_{\varepsilon'}(x,t)|}\rightarrow - {Z(x)\over |Z(x)|}$ uniformly,
as $\varepsilon'\rightarrow 0$.
\item On $(S-A')\times ({1\over 2},1)$, $Y_{\varepsilon'}$ is parallel to
and in the same direction as $Z$.
\end{enumerate}
\end{lemma}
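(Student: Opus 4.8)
The assertion that $R_{\varepsilon,\varepsilon'}$ is directed by, and converges as $\varepsilon\to 0$ to, $\tfrac{\bdry}{\bdry t}+Y_{\varepsilon'}$ is nothing but parts (2) and (6) of Lemma~\ref{interpolation} applied to the $\beta_t$ of the Construction, so the real content is the four properties of $Y_{\varepsilon'}$. The plan is to read each of them off the pointwise identity Equation~\ref{equation: Y varepsilon}, namely $i_{Y_{\varepsilon'}}\omega_t=-\dot\beta_t$ with $\omega_t=d_2\beta_t$. The decisive (trivial) remark is that on the surface $S$ the bundle of $2$-forms is a line bundle, so whenever $\omega_t$ and $d\beta$ are both positive area forms one has $\omega_t=\kappa_t\,d\beta$ for a positive function $\kappa_t$. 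Hence, letting $Z$ be the vector field with $i_Z\,d\beta=\beta$ (so $Z$ directs $\ker\beta$), Equation~\ref{equation: Y varepsilon} becomes $i_{Y_{\varepsilon'}}\,d\beta=-\kappa_t^{-1}\dot\beta_t$, and therefore: $Y_{\varepsilon'}(x,t)=0$ exactly when $\dot\beta_t|_x=0$; and $Y_{\varepsilon'}(x,t)$ is a multiple of $Z(x)$ exactly when $\dot\beta_t|_x$ is a multiple of $\beta|_x$, in which case the sign of that multiple is opposite to the sign of the proportionality constant relating $\dot\beta_t$ to $\beta$. So everything reduces to computing $\dot\beta_t$ region by region and comparing it with $\beta$.

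Items (1) and (2) are then immediate. On $A'=\{0\le y\le\varepsilon''\}$ one has $f_{\varepsilon'}\equiv1$, and since $g$ preserves the collar $A$ and restricts there to $(\theta,y)\mapsto(\theta+C',y)$, one gets $g_*\beta=(g^{-1})^*\big((C-y)\,d\theta\big)=(C-y)\,d\theta=\beta$ on $A$; as $g(A')=A'$ this yields $\beta_0=\beta_{1/2}=\beta_1=\beta$ on $A'$, hence $\beta_t\equiv\beta$ and $\dot\beta_t\equiv 0$ there, so $Y_{\varepsilon'}=0$ on $A'\times[0,1]/\sim$ and $R_{\varepsilon,\varepsilon'}=\tfrac{\bdry}{\bdry t}$ is tangent to $\bdry\Sigma(S,g)$. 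For (2), on $[0,\tfrac12]$ we have $\dot\beta_t=\chi_0'(t)(\beta_{1/2}-\beta_0)$ and on $[\tfrac12,1]$ we have $\dot\beta_t=\chi_1'(t)(\beta_1-\beta_{1/2})$, so the prescribed vanishing $\chi_0'(0)=\chi_0'(\tfrac12)=\chi_1'(\tfrac12)=0$ forces $\dot\beta_t=0$, hence $Y_{\varepsilon'}=0$, at $t=0$ and $t=\tfrac12$.

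For (4), on $[\tfrac12,1]$ every building block is a positive function times $\beta$: from $\beta_{1/2}=\beta$ and $\beta_1=f_{\varepsilon'}\beta$ we get $\beta_t=g_t\beta$ with $g_t=(1-\chi_1(t))+\chi_1(t)f_{\varepsilon'}>0$, so $\dot\beta_t=\chi_1'(t)(f_{\varepsilon'}-1)\beta$; on $S-A'$ we have $f_{\varepsilon'}<1$ and, for $t\in(\tfrac12,1)$, $\chi_1'(t)>0$, so $\dot\beta_t$ is a negative multiple of $\beta$ and, by the remark above, $Y_{\varepsilon'}$ is a positive multiple of $Z$. For (3), on $[0,\tfrac12]$ we have $\dot\beta_t=\chi_0'(t)\big(\beta-g_*(f_{\varepsilon'}\beta)\big)$ with $\chi_0'(t)>0$ for $t\in(0,\tfrac12)$. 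On the transition annulus $A-A'$, since $g$ is a rotation there, $g_*(f_{\varepsilon'}\beta)=f_{\varepsilon'}\beta$, so $\dot\beta_t=\chi_0'(t)(1-f_{\varepsilon'})\beta$ is a positive multiple of $\beta$ (as $f_{\varepsilon'}<1$ off $A'$), giving $Y_{\varepsilon'}/|Y_{\varepsilon'}|=-Z/|Z|$ there exactly, with no limit needed. On $S-A$, where $f_{\varepsilon'}\equiv\varepsilon'$ and $g$ preserves $A$, we have $g_*(f_{\varepsilon'}\beta)=\varepsilon'\,g_*\beta$ and $\omega_t=\chi_0(t)\,d\beta+(1-\chi_0(t))\varepsilon'\,d(g_*\beta)$; letting $\varepsilon'\to 0$, these converge—uniformly in $x$ on $S-A-U$ and in $t$ on compact subintervals of $(0,\tfrac12)$—to $\dot\beta_t\to\chi_0'(t)\beta$ and $\omega_t\to\chi_0(t)\,d\beta$, whence $Y_{\varepsilon'}\to-\tfrac{\chi_0'(t)}{\chi_0(t)}Z$, a negative multiple of $Z$; in particular $Y_{\varepsilon'}/|Y_{\varepsilon'}|\to-Z/|Z|$ uniformly in $x$.

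Two things demand care along the way. First, the reduction in the first paragraph presupposes that each $\omega_t=d_2\beta_t$ is a positive area form on all of $S$; on $S-A$ and on the slab $[\tfrac12,1]$ this follows by writing $\beta_t$ as a convex combination of the primitives $g_*(f_{\varepsilon'}\beta)$, $\beta$, $f_{\varepsilon'}\beta$ of positive area forms—the inequality $df_{\varepsilon'}\wedge\beta\ge0$ recorded in the Construction being precisely what makes $f_{\varepsilon'}\beta$ such a primitive—and on $A-A'$ from the direct computation $d_2\big(h_t(y)(C-y)\,d\theta\big)=\big(h_t'(y)(C-y)-h_t(y)\big)\,dy\wedge d\theta$ together with $h_t>0$, $h_t'\le0$, $C\gg0$. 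Second, and this is where I expect the real work to lie, is the uniformity asserted in (3): since $|Y_{\varepsilon'}|\to0$ as $t\to0$, the convergence must be phrased for directions rather than vectors, and the transition annulus $A-A'$ must be treated separately—but there, as noted, the proportionality $Y_{\varepsilon'}=-\kappa_t^{-1}\chi_0'(t)(1-f_{\varepsilon'})\,Z$ is exact and limit-free, so the $\varepsilon'\to0$ estimates are only needed on $S-A-U$, where $g_*\beta$ and $d(g_*\beta)$ are bounded. Making these estimates clean is the main obstacle; everything else is bookkeeping.
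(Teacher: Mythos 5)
Your argument follows the same route as the paper's: everything is read off Equation~\ref{equation: Y varepsilon}, computing $\dot\beta_t$ region by region and comparing it with $\beta$. Items (1), (2), (4), and the exact identity $Y_{\varepsilon'}/|Y_{\varepsilon'}|=-Z/|Z|$ on $(A-A')\times(0,{1\over 2})$ come out exactly as in the paper, and your preliminary check that $\omega_t>0$ (convex combinations of primitives of positive area forms, using $df_{\varepsilon'}\wedge\beta\geq 0$) is a sensible addition that the paper leaves implicit.

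The one loose end is the uniformity in (3), which you flag as ``the main obstacle'' and only obtain for $t$ in compact subintervals of $(0,{1\over 2})$, whereas the lemma asserts uniform convergence on all of $(S-A'-U)\times(0,{1\over 2})$. This is closed by an observation already contained in your first paragraph: for $t\in(0,{1\over 2})$ one has $\dot\beta_t=\chi_0'(t)\,(\beta-f_{\varepsilon'}(g_*\beta))$ with $\chi_0'(t)>0$, and since rescaling the positive area form $\omega_t$ only rescales the solution $Y_{\varepsilon'}$ of $i_{Y_{\varepsilon'}}\omega_t=-\dot\beta_t$ by a positive function, the direction $Y_{\varepsilon'}(x,t)/|Y_{\varepsilon'}(x,t)|$ does not depend on $t$ at all: it is the suitably co-oriented kernel direction of $f_{\varepsilon'}(g_*\beta)-\beta$ at $x$, which is precisely how the paper phrases it. Hence the uniform convergence claimed in (3) is a statement about $x$ alone, and on $S-A-U$ it follows at once from the uniform convergence $f_{\varepsilon'}(g_*\beta)=\varepsilon' g_*\beta\rightarrow 0$ together with $\beta$ being bounded away from zero there; combined with the exact equality on $A-A'$ that you already established, this gives (3) on the whole region. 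So the estimates you postponed require nothing beyond what you have written; tracking the limiting vector $-{\chi_0'(t)\over \chi_0(t)}Z$ rather than only its direction is what created the apparent difficulty near the endpoints $t=0$ and $t={1\over 2}$.
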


One can think of the vertical projections $Y$ of $R$ as what happens
in a ``puffer machine'': Between $t=0$ and $t={1\over 2}$, $Y$ flows
away from $\bdry S$ and is sucked towards the singularities of
$\beta$ along $\ker \beta$ (with some error), and, between
$t={1\over 2}$ and $t=1$, $Y$ flows away from the singularities of
$\beta$ towards $\bdry S$ along $\ker\beta$ (with no error).

\begin{proof}
This follows from Equation~\ref{equation: Y varepsilon}. First
suppose $t\in[0, {1\over 2}]$. Then
\begin{equation}\label{eqn: beta dot}
\dot\beta_t=\chi'_0(t)(\beta_{1/2}-\beta_0)=
\chi'_0(t)(\beta-f_{\varepsilon'} (g_*\beta)).
\end{equation}
(1) follows from $\dot\beta_t=0$ by observing that
$\beta_0=\beta_{1/2}$ on $A'$. Similarly, (2) follows from
$\dot\beta_t=0$ by observing that $\chi'_0(t)=0$ when $t=0$ or
$t={1\over 2}$. We now prove (3). The vector field
$Y_{\varepsilon'}$ directs the kernel of $f_{\varepsilon'}
(g_*\beta)-\beta$, since $\chi'_0(t)>0$ when $t\in(0,{1\over 2})$.
In order for ${Y_{\varepsilon'}(x,t)\over |Y_{\varepsilon'}(x,t)|}$
to make sense, we need $Y_{\varepsilon'}(x,t)$ to be nonzero ---
this is achieved by making $\varepsilon'$ sufficiently small and
restricting to $(S-A'-U)\times (0,{1\over 2})$. The uniform
convergence of $f_{\varepsilon'} (g_*\beta)-\beta$ to $-\beta$ on
$(S-A-U)\times(0,{1\over 2})$ as $\varepsilon'\rightarrow 0$ (note
that we wrote $A$ instead of $A'$) implies the uniform convergence
of ${Y_{\varepsilon'}(x,t)\over |Y_{\varepsilon'}(x,t)|}$ to $-
{Z(x)\over |Z(x)|}$ on $(S-A-U)\times(0,{1\over 2})$. On the other
hand, on $(A-A')\times (0,{1\over 2})$, ${Y_{\varepsilon'}(x,t)\over
|Y_{\varepsilon'}(x,t)|}= - {Z(x)\over |Z(x)|}$ already.

The situation $t\in[{1\over 2},1]$ is similar and is left to the
reader.
\end{proof}

\subsubsection{Extension to the binding} \label{subsub: extension to
binding} Let $S_0$ be the surface obtained by gluing an annulus
$A_0=S^1\times[-1,0]$ to $S$ so that $S^1\times\{0\}$ is identified
with $\bdry S$.  Let $h:S_0\stackrel\sim\rightarrow S_0$ be a
diffeomorphism which restricts to the identity on $\bdry S_0$.
Suppose $h=h_0\cup g$, where $g$ is the diffeomorphism on $S$ as
above and $h_0:S^1\times[-1,0]\stackrel\sim\rightarrow
S^1\times[-1,0]$ maps $(\theta,y)\mapsto (\theta+C'(y+1),y)$, where
$C'$ is the positive constant which records the rotation about the
boundary and $\theta\in S^1= \R/\Z$.

Fix a $1$-form $\beta$ on $S$ so that $d\beta>0$ and $\beta$ exits
$\bdry S$ uniformly with respect to $g$. Let
$\alpha=\alpha_{\varepsilon,\varepsilon'}$ and
$R=R_{\varepsilon,\varepsilon'}$ be the contact $1$-form and Reeb
vector field constructed on $\Sigma(S,g)$ as in the previous
subsection. According Lemma~\ref{lemma:uniform},
$R=\frac{\partial}{\partial t}$ on $\partial \Sigma(S,g)$; hence
$\bdry \Sigma(S,g)$ is linearly foliated by $R$. Also the
characteristic foliation of $\alpha$ along $\bdry \Sigma(S,g)$ is
linearly foliated by leaves which are close to $\bdry S$.

We now extend $\alpha$ and $R$ to the closed $3$-manifold $M$ which
corresponds to the open book $(S_0,h)$, by gluing a neighborhood
$N(K)$ of the binding to $\partial \Sigma(S,g)$. Endow $N(K)\simeq
\R/\Z \times D^2$ with cylindrical coordinates $(z,(r,\theta ))$ so
that $D^2=\{r\leq 1\}$. The fibration of the open book is given on
$N(K)$ by $(z,r,\theta)\mapsto \theta$. If we use coordinates
$({\theta\over 2\pi},z)$ to identify $\bdry N(K)\simeq \R^2/\Z^2$,
then $R$ has slope $C'>0$ and $\xi|_{\bdry (S^1\times D^2)}$ has
slope $-{1\over C_\varepsilon}$ for $C_\varepsilon\gg 0$. We extend
the contact form $\alpha_{\varepsilon, \varepsilon'}$ to $N(K)$ by
an equation of the form $a_{\varepsilon} (r) dz +b_{\varepsilon}
(r)d\theta$, where $a_\varepsilon(r)>0$ and $b_\varepsilon(r)\geq
0$. The characteristic foliation on $\{r=r_0\}$ will then be
directed by $a_\varepsilon(r_0){\bdry\over \bdry \theta}
-b_\varepsilon(r_0) {\bdry\over \bdry z}$. The contact condition is
given by the inequality: $a_{\varepsilon}
b_{\varepsilon}'-a_{\varepsilon}' b_{\varepsilon}>0$. It expresses
the fact that the plane curve $(a_{\varepsilon}
(r),b_{\varepsilon}(r) )$ is transverse to the radial foliation of
the plane, and rotates in the counterclockwise direction about the
origin. The Reeb vector field is given by $R_{\varepsilon}
=\frac{1}{a_{\varepsilon} b_{\varepsilon}'
-a_{\varepsilon}'b_{\varepsilon}}
(b_{\varepsilon}'\frac{\partial}{\partial z} -a_{\varepsilon}'
\frac{\partial}{\partial \theta} )$. The boundary condition uniquely
determines the values $a_{\varepsilon} (1)$, $b_{\varepsilon} (1)$,
$a_{\varepsilon}' (1)$ and $b_{\varepsilon}' (1)$.  In particular,
these values depend smoothly on $\varepsilon$. For all
$\varepsilon$, $(a_{\varepsilon} (1), b_{\varepsilon} (1))$ is in
the interior of the first quadrant.  Also, we require that
$(a_\varepsilon(r),b_\varepsilon(r))$ lie on a line segment that
starts on the positive $\theta$-axis and ends at $(a_{\varepsilon}
(1), b_{\varepsilon} (1))$, and is directed by
$(a'_\varepsilon(1),b'_\varepsilon(1))$. We then can extend
$a_{\varepsilon}$ and $b_{\varepsilon}$ on $[0,1]$, so that
$a_{\varepsilon}(r)=C_{0,\varepsilon}-C_{1,\varepsilon} r^2$ and
$b_{\varepsilon}(r)=r^2$ near $r=0$ (where
$C_{0,\varepsilon},C_{1,\varepsilon}$ are appropriate positive
constants which depend on $\varepsilon$) and so that they depend
smoothly on $\varepsilon$. By construction $R_\varepsilon$ will
linearly foliate the level tori $\{r=const>0\}$ so that the slope
remains constant ($=C'$). In particular, $R$ will be transverse to
the pages $S\times\{t\}$, except along the binding $\gamma_0$, which
is a closed orbit of $R$.

\s In the remaining subsections of this section, we will construct a
suitable diffeomorphism $g=\psi'$ which is freely homotopic to a
pseudo-Anosov homeomorphism $\psi$, and a $1$-form $\beta$ which is
adapted to $\psi$.

\subsection{Main proposition}
Let $M$ be a closed, oriented 3-manifold and $\xi$ be a cooriented
contact structure. Suppose that $\xi$ is carried by an open book
with page $S$ and monodromy $h:S\stackrel\sim\rightarrow S$.  Recall
that $h|_{\bdry S}=id$. For notational simplicity, assume that
$\bdry S$ is connected.

Suppose $h$ is freely homotopic to a pseudo-Anosov homeomorphism
$\psi$ with fractional Dehn twist coefficient $c={k\over n}$. Let
$(\F,\mu)=(\F^s,\mu^s)$ be the stable foliation on $S$, and $\lambda
>1$ be the constant such that $\psi_* \mu =\lambda \mu$. The
foliation $\F$ has saddle type singularities on $\bdry S$, and the
singular points of $\F$ on $\partial S$ are denoted by
$x_1,\dots,x_n$. (Here the subscript $i$ increases in the direction
given by the orientation of $\bdry S$.) Denote the interior
singularities of $\F$ by $y_1,\dots,y_q$. The homeomorphism $\psi$
is a diffeomorphism away from these singular points. Also let $P_i$
be the prong emanating from $x_i$, and let
$Q_{j1},Q_{j2},\dots,Q_{jm_j}$ be the prongs emanating from $y_j$,
arranged in counterclockwise order about $y_j$.

\subsubsection{$N(\bdry S)$}\label{matisse}
Let $N(\bdry S)\subset S$ be a neighborhood of $\bdry S$ with a
particular shape: \be
\item $\bdry (N(\bdry S))-\bdry S$ is a concatenation of smooth arcs which
are alternately tangent to $\mathcal{F}^u$ (the {\em vertical} arcs
$a_1,\dots,a_n$, since they are transverse to $\mathcal{F}$) and
tangent to $\F$ (the {\em horizontal} arcs $b_1,\dots,b_n$).  Here
$b_i$ is between $a_i$ and $a_{i+1}$, where the indices are taken
modulo $n$, and there is a prong $P_i$ starting at $x_i$ which exits
$N(\bdry S)$ through $a_i$.
\item Each transversal arc $a_i$ is divided into two subarcs by the prong
$P_i$ starting at $x_i$. We pick $N(\partial S)$ so that all these
subarcs have the same transverse measure $\delta \ll 1$. (This
becomes important later on!)
\item No horizontal arc $b_i$ is contained in any prong $P_j$ or
$Q_{jl}$. (This can be achieved by observing that the intersection
between $a_i$ and any prong is countable, and by shrinking $\delta$
if necessary.) \ee

Let $P_i'$ be the first component of $P_i\cap (S-int(N(\bdry S)))$
that can be reached from the singular point $x_i$, traveling inside
$P_i$.  By (3), $P_i'$ is a compact arc with endpoints on $int(a_i)$
and some $int(a_{i'})$.  Similarly, let $Q_{jl}'$ be the component
of $Q_{jl}\cap (S-int(N(\bdry S)))$ that begins at $y_j$ and ends on
some $int(a_{j'})$.

Next, endow each $a_i$ with the boundary orientation of
$S-int(N(\bdry S))$. For each $a_i$, define a parametrization
$p_i:[-\delta,\delta]\rightarrow a_i$ so that $p_i(-\delta)$ is the
initial point of $a_i$, $p_i(\delta)$ is the terminal point, and the
$\mu$-measure from $p_i(-\delta)$ to $p_i(s)$ is $s+\delta$. Let
$\varepsilon>0$ be a sufficiently small constant so that all the
leaves of $\F$ which start from $p_i([-\delta,-\delta+\varepsilon])$
exit together along some $a_{i'}$ and also avoid the prong
$P'_{i'}$. Also, for each $i$, define the map
$q_i:[-\delta,\delta]\rightarrow \bdry S$ so that $q_i(s)$ is the
point on $\bdry S$ which is closest to $p_i(s)$ with respect to the
fixed hyperbolic metric.  In particular, $q_i(0)=x_i$ and the
geodesic through $p_i(0)$ and $q_i(0)$ agrees with the prong $P_i$,
since the prong $P_i$ is perpendicular to $\bdry S$. Also let
$p_i(s)q_i(s)$ be the shortest geodesic between $p_i(s)$ and
$q_i(s)$.

\subsubsection{Walls}
Let $W$ be a properly embedded, oriented arc of $S$ so that $W\cap
N(\bdry S)$ consists of exactly two components.  The component
containing the initial point is the {\em initial arc} of $W$, the
component containing the terminal point is the {\em terminal arc} of
$W$, and $W\cap (S-int(N(\bdry S)))$ is the {\em middle arc} of $W$.
We now define the {\em walls} $W_{i,L}$, $W_{i,R}$ for
$i=1,\dots,n$. The wall $W_{i,L}$ (resp.\ $W_{i,R}$) is a properly
embedded, oriented arc of $S$ which intersects $N(\bdry S)$ in two
components. The initial arc of $W_{i,L}$ (resp.\ $W_{i,R}$) is the
geodesic arc $q_i(-\delta+\varepsilon)p_i(-\delta+\varepsilon)$
(resp.\ $q_i(\delta-\varepsilon)p_i(\delta-\varepsilon)$), the
terminal arc of $W_{i,L}$ is $p_{i'}(s_{i,L})q_{i'}(s_{i,L})$
(resp.\ $p_{i'}(s_{i,R})q_{i'}(s_{i,R})$), and the middle arc is a
leaf of $\F|_{S-int(N(\bdry S))}$.  (We may need to take a
$C^0$-small modification of the geodesic arcs, so that the walls
become smooth. From now on, we assume that such smoothings have
taken place, with the tacit understanding that the arcs
$p_i(s)q_i(s)$ are only ``almost geodesic''.) It is conceivable
that, a priori, $W_{i,L}=W_{i',R}$ for $i\not=i'$, with opposite
orientations. In that case, perturb $\varepsilon$ so that the walls
are pairwise disjoint.

\subsubsection{$N(y_j)$}
Now, for each $j$, we define $N(y_j)$ to be a sufficiently small
neighborhood of $Q_{j1}'\cup \dots\cup Q_{jm_j}'$ in $S-int(N(\bdry
S))$ so that $\bdry N(y_j)$ is a union of smooth arcs which are
alternately vertical and horizontal, and so that $N(y_j)$ has the
following properties:
\begin{enumerate}
\item $N(y_j)$ is disjoint from $N(y_{j'})$ for $j'\not=j$;
\item $N(y_j)$ does not intersect any $P_i'$;
%\item $N(y_j)$ is disjoint from the middle arcs $W_{i,L}\cap int(S-N(\bdry
%S))$ and $W_{i,R}\cap int(S-N(\bdry S))$.
\item Each vertical arc of $\bdry N(y_j)$ is contained in some $int(a_i)$
and is disjoint from $p_i([-\delta,-\delta+\varepsilon])$ and
$p_i([\delta-\varepsilon,\delta])$.
\end{enumerate}
(3) is possible since the horizontal arc $b_i$ is disjoint from all
the prongs.

\subsubsection{$S'$ and $S''$}
We now define the subsets $S''\subset S'\subset S$:
\begin{eqnarray*}
S' &=&S -\cup_{1\leq j\leq q} ~int(N( y_j ))- int (N(\partial S )),\\
S''&=& S'- \cup_{1\leq i\leq n} ~int(N(P_i')). \end{eqnarray*} Here
we take $N(P_i')$ to be a plaque of $\mathcal{F}$, each of whose
vertical boundary components is sufficiently short to be contained
in the interior of some vertical arc in $\bdry S'$, and is disjoint
from $p_i([-\delta,-\delta+\varepsilon])$,
$p_i([\delta-\varepsilon,\delta])$.

\subsubsection{Modified diffeomorphism $\psi'$.}
Finally, we describe the diffeomorphism
$\psi':S\stackrel\sim\rightarrow S$, which is derived from and
freely homotopic to the pseudo-Anosov homeomorphism $\psi$, and
agrees with $\psi$ outside a small neighborhood of $N(\bdry S)\cup
\psi(N(\bdry S))$. First consider the restriction $\psi:
S-int(N(\bdry S)) \rightarrow S$ with image $S-\psi(int(N(\bdry
S)))$.  Let $g_1$ be a flow on $S$ which is parallel to the stable
foliation, pushes $\psi(N(\bdry S))$ into $N(\bdry S)$, and maps
$\psi(a_i)$ inside $a_{i+k}$. (Recall ${k\over n}$ is the fractional
Dehn twist coefficient.) Next, let $g_2$ be a flow on $S$ which is
parallel to the unstable foliation and maps $g_1\circ \psi(N(\bdry
S))$ to $N(\bdry S)$. Observe that $g_2\circ g_1$ can be taken to be
supported in a neighborhood of $N(\bdry S)\cup \psi(N(\bdry S))$.
Now, $\psi'=g_2\circ g_1\circ \psi$ is a diffeomorphism from
$S-int(N(\bdry S))$ to itself, and, by choosing $g_1$ and $g_2$
judiciously, we can ensure that the restrictions $\psi':
a_i\rightarrow a_{i+k}$ and $\psi': b_i\rightarrow b_{i+k}$ are
transverse measure-preserving diffeomorphisms.  Hence we can extend
$\psi'$ to $N(\bdry S)$ by a rigid rotation about $\bdry S$ which
takes $p_i(s)q_i(s)$ to $p_{i+k}(s)q_{i+k}(s)$.  In the case $k=0$,
$\psi'|_{N(\bdry S)}$ is the identity.

\subsubsection{Statement of proposition.}

Let $\gamma_1$ and $\gamma_2$ be two properly embedded oriented arcs
of $S$ with the same initial point $x\in \bdry S$.  Let
$\wt\gamma_1$ and $\wt\gamma_2$ be lifts to the universal cover $\wt
S$, starting at the same point $\wt x$. We say that $\gamma_1$ is
{\em setwise to the left of} $\gamma_2$ and write $\gamma_1\leq
\gamma_2$, if $\wt\gamma_1$ does not intersect the component of $\wt
S-\wt\gamma_2$ whose boundary orientation is opposite that of the
orientation of $\wt\gamma_2$.  We make this definition to
distinguish from the notion of $[\gamma_1]$ being to the left of
$[\gamma_2]$, where $[\gamma_i]$, $i=1,2$, is the isotopy class of
$\gamma_i$ rel endpoints. Having said that, {\em in the rest of the
paper, ``to the left'' will always mean ``setwise to the left''}.
(The same definition can be made when only one of the $\gamma_i$ is
a properly embedded arc of $S$, and the other is an arc which just
starts at $x$.)

We are now ready to state the main proposition of this section.

\begin{prop} \label{prop: construction}
There exists a $1$-form $\beta$ on $S$ with $d\beta>0$ such that the
walls $W_{i,L}$, $W_{i,R}$, $i=1,\dots,n$, are integral curves of
$\ker \beta$, and satisfy the following properties:
\begin{enumerate}
\item Each wall contains exactly one singularity of $\beta$.  It is
an elliptic singularity, and is in the interior of the initial arc
or in the interior of the terminal arc.
\item $W_{i,L}$ (resp.\ $W_{i,R}$) is to the left (resp.\ to the
right) of the prong $P_i$.
\item $W_{i,L}$ (resp.\ $W_{i,R}$) is to the left of
$\psi'(W_{i-k,L})$ (resp.\ to the right of $\psi'(W_{i-k,R})$).
\item $\beta$ exits $\bdry S$ uniformly with respect to $\psi'$.
\item The initial and terminal arcs of $(\psi')^{-1}(W_{i,L})$ and
$(\psi')^{-1}(W_{i,R})$ are integral arcs of $\ker\beta$.
\item Suppose $\psi'$ maps an initial (resp.\ terminal) arc of
$(\psi')^{-1}(W_{i,L})$ to an initial (resp.\ terminal) arc of
$W_{i,L}$.  If both arcs contain elliptic singularities, then
$\psi'$ matches the germs of these elliptic singularities. The same
holds for $(\psi')^{-1}(W_{i,R})$ and $W_{i,R}$.
\end{enumerate}
\end{prop}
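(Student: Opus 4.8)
The plan is to prescribe the kernel line field $\ker\beta$ first — exactly on the walls, on the $(\psi')^{-1}$-images of their initial and terminal arcs, and on a collar of $\bdry S$ — and only afterwards to produce a primitive $\beta$ of an area form realizing that kernel; the construction proceeds region by region. On a model collar $A=S^1\times[0,1]$ of $\bdry S$, in a coordinate $\theta$ adapted to the rigid-rotation structure of $\psi'|_{N(\bdry S)}$ built in the construction of $\psi'$, I set $\beta=(C-y)\,d\theta$ with $C\gg 0$: then $d\beta>0$, $\ker\beta=\langle\partial_y\rangle$, and $\beta$ exits $\bdry S$ uniformly with respect to $\psi'$, which is condition (4). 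I extend $\ker\beta$ over the rest of $N(\bdry S)$ as the vertical foliation whose leaves are the almost-geodesics $p_i(s)q_i(s)$, into which I punch one elliptic (source-type) singularity on each of the $2n$ walls, placed in the interior of its initial or terminal arc but outside $A$; near such a point I use the radial model $\beta=r^{2}d\theta$, which has $d\beta>0$ and radial kernel, in a chart in which the wall is a straight segment through the origin, giving condition (1). Since $\psi'|_{N(\bdry S)}$ is the rigid rotation sending $p_i(s)q_i(s)\mapsto p_{i+k}(s)q_{i+k}(s)$, every initial or terminal arc of $(\psi')^{-1}(W_{i,L})$ and $(\psi')^{-1}(W_{i,R})$ is again one of these vertical leaves, giving condition (5); and placing the $2n$ elliptic singularities equivariantly under this rotation makes $\psi'$ match their germs whenever it matches the corresponding initial/terminal arcs, which is condition (6). (After shrinking $\delta$ and $\varepsilon$, all the relevant arcs and their $(\psi')^{-1}$-images stay inside $N(\bdry S)$.)

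Next, in a neighborhood of each middle arc — a leaf of $\F=\F^{s}$ in $S-int(N(\bdry S))$ — I declare $\ker\beta=\F$; locally $\F=\ker(dy)$ for a leaf coordinate $y$, and $\beta=e^{-x}\,dy$ (with $x$ transverse) realizes this with $d\beta>0$. Consistency is guaranteed because the walls are pairwise disjoint and miss the $N(y_j)$ and the $N(P_i')$. The remaining freedom — which $\F^{s}$-leaf is the middle arc of $W_{i,L}$ (resp.\ $W_{i,R}$) and where the endpoints $s_{i,L},s_{i,R}$ sit — is what I use to force conditions (2) and (3). Condition (2) is easy: $P_i$ is a separatrix of $\F^{s}$ meeting $\bdry S$ perpendicularly, the $\F^{s}$-leaves fill a one-sided neighborhood of it, and shrinking $\varepsilon$ places $W_{i,L}$ as close as needed to $P_i$ on its left (symmetrically on the right). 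For condition (3) I use that $\psi'$ agrees with the pseudo-Anosov $\psi$ off $N(\bdry S)\cup\psi(N(\bdry S))$, that $\psi$ preserves $\F^{s}$ (scaling $\mu^{s}$ by $\lambda$), and that $\psi':x_{i-k}\mapsto x_i$ carries $P_{i-k}$ to $P_i$ preserving sides; thus $\psi'(W_{i-k,L})$ is again an arc issuing from near $x_i$, on the left of $P_i$, whose middle arc is essentially $\psi$ of the middle arc of $W_{i-k,L}$, and ``$W_{i,L}$ to the left of $\psi'(W_{i-k,L})$'' reduces, in $\wt S$, to a comparison of two $\F^{s}$-leaves. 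I arrange this by choosing the middle arcs of the $W_{\cdot,L}$ (and, symmetrically, the $W_{\cdot,R}$) in a $\psi$-nested way around the cyclic orbit $i\mapsto i+k$; the uniform $\mu$-measure $\delta$ on the two subarcs of each $a_i$ and the smallness of $\varepsilon$ are what make the nesting compatible all the way around the cycle.

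Finally, on the complement of a neighborhood of the walls, of $A$, of the $N(y_j)$ and of the $N(P_i')$, I extend $\ker\beta$ to a singular foliation agreeing with the prescribed germs along the common boundary and having only elliptic and hyperbolic singularities; since $\chi(S)<0$, Poincar\'e--Hopf forces extra singularities, which I place (mostly hyperbolic, with $d\beta>0$ as in the computation following Lemma~\ref{interpolation}) in these ``junk'' regions, where the proposition imposes nothing. I then patch the local primitives into one $\beta$ with $d\beta>0$, using that, once the kernel is fixed, the inequality $d(e^{f}\beta)>0$ is affine in $f$, so the admissible local models interpolate. The main obstacle is twofold: arranging conditions (2) and (3) simultaneously around the entire cyclic orbit $i\mapsto i+k$ — the genuinely dynamical point, and the one that will later control the periodic orbits — and checking that this prescribed singular foliation, a controlled modification of the taut foliation $\F^{s}$, actually admits a taming $1$-form with $d\beta>0$ and not merely a compatible line field; I expect the latter from the explicit ``puffer'' structure of the foliation (no closed leaves, no Reeb-like annular pieces), but it is precisely the step that needs care.
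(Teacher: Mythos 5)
Your overall architecture (prescribe the kernel along the walls and the collar, then realize it by a primitive of an area form, with singularities pushed into complementary regions) is broadly the same as the paper's, but the two places you identify as delicate are exactly where your argument has real gaps, and in both cases the missing content is the heart of the paper's proof.

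First, properties (2) and (3) are not available to be ``arranged'': the walls are already pinned down before the proposition (their initial and terminal arcs are the vertical arcs $q_i(-\delta+\varepsilon)p_i(-\delta+\varepsilon)$, etc., and their middle arcs are the $\F$-leaves through $p_i(-\delta+\varepsilon)$ and $p_i(\delta-\varepsilon)$ fixed in Section~\ref{matisse}), so there is no freedom to choose ``which $\F^s$-leaf is the middle arc'' in a ``$\psi$-nested way''; (2) and (3) are assertions about these specific arcs that must be \emph{proved}. Moreover, ``shrinking $\varepsilon$ places $W_{i,L}$ as close as needed to $P_i$ on its left'' does not give (2): being to the left is a setwise condition in $\wt S$, and the danger is that the terminal arc of $W_{i,L}$, which cuts across $N(\bdry S)$ to $\bdry S$, crosses to the wrong side of $P_i$ when $P_i'$ returns to the same vertical arc $a_{i'}$. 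The paper's verification is a genuine case analysis on the exit parameter $s$ of the middle arc, using precisely the defining property of $\varepsilon$ (all leaves from $p_i([-\delta,-\delta+\varepsilon])$ exit together along some $a_{i'}$ and avoid $P'_{i'}$) to exclude $s\geq 0$. Similarly, (3) is not a choice but a consequence of the dilatation: comparing $W=W_{i,L}\cap(S-N(\bdry S))$ with $\psi(W)$, the initial point of $\psi(W)$ projects to $p_i({1\over\lambda}(-\delta+\varepsilon))$, and the expansion of the transverse measure $\mu^u$ by $\lambda>1$ forces either a bifurcation in $\wt S$ or the displacement of $\psi'(W)$ past $b_{i'-1}$; none of this dynamical input appears in your sketch, and without it the ``nesting around the cycle $i\mapsto i+k$'' is an assertion, not an argument.

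Second, passing from a prescribed singular line field to a $1$-form $\beta$ with that kernel and $d\beta>0$ is obstructed, and your ``affine in $f$'' interpolation remark does not address the obstruction. Extending $\beta$ over a foliated rectangle with prescribed boundary germ requires $\int_{\bdry R}\beta>0$ by Stokes (Lemma~\ref{extension to rectangle}), and the paper has to engineer such inequalities explicitly (e.g.\ Equation~\ref{crazyrectangle}, by shrinking $\varepsilon_2$), besides building $\beta$ on $S''$ from the transverse measure, $\beta=\nu+\varepsilon_0\nu'$, which itself needs the orientability Claim for $\F|_{S''}$. Partition-of-unity patching of local models with a common kernel also fails naively: if $\beta=\sum_i\rho_i\beta_i$ with $\beta_i=g_i\eta$ pointwise, then $d\beta$ contains $(\sum_i g_i\,d\rho_i)\wedge\eta$, which has no sign even though $\sum_i d\rho_i=0$. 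So the step you defer as ``needing care'' is exactly where the proof lives; the paper resolves it region by region \`a la Giroux (and, incidentally, its foliation does contain Reeb annuli along the strips $A_i$, so the heuristic ``no Reeb-like pieces'' is not what saves the day).
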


When comparing $W_{i,L}$ with $P_i$, we concatenate $P_i$ with a
small arc from $q_i(-\delta+\varepsilon)$ to $x_i$, and assume
$W_{i,L}$ and $P_i$ have the same initial points (and similarly for
$W_{i,R}$ and $P_i$).

The proof of Proposition~\ref{prop: construction} occupies the next
two subsections. In Subsection~\ref{subsection1} we construct the
$1$-form $\beta$, and in Subsection~\ref{subsection2} we verify the
properties satisfied by the walls.

\subsection{Construction of $\beta$} \label{subsection1} $\mbox{}$

\s\n {\bf Step 1.} (Construction of $\beta$ on $S''$.) The surface
$S''$ has corners and carries the nonsingular line field
$\F|_{S''}$.

\begin{claim}
The restriction of $\F$ to $S''$ is orientable.
\end{claim}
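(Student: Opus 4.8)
The plan is to exhibit a continuous nonvanishing vector field tangent to $\mathcal{F}|_{S''}$, or equivalently to show that $\mathcal{F}|_{S''}$ admits a consistent choice of direction; by definition this is possible if and only if $H^1(S'';\Z/2\Z)$ pairs trivially with the $\Z/2\Z$-monodromy class of the line field, so it suffices to check that the orientation of $\mathcal{F}$ does not flip around any loop in $S''$. Since $S''$ is a surface with boundary, it is homotopy equivalent to a wedge of circles, and $H_1(S'';\Z/2\Z)$ is generated by finitely many embedded loops; I would first reduce to checking these generators. The key geometric fact is that the orientation of a singular foliation $\mathcal{F}$ with only saddle-type (i.e.\ $k$-pronged with $k\geq 2$) singularities can be transported unambiguously along any path that avoids the singularities, and the obstruction to global orientability along a loop $\gamma$ is precisely the parity of the number of ``half-turns'' the foliation makes, which in turn is governed by the indices of the singularities enclosed. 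So the first real step is: \emph{$S''$ contains no singularities of $\mathcal{F}$ in its interior}. This is exactly how $S''$ was built --- $S' = S - \cup_j \mathrm{int}(N(y_j)) - \mathrm{int}(N(\bdry S))$ removes neighborhoods of all the interior singularities $y_j$ and of the boundary singularities $x_i$, and $S'' = S' - \cup_i \mathrm{int}(N(P_i'))$ removes even more; hence $\mathcal{F}|_{S''}$ is a genuine nonsingular line field, consistent with the statement.

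Next I would argue that the inclusion $S'' \hookrightarrow S$, or rather the relation of $S''$ to the surface-minus-singularities, kills the relevant monodromy. Concretely: let $\Sigma$ be $S$ with an open disk removed around each singular point $x_1,\dots,x_n,y_1,\dots,y_q$, and observe $S'' \subset \Sigma$. On $\Sigma$ the line field $\mathcal{F}$ is nonsingular, and its orientation double cover $\widehat\Sigma \to \Sigma$ is classified by a homomorphism $w_1(\mathcal{F}): \pi_1(\Sigma) \to \Z/2\Z$. A small loop $\ell_p$ around a singular point $p$ with $m_p$ prongs maps to $m_p \bmod 2$ under $w_1(\mathcal{F})$ (the foliation makes $m_p - 2$ half-turns, or equivalently has index $1 - m_p/2$), so the class $w_1(\mathcal{F})$ is determined on these peripheral loops. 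The point is then to show that $w_1(\mathcal{F})$ vanishes on the image of $\pi_1(S'') \to \pi_1(\Sigma)$. Since $S''$ is cut out by properly embedded arcs, every loop in $S''$ is, up to homology, a boundary component of $S''$ together with meridians; but $S''$ is itself a planar-ish piece assembled from $\mathcal{F}$-plaques, and crucially its boundary decomposes into arcs that are \emph{alternately tangent to $\mathcal{F}^u$ and tangent to $\mathcal{F}$} (by the construction of $N(\bdry S)$ and $N(y_j)$). I would use this: any loop in $S''$ can be pushed to $\bdry S''$, and along $\bdry S''$ the direction of $\mathcal{F}$ can be tracked explicitly because the boundary alternates between leaves of $\mathcal{F}$ (where the direction is literally the tangent to the boundary) and transversals (where it is the fixed transverse co-orientation). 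Going once around such a boundary component the direction returns to itself precisely because each horizontal arc $b_i$ is a leaf and the vertical arcs $a_i$ carry a fixed transverse orientation — there is no half-turn.

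The cleanest way to finish, and the one I would actually write, is probably the direct one: orient $\mathcal{F}$ along $\bdry S$ (say, so that on each horizontal arc $b_i$ the chosen direction agrees with the boundary orientation of $S''$, which is possible since $b_i$ is a leaf), then extend the orientation inward leaf-by-leaf. Because $S''$ is $\mathcal{F}|_{S''}$-saturated up to its vertical boundary arcs — i.e.\ every leaf of $\mathcal{F}|_{S''}$ is a compact arc running from one vertical boundary arc to another (this uses that all singularities and all the prong-neighborhoods $N(P_i')$, $N(y_j)$ have been excised, so no leaf of $\mathcal{F}|_{S''}$ can spiral or hit a singularity) — the leaf space of $\mathcal{F}|_{S''}$ is a disjoint union of intervals, hence simply connected, and the orientation propagates with no obstruction. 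I expect the main obstacle to be making precise the claim that $\mathcal{F}|_{S''}$ is a product-like (suspension) foliation with interval leaf space: one must rule out leaves that are boundary-parallel or that limit onto excised regions, which requires invoking the minimality of $\Lambda^s$ and the fact that the complementary components of $\S\cup\Lambda$ were all accounted for in $N(\bdry S)$, $N(y_j)$, $N(P_i')$. Once that structural statement is in hand, orientability is immediate; so the write-up should spend its effort there and then conclude in one line.
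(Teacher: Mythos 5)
Your reduction to showing that the $\Z/2$-monodromy of the line field vanishes on loops in $S''$, and your observation that $S''$ contains no singularities of $\F$, are both fine and consistent with the construction. But both of your proposed ways of finishing assert exactly the point that needs proof. In the boundary-tracking version, the claim that going once around a boundary component of $S''$ produces ``no half-turn'' is unjustified: along a circle made of arcs alternately tangent and transverse to $\F$, the net rotation of the line field is the turning of the boundary plus a quarter-turn contribution of either sign at each corner, so it equals (up to sign and convention) $\tfrac14\bigl(\#\{\mbox{concave corners}\}-\#\{\mbox{convex corners}\}\bigr)$ plus an integer; alternation only makes the number of corners even, so a priori the total can be a half-integer and the field can reverse. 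Ruling this out is precisely where the paper does its work: it observes that each concave corner of $S''$ (these come only from the corners of $N(\bdry S)$) is adjacent to a distinct convex one, so the degree along every boundary circle is $\leq 0$; Poincar\'e--Hopf then forces every component of $S''$ to be a disk or an annulus, and in the annulus case both boundary degrees to vanish, which is exactly the absence of a half-turn. You supply no substitute for this quantitative step.

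The ``cleanest'' finish has the same problem in different clothing. First, the structural claim that every leaf of $\F|_{S''}$ is a compact arc running between vertical boundary arcs is only sketched (it does follow from minimality --- a half-leaf trapped in the proper compact subsurface $S''$ would contradict density of half-leaves of $\F^s$ --- but you explicitly defer this). More seriously, even granting it, the inference ``hence the leaf space is a disjoint union of intervals, hence the orientation propagates'' is not valid as stated: a nonsingular foliation all of whose leaves are compact boundary-to-boundary arcs can have leaf space a circle (an $I$-bundle over $S^1$) or a branched non-Hausdorff $1$-manifold, and the twisted $I$-bundle over $S^1$ shows that compact-arc leaves alone do not imply orientability of the tangent line field. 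To exclude these possibilities you would have to use the corner/tangency structure of $\bdry S''$ (e.g.\ that every boundary circle contains a horizontal arc, plus an argument that no essential foliated sub-annulus fibered over $S^1$ occurs), and at that point you are reconstructing, with more effort, the Euler-characteristic/degree count that the paper's two-paragraph proof carries out directly. So the approach is genuinely different in spirit, but as written it has a gap at the crux; if you want to pursue it, the work should go into proving the interval-leaf-space statement, and the paper's corner-counting argument is the shorter route.
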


\begin{proof}
All the corners of $S''$ are convex, except those which are also
corners of $N(\bdry S)$.  Each concave corner $p_i(\pm \delta)$ is
always adjacent to a convex one of type $p_i(s)$, obtained by
removing the neighborhood of a prong; moreover, such an assignment
defines an injective map from the set of concave corners to the set
of convex corners.  Let $C$ be a connected component of $S''$. Since
the foliation $\F$ is either tangent to or transversal to every
smooth subarc of $\partial S''$, if we smooth $\partial C$, then for
every boundary component $c_i$ of $\partial C$, the degree of $\F$
along $\bdry c_i$ is
$$\deg (\F,\partial c_i )=\frac{1}{4} (\sharp \{ {\rm concave \
corners}\} -\sharp \{ {\rm convex \ corners} \})\leq 0.$$ Therefore
we see that $\chi (C) \geq 0$, and $C$ is either a disk or an
annulus. Moreover, in the case of an annulus, the degree of $\F$
must be zero on the two boundary components. The claim follows.
\end{proof}

From now on, fix an orientation of $\F|_{S''}$. The transversal
measure for $\F|_{S''}$ on $S''$ is now given by a closed $1$-form
$\nu$ which vanishes on $\F|_{S''}$ and satisfies
$\psi_*\nu=\lambda\nu$. The measure of an arc transversal to
$\F|_{S''}$ is given by the absolute value of the integral of $\nu$
along this arc. We stipulate that $\nu(Y)>0$ if $(X,Y)$ is an
oriented basis for $TS$ at a point and $X$ is tangent to and directs
$\F|_{S''}$.

The surface $S''$ can be covered by interiors of finitely many
Markov charts of $\F$ in $S-int(N(\bdry S))$.  (Here we are using
the topology induced from $S-int(N(\bdry S))$.) It suffices to
consider charts of type $R=[-1,1]\times [-\delta_0, \delta_0]$ and
$R'=([-1,1]\times [-\delta_0,\delta_0]) -([-{1\over 3},{1\over
3}]\times [-\delta_0,0))$, where $\delta_0>0$ is small and we use
coordinates $(x,y)$ for both types of charts. Here we take $\nu
=dy$; in particular, this means $\F =\{ dy=0\}$. Moreover, we
require $\{ \pm 1\} \times [-\delta_0,\delta_0]$ to be subarcs of
vertical arcs of $\partial S'$ for both $R$ and $R'$; $\{\pm {1\over
3}\}\times [-\delta_0,0]$ to be subarcs of vertical arcs of $\bdry
(N(\bdry S))$ for $R'$; and $[-{1\over 3},{1\over 3}]\times \{0\}$
to be a horizontal arc of $\bdry (N(\bdry S))$ for $R'$.

On each chart $U=R$ or $R'$, let $f_U$ be a function $R\rightarrow
\R^{\geq 0}$  (resp.\ $R'\rightarrow \R^{\geq 0}$) which satisfies:
\be
\item  $f_U=0$ on $[-1,1] \times \{ -\delta_0,\delta_0\}$
(resp.\ $([-1,1]\times\{-\delta_0,\delta_0\})\cap R'$).
\item $\frac{\partial f_U}{\partial x} (x,y)>0$ for $y\in
(-\delta_0,\delta_0)$. \ee If we sum the forms $f_U dy$ over all the
charts $U$, we obtain a form $\nu'$ on $S''$ with $d\nu'
>0$. Now consider the form $\beta=\nu+\varepsilon_0\nu'=F\nu$,
where $\varepsilon_0>0$. We have $d\beta>0$, since $d\nu=0$ and
$d\nu'>0$. Observe that $\beta$ is defined in a slight enlargement
of $S''$, and the desired $\beta$ is $\beta|_{S''}$.

\s\n {\bf Step 2.} (Extension of $\beta$ to $S$ in the absence of
interior singularities.)  Suppose there are no interior
singularities.  We first state and prove a useful lemma.

\begin{lemma} \label{extension to rectangle}
Consider the rectangle $R=[-1,1]\times[-\delta_1,\delta_1]$ with
coordinates $(x,y)$. Let $\beta= Fdy$, $F>0$, be the germ of a
$1$-form on $\bdry R$ which satisfies ${\bdry F\over \bdry x}>0$.
Then $\beta$ admits an extension to $R$ with the properties that
$d\beta>0$ and $\beta({\bdry\over \bdry y})>0$ if and only if
$\int_{\bdry R} \beta>0$.
\end{lemma}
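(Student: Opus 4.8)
The plan is to prove both implications directly. Necessity is immediate: if $\beta$ extends to $R$ with $d\beta>0$, then by Stokes' theorem $\int_{\bdry R}\beta=\int_R d\beta>0$.

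For sufficiency, I would first rephrase the hypothesis. Since $\beta=F\,dy$ near $\bdry R$ and $dy$ vanishes along the two horizontal sides of $\bdry R$, those sides contribute nothing to $\int_{\bdry R}\beta$, so $\int_{\bdry R}\beta=I_+-I_-$ where $I_\pm:=\int_{-\delta_1}^{\delta_1}F(\pm1,y)\,dy$; thus the hypothesis says exactly $I_+>I_-$. I then look for the extension in the form $\beta=P\,dx+Q\,dy$, subject to: $P\equiv0$ and $Q\equiv F$ on a neighborhood of $\bdry R$ (this yields both the germ condition and $\beta(\partial_y)=Q>0$ there), $Q>0$ on all of $R$, and $\partial_xQ-\partial_yP>0$ on all of $R$ (which is $d\beta>0$).

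The construction has two steps. \emph{Step 1: choose $Q$.} I want a positive extension $Q$ of $F$ whose vertical average $m(x):=\int_{-\delta_1}^{\delta_1}Q(x,y)\,dy$ is strictly increasing on $[-1,1]$. This is possible precisely because of the hypothesis: near $\bdry R$ one automatically has $m'(x)=\int_{-\delta_1}^{\delta_1}\partial_xF\,dy>0$ (as $\partial_xF>0$), the endpoint values are $m(\pm1)=I_\pm$ with $I_-<I_+$, so $m$ can be prescribed as a smooth strictly increasing function agreeing with $\int F(x,\cdot)\,dy$ near $x=\pm1$, and then realized as the vertical average of a positive extension $Q$ (for instance $Q=\zeta F+(1-\zeta)\phi$, with $\zeta$ a cutoff supported near $\bdry R$ and $\phi=\phi(x)>0$ chosen to give the right average on each vertical line). \emph{Step 2: solve for $P$.} Fix $Q$, so $\min_{[-1,1]}m'>0$. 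Let $\theta(s)$ be a cutoff equal to $1$ off tiny neighborhoods (inside the collar) of $s=\pm\delta_1$ with $\|1-\theta\|_{L^1}$ small, let $\xi(x)$ be a cutoff vanishing near $x=\pm1$, put $c(x):=\big(\int\theta\big)^{-1}\int\theta(s)\,\partial_xQ(x,s)\,ds$, and set $P(x,y):=\xi(x)\int_{-\delta_1}^{y}\theta(s)\big(\partial_xQ(x,s)-c(x)\big)\,ds$. The normalization gives $\int_{-\delta_1}^{\delta_1}\theta(s)(\partial_xQ-c(x))\,ds=0$, so $P$ vanishes near $y=\pm\delta_1$, and $\xi$ makes $P$ vanish near $x=\pm1$; moreover $c(x)=\big(\int\theta\big)^{-1}\big(m'(x)-\int(1-\theta)\partial_xF\big)>0$ once $\|1-\theta\|_{L^1}$ is small relative to $\min m'$. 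One checks $\partial_xQ-\partial_yP=\partial_xQ\,(1-\xi\theta)+\xi\theta\,c(x)$, which is positive everywhere: where $\xi\theta=1$ it equals $c(x)>0$, and where $\xi\theta<1$ the point lies in the collar of $\bdry R$, so $\partial_xQ=\partial_xF>0$ and the expression is a sum of nonnegative terms with at least one strictly positive. Finally $\beta=F\,dy$ near $\bdry R$ by construction, and $\beta(\partial_y)=Q>0$, completing the proof.

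The only real subtlety is Step 1: the prescribed germ pins the value of $m$ at $x=\pm1$ and forces $m$ to be increasing on a collar of $\bdry R$, and these partial data can be completed to a globally increasing $m$ if and only if $I_+>I_-$, i.e. if and only if $\int_{\bdry R}\beta>0$ — this is where the hypothesis enters. Everything after that is bookkeeping with cutoffs, arranged so that positivity of $Q$, positivity of the constants $c(x)$, and the inequality $d\beta>0$ all hold at once; this forces one to choose the collar widths and the $L^1$-size of $1-\theta$ small, in that order, but involves nothing deep.
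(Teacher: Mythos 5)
Your proof is correct in its essentials, but it takes a genuinely different route from the paper's. The paper straightens the boundary data: it chooses a fiber-preserving diffeomorphism $\phi(x,y)=(x,\phi_x(y))$ so that on the vertical edges the pulled-back germ becomes $c_{\pm 1}\,dy$ with constants $c_{-1}<c_{1}$ --- this inequality being exactly the hypothesis $\int_{\partial R}\beta>0$ --- then extends trivially as $G\,dy$ with $\partial G/\partial x>0$ and sets $\beta=\phi_*(G\,dy)$; since $\phi$ is orientation- and fiber-preserving, $d\beta>0$ and $\beta(\partial_y)>0$ come for free, and no $dx$-component is ever introduced. You instead stay in the given coordinates, allow $\beta=P\,dx+Q\,dy$, and encode the hypothesis in the monotonicity of the fiberwise average $m(x)=\int Q(x,\cdot)\,dy$; your Step 2, which redistributes $\partial_xQ$ along fibers by an explicit $P$ vanishing near $\partial R$, is correct as written and amounts to showing that an increasing average is sufficient (it is clearly necessary once $P$ vanishes at $y=\pm\delta_1$). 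The paper's route buys brevity and almost no cutoff bookkeeping (its only care point is making the isotopy constant in $x$ near $y=\pm\delta_1$ so that $\phi^*\beta$ is a multiple of $dy$ near the horizontal edges); yours buys an explicit construction in the original coordinates and a clean isolation of the necessary and sufficient condition, at the cost of the cutoff arrangement you describe. One caveat on your Step 1: the sample formula $Q=\zeta F+(1-\zeta)\phi(x)$ does not quite deliver a monotone average in the transition zone where $\zeta$ varies in $x$, since there $\tfrac{d}{dx}\int Q\,dy$ picks up the term $\int \partial_x\zeta\,(F-\phi)\,dy$, which is of size $|\partial_x\zeta|$ and not controlled in sign when $\phi$ depends on $x$ alone; a clean fix is to take any positive extension $Q_0$ of $F$ and correct multiplicatively, $Q=Q_0\bigl(1+\rho(x,y)\,u(x)\bigr)$ with $\rho\ge 0$ supported in the interior and $u(x)=\bigl(m(x)-\int Q_0(x,\cdot)\,dy\bigr)/\int\rho\,Q_0\,dy\ge 0$ for a prescribed increasing $m$ agreeing with $\int F(x,\cdot)\,dy$ near $x=\pm 1$, which preserves positivity and the boundary germ while prescribing the average. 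This is bookkeeping of the same kind you already carry out in Step 2, so it is not a conceptual gap, but it should be fixed before the argument is complete.
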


\begin{proof}
The condition $\int_{\bdry R}\beta>0$ is clearly necessary by
Stokes' theorem.  We check that it is sufficient.  Let
$\phi_{-1}:[-\delta_1,\delta_1]\rightarrow [-\delta_1,\delta_1]$ be
an orientation-preserving diffeomorphism for which
$\phi_{-1}^*\beta(-1,y)=c_{-1}dy$, where $c_{-1}>0$ is a constant.
Similarly define $\phi_{1}$ so that $\phi_{1}^*\beta(1,y)=c_1dy$
with $c_1>c_{-1}$.  Take an isotopy $\phi_x$ rel endpoints between
$\phi_{-1}$ and $\phi_{1}$, and define $\phi(x,y)=(x,\phi_x(y))$.
Now, there exists a $1$-form $Gdy$ near $\bdry R$ which agrees with
$\phi^*(\beta)$.  We may extend $G$ to all of $R$ with the property
that ${\bdry G\over \bdry x}>0$. Now let $\beta=\phi_*(Gdy)$.
\end{proof}

Next consider the walls $W_{i,L}$, $W_{i,R}$, $i=1,\dots,n$. The
walls are pairwise disjoint, and are disjoint from $P_{i'}'$ as well
as the portions of $P_{i'}\cap N(\bdry S)$ of type
$p_{i'}(0)q_{i'}(0)$, for all $i'$. Moreover, we may assume that
$\psi'$ leaves the union of the initial arcs of $W_{i,L}$ (resp.\
$W_{i,R}$), $i=1,\dots,n$, invariant.

\s\n {\bf Step 2A.} Consider the region $B_{i-1}$ of $N(\bdry S)$
which is bounded by the initial arcs of $W_{i-1,R}$ and $W_{i,L}$,
the arc $b_{i-1}$, and an arc of $\bdry S$. Assume without loss of
generality that, with respect to the orientation on $\F|_{S''}$,
$b_{i-1}$ is oriented from $a_{i-1}$ to $a_i$. Our strategy is as
follows: Start with $\F'|_{S''}=\F|_{S''}$, extend $\F'$ to a
singular Morse-Smale characteristic foliation on $B_{i-1}$, and
construct a $1$-form $\beta$ with $d\beta>0$ so that
$\ker\beta=\F'$.

Consider a characteristic foliation $\F'$ on a small neighborhood of
$B_{i-1}$ with the following properties (see Figure~\ref{walls}):
\begin{itemize}
\item The initial arc of $W_{i-1,R}$ is a nonsingular leaf which points
out of $\bdry S$.
\item The initial arc of $W_{i,L}$ is an integral curve with one
positive elliptic singularity $e_{i-1}$ on it.  We place the
$e_{i-1}$ so that Property (1) of Proposition~\ref{prop:
construction} holds.
\item $int(B_{i-1})$ contains a positive hyperbolic singularity $h_{i-1}$,
where the stable separatrices come from $e_{i-1}$ and from
$p_{i-1}(\delta-\varepsilon_2)$ on $a_{i-1}$, and the unstable
separatrices go to $\bdry S$ and $p_i(-\delta+\varepsilon_3)$ on
$a_i$.
\item By making $\varepsilon_2>0$ small enough, we have
\begin{equation}\label{crazyrectangle}
\displaystyle\int_{[\delta-\varepsilon_2,\delta]} p_{i-1}^*\beta <
\int_{[-\delta+\varepsilon_3,-\delta]} p_i^*\beta.
\end{equation}
\item The foliation points out of $S$ along $\bdry S$.
\end{itemize}

\begin{figure}[ht]
\vskip.1in
\begin{overpic}[height=1.7in]{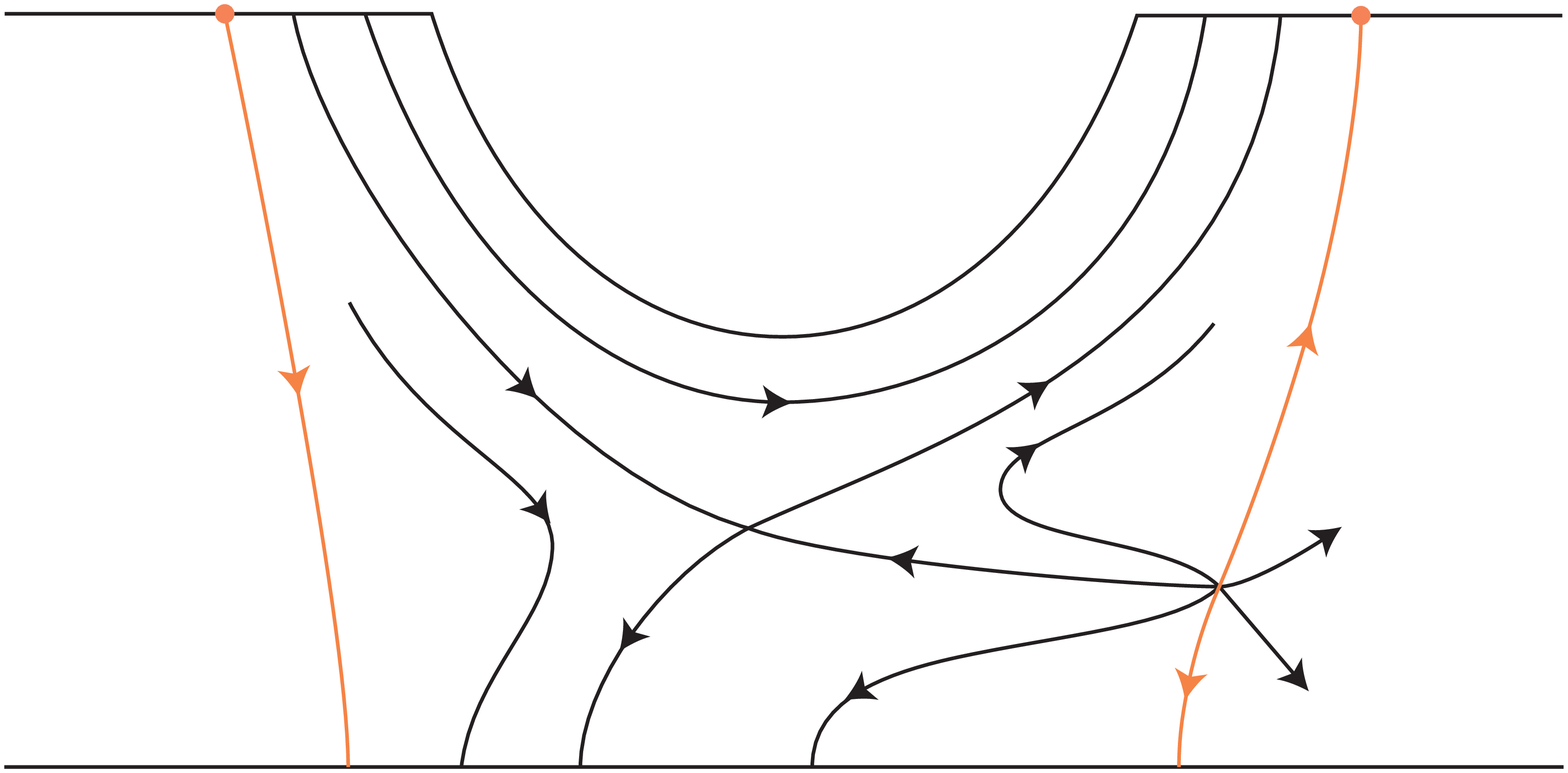}
\put(7,20){\tiny $W_{i-1,R}$} \put(85,25){\tiny $W_{i,L}$}
\put(0,50.5){\tiny $a_{i-1}$} \put(77,50.5){\tiny $a_i$}
\put(47.7,30.15){\tiny $b_{i-1}$} \put(12,50.5){\tiny
$p_{i-1}(\delta-\varepsilon)$} \put(85,50.5){\tiny
$p_i(-\delta+\varepsilon)$} \put(0,2){\tiny $\bdry S$}
\put(46.3,11.8){\tiny $h_{i-1}$} \put(81,10.4){\tiny $e_{i-1}$}
\end{overpic}
\caption{The kernel of the $1$-form $\beta$ in the region $B_{i-1}$
between the walls $W_{i-1,R}$ and $W_{i,L}$. The walls are orange
arcs.} \label{walls}
\end{figure}

We now explain how to extend $\beta$ to $B_{i-1}$ so that $\ker
\beta$ is the above characteristic foliation and $d\beta >0$. This
procedure follows Giroux's construction in
\cite[Proposition~2.6]{Gi2}. The form $\beta$ can be defined in a
neighborhood of $h_{i-1}$ and $e_{i-1}$, and, provided $\beta$ is
sufficiently large on the boundary of the neighborhood of $h_{i-1}$,
the extension to small neighborhoods of the separatrices and initial
arcs of $W_{i-1,R}$ and $W_{i,L}$ is immediate. The complementary
regions are all foliated rectangles. All but one have one vertical
edge either on $\bdry S$ or on $\bdry N(e_{i-1})$, and easily
satisfy the conditions of Lemma~\ref{extension to rectangle}. The
condition that $\beta$ exit uniformly with respect to $\psi'$ is
also easily met, on the portion that is defined.  The remaining
component is a rectangle whose vertical edges are small retractions
of $p_{i-1}([\delta-\varepsilon_2,\delta])$ and
$p_i([-\delta+\varepsilon_3,-\delta])$. Observe that the conditions
of Lemma~\ref{extension to rectangle} also hold for the remaining
rectangle, thanks to Equation~\ref{crazyrectangle}. Now we can apply
Lemma~\ref{extension to rectangle}, and extend $\beta$ to all the
rectangles.

\s\n {\bf Step 2B.} Next, for each $i$, we extend the horizontal
arcs of $\bdry (N(P_i'))$ inside $N(\bdry S)$ by geodesic arcs to
$\bdry S$. More precisely, let $p_i(s_1)$, $p_i(s_2)$,
$p_{i'}(s_3)$, $p_{i'}(s_4)$ be the four corners of $N(P_i')$, where
$s_1<s_2$ and $s_3<s_4$.
% and the orientation of $\F'$ is opposite that of $\bdry (N(P_i'))$
%along the arc from $p_i(s_1)$ to $p_{i'}(s_4)$.
Extend the horizontal arc $p_i(s_1)p_{i'}(s_4)$ by geodesic arcs
$p_i(s_1)q_i(s_1)$, $p_{i'}(s_4)q_{i'}(s_4)$ to obtain the arc
$d_{i,L}$ which is properly embedded in $S$. Similarly, extend
$p_i(s_2)p_{i'}(s_3)$ by geodesic arcs $p_i(s_2)q_i(s_2)$,
$p_{i'}(s_3)q_{i'}(s_3)$ to obtain $d_{i,R}$. Let $A_i\subset S$ be
the strip which lies between $d_{i,L}$ and $d_{i,R}$.  We extend
$\F'=\ker \beta$ to $A_i$. There are two cases:

\s\n (1) The orientations of $\F|_{S''}$ agree on $d_{i,L}\cap
\bdry(N(P_i'))$ and $d_{i,R}\cap \bdry (N(P_i'))$. This situation is
given in Figure~\ref{model3}.
\begin{figure}[ht]
\vskip.1in
\begin{overpic}[height=2.4in]{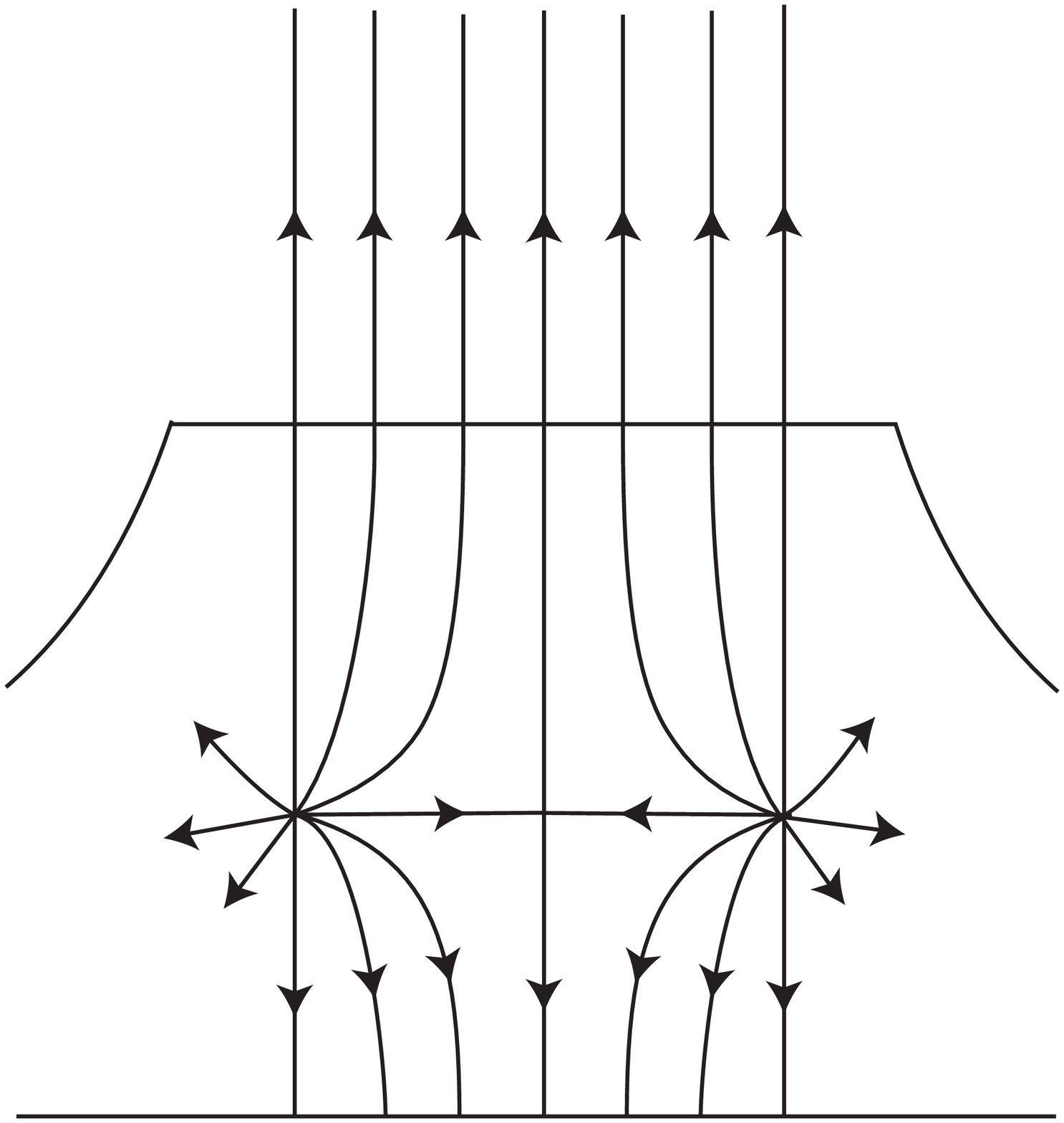}
\put(0,2){\tiny $\bdry S$} \put(15.5,80){\tiny $d_{i,L}$}
\put(72.5,80){\tiny $d_{i,R}$}
\end{overpic}
\caption{Construction of $\F'$ near $R_{\mbox{\tiny from}}$.}
\label{model3}
\end{figure}
In this case, we extend the characteristic foliation $\F'$ to
$N(P_i')$ so that it coincides with $\F$ on $N(P_i')$ and the
orientation agrees with that of $\F|_{S''}$ along $d_{i,L}\cap
\bdry(N(P_i'))$ and $d_{i,R}\cap \bdry (N(P_i'))$. There are two
remaining rectangles $R_{\mbox{\tiny to}}$ and $R_{\mbox{\tiny
from}}$ in $A_i$ to be foliated.  The rectangle $R_{\mbox{\tiny
to}}$ (resp.\ $R_{\mbox{\tiny from}}$) has a vertical edge in common
with $N(P'_i)$, along which $\F'$ exits (resp.\ enters) $N(P_i')$.
On $R_{\mbox{\tiny to}}$, the foliation $\F'$ consists of geodesic
arcs from $p_{i'}(s)$ to $q_{i'}(s)$, for $s\in[s_3,s_4]$, by
switching $i$, $i'$ if necessary. On $R_{\mbox{\tiny from}}$, we
place a positive elliptic singularity on each of $p_i(s_1)q_i(s_1)$
and $p_i(s_2)q_i(s_2)$ so that the two geodesics become integral
curves. Next we place a positive hyperbolic singularity in the
interior of $R_{\mbox{\tiny from}}$, so that both stable
separatrices come from the two elliptic points and the unstable
separatrices exit through $\bdry S$ and $\bdry N(P_i')$. Also, we
arrange so that $\F'$ exits from $S$ along $\bdry S\cap A_i$.  The
extension of $\beta$ to $A_i$ as a $1$-form with kernel $\F'$
subject to the condition $d\beta>0$ follows from
Lemma~\ref{extension to rectangle} and the considerations in Step
2A.

\s\n (2) The orientations of $\F|_{S''}$ on $d_{i,L}\cap
\bdry(N(P_i'))$ and $d_{i,R}\cap \bdry (N(P_i'))$ are opposite.
Without loss of generality assume that $\F'$ is oriented from
$p_i(s_1)$ to $p_{i'}(s_4)$.  Place an elliptic singularity between
$p_i(s_1)$ and $q_i(s_1)$, and between $p_{i'}(s_3)$ and
$q_{i'}(s_3)$, so that $d_{i,L}$ and $d_{i,R}$ are integral curves
of $\F'$.
\begin{figure}[ht] \vskip.1in
\begin{overpic}[height=2.4in]{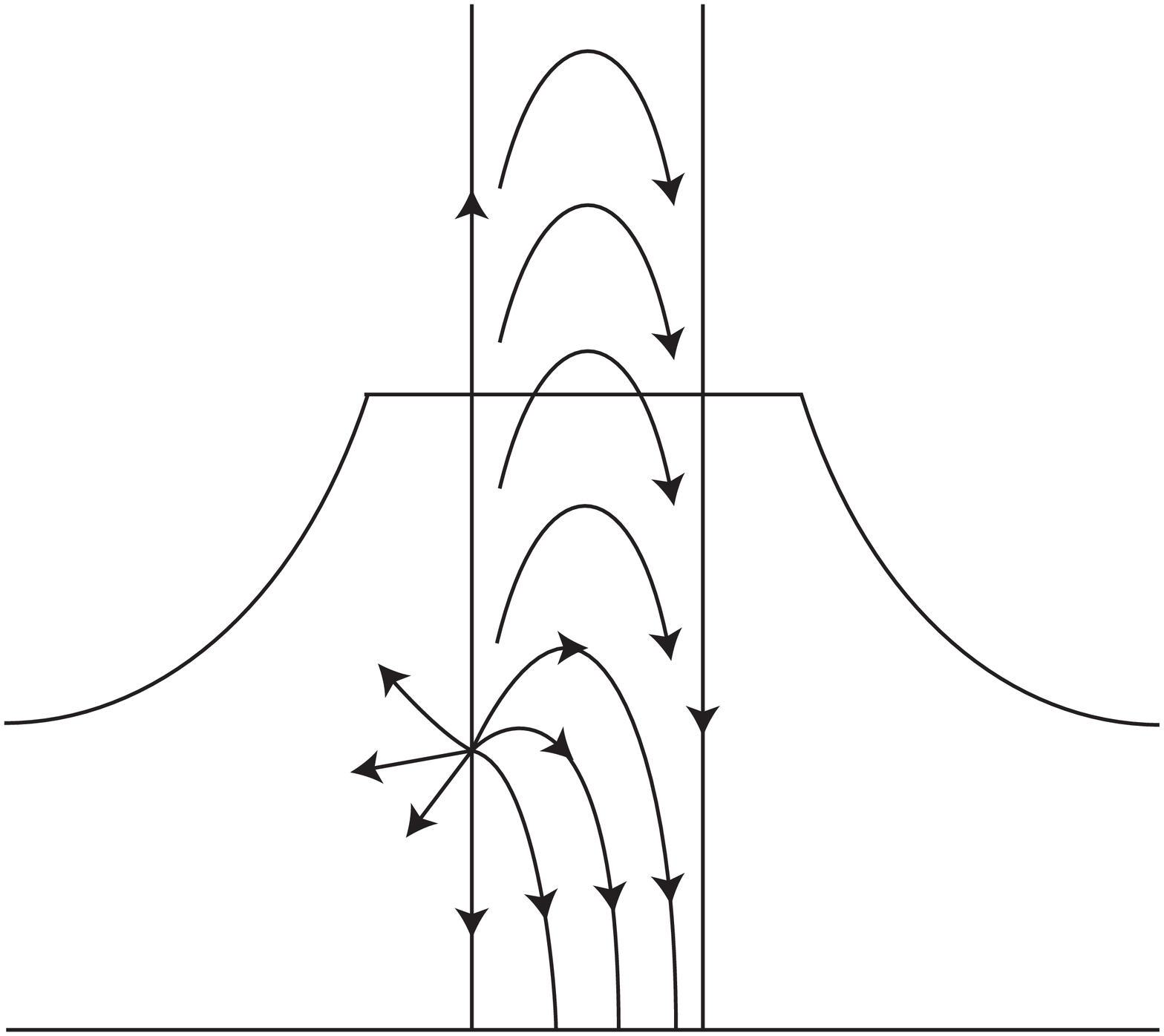}
\end{overpic}
\caption{} \label{model1}
\end{figure}
Next, place a hyperbolic singularity in the interior of one of the
components of $A_i-N(P_i')$. Its stable separatrices come from the
elliptic singularities on $d_{i,L}$ and $d_{i,R}$, and its unstable
separatrices exit $S$ along the two distinct components of $A_i\cap
\bdry S$. We can extend the foliation $\F'$ to all of $A_i$ without
adding any extra singularities and so that $\F'|_{N(P_i)}$ is a Reeb
component. See Figures~\ref{model1} and~\ref{model2} for the ends of
$A_i$. Finally, extend $\beta$ to $A_i$ as before.
\begin{figure}[ht]
\vskip.1in
\begin{overpic}[height=2.4in]{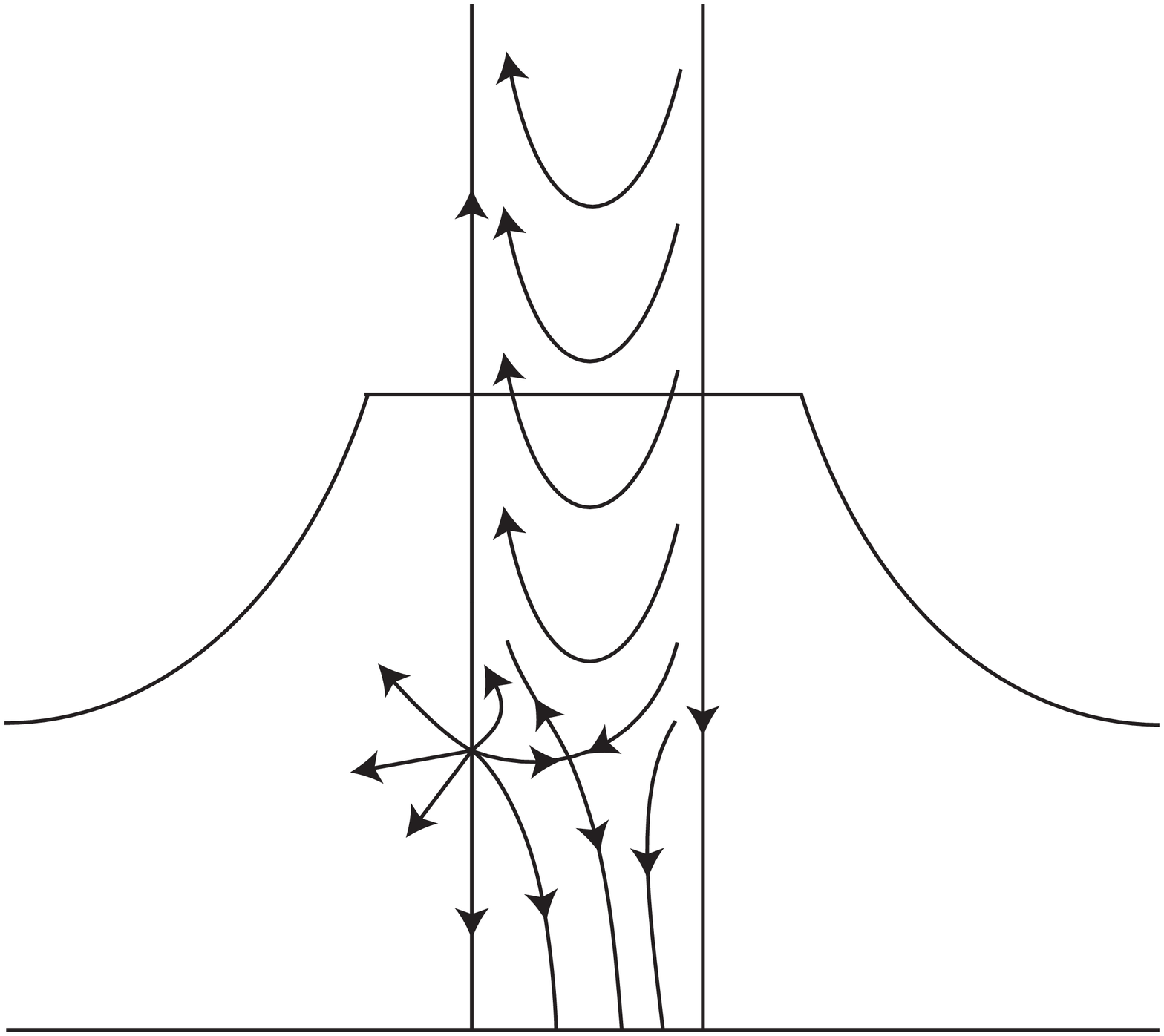}
\end{overpic}
\caption{} \label{model2}
\end{figure}

\s\n {\bf Step 2C.} After Steps 2A and 2B, we are left with
rectangles $\mathcal{R}$ in $N(\bdry S)$ whose vertical edges are on
$a_i$ and on $\bdry S$ and whose horizontal edges are of type
$p(s)q(s)$. We subdivide the rectangles by adding horizontal edges
so that the $W_{i,L}$, $W_{i,R}$ and $(\psi')^{-1}(W_{i,L})$,
$(\psi')^{-1}(W_{i,R})$ become integral arcs of $\ker \beta$, i.e.,
Property (5) of Proposition~\ref{prop: construction} is satisfied.
Observe that the initial arcs of $W_{i,L}$ and $W_{i,R}$ are already
integral arcs by Step 2A, and the initial arcs of
$(\psi')^{-1}(W_{i,L})$, $(\psi')^{-1}(W_{i,R})$ are the same as the
initial arcs of $W_{i-k,L}$ and $W_{i-k,R}$ by the definition of
$\psi'$. Hence we only consider the terminal arcs.

Let $p_{\phi_L(i)}(s_{i,L})$, $p_{\phi_R(i)}(s_{i,R})$,
$(\psi')^{-1}(p_{\phi_L(i)}(s_{i,L}))$, and
$(\psi')^{-1}(p_{\phi_R(i)}(s_{i,R}))$ be the initial points of the
terminal arcs of $W_{i,L}$, $W_{i,R}$, $(\psi')^{-1}(W_{i,L})$, and
$(\psi')^{-1}(W_{i,R})$. Here $\phi_L$, $\phi_R$ are some functions.
By construction, $p_{\phi_L(i)}(s_{i,L})$ is on the boundary of some
rectangle $\mathcal{R}$. If the orientation of $\F'$ at
$p_{\phi_L(i)}(s_{i,L})$ points into $\mathcal{R}$, then extend
$\F'$ and $\beta$ so that $\F'$ is tangent to and nonsingular along
the horizontal edge $p_{\phi_L(i)}(s_{i,L})q_{\phi_L(i)}(s_{i,L})$.
If the orientation points out of $\mathcal{R}$, then extend $\F'$
and $\beta$ so that $p_{\phi_L(i)}(s_{i,L})q_{\phi_L(i)}(s_{i,L})$
is an integral curve containing an elliptic singularity.  Next, if
$p_{\phi_L(i)-k}(s')=(\psi')^{-1}(p_{\phi_L(i)}(s_{i,L}))$ is on the
boundary of some rectangle $\mathcal{R}$, then $\F'$ and $\beta'$
can be extended similarly. If $p_{\phi_L(i)-k}(s')$ is inside some
$A_{i''}$, then let $W$ be a properly embedded arc in $S$ obtained
by concatenating $q_{\phi_L(i)-k}(s')p_{\phi_L(i)-k}(s')$, the leaf
of $\F|_{S-int(N(\bdry S))}$ through $p_{\phi_L(i)-k}(s')$, and a
terminal arc of type $p_{i''}(s'')q_{i''}(s'')$.  We then modify
$\F'$ by erasing $\F'|_{A_{i''}}$, extending $\F'$ to $W$ so that
$W$ is an integral curve which contains an elliptic singularity,
splitting $A_{i''}$ into two annuli along $W$, and applying the
procedure in Step~2B to each of the two annuli. The cases of
$p_{\phi_R(i)}(s_{i,R})$ and $\psi'(p_{\phi_R(i)}(s_{i,R}))$ are
treated similarly.

Finally, the extensions of $\F'$ and $\beta$ to the interiors of the
rectangles are identical to the extensions to $R_{\mbox{\tiny to}}$
and $R_{\mbox{\tiny from}}$ in Case (1) of Step 2B.

\s We remark that the extension of $\beta$ can be chosen so that
$\beta$ exits $\bdry S$ uniformly with respect to $\psi'$.

\s\n {\bf Step 3.} (Extension of $\beta$ to $S$ in the presence of
interior singularities.) We now explain how to extend $\beta$ to
$N(y_j)$. Let us denote the vertical boundary components of $N(y_j)$
by $c_1,\dots, c_{m_j}$ and the horizontal components by
$d_1,\dots,d_{m_j}$ (both ordered in a counterclockwise manner),
where the prong $Q_{jl}$ is between $d_l$ and $d_{l+1}$ and
intersects $c_l$. Here we orient $d_l$ by $\F'$. For a fixed $j$,
extend $d_l$ by geodesics to $\bdry S$ as before, and denote them by
$d'_l$.  Now let $C_j\supset N(y_j)$ be the subsurface of $S$
bounded by $d'_1,\dots,d'_{m_j}$. Refer to
Figure~\ref{singular-points}.
\begin{figure}[ht]
\vskip.15in
\begin{overpic}[height=2.4in]{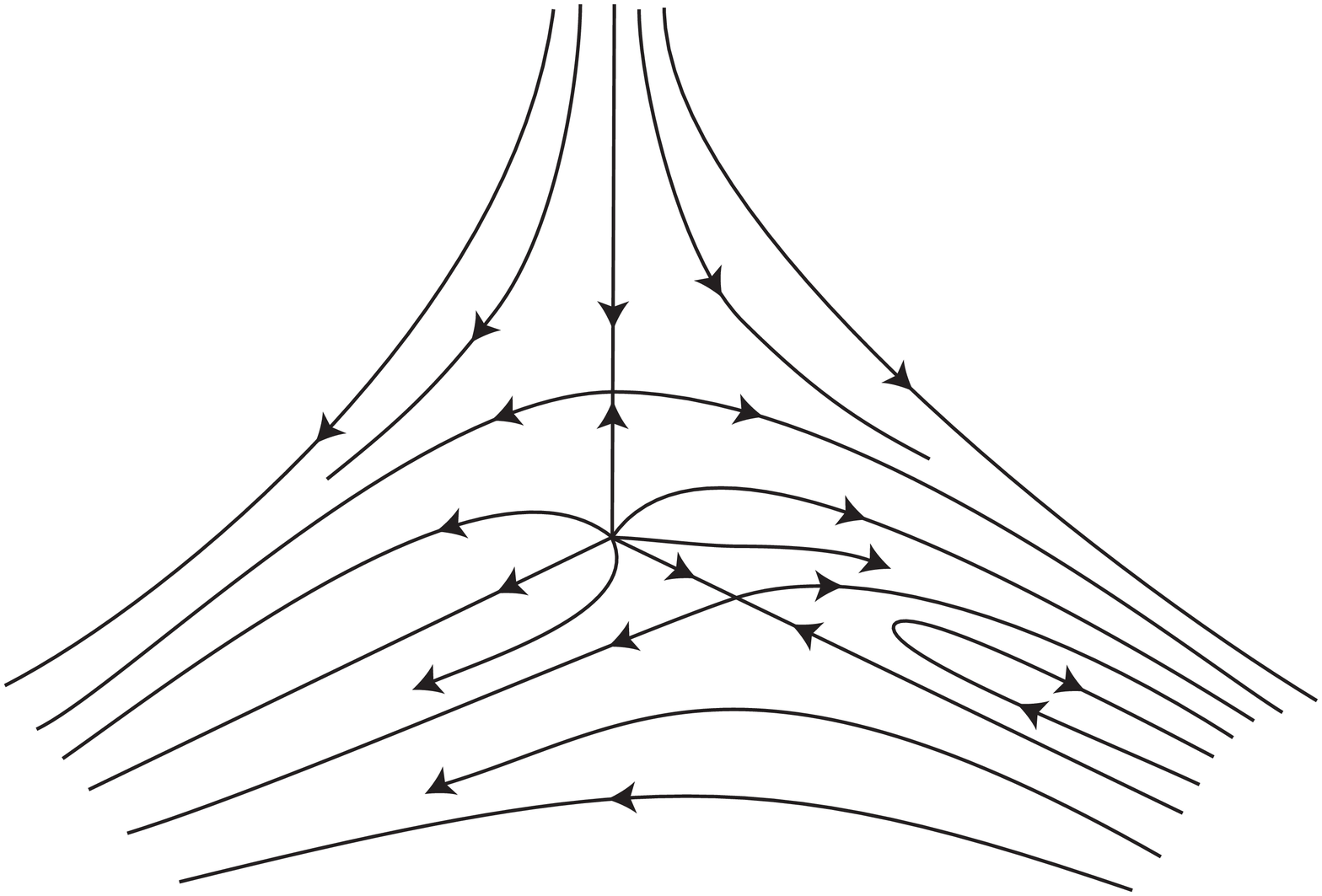}
\put(48,3){\tiny $d_1$} \put(70,40){\tiny $d_2$} \put(20,36){\tiny
$d_3$} \put(94,7) {\tiny $c_1$} \put(45,69.6){\tiny $c_2$}
\put(1.5,5){\tiny $c_3$}
\end{overpic}
\caption{Description of $\F'$ on $N(y_j)$.} \label{singular-points}
\end{figure}

We extend $\F'$ to $C_j$ as follows: Place an elliptic singularity
at $y_j$. Next, place a hyperbolic singularity on the prong
$Q_{jl}'$ emanating from $y_j$, if and only if at least one of
$d_l$, $d_{l+1}$ enters the region $S-int(N(\bdry S))$ along $c_l$.
In this case, the prong $Q_{jl}'$ is contained in the union of the
stable separatrices.  If both $d_l$ and $d_{l+1}$ enter
$S-int(N(\bdry S))$ along $c_l$, then the unstable separatrices exit
$N(y_j)$ along $c_{l-1}$ and $c_{l+1}$.  Otherwise, one unstable
separatrix exits along $c_l$ and the other exits along $c_{l-1}$
(resp.\ $c_{l+1}$) if $d_l$ (resp.\ $d_{l+1}$) enters $S-int(N(\bdry
S))$ along $c_l$. Finally, we complete $\F'$ and $\beta$ on $N(y_j)$
without adding extra singularities, and then extend to $C_j$ using
the models of $R_{\mbox{\tiny to}}$ and $R_{\mbox{\tiny from}}$
(Figure~\ref{model3}) from Case (1) of Step 2B, and
Figure~\ref{model1} from Case (2) of Step 2B.

As in Step~2C, if there is an initial point of a terminal arc of
$(\psi')^{-1}(W_{i,L})$ or $(\psi')^{-1}(W_{i,R})$ which lies in
$N(y_j)$, then we may need to insert extra arcs of type $W$ and redo
the construction of $\F'$ and $\beta$.

\s This completes the construction of $\beta$ on $S$.

\subsection{Verification of the properties.} \label{subsection2}
In this subsection we prove Properties~(1)--(6) of
Proposition~\ref{prop: construction}. Properties (1) and (4)--(6)
are clear from the construction.

\s\n (2) We compare $W_{i,L}$ and $P_i$. The wall $W_{i,L}$ is
initially to the left of $P_i$.  (More precisely,
$p_i(-\delta+\varepsilon)q_i(-\delta+\varepsilon)$ is to the left of
$p_i(0)q_i(0)$.) On $S-N(\bdry S)$, $W_{i,L}$ and $P_i'$ are leaves
of $\F$, and they do not cross. If there is some prong $P_j'$ or
$Q_{jk}'$ that intersects $p_i([-\delta+\varepsilon,0])$, then
$W_{i,L}$ and $P_i$ bifurcate in the universal cover $\wt{S}$ and
never reintersect. Otherwise, $W_{i,L}\cap (S-N(\bdry S))$ and
$P_i\cap (S-N(\bdry S))$ are parallel paths in $\wt{S}$. Let
$p_{i'}(s)$ be the ``other'' endpoint of $W_{i,L}\cap (S-N(\bdry
S))$, i.e., the one that is not $p_i(-\delta+\varepsilon)$.

If $s>0$, then we claim that the prong $P_{i'}'$ is between
$W_{i,L}$ and $P_i'$. Indeed, first observe that the transverse
distance between $W_{i,L}$ and $P_i'$ is $\delta-\varepsilon$.  Now,
in Section~\ref{matisse}, $\varepsilon>0$ was defined so that all
the leaves of $\F$ which start from
$p_i([-\delta,-\delta+\varepsilon])$ exit together along some
$a_{i'}$ and also avoid $P'_{i'}$. Hence $s\leq \delta-\varepsilon$
and $s\not=0$. This means that $P_i'$ intersects $a_{i'}$ at
$p_{i'}(s')$ with $s'<0$, and the prong $P_{i'}'$ is between
$W_{i,L}$ and $P_i'$.

Therefore $s<0$, and $P_i$ continues to the right while $W_{i,L}$
enters $N(\bdry S)$ and exits along $\bdry S$.

\s\n (3) Assume without loss of generality that $\psi(x_i)=x_i$. To
compare $W_{i,L}$ and $\psi'(W_{i,L})$, we first compare
$W=W_{i,L}\cap (S-N(\bdry S))$ and $\psi(W)$. Here, $\psi(W)$ and
$\psi'(W)=\psi'(W_{i,L})\cap (S-N(\bdry S))$ agree outside a
neighborhood of $N(\bdry S)$. The initial point of $W$ is
$p_i(-\delta+\varepsilon)$ and the initial point of $\psi(W)$,
projected to $a_i$ along $\F$, is $p_i({1\over
\lambda}(-\delta+\varepsilon))$. As in (2), if there is a prong
$P_j'$ or $Q_{jk}'$ that intersects
$p_i([-\delta+\varepsilon,{1\over \lambda}(-\delta+\varepsilon)])$,
then $W$ and $\psi(W)$ bifurcate in $\wt{S}$.  Hence the same can be
said about $W$ and $\psi'(W)$.

Otherwise, let $p_{i'}(s)$ be other endpoint of $W$ as in (2).  If
$s>0$, then let $p_{i'}(s'')$ be the first intersection of $\psi(W)$
with $a_{i'}$. Observe that $\psi(W)$ is longer than $W$, with
respect to the transverse measure $\mu^u$, so $s''$ cannot be in the
interval $[-{1\over \lambda}(\delta-\varepsilon), {1\over
\lambda}(\delta-\varepsilon)]$ if $W$ and $\psi(W)$ fellow-travel.
If $s''\geq 0$, then we necessarily have $s''\in [0,{1\over
\lambda}(\delta-\varepsilon)]$, since the distance $s-s''=
(1-{1\over \lambda})(\delta-\varepsilon)$. This is a contradiction.
Therefore, $s''<0$, which indicates a bifurcation. Now suppose
$s<0$. Let us parametrize $W$ (resp.\ $\psi(W)$) by the
$\mu^u$-distance from the point $p_i(-\delta+\varepsilon)$ (resp.\
$p_i({1\over \lambda}(-\delta+\varepsilon))$). Then either $\psi(W)$
does not intersect $a_{i'}$ at time $\mu^u(W)$, or intersects
$a_{i'}$ at $p_{i'}(s'')$ at time $\mu^u(W)$, where $s''\in
[-\delta,{1\over \lambda}(-\delta+\varepsilon))$. The possibility
$s''\in [{1\over \lambda}(-\delta+\varepsilon),0]$ has already been
ruled out.  The wall $W_{i,L}$ enters $a_{i'}$ and exits along
$\bdry S$, whereas $\psi(W)$ is ``to the right'' of $\psi(a_{i'})$
and hence $\psi'(W)$ is pushed ``to the right'' of $b_{i'-1}$.

\s\n This concludes the proofs of Properties (2) and (3), and also
the proof of Proposition~\ref{prop: construction}.

\subsection{Calculation of $\Phi$}

First observe the following:

\begin{lemma} \label{lemma: all in a row}
$$P_{i-1}\leq W_{i-1,R}\leq (\psi')^{-1}(W_{i-1+k,R})\leq
(\psi')^{-2}(W_{i-1+2k,R})\leq \dots \hspace{1in}$$
$$\hspace{1in}\dots\leq(\psi')^{-2}(W_{i+2k,L}) \leq (\psi')^{-1}(W_{i+k,L})\leq
W_{i,L} \leq P_{i}.$$
\end{lemma}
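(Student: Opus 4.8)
\emph{Approach.} I would read the display as a single chain of ``setwise to the left'' comparisons and exploit two features of the relation $\le$: it is transitive among arcs issuing from one point, and it is carried to itself by any orientation-preserving homeomorphism of $S$, in particular by $(\psi')^{\pm 1}$. To make the comparisons in the display literal rather than schematic, I would prepend to every arc occurring there a subarc of $\bdry S$ joining a fixed point (say $x_{i-1}$) to its actual initial point: every $R$-wall $(\psi')^{-m}(W_{i-1+mk,R})$ begins at $q_{i-1}(\delta-\varepsilon)$ and every $L$-wall $(\psi')^{-m}(W_{i+mk,L})$ begins at $q_i(-\delta+\varepsilon)$ (because $\psi'|_{N(\bdry S)}$ is the rigid rotation $p_\ell(s)q_\ell(s)\mapsto p_{\ell+k}(s)q_{\ell+k}(s)$), and these points, together with the endpoints of the $P_\ell$'s after the concatenation described following Proposition~\ref{prop: construction}, all lie on a common subarc of $\bdry S$. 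Since the walls are pairwise disjoint and (after a harmless perturbation) disjoint from the prongs, the resulting arcs lift to embedded arcs in $\wt S$ from a common point which are pairwise disjoint off that point, hence pairwise comparable by $\le$; so it suffices to verify each consecutive inequality.

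The two extreme inequalities $P_{i-1}\le W_{i-1,R}$ and $W_{i,L}\le P_i$ are Proposition~\ref{prop: construction}(2) (for disjoint arcs from a common point, ``$W_{i-1,R}$ to the right of $P_{i-1}$'' is the same as ``$P_{i-1}$ to the left of $W_{i-1,R}$''). For the $R$-spiral, Proposition~\ref{prop: construction}(3) applied with index $i-1+(m+1)k$ says $\psi'(W_{i-1+mk,R})$ is to the left of $W_{i-1+(m+1)k,R}$; pushing this forward by $(\psi')^{-(m+1)}$ gives $(\psi')^{-m}(W_{i-1+mk,R})\le(\psi')^{-(m+1)}(W_{i-1+(m+1)k,R})$ for all $m\ge 0$. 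Symmetrically, (3) applied with index $i+(m+1)k$ says $W_{i+(m+1)k,L}$ is to the left of $\psi'(W_{i+mk,L})$, and pushing forward by $(\psi')^{-(m+1)}$ gives $(\psi')^{-(m+1)}(W_{i+(m+1)k,L})\le(\psi')^{-m}(W_{i+mk,L})$, the $L$-spiral.

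It remains to splice the two (infinite) spirals, i.e.\ to show that every $R$-term precedes every $L$-term. By the two monotonicities just established and transitivity this reduces to $(\psi')^{-m}(W_{i-1+mk,R})\le(\psi')^{-m}(W_{i+mk,L})$ for every $m\ge 0$, which after applying $(\psi')^{m}$ is the single family
$$W_{j-1,R}\le W_{j,L}\qquad(j=i+mk).$$
This is where I expect the genuine work to be. To prove it I would use the shape of $N(\bdry S)$ together with Step~2A of the construction of $\beta$: the region $B_{j-1}\subset N(\bdry S)$ is bounded by the initial arc of $W_{j-1,R}$ (the one meeting $a_{j-1}$), the initial arc of $W_{j,L}$ (the one meeting $a_j$), the horizontal arc $b_{j-1}$, and an arc of $\bdry S$; outside $N(\bdry S)$ the middle arcs of $W_{j-1,R}$ and $W_{j,L}$ are leaves of $\F|_{S-int(N(\bdry S))}$ and hence disjoint; and by Proposition~\ref{prop: construction}(2), $W_{j-1,R}$ lies to the right of $P_{j-1}$ while $W_{j,L}$ lies to the left of $P_j$. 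Since $P_{j-1}$ and $P_j$ emanate from the consecutive boundary singularities $x_{j-1},x_j$ and --- using $c=k/n>0$, so that $\psi'$ rotates the prongs coherently --- suitable lifts $\wt P_{j-1},\wt P_j$ cut off a region of $\wt S$ into which both walls run, the cyclic position of the two walls is pinned down and $W_{j-1,R}\le W_{j,L}$ follows. Concatenating all the consecutive inequalities then yields the chain in the statement. The main obstacle, beyond the bookkeeping with Proposition~\ref{prop: construction}(2)--(3), is precisely this non-crossing of the two spirals, together with making the schematic ``$\le$'' along an infinite chain a bona fide transitive comparison (this is where disjointness of the walls and the device of extending along $\bdry S$ to a common basepoint are used).
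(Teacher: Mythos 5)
Your overall decomposition matches the paper's: Proposition~\ref{prop: construction}(2) gives the two end inequalities, (3) plus the fact that $\le$ is preserved by $(\psi')^{\pm1}$ gives the $R$- and $L$-spirals, and transitivity reduces the splice to the single family $W_{j-1,R}\le W_{j,L}$. That reduction, and the recognition that it is where the real work lies, is exactly right.

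Two points, however. First, your argument for $W_{j-1,R}\le W_{j,L}$ does not close. Knowing $P_{j-1}\le W_{j-1,R}$ and $W_{j,L}\le P_j$ confines both walls to the strip in $\wt S$ between $\wt P_{j-1}$ and $\wt P_j$, but it does not determine the position of $W_{j-1,R}$ relative to $W_{j,L}$: two arcs entering that strip in the observed order could still cross it in the wrong order. What you need, and what the paper supplies, is that the two lifted walls \emph{do not reintersect}, and the paper gets this cheaply from the shape of the walls you have already noted: their middle arcs are disjoint leaves of $\F|_{S-\mathrm{int}(N(\bdry S))}$, so the only way the lifts could reintersect is through the terminal arcs, which would force both middle arcs to hit the same transversal $a_{i'}$; that configuration produces a monogon for the stable lamination, which is impossible. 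You mention the disjointness of the middle arcs and then abandon it --- that observation, pushed through to the monogon contradiction, is the missing piece. (Alternatively, one can invoke the pairwise disjointness in $S$ of $W_{j-1,R}$ and $W_{j,L}$ secured by the $\varepsilon$-perturbation in the construction, since disjoint arcs lift to disjoint arcs.)

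Second, the opening claim that after extending to a common basepoint all the arcs in the display ``lift to embedded arcs in $\wt S$ \dots which are pairwise disjoint off that point'' is unjustified and in fact not obviously true: the walls at a fixed level are pairwise disjoint by construction, but the pullbacks $(\psi')^{-m}(W_{\cdot})$ for $m\ge1$ need not be disjoint from the $W_{\cdot}$ or from each other. Fortunately you do not need this global comparability: transitivity of $\le$ (for arcs issuing from a common point) together with the consecutive inequalities already gives the stated chain, which is how the paper proceeds.
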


Recall that $a\leq b$ means $a$ is to the left of $b$. Also,
$c={k\over n}$ is the fractional Dehn twist coefficient, where $n$
is the number of prongs.

\begin{proof}
By Proposition~\ref{prop: construction}, $P_{i-1}\leq W_{i-1,R}$ and
$\psi'(W_{i-1,R})\leq W_{i-1+k,R}$.  Hence $W_{i-1,R}\leq
(\psi')^{-1}(W_{i-1+k,R})$, and the first row of inequalities holds.
Similarly, the second row of inequalities holds. Next,
$W_{i-1,R}\leq W_{i,L}$. (Reason: $W_{i-1,R}$ is initially to the
left of $W_{i,L}$. In order for them to reintersect, $W_{i-1,R}\cap
(S-int(N(\bdry S)))$ and $W_{i,L}\cap(S-int(N(\bdry S)))$ must both
have endpoints on the same $a_{i'}$. This implies the existence of a
monogon, which is a contradiction.) Repeated application of
$(\psi')^{-1}$ gives $(\psi')^{-j}(W_{i-1+jk,R})\leq (\psi')^{-j}
(W_{i+jk,L})$.
\end{proof}

Consider a trajectory $Q$ of the type
\begin{equation}\label{eqn: Q}
Q=\gamma_1 ((\psi')^{-1}(\gamma_2))\dots
((\psi')^{-m+2}(\gamma_{m-1})) ((\psi')^{-m+1}(\gamma_m)).
\end{equation}
The trajectory $Q$ is said to be an {\em ideal trajectory} if, for
each $i$, $\gamma_i:[0,1]\rightarrow S$ is tangent to $\F'$ and does
not pass through a singular point, and
$\gamma_{i}(1)=(\psi')^{-1}(\gamma_{i+1}(0))$.

\begin{prop} \label{ideal trajectory}
If $Q$ is an ideal trajectory, then $\Phi(Q)=0$.
\end{prop}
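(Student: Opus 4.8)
The plan is to use the key geometric input from Proposition~\ref{prop: construction}, namely the chain of inequalities in Lemma~\ref{lemma: all in a row}, together with the additivity property of the Rademacher function from Proposition~\ref{prop: rademacher}(3) and its $\psi'$-invariance from Proposition~\ref{prop: rademacher}(1). First I would observe that since each $\gamma_i$ is tangent to $\F'$ (hence to $\ker\beta$) and the walls $W_{i,L}, W_{i,R}$ are the integral curves of $\ker\beta$ singled out in the construction, each segment $\gamma_i$ is trapped — in the ``setwise to the left/right'' sense — between consecutive walls of the chain in Lemma~\ref{lemma: all in a row}. More precisely, each $(\psi')^{-i+1}(\gamma_i)$, being a leaf of $\F'$ with prescribed endpoints on the $a_j$'s, lies setwise between $P_{i-1}$ and $P_i$ (suitably translated by powers of $\psi'$), because two leaves of the characteristic foliation cannot cross and the walls interpolate between the prongs.

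Next I would compute $\Phi$ of a leaf of $\F'$ that runs from one $a_j$ to another. The Rademacher function $\Phi$, as defined in Section~\ref{section: Rademacher}, counts (signed) intervals of the lift whose endpoints lie on $\wt\S$, i.e., on lifts of prongs. A leaf $\gamma_i$ of $\F'$ that does not pass through a singularity and is squeezed between two consecutive prongs/walls in the chain will have a lift $\wt\gamma_i$ that crosses $\wt\S$ in at most a controlled way; the point is that because $\gamma_i$ stays on one ``side'' — it never separates one prong from its neighbor in the cyclic order along a given boundary component of $\wt S$ — the signed count of full intervals between prong-lifts is zero, or more precisely $\Phi(\gamma_i|_P) = 0$ in each complementary region $P$ it meets, by the same case analysis as in the proof of Proposition~\ref{prop: rademacher}(3), cases (i)–(iv). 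I would then assemble $\Phi(Q)$ using the concatenation formula: $\Phi(Q) = \sum_i \Phi((\psi')^{-i+1}(\gamma_i)) + (\text{sum of } \varepsilon\text{-corrections at the junctions})$, where each $\varepsilon \in \{-1,0,1\}$ comes from the angle made at $\gamma_i(1) = (\psi')^{-1}(\gamma_{i+1}(0))$. By $\psi'$-invariance, $\Phi((\psi')^{-i+1}(\gamma_i)) = \Phi(\gamma_i)$.

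The main obstacle — and the part that needs real care — is showing that all the junction corrections $\varepsilon$ vanish, not just the individual terms $\Phi(\gamma_i)$. This is where Lemma~\ref{lemma: all in a row} is doing the heavy lifting: the junction at $\gamma_i(1)$ occurs where one $\F'$-leaf hands off to the next, and the walls $W_{i-1,R} \le \dots \le W_{i,L}$ being nested and the elliptic singularities of $\beta$ on the walls being matched up by $\psi'$ (Property~(6) of Proposition~\ref{prop: construction}) forces the concatenation $\gamma_i \cdot (\psi')^{-1}(\gamma_{i+1})$ to already be ``efficient'' at that junction — no backtracking relative to $\wt\S\cup\wt\Lambda$ — which is precisely condition~(i) in the proof of Lemma~\ref{Rad} that gives $\varepsilon = 0$. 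So the argument structure is: (a) localize $\gamma_i$ between consecutive members of the chain; (b) deduce $\Phi(\gamma_i) = 0$ region-by-region; (c) deduce each junction contributes $\varepsilon = 0$ from the nesting and the germ-matching of elliptic singularities; (d) sum up, using $\psi'$-invariance, to get $\Phi(Q) = 0$. I expect step~(c) to require the most bookkeeping, tracking exactly which $a_j$ the endpoints of consecutive $\gamma_i$'s land on and invoking the no-monogon argument from Lemma~\ref{lemma: all in a row} to rule out the crossing configurations that would produce $\varepsilon = \pm 1$.
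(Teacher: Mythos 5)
Your proposal takes a genuinely different route from the paper's, and the route you take has at least one real gap. The paper does \emph{not} decompose $\Phi(Q)$ into $\sum_i \Phi((\psi')^{-i+1}(\gamma_i))$ plus junction corrections and then show everything vanishes term by term. Instead it argues directly by contradiction: if $\Phi(Q)\ne 0$, then some lift $\wt Q$ must intersect two consecutive prongs $\wt P_i$, $\wt P_{i-1}$ emanating from the same component $L$ of $\bdry\wt S$; fixing the lift $\wt\psi'$ that fixes $L$ pointwise, each $\wt\gamma_j$ — being tangent to $\F'$ and hence unable to cross the walls, which are integral curves of $\ker\beta$ — stays strictly to the right of $\wt W_{i,L}$, and applying $\wt\psi'$ pushes things further to the right of $\wt W_{i,L}$ by Proposition~\ref{prop: construction}(3); unwinding by $(\wt\psi')^{-m+1}$ and invoking the chain of inequalities in Lemma~\ref{lemma: all in a row} (in particular $\wt P_{i-1}\le (\wt\psi')^{-m+1}(\wt W_{i,L})$), the terminal point of $\wt Q$ remains strictly to the right of a wall that is itself to the right of $\wt P_{i-1}$, contradicting the assumed crossing. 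Nothing is ever said about the values $\Phi(\gamma_i)$ or about concatenation errors $\varepsilon$.

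By contrast, your steps (b) and (c) are where the gap is. In (b) you invoke ``the same case analysis as in the proof of Proposition~\ref{prop: rademacher}(3), cases (i)--(iv)'' to conclude $\Phi(\gamma_i)=0$, but that case analysis concerns the behavior of $\Phi$ under \emph{concatenation} of two arcs, not the value of $\Phi$ on a single arc; moreover you write $\Phi(\gamma_i|_P)$ using notation from the periodic Rademacher function, which does not appear in the pseudo-Anosov definition. The wall-trapping argument can in fact be used to show a single $\gamma_i$ never crosses from $\wt P_j$ to $\wt P_{j+1}$, but you do not spell this out, and it is not the same as the case analysis you cite. In (c) you argue that the junction corrections $\varepsilon$ vanish because of ``the germ-matching of elliptic singularities'' (Property~(6) of Proposition~\ref{prop: construction}), but that property has nothing to do with the concatenation error in Proposition~\ref{prop: rademacher}(3): germ-matching is used later, in the proof of Proposition~\ref{prop: genuine trajectory} (Case~1), to prevent a trajectory of $Y_{\varepsilon'}$ from jumping between nearby leaves of $\ker\beta$ near an elliptic point. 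It does not control the $\pm 1$ ambiguity coming from the relative positions of $\gamma(0)$, $\gamma(1)=\gamma'(0)$, $\gamma'(1)$ in a complementary region, which is what $\varepsilon$ measures. You give no argument that these angle terms actually vanish, and there is no reason they must on a per-junction basis. The paper's contradiction argument sidesteps this entirely: it only needs the \emph{global} conclusion that $\wt Q$ stays on one side of a nested family of walls, which is cheaper than the per-piece accounting you attempt.

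A secondary inaccuracy: you describe $(\psi')^{-i+1}(\gamma_i)$ as ``a leaf of $\F'$ with prescribed endpoints on the $a_j$'s.'' Since $\psi'$ does not preserve $\F'$, $(\psi')^{-i+1}(\gamma_i)$ is tangent to $(\psi')^{-i+1}_*\F'$, not to $\F'$; and the endpoints of $\gamma_i$ are not required to lie on the $a_j$'s (they are points of $S$ determined by the flow, and the definition of an ideal trajectory only constrains $\gamma_i(1)=(\psi')^{-1}(\gamma_{i+1}(0))$). Neither error is fatal on its own, but both suggest the piece-by-piece bookkeeping is harder to carry out correctly than you anticipate. I would recommend abandoning the additive decomposition and instead running the contradiction argument: assume $\wt Q$ crosses two consecutive prongs, then use that each $\wt\gamma_j$ cannot cross a wall together with Lemma~\ref{lemma: all in a row} to trap the entire trajectory on one side.
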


\begin{proof}
Assume without loss of generality that $\psi(x_i)=x_i$.  (This is
just for ease of indexing.)

We argue by contradiction. Suppose that $\Phi(Q)<0$. (The case
$\Phi(Q)>0$ is similar.) We lift to the universal cover $\pi:\wt
S\rightarrow S$. A tilde placed over a curve will indicate an
appropriate lift to $\wt S$.  Then a lift $\wt Q$ of $Q$ in $\wt{S}$
must intersect two consecutive prongs $\wt{P}_i$ and $\wt{P}_{i-1}$
which start from the same component $L$ of $\bdry\wt{S}$, in that
order. Also choose a lift $\wt \psi':\wt S\rightarrow \wt S$ of
$\psi'$ which fixes $L$ pointwise.

Assume $\wt\gamma_1(0)$ is strictly to the right of the lift
$\wt{W}_{i,L}$, whose initial point is between the initial points of
$\wt{P}_i$ and $\wt{P}_{i-1}$ on $L$. (The modifier ``strictly''
means $\wt\gamma_1(0)$ is not on $\wt{W}_{i,L}$.) Then, since the
trajectory is ideal, $\wt\gamma_1(1)$ is also strictly to the right
of $\wt{W}_{i,L}$. Next, $(\wt\psi')^{-1}(\wt{W}_{i,L})\leq
\wt{W}_{i,L}$, so $\wt\gamma_1(1)=(\wt\psi')^{-1}(\wt\gamma_2(0))$
is strictly to the right of $(\wt\psi')^{-1}(\wt{W}_{i,L})$. Again,
since the trajectory is ideal, $(\wt\psi')^{-1}(\wt\gamma_2(1))$ is
strictly to the right of $(\wt\psi')^{-1}(\wt W_{i,L})$. Eventually
we prove that $(\wt\psi')^{-m+1}(\wt\gamma_m(1))$ is strictly to the
right of $(\wt\psi')^{-m+1}(\wt W_{i,L})$. Since $\wt P_{i-1}\leq
(\wt\psi')^{-m+1}(\wt W_{i,L})$ by Lemma~\ref{lemma: all in a row},
it follows that $\wt Q$ cannot cross from $\wt P_i$ to $\wt
P_{i-1}$, a contradiction.

An equivalent proof (the one we use in the general case) is to
consider the sequence $\wt\gamma_1,\dots,\wt\gamma_m$, where
$\wt\psi'(\wt\gamma_i(1))=\wt\gamma_{i+1}(0)$. First, $\wt\gamma_1$
is strictly to the right of $\wt W_{i,L}$.  Hence
$\wt\psi'(\wt\gamma_1)$ is strictly to the right of $\wt\psi'(\wt
W_{i,L})$ and also strictly to the right of $\wt W_{i,L}$. This
implies that $\wt\gamma_2$ is strictly to the right of $\wt
W_{i,L}$.  Repeating the procedure, $\wt\gamma_m$ is strictly to the
right of $\wt W_{i,L}$.  Shifting back by $(\wt\psi')^{-m+1}$,
$(\wt\psi')^{-m+1}(\wt\gamma_m(1))$ is strictly to the right of
$(\wt\psi')^{-m+1}(\wt W_{i,L})$, a contradiction.
\end{proof}

Let $\beta$ be the $1$-form on $S$ constructed in
Section~\ref{subsection1}, and let
$\alpha_{\varepsilon,\varepsilon'}$ and
$R_{\varepsilon,\varepsilon'}$ be the contact $1$-form and Reeb
vector field constructed in Section~\ref{subsub: N} using $\beta$.
Define a {\em genuine trajectory} $Q$ to be a concatenation of the
type given by Equation~\ref{eqn: Q}, where each $\gamma_i$ is the
image of a trajectory of $R_{\varepsilon,\varepsilon'}$ from $t=0$
to $t=1$, under the projection $\pi: S\times[0,1]\rightarrow S$ onto
the first factor.

\begin{prop} \label{prop: genuine trajectory}
Given $N\gg 0$, for sufficiently small $\varepsilon,\varepsilon'>0$,
any genuine trajectory $Q$ of $R_{\varepsilon,\varepsilon'}$ with
$m\leq N$ satisfies $\Phi(Q)=0$.
\end{prop}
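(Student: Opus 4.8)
The plan is to deduce Proposition~\ref{prop: genuine trajectory} from Proposition~\ref{ideal trajectory} by showing that, for $\varepsilon,\varepsilon'$ small, every genuine trajectory $Q$ of $R_{\varepsilon,\varepsilon'}$ with at most $N$ legs can be isotoped (rel endpoints, and respecting the $(\psi')^{-j}$-shifts leg-by-leg) to an ideal trajectory without changing $\Phi$. The key point is Lemma~\ref{lemma:uniform}: the projection $Y_{\varepsilon'}$ of $R_{\varepsilon,\varepsilon'}$ to a page is, up to uniformly small error (away from $A'$ and the fixed neighborhood $U$ of the singular set of $\beta$), parallel to the line field $\ker\beta=\F'$ — pointing toward the singularities of $\beta$ on $[0,\tfrac12]$ and away from them on $[\tfrac12,1]$, and vanishing on $A'$ and at $t=0,\tfrac12$. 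So each leg $\gamma_i$ of a genuine trajectory is $C^0$-close to a concatenation of arcs tangent to $\F'$ (an arc of $\ker\beta$ flowing in toward a singularity, then an arc flowing back out), with error controlled by $\varepsilon'$.

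First I would make precise the sense in which an $\F'$-tangent arc has $\Phi=0$: by Proposition~\ref{ideal trajectory}, a concatenation of the form~\eqref{eqn: Q} built from $\F'$-tangent legs avoiding singularities has $\Phi=0$, and the two-subarc structure of each genuine leg (in, then out) is exactly of this tangent type away from the singularities of $\beta$. Next I would handle the singularities of $\beta$: near each elliptic or hyperbolic singularity of $\F'$ the vector field $Y_{\varepsilon'}$ genuinely deviates from $\ker\beta$, but the singular set is a fixed finite configuration (the elliptic points on the walls, on the $d_{i,*}$-type arcs, at the $y_j$, together with the associated hyperbolic points), independent of $\varepsilon'$. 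A genuine leg passing near a singularity stays, for $\varepsilon'$ small, inside a small disk neighborhood on which it contributes nothing new to $\Phi$ — either it is confined to a single complementary region of $\S\cup\Lambda$ (so contributes $0$, as in cases (i),(iv) of the proof of Proposition~\ref{prop: rademacher}), or the extra turning it picks up is a bounded $\varepsilon$ — and one argues that the total such correction over $\leq N$ legs is $< 1$ in absolute value; since $\Phi$ is integer-valued on genuine trajectories (it counts lifts of prongs crossed), $\Phi(Q)$ equals $\Phi$ of the associated ideal trajectory, which is $0$. Here the fact that $m\leq N$ is essential: it bounds the number of legs, hence the number of singularity-crossings, so a single global choice of $\varepsilon,\varepsilon'$ works for all such $Q$.

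The technical heart is the passage from a genuine leg to an ideal leg while controlling $\Phi$, and the main obstacle is the behavior near the singularities of $\beta$ and near $\bdry S$ — precisely where Lemma~\ref{lemma:uniform} gives no $\F'$-control. The cleanest way around this is to use the ``setwise to the left/right'' formalism directly, as in the second (preferred) proof of Proposition~\ref{ideal trajectory}: lift to $\wt S$, and show that a genuine leg $\wt\gamma_i$ starting strictly to the right of a lifted wall $\wt W_{i,L}$ must end strictly to the right of it as well, since on $[0,\tfrac12]$ the page-projected flow is (up to small error directed \emph{along} $\ker\beta$) pushing points toward singularities of $\beta$ — and the walls themselves are integral curves of $\ker\beta$ by Proposition~\ref{prop: construction}(5), so a trajectory cannot cross a wall from the correct side when $\varepsilon'$ is small enough that the error never reverses the transverse component of $Y_{\varepsilon'}$ against $\ker\beta$ near the wall. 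Applying $\wt\psi'$, using Proposition~\ref{prop: construction}(3), and iterating $\leq N$ times (Lemma~\ref{lemma: all in a row}) then forces $\wt Q$ to remain trapped between $\wt P_{i-1}$ and $\wt P_i$, exactly reproducing the contradiction in the proof of Proposition~\ref{ideal trajectory}. Thus $\Phi(Q)=0$.
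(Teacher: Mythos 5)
Your third paragraph correctly identifies the paper's strategy (trap a lift $\wt Q$ between $\wt P_{i-1}$ and $\wt P_i$ by showing genuine legs cannot cross the walls, then iterate via Proposition~\ref{prop: construction}(3) and Lemma~\ref{lemma: all in a row}), but the crucial step --- that a genuine leg starting strictly to the right of $\wt W_{i,L}$ ends strictly to the right of it --- is not established. Your justification, that for $\varepsilon'$ small ``the error never reverses the transverse component of $Y_{\varepsilon'}$ against $\ker\beta$ near the wall,'' is not true and is not what the paper uses: away from the singular set, Lemma~\ref{lemma:uniform} only gives $|Y_{\varepsilon'}/|Y_{\varepsilon'}|+Z|\leq\delta_0$, so each leg genuinely can drift \emph{toward} the wall transversally to $\ker\beta$. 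What makes the argument work in the paper is a quantitative bookkeeping: one fixes a foliated protective neighborhood $N_{i,L}$ of $W_{i,L}\cap(S-U)$ of transverse width $\tau$, shows each leg drifts at most $K\delta_0$ in the leaf coordinate there, notes that applying $\wt\psi'$ only pushes points further right (Proposition~\ref{prop: construction}(3)), and chooses $\delta_0$ with $NK\delta_0<\tau$. This is exactly where the hypothesis $m\leq N$ enters quantitatively --- not merely to ``bound the number of singularity-crossings.''

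Moreover, near the elliptic singularity on the wall (inside $U$), where Lemma~\ref{lemma:uniform} gives no control at all, your argument says nothing, yet this is where a leg could in principle jump leaves toward the wall. The paper needs a case analysis here: if $\psi'$ matches the germs of the elliptic points of $(\psi')^{-1}(W_{i,L})$ and $W_{i,L}$ (Proposition~\ref{prop: construction}(6)), then $\ker\beta=\ker(\psi')_*\beta$ on $U_{i,L}$ and no leaf-jumping occurs; otherwise an explicit local model ($\beta=x\,dy-y\,dx$, $f_{\varepsilon'}\psi'_*\beta=\varepsilon'u(x)\,dy$) shows that even when the leg exits on a leaf closer to the wall, it cannot cross $W_{i,L}$ because the initial (or terminal) arc of the wall is an integral arc of both $\beta$ and $(\psi')_*\beta$, so Lemma~\ref{interpolation}(4) forbids crossing there. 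Your first two paragraphs (isotope each leg to an $\F'$-tangent arc, argue the total ``correction'' is $<1$ and invoke integrality of $\Phi$) do not constitute an argument: $\Phi$ is determined by which prongs the lift crosses, there is no real-valued error being accumulated, and asserting the isotopy does not change $\Phi$ is assuming precisely what must be proved.
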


During the proof, a {\em leaf} of a singular foliation is understood
to be a maximal integral submanifold which {\em does not contain a
singular point}.

\begin{proof}
Let $Q$ be a genuine trajectory. Suppose each
$\gamma_i:[0,1]\rightarrow S$ is parametrized by $t$. We prove that
a lift $\wt Q$ of $Q$ cannot cross $\wt P_i$ and $\wt P_{i-1}$, as
in Proposition~\ref{ideal trajectory}.

\s\n {\bf Case 1.} Suppose that $\psi'$ matches the germ of an
elliptic point on $(\psi')^{-1}(W_{i,L})$ to the germ of an elliptic
point on $W_{i,L}$. Recall that, by Property (6) of
Proposition~\ref{prop: construction}, the germs of the elliptic
singularities are matched by $\psi'$ if the initial arcs (or
terminal arcs) of $(\psi')^{-1}(W_{i,L})$ and $W_{i,L}$ both contain
elliptic singularities.

We first recall Lemma~\ref{lemma:uniform}. Let $U$ be a small
neighborhood of the singular set of $\ker\beta$. On
$(S-U)\times[0,{1\over 2}]$, given $\delta_0>0$ small, there exists
$\varepsilon'>0$, so that, at points where $Y_{\varepsilon'}$ is
nonzero, $|{Y_{\varepsilon'}\over |Y_{\varepsilon'}|}+Z|\leq
\delta_0$, where $Z$ is a unit vector field which directs $\ker
\beta$. On $S\times[{1\over 2},1]$, $Y_{\varepsilon'}$ directs
$\ker\beta$, at points where $Y_{\varepsilon'}$ is nonzero.

Let $U_{i,L}$ be the connected component of $U$ which, after
possibly shrinking $U$, satisfies the following:
\begin{itemize}
\item $U_{i,L}$ is a small disk which is centered at the
elliptic point of $W_{i,L}$.
\item $\ker\beta=\ker(\psi')_*\beta$ on $U_{i,L}$.
\end{itemize}
The second condition holds since $\psi'$ matches the germs of the
elliptic singularities of $(\psi')^{-1}(W_{i,L})$ and $W_{i,L}$.
This implies that an arc $\gamma:[0,1]\rightarrow U_{i,L}$ which is
tangent to $Y_{\varepsilon'}$ does not jump from one leaf of
$\ker\beta|_{U_{i,L}}$ to another leaf.

Next let $N_{i,L}=([0,1]\cup[2,3])\times[-\tau,\tau]\subset S-U$ be
a foliated neighborhood of $W_{i,L}\cap (S-U)$ with coordinates
$(x,y)$, so that $y=const$ are leaves of $\ker \beta$ and $y=0$ is
$W_{i,L}\cap (S-U)$.  See Figure~\ref{nbhd-of-wall}. Since the
lengths of leaves of $\ker\beta|_{N_{i,L}}$ are bounded, there
exists a constant $K$ (independent of $\varepsilon$, $\varepsilon'$)
so that any arc $\gamma:[0,1]\rightarrow S-U$ which is tangent to
$Y_{\varepsilon'}$ and passes through $\{y=y_0\}$ must be contained
in $\{\max(y_0-K\delta_0,-\tau)\leq y\leq \min(
y_0+K\delta_0,\tau)\}\cup (S-N_{i,L}-U)$.  We then take $\delta_0$
sufficiently small so that $NK\delta_0 <\tau$. In other words,
$N_{i,L}$ is the protective layer of $W_{i,L}$ which makes it hard
to cross $W_{i,L}$ when $\varepsilon'$ is small.

\begin{figure}[ht]
\vskip.15in
\begin{overpic}[height=1.5in]{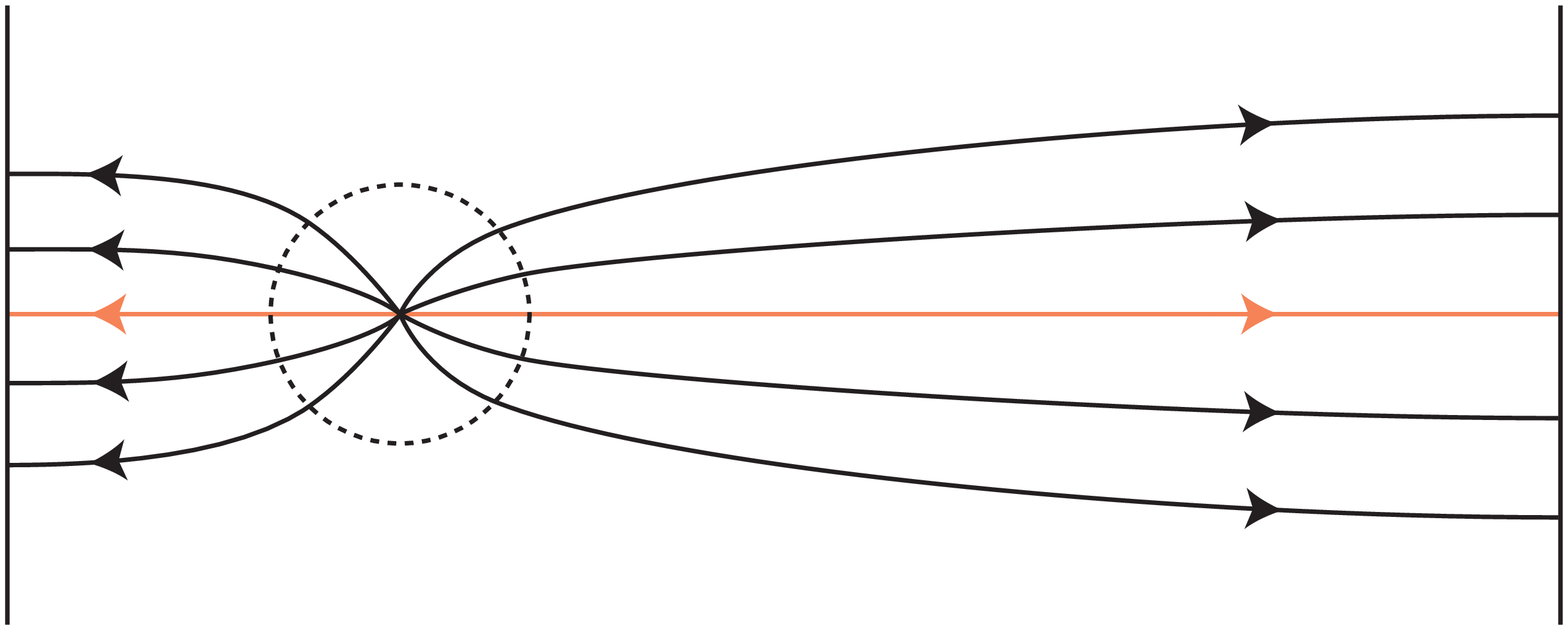}
\put(90,16.5){\tiny $W_{i,L}$} \put(24,8){\tiny $U_{i,L}$}
\put(6,6.5){\tiny $N_{i,L}$} \put(55,5.8){\tiny $N_{i,L}$}
\end{overpic}
\caption{The neighborhood of the wall $W_{i,L}$.}
\label{nbhd-of-wall}
\end{figure}

The initial point $\wt\gamma_1(0)$ must be strictly to the right of
$\wt P_i$ and also disjoint from $\wt N_{i,L}$ corresponding to $\wt
W_{i,L}$.  By the considerations of the previous two paragraphs,
$\wt\gamma_1([0,1])\cap \wt N_{i,L} \subset \{ \tau-K\delta_0\leq
y\}$, where we are taking $y>0$ to be to the right of $\wt W_{i,L}$.
Observe that, since $\wt\gamma_1|_{[1/2,1]}$ is tangent to
$\ker\beta$, it does not jump leaves.

Next, $\wt\gamma_2(0)=\wt\psi'(\wt\gamma_1(1))$. Since $\wt\psi'(\wt
W_{i-k,L})$ is to the right of $\wt W_{i,L}$, $\wt\gamma_2(0)$ is
further to the right of $\wt\gamma_1(1)$. In particular, if
$\wt\gamma_2(0)$ is in $\wt N_{i,L}$, then its $y$-coordinate is
greater than or equal to that of $\wt\gamma_1(1)$. Now apply the
same considerations to $\wt\gamma_2$ to obtain that
$\wt\gamma_2([0,1])\cap \wt N_{i,L}\subset \{ \tau-2K\delta_0\leq
y\}$. Continuing in this manner, we find that
$\wt\gamma_m([0,1])\cap \wt N_{i,L}\subset \{ \tau-mK\delta_0\leq  y
\}$. If $m\leq N$, then $\tau-mK\delta_0>0$. Hence the entire
trajectory $\wt Q$ must be to the right of $(\wt\psi')^{-m+1}(\wt
W_{i+(m-1)k,L})$, which, in turn, is to the right of $\wt P_{i-1}$.

%The same proof shows that $\wt Q$ cannot cross $\wt
%P_i$ and $\wt P_{i-1}$ in succession, for the more general
%situation where

\s\n {\bf Case 2.} Suppose that the initial arc of $W_{i,L}$
contains an elliptic point, whereas the initial arc of
$(\psi')^{-1}(W_{i,L})$ does not.

The difference with the previous case is that $\gamma_i$ can now
switch leaves inside $U_{i,L}$. Consider coordinates $(x,y)$ on
$U_{i,L}$ so that $W_{i,L}\cap U_{i,L}=\{y=0\}$, $W_{i,L}$ is
directed from $x>0$ to $x<0$, $\beta=xdy-ydx$, and
$f_{\varepsilon'}\psi'_*\beta= \varepsilon'(u(x)dy)$, with ${\bdry
u\over \bdry x}>0$ and $u>0$. We also suppose that being locally to
the right of $W_{i,L}$ means $y>0$. We compute that
$$f_{\varepsilon'}\psi'_*\beta-\beta= (-x+\varepsilon'u(x))dy+
ydx,$$ which is directed by $Y'=(-x+\varepsilon'u(x)){\bdry\over
\bdry x}-y{\bdry\over \bdry y}$. If $\varepsilon'$ is sufficiently
small, then the elliptic point of
$f_{\varepsilon'}\psi'_*\beta-\beta$ is contained inside $U_{i,L}$.
Suppose $\gamma_i|_{[0,1/2]}$ enters $U_{i,L}$ at $x<0$, $y>0$.
Comparing with the vector field $x{\bdry \over\bdry x}+y{\bdry \over
\bdry y}$ which directs $\ker\beta$, we see that
$\gamma_i|_{[1/2,1]}$ exits $U_{i,L}$ along a leaf of $\ker\beta$
which is further to the right of $W_{i,L}$, since
$-x+\varepsilon'u(x)>-x$.  If $x>0$ and $y>0$, then
$\gamma_i|_{[1/2,1]}$ will exit $U_{i,L}$ along a leaf of
$\ker\beta$ which is closer to $W_{i,L}$. However, this does not
present any problem, since initial arc of $W_{i,L}$ is an integral
arc of both $\beta$ and $(\psi')_*\beta$, and
Lemma~\ref{interpolation}(4) implies that $\gamma_i$ cannot cross
the wall $W_{i,L}$ along the initial arc.

The same proof holds when the terminal arc of $W_{i,L}$ contains an
elliptic point and the terminal arc of $(\psi')^{-1}(W_{i,L})$ does
not.
\end{proof}

\section{Nondegeneracy of Reeb vector fields} \label{section:
genericity}

In this section we collect some results on perturbing the contact
$1$-form to make the corresponding Reeb vector field nondegenerate.

We start with the proof of the following well-known fact (see for
example~\cite[Proposition~6.1]{HWZ6}).

\begin{lemma} \label{lemma: genericity}
Let $\alpha$ be a contact form on a closed $3$-manifold $M$. The set
of smooth functions $g:M\rightarrow (0,+\infty)$ for which the form
$g\alpha$ is nondegenerate is a dense subset of $C^{\infty}
(M,(0,+\infty))$ in the $C^\infty$-topology.
\end{lemma}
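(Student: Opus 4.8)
The plan is to use a Sard-type / transversality argument in the space of perturbations. First I would set up the right space: rather than perturbing over all of $C^\infty(M,(0,\infty))$ at once, I would localize. Fix $T_0>0$; it suffices to show that, for a dense set of $g$, every closed orbit of $R_{g\alpha}$ of period at most $T_0$ is nondegenerate, and then take a countable intersection over $T_0\to\infty$ (a countable intersection of open dense sets is dense, and for each fixed $T_0$ the set of admissible $g$ is open by compactness of the space of short orbits together with continuous dependence of the linearized return map on $g$). So the real content is the fixed-$T_0$ statement.

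For the fixed-$T_0$ statement I would argue by a finite induction over the (finitely many, by the action/period bound and an Arzel\`a--Ascoli argument) simple closed orbits involved, perturbing $g$ in a small tubular neighborhood $N(\gamma)\simeq S^1\times D^2$ of one simple orbit $\gamma$ at a time while keeping $g\equiv 1$ outside a slightly larger neighborhood, so that orbits already made nondegenerate (those not meeting $N(\gamma)$, or meeting it in a controlled way) are not disturbed. The key local computation: on $S^1\times D^2$ with $\alpha$ in a normal form adapted to $\gamma$, choosing $g$ to depend only on the $D^2$-variable $w$ (or a suitable jet of $g$ along $\gamma$) changes the first-return map $P_\gamma:(D^2,0)\to(D^2,0)$ and its covers $P_\gamma^k$ by a controllable amount; one shows the map $g\mapsto (\text{$1$-jet of }P_\gamma^k\text{ at }0)_{k\le T_0/T(\gamma)}$ is a submersion onto the relevant finite-dimensional jet space, so a generic such $g$ avoids the ``bad'' locus where some $P_\gamma^k$ has $1$ as an eigenvalue. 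Equivalently, one may invoke the standard fact (cited as \cite[Proposition~6.1]{HWZ6}) that the eigenvalues of the return maps can be moved independently by such localized conformal perturbations.

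Here is the structure I would write: (1) reduce to period $\le T_0$ via countable intersection and Baire; (2) note that for fixed $T_0$ the ``all short orbits nondegenerate'' set is $C^\infty$-open — this uses that the orbit set of bounded period is compact and the return map depends continuously on $g$; (3) prove density for fixed $T_0$ by the one-orbit-at-a-time localized perturbation, with the submersivity/transversality lemma at the core; (4) conclude.

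The main obstacle is step (3), specifically making precise that a localized perturbation supported near one simple orbit $\gamma$ genuinely makes $\gamma$ and \emph{all} its multiple covers $\gamma^k$ ($k T(\gamma)\le T_0$) nondegenerate \emph{simultaneously}, without re-degenerating any previously treated orbit and without creating new short orbits. Handling multiple covers is the delicate point — one has finitely many constraints ($1\notin\operatorname{Spec}$ for each $P_\gamma^k$) and must exhibit a finite-dimensional family of perturbations on which the assignment to the tuple of linearized return maps is transverse to the bad stratum; the fact that only a jet of $g$ along $\gamma$ matters, and that one has enough free parameters, is what makes this work, but it requires the explicit normal form and a short computation of how $d P_\gamma$ varies. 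Everything else (compactness of the short-orbit set, openness, the Baire step) is routine.
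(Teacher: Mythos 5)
Your overall architecture (open-and-dense for each period bound $T_0$, then a Baire/countable-intersection step, with openness coming from compactness of the set of short orbits plus the no-new-orbits estimate) matches the paper's proof, which works with the sets $G_N$ and concludes with $\cap_N G_N$. But step (3) as you set it up has a genuine gap: you propose "a finite induction over the (finitely many, by the action/period bound and an Arzel\`a--Ascoli argument) simple closed orbits involved." Finiteness is false in general. Arzel\`a--Ascoli gives compactness of the set $\Gamma_{\leq T_0}$ of points lying on orbits of period $\leq T_0$, not finiteness of the set of simple orbits: for a degenerate form the short orbits can come in continuous families (circles, tori, or all of $M$ fibered by orbits), and this is exactly the situation the lemma is applied to in this paper (the Morse-Bott-type forms of Sections 4--5, e.g.\ $S^1$-invariant forms where every fiber is closed, or the round form on $S^3$). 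Nondegenerate orbits of bounded period are isolated and hence finite in number, which is why finiteness is legitimate in the \emph{openness} step, but you cannot assume it in the \emph{density} step, so the "one simple orbit at a time" induction never gets started. The paper instead covers the compact set $\Gamma_{\leq N}$ by finitely many solid-torus flow boxes $U_i\subset V_i$ (transverse to $R_\alpha$, with all orbits entering $\{\theta\}\times D(1)$ staying in $V_i$ past time $N$), and inducts over these charts, using in each chart the explicit two-parameter family $f_{a,b}(x,y,t)=\chi_1(t)\chi_2(\sqrt{x^2+y^2})(ax+by)$ and parametric transversality: the family of $l$-th return maps $F^l_{a,b}$ is transverse to the diagonal as a family (seen from the $(a,b)$-derivatives at $(0,0)$), so a generic $(a,b)$ makes all fixed points in that chart nondegenerate, while smallness of the perturbation prevents new short orbits outside the charts.

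A secondary caveat: your core claim that $g\mapsto(\text{$1$-jet of }P_\gamma^k)_{k\le T_0/T(\gamma)}$ is a \emph{submersion} onto the jet space is stronger than what conformal perturbations $g\alpha$ give you and stronger than what is needed; the paper only establishes transversality of the parametrized family of return maps to the diagonal (so that $1$ is generically not an eigenvalue), not independent control of the eigenvalues of all covers. If you replace the orbit-by-orbit induction with the chart-by-chart induction over a finite cover of $\Gamma_{\leq T_0}$, and replace the submersion claim with this parametric transversality statement, your argument becomes essentially the paper's proof.
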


\begin{proof}
Fix $N >0$. We consider the set $G_N$ of functions $g :M\rightarrow
(0,+\infty )$ for which all the orbits of the Reeb vector field
$R_{g\alpha}$ of $g\alpha$ of period $\leq N$ are nondegenerate.

We first claim that $G_N$ is open. First observe that the union
$\Gamma_{\leq N}$ of all closed orbits of $R_\alpha$ of action $\leq
N$ is closed, and hence compact. Next, any closed orbit which comes
sufficiently close to an orbit $\gamma$, all of whose multiple
covers with period $\leq N$ are nondegenerate, is a long orbit,
i.e., has action $> N$. (A sequence of closed orbits of period $\leq
N$ converging to $\gamma$ implies that the return map for some
multiple cover of $\gamma$ before time $N$ has $1$ as eigenvalue.)
It is therefore possible to cover $\Gamma_{\leq N}$ by finitely many
sufficiently small disjoint solid tori $U_1 ,\dots ,U_k$, together
with security neighborhoods $V_1 ,\dots ,V_k$, so that, if $D(r)$ is
the disk of radius $r$ centered at the origin and $S^1=\R/\Z$, then:
\begin{enumerate}
\item For all $1\leq i\leq k$, $V_i \simeq S^1 \times D(2)$ and
$V_i \supset U_i \simeq S^1 \times int(D(1))$;
\item $S^1 \times \{ (0,0)\}$ is a nondegenerate periodic orbit of
$R_\alpha$ of action $\leq N$, and is the only periodic orbit of
action $\leq N$ inside $V_i$;
\item $R_\alpha$ is transverse to the foliation by horizontal disks on
$V_i$; \item For any $\theta\in S^1=\R/\Z$, all the orbits which
start from $\{ \theta\} \times D(1)$ stay inside $V_i$ at least for
an amount of time greater than $N+1$ and thus at least until the
first return to $\{\theta\} \times D(2)$, which occurs before time
$N+\varepsilon$.
\end{enumerate}
Any sufficiently small perturbation $R_{g\alpha}$ will still have a
single nondegenerate orbit, amongst those that start from
$\{\theta\}\times D(1)$. This follows from the transversality of the
graphs of the return maps with the diagonal of $(\{\theta\} \times
D(1)) \times (\{\theta\} \times D(2))$.  (The same considerations
also hold for multiple covers of the nondegenerate orbit of period
$\leq N$.) If $Z=M-\cup_{i=1}^k U_i$, then there is a small constant
$\tau>0$ so that no orbit $\delta :[0,N]\rightarrow M$ which is
strictly contained in $Z$ has two points $t_1,t_2$ sufficiently far
apart, so that $\delta(t_1)$ is within $\tau$-distance of
$\delta(t_2)$.  This implies that a sufficiently small perturbation
will not create any new periodic orbits of action $\leq N$. This
proves the claim.

Next we prove that $G_N$ is dense in $C^\infty (M,(0,+\infty))$. It
suffices to show that there exists a sequence of functions $f_n$
going to zero, so that $1+f_n\in G_N$ for all $n\in \N$. Let
$\{U_i\}_{i=1}^k$ be an open cover of $\Gamma_{\leq N}$ and $V_i$ be
the security neighborhood of $U_i$, satisfying (1), (3), (4) in the
previous paragraph. As in the previous paragraph, if we make a
sufficiently small perturbation of $\alpha$, we do not create orbits
of action less than $N$ outside $\cup_{1\leq i\leq k} U_i$.

We first modify $\alpha$ on $V_1$. Let $C>0$ so that the first
return occurs at a time strictly greater than $C$. We take an
embedding $[0,C] \times D({3\over 2}) \rightarrow V_1$ so that $\{
0\} \times D({3\over 2})$ is mapped to the horizontal disk $\{ 0\}
\times D({3\over 2}) \subset S^1 \times D(2)$, and so that, if $t$
is the coordinate of the $[0,C]$ factor, $\frac{\partial}{\partial
t} =R$.  We may also assume that $\alpha=dt+\beta$, where $\beta$ is
a $1$-form on $D({3\over 2})$ which is independent of $t$,
$\beta(0,0)=0$, and $\beta$ is small on $D({3\over 2})$ (by taking
the $V_i$ to be sufficiently small to start with). If $f$ is a
function on $M$ with support on $[0,C]\times D({3\over 2})$, then
$$d((1+f)\alpha)=\left(d_2f-{\bdry f\over \bdry t}\beta\right)\wedge
dt+(1+f)d\beta,$$ where $d_2$ means the exterior derivative in the
direction of $D({3\over 2})$. Provided $d_2f\gg {\bdry f\over \bdry
t}\beta$ on $[0,C]\times D(1)$ (which is the case for our specific
choice of $f$ below), $R_{(1+f)\alpha}$ is parallel to ${\bdry\over
\bdry t}+X$, which approximately satisfies
$$i_X d\beta=-{d_2f\over 1+f}.$$

Now we can look at a family of deformations corresponding to
functions which are zero outside of $[0,C] \times D({3\over 2})$ and
given by
$$f_{a,b} (x,y,t)=\chi_1 (t)\chi_2 (\sqrt{x^2 +y^2}) (ax +by)$$
inside, where $(t,x,y)$ are coordinates on $[0,C]\times D({3\over
2})$, $\chi_1$ and $\chi_2$ are positive cut-off functions such that
the $2$-jet of $\chi_1$ is $0$ at $t=0$ and $t=C$, $\chi_2
([0,1])=1$, and $\chi_2=0$ near ${3\over 2}$.
 This gives a sequence of families
$(F^l_{a,b} )_{1\leq l\leq p}$, with $1\leq p\leq  \frac{N}{C} +1$,
of $l$-th return maps which are  transversal to the diagonal in
$(\{0\} \times D(1)) \times (\{ 0\} \times D(2))$ as families: the
transversality is obtained by looking at derivatives of  $F^l_{a,b}$
in $(a,b)$ variables at $(a,b)=(0,0)$.

The transversality of the families $F^l_{a,b}$ implies that the
fixed points of $F^l_{a,b}$ are nondegenerate for a generic
$(a,b)\in \R^2$. Thus we can find a function $f_{a,b}$ as small as
we want so that the periodic orbits of period less than $N$ of
$R_{g_1\alpha}$, $g_1=1+f_{a,b}$, which are contained in $U_1$, are
nondegenerate. Next, we deform $g_1\alpha$ on $V_2$, by multiplying
a function $g_2$ which is very close to $1$, so that the orbits in
$U_1$ of period $\leq N$ remain nondegenerate and all the periodic
orbits inside $U_2$ of period $\leq N$ become nondegenerate. The
density of $G_N$ follows by induction.

Now the proof of the lemma follows by looking at $\cap_{N\in \N}
G_N$, which is a dense $G_\delta$-set.
\end{proof}

\begin{lemma} \label{lemma: perturbation}
Given $N\gg 0$, there is a $C^\infty$-small perturbation of
$\alpha_{\varepsilon,\varepsilon'}$ so that the perturbed Reeb
vector field $R$ satisfies the following:
\begin{enumerate}
\item All the closed orbits of $R$ with action $\leq N$ are nondegenerate;
\item The binding $\gamma_0$ and its multiple covers are nondegenerate
periodic orbits of $R$;
\item All other orbits of $R$ are positively transverse to the pages
of the open book;
\item All the orbits near $\gamma_0$ lie on the boundary of a solid
torus whose core curve is $\gamma_0$ and have irrational slope on
the solid torus.
\end{enumerate}
\end{lemma}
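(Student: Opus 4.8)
The plan is to handle the binding neighbourhood by an explicit hands-on modification of the radial profile, and then to invoke the genericity argument of Lemma~\ref{lemma: genericity} everywhere else, making sure the two perturbations have disjoint supports. Recall from Section~\ref{subsub: extension to binding} that on $N(K)\simeq \R/\Z\times D^2$ we have $\alpha_{\varepsilon,\varepsilon'}=a_\varepsilon(r)dz+b_\varepsilon(r)d\theta$, with Reeb vector field $R_{\varepsilon,\varepsilon'}$ linearly foliating every torus $\{r=\mathrm{const}>0\}$ with constant slope $C'>0$, and with $\gamma_0=\{r=0\}$ a closed orbit; also $R_{\varepsilon,\varepsilon'}$ is transverse to the interiors of the pages $\{\theta=\mathrm{const}\}$ away from $\gamma_0$.

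\textbf{Step 1 (near the binding).} Fix an irrational number $\sigma_0$ very close to $C'$. On a small disk $\{r\le\rho'\}$, replace the profile $(a_\varepsilon(r),b_\varepsilon(r))$ by a new profile which agrees with the old one for $r\ge\rho'$, keeps $b'(r)>0$ and the ``counterclockwise'' condition $a b'-a' b>0$ throughout $(0,\rho']$, and equals $(C_0-C_1 r^2,\,r^2)$ near $r=0$ with slope $-a'/b'\equiv\sigma_0$ on an inner collar $\{r\le\rho\}$, $0<\rho<\rho'$. Since $\sigma_0\approx C'$ this modification is $C^\infty$-small, so it preserves the (open) contact condition and the contact structure up to isotopy. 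On $\{r\le\rho\}$ every torus $\{r=r_0\}$ is then invariant with linear flow of \emph{irrational} slope $\sigma_0$, so $\gamma_0$ is the only closed orbit there and its linearized return map has irrational rotation number; hence $\gamma_0$ and all its multiple covers are nondegenerate. Moreover $a'(r)\ne0$ for $0<r\le\rho'$, so $d\theta(R)\ne0$ and $R$ stays positively transverse to the pages on $\{0<r\le\rho'\}$. This gives (2), (4) (with $\{r\le\rho\}$ the advertised solid torus), and the near-binding part of (3).

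\textbf{Step 2 (away from the binding).} Let $M'=M- int(\{r\le\rho\})$, a compact manifold on which the modified Reeb field $R$ is positively transverse to the pages; this is an open condition on $M'$. The set $\Gamma_{\le N}$ of closed orbits of $R$ of action $\le N$ other than $\gamma_0$ and its covers is closed, hence compact, and is contained in $\{r>\rho\}$ because $\{r\le\rho\}$ carries only $\gamma_0$. Now apply the proof of Lemma~\ref{lemma: genericity}: cover $\Gamma_{\le N}$ by finitely many disjoint solid tori $U_i$ with security neighbourhoods $V_i$, all contained in $\{r>\rho\}$ and satisfying the four conditions in that proof, and set $g=1+f$ where $f$ is a $C^\infty$-small function supported in $\cup_i V_i$, of the form $\sum_i f_{a_i,b_i}$ there, chosen so that every closed orbit of $R_{g\alpha}$ of action $\le N$ lying in some $U_i$ is nondegenerate and, by the ``no new orbits'' part of that argument, no closed orbit of action $\le N$ is created outside $\cup_i U_i$. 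Because $g\equiv1$ on $\{r\le\rho\}$, the dynamics near the binding is untouched, so (2), (4), and the near-binding case of (3) persist and no short orbit of $R_{g\alpha}$ can cross the invariant torus $\{r=\rho\}$; because $g$ is $C^\infty$-small and ``positively transverse to the pages'' is open on the compact set $M'$, (3) holds on all of $M'$, hence everywhere except along $\gamma_0$; and every closed orbit of $R_{g\alpha}$ of action $\le N$ is either $\gamma_0$ or a cover (nondegenerate by Step 1) or lies in some $U_i$ (nondegenerate by Step 2), giving (1).

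\textbf{Main obstacle.} The only genuinely delicate point is the plane-curve interpolation of Step 1: one must join the prescribed outer boundary data of $(a_\varepsilon,b_\varepsilon)$ to an inner collar of a chosen irrational slope while keeping the curve transverse to the radial foliation of the plane and turning counterclockwise, i.e.\ $ab'-a'b>0$. This is a routine convexity argument of exactly the flavour of the binding construction in Section~\ref{subsub: extension to binding} and of the standard Lutz--Giroux modifications, but it is where the boundary bookkeeping lives. Everything else reduces to choosing the genericity perturbation of Lemma~\ref{lemma: genericity} with support disjoint from the collar $\{r\le\rho\}$, which is possible precisely because no short orbit other than $\gamma_0$ meets that collar; the openness of the transversality condition on the compact complement of the binding neighbourhood then propagates (3) through the perturbation.
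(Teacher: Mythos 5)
Your proposal is correct and follows essentially the same route as the paper: modify the radial profile near $r=0$ so that the slope on the tori $\{r=\mathrm{const}\}$ becomes irrational (the paper does this by replacing $C_{1,\varepsilon}$ with $C_{1,\varepsilon}+\tau$), and then apply the genericity argument of Lemma~\ref{lemma: genericity} on $M-N(\gamma_0)$. You are merely more explicit about the interpolation of the plane curve $(a,b)$ between the inner and outer data and about the disjoint-support bookkeeping, which the paper leaves implicit.
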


\begin{proof}
Recall the functions
$a_\varepsilon(r)=C_{0,\varepsilon}-C_{1,\varepsilon}r^2$ and
$b_\varepsilon(r)=r^2$ from Section~\ref{subsub: extension to
binding}.  We can slightly modify $a_\varepsilon(r)$ near $r=0$ by
replacing $C_{1,\varepsilon}\rightarrow C_{1,\varepsilon}+\tau$,
where $\tau$ is a small number so that the ratio
$b_\varepsilon'(r):{a_\varepsilon'(r)\over 2\pi}$ becomes
irrational.  Hence (2) and (4) are satisfied.  Also, (1) and (3) are
satisfied for orbits near $\gamma_0$.  Finally, the procedure in
Lemma~\ref{lemma: genericity} can be applied to $M-N(\gamma_0)$ to
yield (1) and (3).
\end{proof}

The following lemma is used in the proof of Corollary~\ref{cor:
infinitely many simple orbits}.

\begin{lemma} \label{lemma: perturb finite number of orbits}
Let $\alpha$ be a contact $1$-form on a closed $3$-manifold $M$. If
$\alpha$ is degenerate and has a finite number of simple orbits,
then for any $N\gg 0$ there exists a smooth function $g_N$ which is
$C^\infty$-close to $1$ so that all the periodic orbits of
$R_{g_N\alpha}$ of action $\leq N$ are nondegenerate and lie in
small neighborhoods of the periodic orbits of $R_\alpha$ (and hence
are freely homotopic to multiple covers of the periodic orbits of
$R_\alpha$).
\end{lemma}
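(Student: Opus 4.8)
The plan is to localize the argument of Lemma~\ref{lemma: genericity} to small neighborhoods of the finitely many simple orbits. First I would record that, since $R_\alpha$ has only finitely many simple orbits $\gamma_1,\dots,\gamma_k$, say with minimal periods $T_1,\dots,T_k$, every closed orbit of $R_\alpha$ of action $\le N$ is a cover $m\gamma_j$ with $mT_j\le N$; hence the union $\Gamma_{\le N}$ of all closed orbits of $R_\alpha$ of action $\le N$ is a finite union of embedded circles, in particular compact. I would then choose pairwise disjoint solid-torus neighborhoods $U_1,\dots,U_k$ of $\gamma_1,\dots,\gamma_k$ together with pairwise disjoint security neighborhoods $V_i\supset U_i$ exactly as in the proof of Lemma~\ref{lemma: genericity}: $V_i\simeq S^1\times D(2)$, $U_i\simeq S^1\times int(D(1))$, $\gamma_i=S^1\times\{(0,0)\}$, $R_\alpha$ transverse to the horizontal disks of $V_i$, and any orbit entering $\{\theta\}\times D(1)$ remaining in $V_i$ for time $>N+1$ before its first return to $\{\theta\}\times D(2)$. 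Since the $V_i$ can be taken as small as we like (subject to these conditions), it suffices to produce a $C^\infty$-small $g_N$ for which every closed orbit of $R_{g_N\alpha}$ of action $\le N$ is nondegenerate and contained in some $V_i$; because $V_i$ deformation retracts onto $\gamma_i$, such an orbit is then automatically freely homotopic to a multiple cover of $\gamma_i$.

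Next I would perturb $\alpha$ one solid torus at a time, reusing the construction from the proof of Lemma~\ref{lemma: genericity}. Working inside $V_1$, embed a return-box $[0,C]\times D({3\over 2})$ with $\frac{\partial}{\partial t}=R_\alpha$ and $\alpha=dt+\beta$, where $\beta$ is small, independent of $t$, and $\beta(0,0)=0$, and use the two-parameter family of conformal factors $g_{a,b}=1+\chi_1(t)\chi_2(r)(ax+by)$. The same computation as before shows that the induced families $F^\ell_{a,b}$ of $\ell$-th return maps, $1\le\ell\le p\le {N\over C}+1$, are transverse (as families in $(a,b)$ near $(0,0)$) to the diagonal; hence for a generic $(a,b)$ arbitrarily close to $0$ all fixed points of all the $F^\ell_{a,b}$ are nondegenerate, which means every closed orbit of $R_{g_{a,b}\alpha}$ of action $\le N$ meeting $U_1$ is nondegenerate and, by the security-neighborhood property, is contained in $V_1$. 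As $g_{a,b}$ is supported in $U_1$ and arbitrarily $C^\infty$-close to $1$, I would repeat this over $V_2,\dots,V_k$ in turn, each time choosing the perturbation small enough to preserve the nondegeneracy already achieved in $U_1,\dots,U_{i-1}$, obtaining $g_N=g_{a_1,b_1}\cdots g_{a_k,b_k}$ which is $C^\infty$-close to $1$.

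Finally I would rule out closed orbits of action $\le N$ escaping the $V_i$, again as in Lemma~\ref{lemma: genericity}: on $Z=M-\cup_i U_i$ the field $R_\alpha$ has no closed orbit of action $\le N$ (all such orbits are covers of the $\gamma_j\subset U_j$), so there is $\tau>0$ such that no $R_\alpha$-orbit segment $\delta\colon[0,N]\to M$ contained in $Z$ has two points far apart in time yet within $\tau$ in $M$; a sufficiently $C^\infty$-small perturbation therefore creates no closed orbit of action $\le N$ inside $Z$, while any closed orbit of action $\le N$ entering some $U_i$ stays in $V_i$. This gives both conclusions of the lemma: nondegeneracy of all action-$\le N$ orbits and their localization near the $\gamma_i$. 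The only genuinely delicate point is the simultaneous transversality of the families $F^\ell_{a,b}$ across all iterates $\ell\le p$, and this is exactly what was established in the proof of Lemma~\ref{lemma: genericity}; here it is only a matter of localizing that argument and bookkeeping the successive perturbations.
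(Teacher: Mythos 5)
Your proof is correct and follows essentially the same route as the paper's: both localize the density argument of Lemma~\ref{lemma: genericity} to disjoint solid-torus neighborhoods of the finitely many simple orbits, perturb inside each $V_i$ using the same $g_{a,b}$ construction, and argue that a sufficiently small perturbation creates no new orbits of action $\le N$ in $Z=M-\cup_i U_i$. The paper's proof is terser (it simply refers back to Lemma~\ref{lemma: genericity}), while you helpfully make explicit the free-homotopy conclusion via the deformation retraction of $V_i$ onto $\gamma_i$, but the mathematical content is the same.
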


\begin{proof}
Let $U_1,\dots,U_k$ be the neighborhoods of the simple orbits
$S^1\times \{(0,0)\}$ and $V_1,\dots,V_k$ be the security
neighborhoods as in Lemma~\ref{lemma: genericity}, taken so they are
sufficiently small and mutually disjoint. As before, on
$Z=M-\cup_{i=1}^k U_i$, any sufficiently small perturbation will not
create any new orbits of action $\leq N$. The perturbations inside
the $V_i$ will make the Reeb vector field nondegenerate.
\end{proof}

\section{Holomorphic disks} \label{disks}

Let $h$ be a diffeomorphism which is freely homotopic to a
pseudo-Anosov representative $\psi$ and let $S$ be a page of the
open book decomposition. For each boundary component $(\bdry S)_i$
of $\bdry S$, there is an associated fractional Dehn twist
coefficient $c_i={k_i\over n_i}$, where $n_i$ is the number of
prongs.  Let $\alpha_{\varepsilon,\varepsilon'}$ be the contact
$1$-forms and $R_{\varepsilon,\varepsilon'}$ be the corresponding
Reeb vector fields constructed in Section~\ref{section:
construction}. Recall that the direction of
$R_{\varepsilon,\varepsilon'}$ does not depend on $\varepsilon$,
provided $\varepsilon$ is small enough that the contact condition is
satisfied. In what follows we assume that
$\alpha_{\varepsilon,\varepsilon'}$ is nondegenerate, by applying
the $C^\infty$-small perturbation given in Lemma~\ref{lemma:
perturbation}. Let $(\R\times M,
d(e^t\alpha_{\varepsilon,\varepsilon'}))$ be the symplectization of
$(M,\xi_{\varepsilon,\varepsilon'}=\ker\alpha_{\varepsilon,\varepsilon'})$
and $J_{\varepsilon,\varepsilon'}$ be an almost complex structure
which is adapted to the symplectization. We have the following
theorem:

\begin{theorem} \label{theorem: nodisks}
Suppose the fractional Dehn twist coefficient $c_i\geq
\frac{2}{n_i}$ for each boundary component $(\bdry S)_i$.  Given
$N\gg 0$, for sufficiently small $\varepsilon, \varepsilon'>0$, no
closed orbit $\gamma$ of $R_{\varepsilon,\varepsilon'}$ with action
$\int_\gamma \alpha_{\varepsilon,\varepsilon'}\leq N$ is the
positive asymptotic limit of a finite energy plane $\tilde{u}$ with
respect to $J_{\varepsilon,\varepsilon'}$.
\end{theorem}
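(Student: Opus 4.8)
The plan is to argue by contradiction, using the constructions of Section~\ref{section: construction} to turn a hypothetical finite energy plane into a forbidden genuine trajectory. So suppose some closed orbit $\gamma$ of $R_{\varepsilon,\varepsilon'}$ with $A_{\alpha_{\varepsilon,\varepsilon'}}(\gamma)\leq N$ is the positive asymptotic limit of a finite energy plane $\tilde u=(a,u):\mathbb{C}\to\mathbb{R}\times M$. Filling in the puncture, $u$ extends continuously to $\bar u:D^2\to M$ with $\bar u|_{\partial D^2}$ covering $\gamma$; hence $[\gamma]=0\in\pi_1(M)$, i.e.\ $\gamma$ is contractible. By Lemma~\ref{lemma: perturbation}, $\gamma$ is either a multiple cover of the binding $\gamma_0$ or is positively transverse to every page. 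I would dispose of the binding case at once: since $c_i=k_i/n_i\geq 2/n_i$ forces $k_i\geq 2>1$ for each boundary component, the suspension $\mathcal{L}$ of the stable lamination $\Lambda^s$ is an essential lamination on $M$ (as recalled following Gabai--Oertel), so $\widetilde M=\mathbb{R}^3$ and the leaves of $\mathcal{L}$ are $\pi_1$-injective; consequently $\gamma_0$ and all its covers are noncontractible in $M$ (being freely homotopic into a leaf of $\mathcal L$), contradicting the above. From now on $\gamma$ is positively transverse to the pages, and I let $m\geq 1$ be its open book filtration level, i.e.\ the number of times $\gamma$ meets a page.

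Next I would arrange the quantifiers. Since $R_{\varepsilon,\varepsilon'}$ is $C^0$-close to $\partial_t+Y$ with $Y$ tangent to the pages (Lemma~\ref{lemma:uniform}), the action $A_{\alpha_{\varepsilon,\varepsilon'}}(\gamma)$ is comparable to $m$ with comparability constant tending to $1$ as $\varepsilon,\varepsilon'\to 0$; hence there is $N_0$ depending only on $N$ (say $N_0=2N$) with $m\leq N_0$ whenever $A\leq N$, once $\varepsilon,\varepsilon'$ are small. Shrinking $\varepsilon,\varepsilon'$ further so that Proposition~\ref{prop: genuine trajectory} applies with $N_0$ in place of $N$, the projection to $S$ of one period of $\gamma$, namely the concatenation $Q=\gamma_1\,((\psi')^{-1}(\gamma_2))\cdots((\psi')^{-m+1}(\gamma_m))$ with $\gamma_j$ the $S$-projection of the $j$-th arc of $\gamma$ running from a page to the next, is a genuine trajectory with $m\leq N_0$ factors. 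In particular $\Phi(Q)=0$, and (what the proof of Proposition~\ref{prop: genuine trajectory} actually establishes) no lift $\widetilde Q$ of $Q$ to $\widetilde S$ can cross two consecutive lifted prongs $\widetilde P_i,\widetilde P_{i-1}$ emanating, in that cyclic order, from a single component of $\partial\widetilde S$.

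The heart of the proof — and the step I expect to be the main obstacle — is to contradict this last statement using contractibility of $\gamma$. I would first make $\bar u$ transverse to the binding $K$ and run a degree count with the open book fibration $\bar\pi$: the winding of $\bar\pi\circ\bar u|_{\partial D^2}$ around $S^1$ is $m$ (that is how often $\gamma$ crosses a page), and it equals the sum of the local winding numbers of $\bar\pi\circ\bar u$ about the points of $\bar u^{-1}(K)$; since $\tilde u$ is (a limit of) a $J$-holomorphic curve and $\mathbb{R}\times K$ is $J$-holomorphic, each local winding is positive, so $\bar u$ meets $K$ in exactly $m$ points counted with multiplicity. Lifting $\bar u$ to $\widetilde M=\mathbb{R}^3$ (possible precisely because $\gamma$ is contractible) yields a disk whose boundary $\tilde\gamma$ is a \emph{closed} loop covering $\gamma$ and linking the components of the preimage $\widetilde K$ of the binding a total of $m\geq 1$ times, all with the same sign. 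Reading off the lift $\widetilde Q$ of $Q$ determined by $\tilde\gamma$, this positive linking forces $\widetilde Q$ to pass from some lifted prong $\widetilde P_i$ to the adjacent $\widetilde P_{i-1}$ from the same boundary component — and it is exactly here that $k_i\geq 2$ must be used: for $k_i=1$ the closed loop $\tilde\gamma$ can close up while only ever crossing $\psi'$-translates of a single prong, which is why the theorem is false for $c=1/n$. This contradicts the previous paragraph. Hence no such plane exists; and running the quantifiers as above — given $N$, set $N_0=2N$, then choose $\varepsilon,\varepsilon'$ small enough for the contact condition, for the action estimate $m\leq N_0$, and for Proposition~\ref{prop: genuine trajectory} — finishes the proof.
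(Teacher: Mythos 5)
Your proposal correctly sets up the contradiction: you deduce that $\gamma$ is contractible, reduce to the case that $\gamma$ is positively transverse to the pages with filtration level $m\leq N_0$, and invoke Proposition~\ref{prop: genuine trajectory} to get $\Phi(Q)=0$, i.e.\ the lift $\widetilde Q$ never crosses two consecutive lifted prongs from the same boundary component. Up to that point you are on the paper's track. But the final step --- ``this positive linking forces $\widetilde Q$ to pass from some lifted prong $\widetilde P_i$ to the adjacent $\widetilde P_{i-1}$'' --- is where the argument fails, and it is not a small gap: $\widetilde Q$ does \emph{not} cross consecutive prongs, and the contradiction does not arise from $Q$ crossing anything. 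The positive winding of $\bar u(\partial D^2)$ around the binding is carried entirely by the \emph{other} arcs produced when one cuts the disk along a page near $N(\gamma_0)$. The paper's proof cuts the (trimmed) finite energy plane along $S_0'\times\{0\}$ to get a disk $\mathcal D$, projects to the page, and decomposes $\pi(\overline u(\partial\mathcal D))$ as in Equation~\ref{eqn: 813}; after applying $h^{-m+1}$ one obtains $\Gamma = R'\,\bigl(\prod_j h_0^{-1}(\eta_j^{-1})\eta_j\bigr)\,Q$, where the $\eta_j$ are precisely those cutting arcs from $\partial N(\gamma_0)$. Lemma~\ref{lemma: estimate2} gives $\Phi(h_0^{-1}(\eta_j^{-1})\eta_j)\geq k-1$, Corollary~\ref{corollary: phi of g is zero} gives $\Phi(R')=0$, and Proposition~\ref{prop: genuine trajectory} gives $\Phi(Q)=0$; when $k\geq 2$, the middle part forces $\Phi(\Gamma)\geq 1$, so a lift $\widetilde\Gamma$ has its two endpoints separated by a wall (via Lemma~\ref{lemma: left}), contradicting the fact that $\Gamma$ bounds $\mathcal D$. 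Your degree count for the intersection of $\bar u$ with the binding is a legitimate observation, but it is only the input to, not a substitute for, this boundary decomposition; translating ``$\tilde\gamma$ links $\widetilde K$ positively'' into a crossing of prongs by $\widetilde Q$ in $\widetilde S$ is exactly the wrong attribution, and no bridge is supplied.

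A secondary but genuine issue is your treatment of the binding case. You assert $\gamma_0$ (and its covers) are noncontractible because $\gamma_0$ is ``freely homotopic into a leaf of $\mathcal L$.'' The binding lies in the complementary solid torus of the essential lamination, not in a leaf, and there is no reason for it to be homotopic into a leaf. The paper handles this case by retracting to a curve of slope $1/m_0$ on $\partial N(\gamma_0)$ and running the same $\Phi$-computation on a slightly smaller disk, observing that the binding contribution only makes $\Phi$ more positive. If you want to keep a shortcut here, you would need a genuine argument that the core of the degeneracy solid torus of an essential lamination is never null-homotopic; the free-homotopy-into-a-leaf claim as stated does not give that.
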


We will usually say that $\gamma$ which is the positive asymptotic
limit of a finite energy plane $\tilde{u}$ {\em bounds} $\tilde{u}$.
Theorem~\ref{theorem: nodisks} implies that:

\begin{cor}\label{cor: cylindrical}
Suppose $c_i\geq {2\over n_i}$ for each boundary component $(\bdry
S)_i$.  Given $N\gg 0$, for sufficiently small
$\varepsilon,\varepsilon'>0$, the cylindrical contact homology group
$HC_{\leq N}(M,\alpha_{\varepsilon,\varepsilon'})$ is well-defined.
\end{cor}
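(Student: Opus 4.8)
The plan is to deduce the corollary directly from Theorem~\ref{theorem: nodisks}, together with the nondegeneracy supplied by Lemma~\ref{lemma: perturbation} and the standard compactness and transversality machinery recalled in Section~\ref{section:contact homology}. Recall that $HC_{\leq N}(M,\alpha_{\varepsilon,\varepsilon'})$ is, by definition, the homology of the subcomplex of the cylindrical complex generated by the good closed orbits of $R_{\varepsilon,\varepsilon'}$ of action $\leq N$; since every nontrivial holomorphic cylinder has positive $d\alpha$-energy, the cylindrical differential $\partial$ is strictly action-decreasing on nonconstant generators, so these orbits genuinely span a subcomplex and the truncation is meaningful. (Here, as in the cases of interest for Theorem~\ref{main}, the hypothesis $c_i\geq\frac2{n_i}$ forces $k_i>1$, hence $\widetilde M=\mathbb R^3$ and $\pi_2(M)=0$, so the grading is well defined.) Thus ``well-defined'' means precisely: (a) the trivial augmentation exists in this range, i.e.\ $\partial\gamma$ has no constant term for every generator $\gamma$ of action $\leq N$; (b) the coefficients of $\partial$ are finite signed counts; and (c) $\partial^{2}=0$ on the truncated complex.

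First I would fix $N\gg0$ and apply Lemma~\ref{lemma: perturbation} to replace $\alpha_{\varepsilon,\varepsilon'}$ by a $C^\infty$-small perturbation whose Reeb vector field is nondegenerate on all closed orbits of action $\leq N$; this does not affect the conclusion of Theorem~\ref{theorem: nodisks}, which is robust under such perturbations once $\varepsilon,\varepsilon'$ have been chosen small. Then, for $\varepsilon,\varepsilon'$ sufficiently small, Theorem~\ref{theorem: nodisks} guarantees that no closed orbit $\gamma$ of action $\leq N$ bounds a finite energy plane. Since a constant term in $\partial\gamma$ is exactly a signed count of finite energy planes positively asymptotic to $\gamma$, this gives (a): the trivial augmentation $\varepsilon(1)=1$, $\varepsilon(\gamma)=0$ exists on the truncated complex, which is what makes the linearization \emph{cylindrical} in the sense of Section~\ref{section:contact homology}. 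For (b), the moduli space of index-$1$ cylinders mod $\mathbb R$ from $\gamma^{+}$ to $\gamma^{-}$, both of action $\leq N$, is compact of dimension $0$ by the Gromov-type compactness theorem of \cite{BEHWZ} (any limiting building connects orbits of action $\leq A_{\alpha}(\gamma^{+})\leq N$, so no escape of action occurs), and the relevant transversality holds for simple curves by Dragnev~\cite{Dr} and for the remaining curves via the abstract perturbation scheme of \cite{FO,LT} invoked in Section~\ref{section:contact homology}; hence the signed count is a well-defined rational number.

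For (c) I would run the usual argument: $\langle\partial^{2}\gamma^{+},\gamma^{-}\rangle$ is the signed count of boundary points of the compactified $1$-dimensional moduli space of index-$2$ cylinders from $\gamma^{+}$ to $\gamma^{-}$. By \cite{BEHWZ} this boundary consists of holomorphic buildings, and every orbit appearing in such a building has action $\leq N$; so by Theorem~\ref{theorem: nodisks} no component of any such building is a finite energy plane. A building with no plane component and total (reduced) index $2$ connecting $\gamma^{+}$ to $\gamma^{-}$ must therefore be a two-level chain of cylinders, each level an honest index-$1$ cylinder (trivial cylinders and branched covers of trivial cylinders being dealt with as in the Morse--Bott/SFT formalism of \cite{Bo1,EGH}). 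These are precisely the broken trajectories counted by $\partial\circ\partial$, and the standard gluing theorem shows that they occur in canceling pairs; hence $\partial^{2}=0$. Combining (a)--(c) yields the corollary, and since $\partial$ is action-decreasing the same count also shows $HC_{\leq N}$ is a subquotient of the (putative) full cylindrical homology in the expected way.

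The hard part is \emph{not} contained in this corollary: all of the geometric content---the absence of finite energy planes asymptotic to short orbits, which in turn rests on the control of the Reeb dynamics via $\Phi$ in Section~\ref{section: construction}---is already packaged in Theorem~\ref{theorem: nodisks}, whose proof occupies Section~\ref{disks}. The only genuine caveat here is the usual one flagged in the ``Disclosure'' paragraph of Section~\ref{section:contact homology}: the treatment of multiply covered cylinders and of branched covers of trivial cylinders in the SFT compactness and transversality statements is not fully written in the literature, and we appeal to it exactly as elsewhere in the paper. Modulo that, well-definedness of $HC_{\leq N}(M,\alpha_{\varepsilon,\varepsilon'})$ is formal once no closed orbit of action $\leq N$ bounds a plane.
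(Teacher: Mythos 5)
Your proposal is correct and follows the paper's route exactly: the paper states Corollary~\ref{cor: cylindrical} as an immediate consequence of Theorem~\ref{theorem: nodisks}, the point being that finite energy planes asymptotic to orbits of action $\leq N$ are precisely what would produce constant terms in $\bdry$ and obstruct $\bdry^2=0$ on the cylindrical complex. Your spelled-out version of (a)--(c), including the nondegeneracy from Lemma~\ref{lemma: perturbation} and the standard compactness/transversality caveats, is just the detail the paper leaves implicit.
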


\begin{proof}[Outline of proof of Theorem~\ref{theorem: nodisks}]
Without loss of generality assume that $\bdry S$ is connected. Fix
$N\gg 0$. By Proposition~\ref{prop: genuine trajectory}, for
sufficiently small $\varepsilon, \varepsilon'>0$, any genuine
trajectory $Q$ of $R_{\varepsilon,\varepsilon'}$ which intersects a
page at most $N$ times satisfies $\Phi(Q)=0$. Also, for
$\varepsilon$ small, the number of intersections of a closed orbit
$\gamma$ with a page of the open book is approximately the same as
the action $A_{\alpha_{\varepsilon,\varepsilon'}}(\gamma)$, provided
$\gamma$ lies in $\Sigma(S,\psi')$. Fix sufficiently small constants
$\varepsilon,\varepsilon'>0$ so that the above hold, and write
$\alpha=\alpha_{\varepsilon,\varepsilon'}$,
$R=R_{\varepsilon,\varepsilon'}$,
$\xi=\xi_{\varepsilon,\varepsilon'}$, and
$J=J_{\varepsilon,\varepsilon'}$. We will prove that no closed orbit
of $R$ in $\Sigma(S,\psi')$ which intersects a page at most $N$
times bounds a finite energy plane, and that no closed orbit in
$M-\Sigma(S,\psi')$ bounds a finite energy plane.

\begin{figure}[ht]
\begin{overpic}[height=3in]{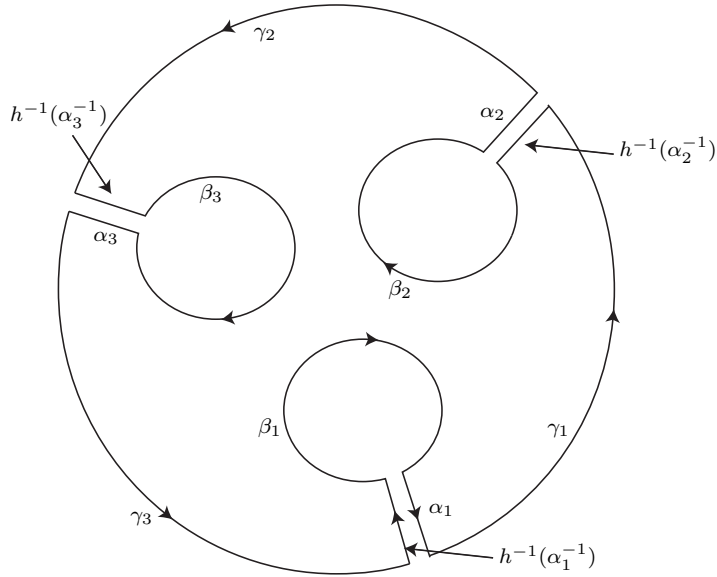}
\put(-8,79.5){\tiny{$h^{-1}(\alpha_3^{-1})$}}
\put(13,9.5){\tiny{$\gamma_3$}} \put(6.5,58.5){\tiny{$\alpha_3$}}
\put(25,65.8){\tiny{$\beta_3$}} \put(34.5,94){\tiny{$\gamma_2$}}
\put(35.2,25){\tiny{$\beta_1$}} \put(65,11){\tiny{$\alpha_1$}}
\put(58,49){\tiny{$\beta_2$}} \put(74,80.1){\tiny{$\alpha_2$}}
\put(77.3,2.25){\tiny{$h^{-1}(\alpha_1^{-1})$}}
\put(85.7,25){\tiny{$\gamma_1$}}
\put(98.3,73.25){\tiny{$h^{-1}(\alpha_2^{-1})$}}
\end{overpic}
\caption{The disk $\mathcal{D}$. The labelings indicate the image of
the given boundary arc under the map $\overline{u}$.} \label{disk}
\end{figure}

We argue by contradiction. Suppose there exists a nondegenerate
orbit $\gamma$ of $R$ which bounds a finite energy plane $\tilde
u=(a,u):\C\rightarrow \R\times M$. Assume in addition that $\gamma$
is not a cover of the binding $\gamma_0$, oriented as the boundary
of a page. By construction, $\gamma$ is transverse to the pages of
the open book. After perturbing $u$ if necessary, we may take $u$ to
be positively transverse to $\gamma_0$. The holomorphicity of
$\tilde u$ ensures that there are no negative intersections. Now let
$N(\gamma_0)$ be a sufficiently small tubular neighborhood of
$\gamma_0$, one which depends on $\gamma$. By restricting to
$M-N(\gamma_0)$ and reparametrizing, we view $u$ as a map
$\overline{u}$ from a planar surface $P$ to $M-N(\gamma_0)$. Here
$P$ is obtained from a unit disk $D$ by excising small disks which
consist of points whose images lie in $N(\gamma_0)$. Next identify
$M-N(\gamma_0)\simeq (S_0'\times[0,1])/(x,1)\sim (h(x),0)$, where
$S_0'$ is a small retraction of the page $S_0$. Cut $M-N(\gamma_0)$
open along $S_0'\times\{0\}$ and project to $S_0'$ via the
projection $\pi: S_0'\times[0,1]\rightarrow S_0'$ onto the first
factor. Then we obtain the map $\pi\circ
\overline{u}:\mathcal{D}\rightarrow S_0'$, where $\mathcal{D}$ is a
disk obtained from $P$ by making cuts along arcs as given in
Figure~\ref{disk}. The cutting-up/normalization process will be done
in detail in Section~\ref{subsection: cutting up}.  In
Section~\ref{subsection: noncontractible} we prove
Proposition~\ref{prop: not contractible}, which states that
$\pi(\overline u(\bdry\mathcal{D}))$ cannot be contractible if
$c\geq{2\over n}$. This is proved using the Rademacher function
$\Phi$ which is adapted to $\mathcal{F}$, and relies on the fact
from Proposition~\ref{prop: genuine trajectory} that $\Phi$ of a
genuine trajectory $Q$ is zero. This gives us the desired
contradiction. The case when $\gamma$ is a multiple cover of
$\gamma_0$ is similar.
\end{proof}

\subsection{Cutting up the finite energy plane} \label{subsection:
cutting up}

The type of argument we are using first appeared in
\cite[Annexe]{CH1}. The discussion in this subsection does not
depend the specific diffeomorphism type of $h$.

Suppose there exists a nonconstant finite energy plane $\tilde
u=(a,u): \C\rightarrow \R\times M$ which is bounded by $\gamma$. We
will slightly modify $u$ to get a map $\overline u: D^2=\{|z|\leq
1\}\rightarrow M$. Let $w$ be the coordinate for $\C$ and $z$ be the
coordinate for $D^2$. Also let $\Pi:TM=\R R\oplus \xi\rightarrow
\xi$ be the projection to $\xi$.

The following summarizes the results of
Hofer-Wysocki-Zehnder~\cite{HWZ1}, tailored to our needs:

\begin{prop}
There exists a smooth map $\overline u:D^2\rightarrow M$ which is
bounded by $\gamma$ and satisfies the following:
\begin{itemize}
\item $\overline u|_{int(D^2)}$ is immersed away from a finite number of points in $int(D^2)$.
\item $\overline u(z)=u(Rz)$ for large $R>0$ and $|z|\leq r_0<1$.
Hence $(a(Rz),\overline u(z))$ is holomorphic on the subdisk
$\{|z|\leq r_0\}$. Moreover, $\{|w|\leq Rr_0\}$ contains the set of
nonimmersed points of $u$.
\item At points where $\overline u$ is immersed, $\overline u$ is positively transverse to
$R$.
\item $\overline u(z)\not\in \operatorname{Im}(\gamma)$ for $r_0<|z|<1$.
\end{itemize}
\end{prop}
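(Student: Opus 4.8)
The plan is to obtain $\overline u$ by composing $\tilde u=(a,u)$ with the projection $\R\times M\to M$ and then reparametrizing the source disk; the only analytic content is the asymptotic description of $\tilde u$ near its puncture, which I would quote from Hofer--Wysocki--Zehnder~\cite{HWZ1}.

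First I would recall the asymptotics. Writing $\C\setminus\{0\}\cong[s_0,\infty)\times(\R/\Z)$ with coordinates $(s,t)$ near the puncture at infinity, and using that $\gamma$ is nondegenerate of period $T$, \cite{HWZ1} gives $a(s,t)=Ts+c_0+o(1)$ and $u(s,t)=\exp_{\gamma(Tt)}\!\big(e^{\lambda s}(e(t)+r(s,t))\big)$, where $\lambda<0$ is an eigenvalue of the asymptotic operator of $\gamma$, $e$ is an associated eigenfunction, and $r$ decays exponentially together with all of its derivatives. Two features matter: $e$ is \emph{nowhere zero} (a nontrivial solution of a first order linear ODE along $S^1$), and the $s$-derivatives of the ``normal part'' $e^{\lambda s}(e+r)$ decay exponentially. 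From this I would extract, for $s$ sufficiently large: (a) $u$ is an immersion whose image is $C^1$-close to the embedded cylinder swept out by $\gamma$, hence transverse to $R$; the transversality is \emph{positive} because $J$ is adapted to the symplectization, so that $u^*d\alpha\ge 0$ with equality only at non-immersed points; (b) $u(s,t)\notin\operatorname{Im}(\gamma)$, since the normal part is nonzero; and (c) after the change of variables $1-|z|=e^{\lambda s}$ (or any diffeomorphism matching this exponential scale near $\partial D^2$), $u$ extends to a smooth map on $\{r_0<|z|\le 1\}$ whose restriction to $\partial D^2$ parametrizes $\gamma$.

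Next I would isolate the ``bad'' locus. Let $Z\subset\C$ be the union of the non-immersed points of $u$ and the set $u^{-1}(\operatorname{Im}(\gamma))$. By (b), $Z$ is disjoint from a neighborhood of the puncture, and being visibly closed it is a \emph{compact} subset of $\C$; say $Z\subset\{|w|\le\rho_0\}$. The non-immersed points of $u$ are precisely the zeros of the $\xi$-component $\Pi\circ du$ of $du$; this section satisfies a Cauchy--Riemann type equation and does not vanish identically, since a finite energy plane is never a (branched) cover of a trivial cylinder, so by the similarity principle its zeros are isolated, hence (being contained in the compact set $Z$) finite in number.

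Finally I would carry out the reparametrization. Fix $r_0\in(0,1)$, set $R=\rho_0/r_0$, and choose a diffeomorphism $\varphi:\operatorname{int}(D^2)\stackrel{\sim}{\to}\C$ that equals $z\mapsto Rz$ on $\{|z|\le r_0\}$ (so $\varphi(\{|z|\le r_0\})=\{|w|\le\rho_0\}\supset Z$) and that realizes the exponential scaling of (c) near $\partial D^2$. Put $\overline u=u\circ\varphi$. Then $\overline u$ is smooth and, by (c), extends smoothly over $\partial D^2$, where it is bounded by $\gamma$; on $\{|z|\le r_0\}$ one has $\overline u(z)=u(Rz)$, so $(a(Rz),\overline u(z))$ is holomorphic there, and $\{|w|\le Rr_0\}$ contains all the non-immersed points of $u$; on $r_0<|z|<1$ the map $\varphi$ lands in $\{|w|>\rho_0\}$, which is disjoint from $Z$, so there $\overline u$ is an immersion whose image avoids $\operatorname{Im}(\gamma)$; at every immersed point of $\overline u$ — in the annulus or in the central subdisk, where $\overline u=u(Rz)$ — the transversality to $R$ is positive by the symplectization argument of (a); and the non-immersed points of $\overline u$ in $\operatorname{int}(D^2)$ are the $\varphi$-images of those of $u$, a finite set. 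This yields the four assertions. The step requiring the most care is the appeal to \cite{HWZ1} in the second paragraph: one must extract from the asymptotic formula that the extension over $\partial D^2$ is genuinely smooth and that the leading eigenfunction $e$ is nowhere zero — it is the latter that rules out $u$ touching $\operatorname{Im}(\gamma)$ near the puncture and hence lets one trap all intersections with the orbit inside $\{|z|\le r_0\}$.
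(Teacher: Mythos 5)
Your proposal follows essentially the same route as the paper: quote the Hofer--Wysocki--Zehnder asymptotics near the puncture, use them to trap the non-immersed points and the intersections with $\operatorname{Im}(\gamma)$ in a compact set, and then reparametrize by a diffeomorphism that is $z\mapsto Rz$ on an inner disk and compresses the end near $\bdry D^2$. The only difference of substance is that you rederive the finiteness of the non-immersed locus from the similarity principle applied to $\Pi\circ du$ (the paper simply cites \cite[Theorem~1.5]{HWZ1} for both this and the finiteness of intersections with $\gamma$), which is fine.

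One technical caveat: your choice of boundary scaling, $1-|z|=e^{\lambda s}$ ``or any diffeomorphism matching this exponential scale,'' is exactly the choice for which $C^\infty$-smoothness of the extension over $\bdry D^2$ is not automatic --- after that substitution the remainder terms become powers $(1-|z|)^{\delta/|\lambda|}$ whose higher $|z|$-derivatives need not extend continuously unless one controls the remainder's decay rate relative to $\lambda$. The paper avoids this by compressing faster, taking $f(r)=\tfrac{1}{1-r}$ near $r=1$, so that every term of size $O(e^{-Cf(r)})$ is flat at $r=1$ and smoothness of $\overline u$ (with $\overline u(e^{2\pi i t})=(k_0t,0)$ on the boundary) is immediate; with that adjustment your argument goes through.
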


\begin{proof}
Let $\R/\Z\times D^2_\delta$ be a small neighborhood of
$\operatorname{Im}(\gamma)$, where $D^2_\delta$ is a disk of radius
$\delta>0$ and $\gamma$ maps to $\R/\Z\times\{0\}$. Let $k_0$ be the
multiplicity of $\gamma$, i.e., the number of times $\gamma$ covers
a simple orbit. When restricted to $|w|\gg 0$, $u(w)$ maps to
$\R/\Z\times D^2_\delta$ and has components $(u_0(w),u_1(w))$. Then,
according to \cite[Theorem~1.4]{HWZ1},
\begin{itemize}
\item $u_0(re^{2\pi it})$ asymptotically approaches $k_0t$, with error
term $O(e^{-Cr})$.  The same holds for all the derivatives of $u_0$.
\item $u_1(re^{2\pi it})=e^{\int_{r_1}^r \mu(\tau)d\tau}[E(e^{2\pi it})
+F(re^{2\pi it})]$, where $\mu:[r_1,\infty)\rightarrow \R$ is a
smooth function which limits to $\lambda<0$, $E(e^{2\pi i t})$ is a
nowhere vanishing function with values in $\R^2$, and $F(re^{2\pi
it})$ is the remainder term which approaches $0$ uniformly in $t$
for all derivatives, as $r\rightarrow \infty$.  (The function
$E(e^{2\pi it})$ is an eigenfunction of a suitable operator with
eigenvalue $\lambda$.)
\end{itemize}
We note that some care is required in choosing the coordinates on
$\R/\Z\times D^2_\delta$.

We now reparametrize $u:\C\rightarrow M$.  Consider the map
$\phi:int(D^2)\rightarrow \C$, $(r,\theta)\mapsto (f(r),\theta)$,
where $f(r)=Rr$ for $r\leq r_0$, $f'(r)>0$, and $f(r)={1\over 1-r}$
near $r=1$.  Then let $\overline u=u\circ \phi$ on $int(D^2)$ and
$\overline u(e^{2\pi it})=(k_0t,0)$.  The above asymptotics
guarantee the smoothness of $\overline u: D^2\rightarrow M$.

The first and last statements follow from \cite[Theorem~1.5]{HWZ1},
which states that (i) $\Pi\circ u_*$ is nonzero (and hence $u$ is an
immersion) away from a finite number of points and that (ii) $u$
intersects $\gamma$ at finitely many points.
\end{proof}

The map $\overline u$, restricted to $int(D^2)$, either intersects
$\gamma_0$ transversely and positively or intersects $\gamma_0$ at a
point where $\Pi\circ \overline u_*=0$. The following lemma allows
us to restrict to the former situation.

\begin{lemma} \label{lemma: perturbation overline u}
There exists a perturbation $\overline v$ of $\overline u$ with
small support inside $int(D^2)$ so that $\overline v$ is positively
transverse to $R$ away from isolated complex branch points and is
positively transverse to $\gamma_0$.
\end{lemma}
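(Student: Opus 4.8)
The plan is to perturb $\overline u$ near the finite set where it fails to be transverse to $R$, and in particular near its intersections with the binding orbit $\gamma_0$. Recall that $\overline u$ is holomorphic on the subdisk $\{|z|\le r_0\}$, which contains all the non-immersed points, so outside $\{|z|\le r_0\}$ the map is already immersed and positively transverse to $R$; moreover on $r_0<|z|<1$ its image avoids $\operatorname{Im}(\gamma)$. Hence the only points requiring attention lie in $\{|z|\le r_0\}$, where $(a(Rz),\overline u(z))$ is holomorphic. First I would use the holomorphicity to get the local normal form: at an interior point $p$ where $\Pi\circ\overline u_*=0$, after choosing coordinates $\R/\Z\times D^2_\delta$ around the relevant orbit (here $\gamma_0$) compatible with $\xi$, the map $\overline u$ has a Taylor expansion whose leading term is, in the $\xi$-directions, a nonzero homogeneous polynomial in $z$ (a ``branch point''); the branch points form a discrete hence finite set in the compact region $\{|z|\le r_0\}$. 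The intersections of $\overline u$ with $\gamma_0$ are likewise finite by Theorem~1.5 of \cite{HWZ1}.

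Next I would carry out the perturbation in two steps. \textbf{Step 1:} away from the branch points and away from $\gamma_0$, $\overline u$ is an immersion, and positive transversality to $R$ is an \emph{open} condition; so we only need to fix things in small disjoint disks $D_1,\dots,D_m\subset\{|z|\le r_0\}$ around the branch points and the intersection points with $\gamma_0$. \textbf{Step 2:} inside each such disk, I add to $\overline u$ a small compactly-supported section of $\overline u^*TM$ of the form $s\cdot R$ (or a combination of $R$ and a $\xi$-valued term), where $s$ is a bump function, designed to (i) push the image off $\operatorname{Im}(\gamma_0)$ at the intersection points — using that $\gamma_0$ is $1$-dimensional and $\dim M=3$, a generic small push achieves transversality, and the positivity of intersection is preserved because holomorphic intersections with $\R\times\gamma_0$ are automatically positive and isolated and small perturbations don't change local intersection signs; and (ii) restore positive transversality to $R$ away from the isolated branch points, since the ``defect'' $\langle \overline v_*(\partial_\theta), dt\rangle$ can be made positive by adding a suitable positive multiple of $R$ — the key point is that at a holomorphic branch point the tangent plane still ``rotates correctly'' around $R$ in the appropriate sense, so only a $C^\infty$-small correction is needed. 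One then checks the contact/positivity inequalities are strict on the compact region and hence survive a sufficiently small perturbation. The resulting $\overline v$ agrees with $\overline u$ outside $\cup_i D_i\subset int(D^2)$, is positively transverse to $\gamma_0$, and is positively transverse to $R$ off the (unchanged, isolated) branch points.

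The main obstacle I expect is \textbf{Step 2(ii)}: arranging a single perturbation that simultaneously (a) makes the image miss $\gamma_0$ transversally and (b) keeps $\overline v$ positively transverse to $R$ everywhere except at isolated branch points, without introducing new branch points or new tangencies. The delicate part is near a point that is \emph{both} a branch point and an intersection with $\gamma_0$ (or where these clusters overlap): there one must write the local model for a holomorphic map tangent to $R$ to high order, verify that the ``winding'' of $\Pi\circ\overline u_*$ around such a point is positive (this is where the holomorphicity and the asymptotic analysis of \cite{HWZ1} are essential), and perturb within that constraint so the winding is preserved. I would handle this by reducing, via the holomorphic normal form, to an explicit polynomial model $z\mapsto(z^{\ell},0)+\text{h.o.t.}$ in suitable coordinates and perturbing the constant/linear terms; the positivity of the relevant determinant is then a direct computation. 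Everything else is a routine partition-of-unity argument patching the local perturbations together while keeping the support inside $int(D^2)$.
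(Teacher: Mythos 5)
There is a genuine gap in your Step 2, and it sits at the heart of the lemma. Your primary perturbation mechanism---adding a section of the form $s\cdot R$---cannot achieve the main goal: $\gamma_0$ is an orbit of $R$, so pushing the image along the Reeb direction carries points of $\gamma_0$ to points of $\gamma_0$ and never moves a degenerate intersection off the binding. The perturbation must be taken in a direction transverse to the Reeb orbits. This is exactly what the paper does: around a point with $\Pi\circ\overline u_*(0)=0$ and $\overline u(0)\in\gamma_0$ it takes a flow-box $[-1,1]\times D^2$ in which the Reeb orbits are $[-1,1]\times\{pt\}$ and $\gamma_0=[-1,1]\times\{0\}$, writes $\overline u=(\overline u_0,\overline u_1)$, and sets $\overline v=(\overline u_0,\overline u_1+f)$ where $f$ equals a small constant $\delta$ on $\{|z|\le\varepsilon''\}$, vanishes for $|z|\ge\varepsilon'$, and has $|f'|$ small. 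On the inner disk this is a rigid translation transverse to the Reeb flow, which carries Reeb orbits to Reeb orbits and so does not disturb transversality to $R$; on the transition annulus $\overline u$ is immersed and positively transverse to $R$ (the non-immersed points are isolated), an open condition that survives the $C^1$-small cutoff. The branch point is now off $\gamma_0$, and the new intersections with $\gamma_0$ occur at immersed points where $\overline v$ is positively transverse to $R$; since $\gamma_0$ is tangent to $R$, these intersections are automatically transverse and positive.

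Relatedly, your positivity argument (``holomorphic intersections with $\R\times\gamma_0$ are positive and small perturbations don't change local intersection signs'') is not valid at a non-transverse intersection: only the total algebraic count over a small ball is preserved, and a degenerate intersection of multiplicity $m$ can a priori split into transverse points of mixed signs under a non-holomorphic perturbation. Positivity of each new intersection must instead be deduced from transversality to $R$ at those points, as above. Finally, you are solving a harder problem than required: the lemma does not ask you to repair transversality to $R$ at the branch points (its conclusion explicitly allows isolated branch points), so the normal form $z\mapsto(z^{\ell},0)+\mathrm{h.o.t.}$, the winding analysis, and your Step 2(ii) are unnecessary; all that is needed is to separate the branch points from $\gamma_0$ by a constant translation transverse to the Reeb orbits.
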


We emphasize that $\overline u$ and $\overline v$ are no longer
holomorphic everywhere.

\begin{proof}
%We will work with $u$ instead of $\overline u$, since $\overline u$
%intersects $\gamma_0$ only where it is holomorphic.
Suppose without loss of generality that $\Pi\circ\overline u_*(0)=0$
and $\overline u(0)\in \gamma_0$. Let $[-1,1]\times D^2\subset M$ be
a small neighborhood of $\overline u(0)=(0,0)$, where the Reeb
orbits are $[-1,1]\times \{pt\}$ and $[-1,1]\times\{0\}$ is a subarc
of $\gamma_0$.  Now restrict the domain of $\overline u$ to a small
neighborhood $D^2_\varepsilon=\{|z|\leq \varepsilon\}$ of $0$ and
write $\overline u=(\overline u_0,\overline u_1)$, where $\overline
u_0$ is the component that maps to $[-1,1]$ and $\overline u_1$ is
the component that maps to $D^2$. Define a smooth function $f:
D^2_\varepsilon\rightarrow \R$ which satisfies the following:
\begin{itemize}
\item $f(z)=\delta$ on $|z|\leq \varepsilon''$.
\item $f(z)=0$ on $|z|\geq \varepsilon'$.
\item $|f'|$ is small on $D^2_\varepsilon$.  (This means that
$\delta$ must be a very small positive number.)
\end{itemize}
Here $0<\varepsilon''<\varepsilon'<\varepsilon$.  Then define
$\overline v(z)=(\overline u_0(z), \overline u_1(z)+f(z))$.  On
$|z|\leq \varepsilon''$, we are simply translating the holomorphic
disk; this does not affect the transversality with $R$.  Now,
provided $|f'|$ is sufficiently small, the transversality on
$\varepsilon''\leq |z|\leq \varepsilon'$ is unaffected. The point
near $z=0$ which intersects $\gamma_0$ is distinct from the point
$z=0$ at which $\Pi\circ \overline v_*=0$.
\end{proof}

The map $\overline v$ from Lemma~\ref{lemma: perturbation overline
u} will be renamed as $\overline u$.

\s Suppose that $\gamma$ does not cover $\gamma_0$. By this we mean
$\gamma\not=\gamma_0$ and $\gamma$ is not a multiple cover of
$\gamma_0$. Since $\gamma_0$ intersects $\overline u$ transversely,
there is a small neighborhood $N(\gamma_0)$ of $\gamma_0$ so that
$\overline u(D^2)\cap \bdry (M-N(\gamma_0))$ is a union of circles,
each of which intersects $\bdry (S_0'\times\{pt\})$ exactly once.
Let $P$ be the planar subsurface of $D^2$ obtained by excising all
$z\in D^2$ such that $\overline u(z) \in int (N(\gamma_0))$. We
write $\bdry P= \bdry_0 P + \bdry_1 P$ where $\bdry_0 P$ maps to
$\gamma$ and the components of $\bdry_1 P$ map to $\bdry
(M-N(\gamma_0))$.

Now take $S_0'= S_0'\times\{0\}$ and consider the intersection of
$S_0'$ and $\overline u(P)$. Observe that $\overline u|_P$ is
already transverse to $S_0'\times\{t\}$ in a neighborhood of $\bdry
P$, for all $t$. Next, by Sard's theorem, there exists
$S_0'\times\{\varepsilon\}$ which is transverse to $\overline u|_P$
with $\varepsilon$ arbitrarily small. By renaming the $t$-variable
(i.e., translating $t\mapsto t-\varepsilon$), we may assume that
$S_0'=S_0'\times\{0\}$ and $\overline u|_P$ intersect transversely.
We now have the following:

\begin{lemma}
The intersection $P\cap \overline u^{-1}(S_0')$ is a union of
properly embedded arcs and embedded closed curves in $P$ which
satisfy the following: \be
\item The embedded closed curves bound disks in $P$.
\item There is an arc $a_i$ which is the unique arc to connect the $i$th component
of $\bdry_1 P$ to $\bdry_0 P$.
\ee
\end{lemma}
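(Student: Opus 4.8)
The plan is to analyze the preimage $\Gamma := P \cap \overline{u}^{-1}(S_0')$ as a one-dimensional submanifold of the planar surface $P$, using transversality and the topology of the boundary. Since $\overline{u}|_P$ is transverse to $S_0'$ and $S_0'$ is a properly embedded codimension-one submanifold of $M - N(\gamma_0)$ (with boundary on $\partial N(\gamma_0)$), the preimage $\Gamma$ is a properly embedded one-manifold in $P$. First I would note that $\Gamma$ is disjoint from $\partial_0 P$: on $\partial_0 P$ the map $\overline u$ parametrizes $\gamma$, which is transverse to the pages and hence meets the page $S_0'$ only in isolated points — and after the small translation $t \mapsto t - \varepsilon$ arranged just before the lemma, $\gamma$ meets the specific slice $S_0' \times \{0\}$ only away from $\partial_0 P$, or more precisely we may push those finitely many intersection points off using that $\gamma$ is transverse to the pages for an interval of $t$-values. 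Near $\partial_1 P$, each component is a circle mapping to $\partial(M - N(\gamma_0))$ that meets $\partial(S_0' \times \{0\})$ exactly once (by the construction of $N(\gamma_0)$); hence $\Gamma$ meets each component of $\partial_1 P$ in exactly one point. So $\Gamma$ is a compact properly embedded one-manifold in $P$ whose boundary consists of exactly one point on each component of $\partial_1 P$ and nothing on $\partial_0 P$.

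The components of such a $\Gamma$ are therefore of two types: embedded closed circles in the interior of $P$, and properly embedded arcs each of whose two endpoints lies on $\partial_1 P$. Since each component of $\partial_1 P$ carries exactly one endpoint, the arcs pair up the components of $\partial_1 P$; but I claim each arc must in fact join a component of $\partial_1 P$ to \emph{itself} is impossible as stated — rather, the correct statement is that the arcs match the boundary components in pairs. Wait: re-reading the lemma, statement (2) says there is a unique arc connecting the $i$th component of $\partial_1 P$ to $\partial_0 P$. So I must instead allow $\Gamma$ to meet $\partial_0 P$. The right setup is: $\partial_0 P$ maps to $\gamma$, and $\gamma$ crosses the reference page $S_0'$ some number of times; but since $\gamma$ is a single closed orbit transverse to the pages, it crosses each page the \emph{same} number of times $\ell \geq 1$, and we arrange (again by choice of the slice within an isotopy, or by reparametrizing) that $\gamma$ crosses $S_0'$ in a way compatible with the combinatorics in Figure~\ref{disk}, contributing the prescribed endpoints on $\partial_0 P$. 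Then $\Gamma$ is a properly embedded one-manifold with boundary on $\partial P$, with a controlled number of endpoints on $\partial_0 P$ and exactly one on each component of $\partial_1 P$; its components are closed circles in $\operatorname{int}(P)$ and arcs joining boundary points.

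To finish, for (1) I would argue that any embedded closed-curve component of $\Gamma$ bounds a disk in $P$: an innermost such circle $C$ bounds a disk $D_C \subset P$ (since $P$ is planar and $C$ is null-homotopic in $M$ as $\overline u|_{D^2}$ is a disk and $S_0'$ is incompressible in $M - N(\gamma_0)$, so $\overline u(C)$ bounds in $S_0'$; combined with planarity of $P$ this forces $C$ to bound in $P$). For (2) I would use an Euler-characteristic / parity count together with the combinatorics of Figure~\ref{disk}: the arcs of $\Gamma$ decompose $\mathcal{D}$ (the disk obtained by cutting $P$ along them) into the configuration shown, and uniqueness of the arc from the $i$th boundary circle to $\partial_0 P$ follows because two such arcs would cobound a bigon or an annulus in $P$ that could be removed, contradicting that $\overline u|_P$ has been taken to intersect $S_0'$ efficiently (minimally) after the perturbations. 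The main obstacle I expect is the careful bookkeeping of exactly how $\gamma$ (equivalently $\partial_0 P$) meets the reference page $S_0'$ and hence exactly how many arc-endpoints land on $\partial_0 P$ — i.e., matching the abstract preimage picture to the specific cut-open disk $\mathcal{D}$ of Figure~\ref{disk}. The incompressibility input (each leaf, here each page $S_0'$, is $\pi_1$-injective — which follows from the essential-lamination discussion, or more simply from the fact that pages of an open book are incompressible) is what rules out closed curves of $\Gamma$ that fail to bound in $P$, and that is the only genuinely topological ingredient beyond transversality.
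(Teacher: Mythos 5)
There are two genuine gaps, one in each item. For item (1), your deduction that an innermost closed component $C$ of $\overline u^{-1}(S_0')$ bounds a disk in $P$ does not go through. What has to be excluded is that $C$ encircles some components of $\partial_1 P$, and neither incompressibility of the page nor planarity of $P$ does that: you only know $\overline u(C)$ is null-homotopic in $M$ (the subdisk of $D^2$ bounded by $C$ may well meet $N(\gamma_0)$), so incompressibility of $S_0'$ in $M-N(\gamma_0)$ cannot even be invoked; and even if $\overline u(C)$ did bound a disk in $S_0'$, the map $\overline u$ is not injective, that disk does not lift to $P$, and a planar surface with several boundary components has plenty of simple closed curves that bound no disk. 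The paper's proof is purely homological: letting $P_0\subset P$ be the subsurface cut off by $C$ on the side away from $\partial_0 P$, one has $\langle \overline u(\partial P_0), S_0'\rangle=0$, while $\langle\overline u(C),S_0'\rangle=0$ (push $C$ off the page in the $t$-direction) and every component of $\overline u(\partial_1 P)$ meets $S_0'$ with the same (negative) sign; hence $\partial P_0$ can contain no component of $\partial_1 P$, so $\partial P_0=C$ and $C$ bounds a disk.

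For item (2), uniqueness is automatic, since each component of $\partial_1 P$ carries exactly one endpoint of $\overline u^{-1}(S_0')$ (its image crosses $\partial S_0'$ once); the actual content is existence, i.e.\ that this single arc ends on $\partial_0 P$ rather than on another component of $\partial_1 P$. Your appeal to ``efficient/minimal'' intersection and bigon removal is not available: no minimality of $\overline u(P)\cap S_0'$ was arranged (and cannot be arranged freely, since $\overline u$ must stay positively transverse to $R$); the only normalization performed is the removal of innermost closed curves. The paper's mechanism is a sign count that your write-up never makes: since $\gamma$ is positively transverse to the pages, all $m=\langle\gamma,S_0'\rangle$ endpoints on $\partial_0 P$ are positive crossings, whereas the two endpoints of any properly embedded arc of $\overline u^{-1}(S_0')$ are crossings of $\partial P$ with $S_0'$ of opposite signs; hence no arc can join $\partial_0 P$ to itself, and then the endpoint count ($m$ on $\partial_0 P$, one on each of the $m$ components of $\partial_1 P$) forces the arc leaving the $i$th component of $\partial_1 P$ to end on $\partial_0 P$. (The false start in your setup, where you first assert $\overline u^{-1}(S_0')$ misses $\partial_0 P$ and then backtrack, should also be cleaned up, but it is not the essential problem.)
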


Order the $a_i$ so that their endpoints in $\bdry_0 P$ are in
counterclockwise order, and order the components of $\bdry_1 P$
using the $a_i$. Also, $a_i$ is oriented {\em from} $\bdry_1 P$ {\em
to} $\bdry_0 P$.

\begin{proof}
Let $\delta$ be a closed curve of $P\cap \overline u^{-1}(S_0')$.
Then $\delta$ cuts off a planar subsurface $P_0$ whose boundary
consists of $\delta$, together with components of $\bdry_1 P$.  Now
consider the intersection pairing with $S_0'$.  Since $\langle
\overline u(\delta), S_0'\rangle=0$ but each component of $\overline
u(\bdry_1 P)$ intersects $S_0'$ negatively, it follows that $\bdry
P_0=\delta$.

Now if $\langle \gamma, S_0'\rangle=m>0$, then there must be $m$
endpoints of $P\cap \overline u^{-1}(S_0')$ on $\bdry_0 P$ and $1$
endpoint each on the $m$ components of $\bdry_1 P$.  If the arc
$a_i$ which begins at the $i$th component of $\bdry_1P$ ends on
another component of $\bdry_1P$, then there must be an arc from
$\bdry_0P$ to itself.  This would contradict $\langle \gamma,
S_0'\rangle=m$.  The lemma follows.
\end{proof}

Take an embedded closed curve of $P\cap \overline u^{-1}(S_0')$ in
$P$ which bounds an innermost disk $D_0$. Then $\overline u(D_0)$
can be pushed across $S_0'$ by either flowing forwards or backwards
along $R$ (depending on the situation). In this way we can eliminate
all the embedded closed curves of $P\cap \overline u^{-1}(S_0')$ in
$P$.

Now cut $P$ along the union of the arcs in $P\cap \overline
u^{-1}(S_0')$ to obtain a disk $\mathcal{D}$. We now have a map
$\overline{u}:\mathcal{D}\rightarrow S_0'\times [0,1]$. After
cutting open at one point, $\overline{u}(\bdry \mathcal{D})$ is
given by:
\begin{equation}
\overline{u}(\bdry
\mathcal{D})=h^{-1}(\alpha_1^{-1})\beta_1\alpha_1\gamma_1\cdots
h^{-1}(\alpha_m^{-1})\beta_m\alpha_m\gamma_m,
\end{equation}
where $\alpha_i$ refers to $\alpha_i\times\{0\}$, $\alpha_i^{-1}$ is
$\alpha_i$ with the opposite orientation, $h^{-1}(\alpha_i^{-1})$
refers to $h^{-1}(\alpha_i^{-1})\times\{1\}$, $\gamma_i$ are
components of the Reeb orbit $\gamma$ cut along $S_0'\times\{0\}$,
and $\beta_i$ are arcs of the type $\{pt\}\times [0,1]$ where $pt\in
\bdry S_0'$. See Figure~\ref{disk}.

Next we compose $\overline{u}$ with the projection
$\pi:S_0'\times[0,1]\rightarrow S_0'$ onto the first factor. Then
the curve $\pi(\overline{u}(\partial \mathcal{D}))\subset S_0'$ is
decomposed into consecutive arcs:
\begin{equation}\label{equation: original}
\pi(\overline{u}(\partial \mathcal{D}))= h^{-1} (\alpha_1^{-1} )
\alpha_1 \gamma_1\cdots  h^{-1} (\alpha_m^{-1} )\alpha_m \gamma_m ,
\end{equation}
where the $\gamma_i$ are actually $\pi(\gamma_i)$. Also note that
the $\beta_i$ project to points.

Rewrite $\pi(\overline{u}(\partial \mathcal{D}))$ as:
\begin{equation} \label{eqn: 813}
\pi(\overline{u}(\partial \mathcal{D}))= h^{-1} (\xi_1^{-1})\xi_1
h^{-1} (\xi_2^{-1}) \xi_2\cdots h^{-1} (\xi_m^{-1})\xi_m Q',
\end{equation}
where $Q'=h^{m-1}(\gamma_1) h^{m-2} (\gamma_2 )\cdots\gamma_m$ is
the projection $S_0'\times[0,m]\rightarrow S_0'$ onto the first
factor of a lift of $\gamma$ to $S_0'\times [0,m]$ and
\begin{eqnarray*}
\xi_1&=&\alpha_1, \\
\xi_2&=&\alpha_2h(\gamma_1^{-1})\\
\xi_3&=&\alpha_3 h(\gamma_2^{-1}) h^2(\gamma_1^{-1})\\
&\vdots &
\end{eqnarray*}

\subsection{Noncontractibility} \label{subsection: noncontractible}
Let $h:S_0\stackrel\sim\rightarrow S_0$ be a diffeomorphism with
$h|_{\bdry S_0}=id$, fractional Dehn twist coefficient $c={k\over
n}$, and pseudo-Anosov representative $\psi$. Writing $S_0=A_0\cup
S$, we may assume that $h|_{S}=\psi'$ and $h|_{A_0}$ is a
rotation/fractional Dehn twist by $c$. Also let
$h_0:S_0\stackrel\sim\rightarrow S_0$ be a homeomorphism which is
isotopic to $h$ relative to $\bdry S_0$, so that $h_0|_{S}=\psi$.

In this subsection we prove the following proposition:

\begin{prop}\label{prop: not contractible}
Suppose $\gamma$ does not cover $\gamma_0$. If $k\geq 2$, then
$\gamma$ does not bound a finite energy plane.
\end{prop}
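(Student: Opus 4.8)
The plan is to argue by contradiction, building on the setup of Section~\ref{subsection: cutting up}. Suppose $\gamma$ does not cover $\gamma_0$ and bounds a finite energy plane $\tilde u$. After the cutting-up procedure we obtain the disk $\mathcal{D}$ and the boundary curve $\pi(\overline u(\bdry\mathcal{D}))$, which, by Equation~\ref{eqn: 813}, can be written as $h^{-1}(\xi_1^{-1})\xi_1\cdots h^{-1}(\xi_m^{-1})\xi_m Q'$, where $Q'$ is the projection of a lift of $\gamma$ and hence is (up to small perturbation near the binding) a genuine trajectory of $R_{\varepsilon,\varepsilon'}$ intersecting a page at most $N$ times. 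Since $\pi(\overline u(\bdry\mathcal{D}))$ bounds a disk in $S_0'$ (namely the image of $\mathcal{D}$), it is in particular null-homotopic; the goal is to derive a contradiction by evaluating the Rademacher function $\Phi$ on this loop and showing $\Phi\neq 0$, which is impossible for a null-homotopic loop (since $\Phi$ changes by a bounded amount under concatenation and vanishes on trivial loops — more precisely one should set up $\Phi$ on the loop via Proposition~\ref{prop: rademacher}).

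The key computation is as follows. First I would use Proposition~\ref{prop: rademacher}(3), the quasi-morphism property $\Phi(\gamma\gamma')=\Phi(\gamma)+\Phi(\gamma')+\varepsilon$ with $\varepsilon\in\{-1,0,1\}$, to estimate $\Phi$ of the concatenation in Equation~\ref{eqn: 813} term by term. The crucial inputs are: (i) $\Phi$ is $\psi$-invariant (Proposition~\ref{prop: rademacher}(1)), and since $h$ differs from $\psi$ by a boundary rotation by $c=k/n$, the value $\Phi(h^{-1}(\xi))$ differs from $\Phi(\xi)$ in a controlled way — each application of $h^{-1}$ near the boundary shifts the base point by $k$ prongs, contributing roughly $-k$ (or $\pm$ bounded error) to $\Phi$ per factor, as in Proposition~\ref{prop: rademacher}(4); (ii) by Proposition~\ref{prop: genuine trajectory}, $\Phi(Q')=0$ (after choosing $\varepsilon,\varepsilon'$ small enough relative to $N$); (iii) the pairs $h^{-1}(\xi_i^{-1})\xi_i$ do not cancel but instead each contribute a definite negative amount controlled by $k$, because $h^{-1}(\xi_i^{-1})$ and $\xi_i$ start at the same boundary point but $h^{-1}$ has wound the starting arc backwards past $k$ prongs. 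Summing over $i=1,\dots,m$, the total $\Phi$ is bounded above by something like $-m(k-1) + (\text{bounded error per term})$, which, when $k\geq 2$, is strictly negative (or one gets a strictly negative total after accounting for the $\pm 1$ errors, using $k-1\geq 1$). This contradicts null-homotopy.

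I expect the main obstacle to be (a) making precise the claim that each factor $h^{-1}(\xi_i^{-1})\xi_i$ contributes a controlled negative amount — this requires carefully tracking how the fractional Dehn twist by $k/n$ interacts with the prongs $P_i$ and the starting points on $\bdry S$, using the relation between $h$, $\psi'$, and the shift $x_i\mapsto x_{i+k}$; and (b) handling the $\varepsilon\in\{-1,0,1\}$ ambiguities in the quasi-morphism inequality so that they do not swamp the main negative term — this is exactly where the hypothesis $k\geq 2$ (rather than $k\geq 1$) enters, since one needs the per-factor contribution $-k$ to dominate an error of size $1$, leaving a net contribution of at least $-(k-1)\le -1$ per factor. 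I would also need the reduction, already sketched in the outline, that the case where $\gamma$ is a multiple cover of $\gamma_0$ is handled separately (there the orbit is homotopically nontrivial in $M$ or the analysis degenerates) and that the closed curves in $P\cap\overline u^{-1}(S_0')$ have already been removed so that $\mathcal{D}$ is genuinely a disk. Once the sign of $\Phi$ is pinned down, the contradiction with $\pi(\overline u(\bdry\mathcal{D}))$ bounding $\mathcal{D}$ — hence being null-homotopic, hence having $\Phi$ forced to a bounded value incompatible with the linear-in-$m$ negative estimate unless $m$ is also bounded, which it is by $N$, but then one rescales — closes the argument. (The cleanest phrasing: $\Phi$ of a null-homotopic loop, computed via any filling, must be consistent; here the filling by $\mathcal{D}$ forces a contradiction because each of the $m$ "teeth" contributes negatively while genuine trajectories contribute zero.)
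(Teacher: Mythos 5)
Your overall setup (cut the plane open, write the boundary as in Equation~\ref{eqn: 813}, evaluate the Rademacher function) is the paper's, and your per-block estimate is essentially Lemma~\ref{lemma: estimate2} of the paper --- note the sign: with the paper's conventions each block $h_0^{-1}(\eta_i^{-1})\eta_i$ contributes $+(k-1)$ or $+k$, not $-k$, but that is only an orientation convention. The genuine gap is in your final step, where you try to get the contradiction purely from the quasimorphism bookkeeping of Proposition~\ref{prop: rademacher}(3). The loop decomposes into $m$ blocks plus the orbit piece and the correction path $R'$ comparing $h$ with $h_0$, so you incur on the order of $m$ concatenation errors of size $1$. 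With a main term of $k-1$ per block, the counting gives at best $0=\Phi(\Gamma)\geq m(k-1)-(m+O(1))$, i.e.\ $m(k-2)\leq O(1)$: for $k\geq 3$ this only rules out large $m$, and for $k=2$ --- the case the proposition is really about --- it gives nothing, because the per-block gain $k-1=1$ is exactly eaten by the per-concatenation error. Your proposed fix (``per-factor contribution dominates an error of size $1$'') does not resolve this for $k=2$ for precisely that reason.

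The paper avoids the error accumulation by using $\Phi$ only once, qualitatively: from $\Phi\bigl(h_0^{-1}(\eta_1^{-1})\eta_1\cdots h_0^{-1}(\eta_m^{-1})\eta_m\bigr)\geq m(k-1)-(m-1)\geq 1$ it concludes that this sub-arc crosses two consecutive lifted prongs $\wt P_i,\wt P_{i+1}$, and the contradiction then comes from the wall machinery rather than further $\Phi$-counting. One first applies $h^{-m+1}$ so that the orbit piece becomes the genuine trajectory $Q=\gamma_1(\psi')^{-1}(\gamma_2)\cdots(\psi')^{-m+1}(\gamma_m)$ (your $Q'$ in Equation~\ref{eqn: 813} is not yet of this form, so Proposition~\ref{prop: genuine trajectory} does not apply to it directly). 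Then the side-of-wall argument from the proof of Proposition~\ref{prop: genuine trajectory} shows the endpoint of $\wt Q$ stays strictly to the right of $\wt{(\psi')^{-m+1}}(\wt W_{i+1,L})$, while Lemma~\ref{lemma: left} and Corollary~\ref{corollary: phi of g is zero} (which you also need in order to control the correction path $R'$; its $\Phi$-triviality is not automatic and a priori could degrade with $m$) keep the initial point strictly to the left of $\wt{(\psi')^{-m}}(\wt W_{i,R})$. Since these walls are ordered by Lemma~\ref{lemma: all in a row}, the lift of the boundary loop cannot close up, contradicting contractibility. This trapping argument, uniform in $m\leq N$ and valid already for $k=2$, is the ingredient missing from your proposal.
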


Suppose $\gamma\subset \Sigma(S,\psi')$.  If $\gamma$ bounds a
finite energy plane $\tilde u$, then we can cut up the finite energy
plane to obtain $\mathcal{D}$ which satisfies Equation~\ref{eqn:
813}. If we apply $h^{-m+1}$ to Equation~\ref{eqn: 813}, then we
obtain:
\begin{equation}
\label{equation: concat with Q}
\Gamma:=h^{-m+1}(\pi(\overline{u}(\partial \mathcal{D})))= h^{-1}
(\zeta_1^{-1})\zeta_1 h^{-1} (\zeta_2^{-1}) \zeta_2\cdots h^{-1}
(\zeta_m^{-1})\zeta_m Q.
\end{equation}
Here $\zeta_i= h^{-m+1}(\xi_i)$ and
\begin{eqnarray*}
Q&=&\gamma_1 h^{-1} (\gamma_2 )\cdots h^{-m+1}(\gamma_m)\\
&=&\gamma_1 (\psi')^{-1}(\gamma_2)\dots (\psi')^{-m+1}(\gamma_m)
\end{eqnarray*}
is a concatenation of the type appearing in Equation~\ref{eqn: Q}.
The goal is to prove that $\Gamma$ is not contractible.

The key ingredient to proving the non-contractibility of $\Gamma$ is
is the Rademacher function $\Phi$ with respect to the stable
foliation $\F$. Let $(\theta,y)$ be coordinates on
$A_0=S^1\times[-1,0]$ so that $S^1\times\{0\}$ is identified with
$\bdry S$. Pick a retraction $\rho: S_0\rightarrow S$ which sends
$(\theta,y)\mapsto (\theta,0)$. If $\tau$ is an arc in $S_0$, then
we define $\Phi(\tau)=\Phi(\rho(\tau))$.

\begin{lemma}\label{lemma: estimate2}
Let $\eta$ be an arc on $S_0$. Then $\Phi (h_0^{-1} (\eta^{-1} )
\eta )=k-1$ or $k$.
\end{lemma}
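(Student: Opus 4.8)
The plan is to compute $\Phi(h_0^{-1}(\eta^{-1})\eta)$ by exploiting the invariance of $\Phi$ under $\psi$ (Proposition~\ref{prop: rademacher}(1)) together with the near-additivity formula (Proposition~\ref{prop: rademacher}(3)) and the boundary-loop formula (Proposition~\ref{prop: rademacher}(4)). First I would recall that $h_0$ is isotopic rel $\bdry S_0$ to a homeomorphism with $h_0|_S = \psi$, so on $S = \rho(S_0)$ the map $h_0$ induces $\psi$, and $\Phi \circ \psi = \Phi$. The point is that $h_0^{-1}(\eta^{-1})\eta$ is a loop based at $\eta(0) = \eta(1)^{-1}$'s image; more precisely, $\eta$ runs from $\eta(0)\in S_0$ to $\eta(1)$, and $h_0^{-1}(\eta^{-1})$ runs from $h_0^{-1}(\eta(1))$ back to $h_0^{-1}(\eta(0))$. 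Since $h_0$ fixes $\bdry S_0$ pointwise, if $\eta(0),\eta(1)\in\bdry S_0$ this is a genuine loop; in general we close it up by appending the relevant subarcs of $\bdry S_0$, which project under $\rho$ into $\bdry S$.

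The key step is to observe that $h_0^{-1}(\eta^{-1})\eta$, after the closing-up, is freely homotopic to a loop that wraps once around the boundary component $\bdry S$, with a correction that can be read off from the fractional Dehn twist coefficient $c = k/n$. Concretely, the difference between $\eta$ and $h_0(\eta)$ — equivalently between $h_0^{-1}(\eta^{-1})\eta$ and a trivial loop — is precisely the monodromy, whose effect on the boundary is the fractional Dehn twist by $c$. I would make this precise by passing to the universal cover $\wt S$: lift $\eta$ to $\wt\eta$ and $h_0^{-1}(\eta^{-1})$ to the appropriate lift, track the endpoints along $\bdry\wt S$, and count the prongs $\wt P_i$ crossed. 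The number of prongs swept out by the fractional Dehn twist of coefficient $k/n$ along a boundary circle with $n$ prongs is exactly $k$, and the definition of $\Phi$ in the singular-foliation picture counts (signed) intersections of $\wt{\S}_d$ with the arc minus one. This is what produces the value $k$ or $k-1$: the ``minus one'' ambiguity corresponds exactly to whether the relevant endpoint lands on $\wt\S$ or not, which is the same dichotomy that appears in Proposition~\ref{prop: rademacher}(4) (namely $\Phi(\gamma) = n_i$ versus $n_i - 1$ according to whether $\gamma(0)\in\S$).

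Assembling these: using Proposition~\ref{prop: rademacher}(1) to replace $h_0$ by $\psi$ on $S$, using $\Phi(\eta^{-1}) = -\Phi(\eta)$ and the near-additivity $\Phi(\gamma\gamma') = \Phi(\gamma)+\Phi(\gamma')+\varepsilon$ with $\varepsilon\in\{-1,0,1\}$, one reduces the computation to $\Phi$ of a single boundary-parallel loop carrying the fractional Dehn twist, and Proposition~\ref{prop: rademacher}(4) (suitably adapted to a $k$-fold rather than once-around wrap) gives the value $k$ or $k-1$. The main obstacle I anticipate is bookkeeping the endpoints and the closing-up arcs carefully enough to rule out the other possible values of $\varepsilon$ — i.e., showing that the two $\pm 1$ corrections from near-additivity do not accumulate beyond the single unit of slack, so that the answer is pinned down to exactly $\{k-1,k\}$ and not a wider interval. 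This should follow because the two arcs $h_0^{-1}(\eta^{-1})$ and $\eta$ meet $\bdry\wt S$ in a controlled, essentially opposite fashion (as in case (ii) of the proof of Proposition~\ref{prop: rademacher}(3), where the reverse-ordering situation forces $\varepsilon = 0$), leaving only the one genuine $\pm 1$ coming from whether the basepoint lies on a prong.
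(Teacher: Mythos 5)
Your overall strategy is the paper's: replace $h_0$ by $\psi$ at the cost of a boundary arc, pass to the universal cover, count crossings with the prongs $\wt{\S}_d$, and attribute the dichotomy $k$ versus $k-1$ to whether an endpoint lies on a prong. But the one step you flag as the "main obstacle" is exactly where your argument has a genuine gap, and the tool you propose for it cannot close it. Writing $h_0^{-1}(\eta^{-1})\eta \simeq \psi^{-1}(\eta^{-1})\,\delta\,\eta$ (with $\delta\subset\bdry S$ the arc created by the fractional Dehn twist, so $\Phi(\delta)=k-1$ generically) and then invoking Proposition~\ref{prop: rademacher}(2)--(3) gives $\Phi=\,-\Phi(\eta)+(k-1)+\Phi(\eta)+\varepsilon_1+\varepsilon_2$ with two independent concatenation errors $\varepsilon_i\in\{-1,0,1\}$, i.e.\ only $\Phi\in\{k-3,\dots,k+1\}$. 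Pinning the value to $\{k-1,k\}$ is the entire content of the lemma, and your appeal to "case (ii) of the proof of Proposition~\ref{prop: rademacher}(3)" is an unproved assertion: nothing in your setup guarantees that the junctions of $\eta$, $\delta$, and $\psi^{-1}(\eta^{-1})$ are in the reverse-ordering configuration that forces $\varepsilon=0$.

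What the paper does instead is a normalization that removes the concatenation errors altogether rather than estimating them. After perturbing so the endpoints of $\wt\delta$ avoid $\wt{\S}_d$, one isotops $\wt\eta$ (fixing one endpoint, constraining the other to the boundary component $d$) to a quasi-transversal arc disjoint from $\wt{\S}_d$; by the $\wt\psi$-invariance of $\wt{\S}_d$, the arc $\wt\psi^{-1}(\wt{\eta^{-1}})$ is then automatically disjoint from $\wt{\S}_d$ as well, so the contribution of $d$ to $\Phi$ comes from $\wt\delta$ alone and equals $k-1$, and with this normalized representative there is no concatenation error at all (the cancellation $\Phi(\psi^{-1}(\eta^{-1}))=-\Phi(\eta)$ then takes care of the remaining boundary components). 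The value $k$ arises only in the exceptional case where the terminal point of $\wt\eta$ lies on $\wt{\S}_d$, in which case the whole concatenation can be isotoped onto $d$ with endpoints constrained to $\wt{\S}_d$. So your sketch needs this normalization step (or an equivalent device) to be a proof; as written, the key inequality is asserted, not established.
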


When we compute $\Phi$ values, we often suppress $\rho$ and write
$\tau$ to mean $\rho(\tau)$.

\begin{proof}
First observe that the arc $h_0^{-1} (\eta^{-1} )\eta$ is isotopic,
relative to its endpoints, to the concatenation $\psi^{-1}
(\eta^{-1}) \delta \eta$, where $\delta$ is a subarc of $\partial
S$.

Next lift $\psi^{-1} (\eta^{-1}) \delta \eta$ to the universal cover
$\wt S$. We place a tilde to indicate a lift. Let $\wt\psi$ be an
appropriate lift of $\psi$ so that $\wt\psi^{-1} (\wt{\eta^{-1}})
\wt\delta \wt\eta$ is the chosen lift of $\psi^{-1} (\eta^{-1})
\delta \eta$. Let $d$ be the component of $\bdry\wt S$ which
contains $\wt\delta$. Recall that $\wt{\S}_d$ is the union of prongs
$\wt P_i$ that emanate from $d$. We orient each component $\wt P_i$
of $\wt{\S}_d$ so that it points into $\wt{S}$.

If necessary, we perturb the initial point of $\wt\eta$ (and hence
the terminal point of $\wt\psi^{-1}(\wt{\eta^{-1}})$) so that the
endpoints of $\wt\delta$ do not lie on $\wt{\S}_d$. In that case,
$\Phi (\delta)=k-1$, since $\Phi$ of an oriented arc on $d$ is the
oriented intersection number with $\wt{\S}_d$ minus $1$ when the
contribution is positive, and plus $1$ when it is negative. If the
terminal point of $\wt\eta$ lies on $\wt{\S}_d$, then the whole of
$\wt\psi^{-1} (\wt{\eta^{-1}}) \wt\delta \wt\eta$ can be isotoped
onto $d$ via an isotopy which constrains the endpoints to lie on
$\wt{\S_d}$.  In this case, $\Phi(h_0^{-1}(\eta^{-1})\eta)=k$.
Assume otherwise. Then we can isotop $\wt\eta$ while fixing one
endpoint and constraining the other to lie on $d$, so that $\wt\eta$
becomes disjoint from $\wt{\S}_d$. We may also assume that $\wt\eta$
is a quasi-transversal arc. By the $\wt\psi$-invariance of
$\wt{\S}_d$, it follows that $\wt\psi^{-1}(\wt{\eta^{-1}})$ is also
disjoint from $\wt{\S}_d$. Hence the contribution of $d$ towards
$\Phi(\psi^{-1} (\eta^{-1} )\delta \eta)$ is $k-1$.  Moreover, there
is no concatenation error if we use the $\wt\eta$ as normalized
above. Since $\Phi(\psi^{-1}(\eta^{-1}))=-\Phi(\eta)$ by
Proposition~\ref{prop: rademacher}, it follows that $\Phi (h_0^{-1}
(\eta^{-1}) \eta)=k-1.$
\end{proof}

Let $x$ be the initial point of $Q$. Then Equation~\ref{equation:
concat with Q} can be written as:
\begin{equation} \label{equation: concat with Q final version}
\Gamma = R' h_0^{-1} (\eta_1^{-1})\eta_1 h_0^{-1} (\eta_2^{-1})
\eta_2\dots h_0^{-1} (\eta_m^{-1})\eta_m Q,
\end{equation}
where $R'$ is the path $g_1 h_0^{-1}(g_2)\cdots h_0^{-m+1}(g_m)$
which joins $h^{-m}(x)$ to $h_0^{-m}(x)$, and
\begin{eqnarray*}
\eta_m&=&\zeta_m,\\
\eta_{m-1}&=&\zeta_{m-1}g_m,\\
\eta_{m-2}&=&\zeta_{m-2}g_{m-1}h_0^{-1}(g_m)\\
&\vdots &
\end{eqnarray*}
Here $g_i$ is a path from $h^{-1}(\zeta_i^{-1})(x)$ to
$h_0^{-1}(\zeta_i^{-1})(x)$ so that $h^{-1}(\zeta_i^{-1})=g_i
h_0^{-1}(\zeta_i^{-1})$.

In what follows, we pass to the universal cover $\wt{S_0}$ of $S_0$.
Choose a lift of $\pi(\overline{u}(\partial \mathcal{D}))$.  Let
$\wt{h_0^{-m}}$ be the lift of $h_0^{-m}$ which sends the terminal
point of the lift of $h_0^{-1} (\eta_1^{-1} )\eta_1 h_0^{-1}
(\eta_2^{-1}) \eta_2\dots h_0^{-1} (\eta_m^{-1} )\eta_m$ to its
initial point.  We may decompose it as
$$\wt{h_0^{-m}}=\wt{h_{0,m}^{-1}} \circ \wt{h_{0,m-1}^{-1}}
\circ \dots \circ \wt{h_{0,1}^{-1}},$$ where the $\wt{h_{0,i}^{-1}}$
is the lift of $h_0^{-1}$ which sends the terminal point of
$\wt\eta_i$ to the terminal point of $\wt \eta_{i-1}$. Also let
$\wt{h^{-m}} =\wt{h_m^{-1}} \circ \wt{h_{m-1}^{-1}} \circ \dots
\circ \wt{h_1^{-1}}$ be the lift of $h^{-m}$ which coincides with
$\wt{h_0^{-m}}$ near $\partial \wt{S_0}$, and let $\wt{(\psi')^{-m}}
=\wt{(\psi')_m^{-1}} \circ \dots \circ \wt{(\psi'_1)^{-1}}$ and
$\wt{\psi^{-m}} =\wt{\psi_m^{-1}} \circ \dots \circ
\wt{\psi_1^{-1}}$ be the restrictions of $\wt{h^{-m}}$ and
$\wt{h_0^{-m}}$ to $\wt{S}$. Now the arc $\wt R'$ is an arc which
joins $\wt{h^{-m}} (\wt\eta_m (1))$ to $\wt{h_0^{-m}} (\wt\eta_m (1)
)$ in $\wt{S_0}$.

Assume that $\psi(x_i)=x_i$, again for ease of indexing.

\begin{lemma}\label{lemma: left}
Suppose $\wt{\psi^{-m}}$ and $\wt{(\psi')^{-m}}$ are isotopic lifts
of $\psi^{-m}$ and $(\psi')^{-m}$, with $m\geq 1$. If $p\in \wt{S}$
and $\wt{\psi^{-m}} (p)$ is to the left of a lift $\wt{P_i}$ of
$P_i$, then $\wt{(\psi')^{-m}} (p)$ is strictly to the left of the
lift of $(\psi')^{-m} (W_{i,R})$ which starts near $\wt{P_i}$ on the
same component of $\bdry \widetilde S$.
\end{lemma}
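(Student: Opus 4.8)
The statement relates two isotopic lifts of $\psi^{-m}$ and $(\psi')^{-m}$, but since $\psi'$ agrees with $\psi$ away from a neighborhood of $N(\bdry S)\cup\psi(N(\bdry S))$, the only discrepancy occurs near the boundary. So my first move is to reduce from $\psi^{-m}$ to $(\psi')^{-m}$ step by step along the decompositions $\wt{\psi^{-m}}=\wt{\psi_m^{-1}}\circ\cdots\circ\wt{\psi_1^{-1}}$ and $\wt{(\psi')^{-m}}=\wt{(\psi')_m^{-1}}\circ\cdots\circ\wt{(\psi'_1)^{-1}}$, tracking the image of $p$ under successive applications. The plan is to prove by induction on $m$: if $\wt{\psi^{-j}}(p)$ is to the left of (some lift of) $P_i$, then $\wt{(\psi')^{-j}}(p)$ is strictly to the left of the corresponding lift of $(\psi')^{-j}(W_{i,R})$ which starts near $\wt{P_i}$ on the appropriate boundary component. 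The base case $j=0$ is essentially Property (2) of Proposition~\ref{prop: construction}, which says $W_{i,R}$ is strictly to the right of $P_i$ (so being left of $P_i$ forces being strictly left of $W_{i,R}$), together with the observation that $\psi'$ and $\psi$ both fix the relevant endpoint setup near $x_i$.

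For the inductive step I would use two facts in tandem. First, Lemma~\ref{lemma: all in a row} gives the chain of ``setwise to the left'' relations $\dots\leq(\psi')^{-1}(W_{i+k,R})\leq W_{i,R}\leq P_i$, i.e., applying $(\psi')^{-1}$ pushes $W_{i,R}$ (indexed appropriately via the fractional Dehn twist by $k$) to the left. Second, since $\psi'$ is close to $\psi$ outside the boundary collar and the behaviour near the boundary is precisely a rigid rotation by the fractional Dehn twist (sending $p_i(s)q_i(s)\mapsto p_{i+k}(s)q_{i+k}(s)$), the two flows $\wt{\psi^{-1}}$ and $\wt{(\psi')^{-1}}$ move points in a controlled way relative to the prongs: a point left of $\wt{P_i}$ is carried by $\wt{\psi^{-1}}$ to a point left of $\wt{P_{i-k}}$ (after the appropriate lift choice), while $\wt{(\psi')^{-1}}$ carries a point strictly left of $\wt{W_{i,R}}$ to a point strictly left of $\wt{(\psi')^{-1}(W_{i,R})}$, which by Lemma~\ref{lemma: all in a row} sits to the left of $\wt{W_{i-k,R}}$. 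Chaining this $m$ times and keeping the lifts consistent (they agree near $\bdry\wt{S_0}$, as specified in the setup preceding the lemma) yields the conclusion. The word ``strictly'' propagates because $W_{i,R}$ is strictly to the right of $P_i$ at the outset, giving a definite gap that the area-preserving (hence non-collapsing on the relevant transverse measures) maps cannot close up.

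The main obstacle I anticipate is bookkeeping of the lifts: one must verify that the particular lift $\wt{(\psi')^{-m}}$ isotopic to $\wt{\psi^{-m}}$ is the one for which the relevant lift of $(\psi')^{-m}(W_{i,R})$ starts ``near $\wt{P_i}$ on the same component of $\bdry\wt{S}$''. This is the kind of statement that is geometrically obvious from a picture but fiddly to pin down: I would handle it by fixing, as in the proof of Proposition~\ref{ideal trajectory}, a lift $\wt{\psi'}$ that fixes a chosen boundary component $L$ pointwise, noting that the corresponding lift of $\psi$ then also fixes $L$ pointwise (since $\psi$ and $\psi'$ agree on $\bdry S$), and checking that under these choices the endpoints of $\wt{P_i}$, $\wt{W_{i,R}}$, and their images under iteration all march along $L$ in the expected order dictated by the rotation number $k$. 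Once the lift normalization is fixed, the inductive ``push to the left'' argument from the previous paragraph goes through mechanically, and the ``strictly'' is preserved by Property (2) of Proposition~\ref{prop: construction} and the no-monogon argument already used in Lemma~\ref{lemma: all in a row}.
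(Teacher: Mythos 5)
Your plan contains a genuine error in the inductive step that breaks the chaining. You claim that $(\psi')^{-1}(W_{i,R})$ ``sits to the left of $W_{i-k,R}$'' by Lemma~\ref{lemma: all in a row}, but that lemma gives the \emph{opposite} inequality: from $W_{i-1,R}\leq(\psi')^{-1}(W_{i-1+k,R})$, substituting $i\mapsto i-k+1$ yields $W_{i-k,R}\leq(\psi')^{-1}(W_{i,R})$, so $(\psi')^{-1}(W_{i,R})$ is to the \emph{right} of $W_{i-k,R}$. Consequently, knowing that $\wt{(\psi')^{-1}}(p)$ is strictly to the left of $\wt{(\psi')^{-1}}(\wt W_{i,R})$ tells you nothing about its position relative to $\wt W_{i-k,R}$ --- the point could lie between the two curves, and the ``left of the wall'' property you are trying to propagate does not survive the step. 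Relatedly, your reasoning that ``strictly'' propagates because the maps are area-preserving is off the mark: the strictness comes from the disjointness $P_i\leq W_{i,R}$ in Proposition~\ref{prop: construction}(2), not from any measure-theoretic non-collapsing.

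Once you try to repair the induction, you will find it collapses to the paper's one-step argument, which is shorter and avoids Lemma~\ref{lemma: all in a row} entirely. Apply $\wt{\psi^m}$ to the hypothesis: $p$ is to the left of $\wt{\psi^m}(\wt P_i)$, which is again a lift of a prong because $\psi$ permutes prongs. By Proposition~\ref{prop: construction}(2) applied to that prong, $p$ is \emph{strictly} to the left of the lift $\wt W_{i,R}$ of $W_{i,R}$ that starts near $\wt{\psi^m}(\wt P_i)$. Now apply $\wt{(\psi')^{-m}}$ to both $p$ and $\wt W_{i,R}$. Since $\wt{(\psi')^{-m}}$ and $\wt{\psi^{-m}}$ are isotopic lifts, $\wt{(\psi')^{-m}}(\wt W_{i,R})$ is precisely the lift of $(\psi')^{-m}(W_{i,R})$ starting near $\wt{\psi^{-m}}(\wt{\psi^m}(\wt P_i))=\wt P_i$ on the same boundary component, and the conclusion follows. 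The only invariant you need to track is the position of $\wt{(\psi')^{-j}}(p)$ relative to $\wt{(\psi')^{-j}}(\wt W_{i,R})$, which is automatic under homeomorphisms; the comparison with the intermediate walls $\wt W_{i-jk,R}$ is neither available nor needed.
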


The same holds if we replace all occurrences of ``left'' by
``right''.

\begin{proof}
If $\wt{\psi^{-m}} (p)$ is to the left of a lift $\wt{P_i}$ of
$P_i$, then $p$ is to the left of $\wt{\psi^m} (\wt{P_i})$. Since
$\wt{\psi^m} (\wt{P_i} )$ is a prong, $p$ is strictly to the left of
the lift $\wt{W}_{i,R}$ of $W_{i,R}$ which starts near it (as
$\wt{\psi^m} (\wt{P_i} ) \leq \wt{W}_{i,R}$). Applying
$\wt{(\psi')^{-m}}$ to $p$ and $\wt{W}_{i,R}$, the lemma follows.
\end{proof}

\begin{cor} \label{corollary: phi of g is zero}
Suppose $\wt{\psi^{-m}}$ and $\wt{(\psi')^{-m}}$ are isotopic lifts
of $\psi^{-m}$ and $(\psi')^{-m}$, with $m\geq 1$. For any $p\in
\wt{S}$, the path $g_{-m,p}$ from $\wt{\psi^{-m}} (p)$ to
$\wt{(\psi')^{-m}} (p)$ satisfies $\Phi(g_{-m,p})=0$.
\end{cor}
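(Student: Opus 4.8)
The plan is to deduce the corollary directly from Lemma~\ref{lemma: left} and Lemma~\ref{lemma: all in a row} by showing that the two endpoints $q_0:=\wt{\psi^{-m}}(p)$ and $q_1:=\wt{(\psi')^{-m}}(p)$ of $g_{-m,p}$ lie in a \emph{common} complementary region of the lifted boundary prongs $\wt{\S}$ in $\wt{S}$. Once this is established, $g_{-m,p}$ (which is determined up to homotopy rel endpoints, since $\wt{S}$ is simply connected) has a quasi-transversal representative disjoint from every $\wt{\S}_d$, and then $\Phi(g_{-m,p})=0$ is immediate from the definition of $\Phi$ recalled in Section~\ref{section: Rademacher}.

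I would first dispose of the trivial situation. Since $\psi'$ agrees with $\psi$ away from a neighbourhood of $N(\bdry S)\cup\psi(N(\bdry S))$, the diffeomorphisms $\psi^{-m}$ and $(\psi')^{-m}$ agree away from a collar $N$ of $\bdry S$; accordingly the arc $g_{-m,p}$, being the difference of the two isotopic maps applied to $p$, may be taken to lie inside $\wt{N}:=\pi^{-1}(N)$, and it is homotopically trivial when $q_0\notin\wt{N}$. So assume $q_0$ lies in the portion of $\wt{N}$ near one component $d$ of $\bdry\wt{S}$; after an innocuous perturbation of $p$ assume also $q_0\notin\wt{\S}$. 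Let $\wt{P}_{i-1}$ and $\wt{P}_i$ be the two prongs emanating from $d$ between which $q_0$ sits, i.e.\ $q_0$ is to the right of $\wt{P}_{i-1}$ and to the left of $\wt{P}_i$ (the prongs being extended into $N(\bdry S)$ as in Section~\ref{section: construction}), and let $R_i$ be the simply connected region of $\wt{S}$ cut off by $\wt{P}_{i-1}$, $\wt{P}_i$, and the arc of $d$ joining their feet.

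Now I would invoke Lemma~\ref{lemma: left} twice. Its ``left'' form, applied with the prong $\wt{P}_i$ and the fact that $q_0$ lies to the left of $\wt{P}_i$, gives that $q_1$ lies strictly to the left of a lift of a certain $(\psi')^{-m}$-image of a wall which starts near the foot of $\wt{P}_i$ on $d$; its ``right'' form, applied with $\wt{P}_{i-1}$, gives that $q_1$ lies strictly to the right of a lift of a $(\psi')^{-m}$-image of a wall starting near the foot of $\wt{P}_{i-1}$ on $d$. By Lemma~\ref{lemma: all in a row}, both of these wall-lifts lie inside $R_i$ --- this is exactly what the chain of inequalities there records for the index pair adjacent to $q_0$ --- so $q_1\in R_i$ as well. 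Shrinking $N$ if necessary, $R_i\cap\wt{N}$ is a simply connected neighbourhood of the arc of $d$ between the feet of $\wt{P}_{i-1}$ and $\wt{P}_i$, and it is disjoint from all of $\wt{\S}$: within $\wt{N}$ the only boundary prongs near $d$ are $\wt{P}_{i-1}$ and $\wt{P}_i$, which bound $R_i$, while prongs emanating from the other components of $\bdry\wt{S}$ do not reach $\wt{N}$ once $N(\bdry S)$ was taken small. Hence $g_{-m,p}$ is homotopic rel endpoints to an arc contained in $R_i\cap\wt{N}$, and therefore to one disjoint from every $\wt{\S}_{d'}$; evaluating $\Phi$ on this representative yields $\Phi(g_{-m,p})=0$.

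The step I expect to be the main obstacle is the index and lift bookkeeping in the previous paragraph: identifying the two walls produced by Lemma~\ref{lemma: left} with the correct entries of the chain in Lemma~\ref{lemma: all in a row} --- the relevant indices being shifted by $mk$ because $\psi'$ rotates walls by $k$ (Proposition~\ref{prop: construction}(3)) --- and tracking which lift of each wall is the one ``starting near $\wt{P}_i$'', given that $\wt{\psi^{-m}}$ and $\wt{(\psi')^{-m}}$ are prescribed only up to being isotopic lifts, exactly as in the proof of Lemma~\ref{lemma: left}. The remaining ingredients --- confining $g_{-m,p}$ to a collar of $\bdry S$ and checking that the collar avoids the distant boundary prongs --- are routine once $N(\bdry S)$, and hence $N$, is chosen small enough at the outset.
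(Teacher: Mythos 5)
Your core computation goes wrong at the step you yourself flagged as the likely obstacle, and the preliminary reduction is also false. First, the reduction: since $\psi'=g_2\circ g_1\circ\psi$ with $g_2\circ g_1$ supported near $N(\bdry S)\cup\psi(N(\bdry S))$, the maps $(\psi')^{-m}$ and $\psi^{-m}$ can differ at any point whose backward orbit meets that support, i.e.\ on the union of the forward images $\psi^{j}$ (for $0\leq j\leq m-1$) of that neighborhood; for $m\geq 2$ these images stretch along the unstable direction and are \emph{not} contained in any collar of $\bdry S$. So $q_0=\wt{\psi^{-m}}(p)$ and $q_1=\wt{(\psi')^{-m}}(p)$ need not be near $\bdry\wt S$ or near each other, the arc $g_{-m,p}$ cannot be confined to $\wt N$, and the case $q_0\notin\wt N$ is not trivial. (Separately, in the foliation picture the separatrices are dense in $S$, so no collar of $d$ is disjoint from $\wt{\S}$; that disjointness only holds in the lamination picture.)

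Second, the wall bookkeeping. Lemma~\ref{lemma: left} applied at $\wt P_i$ produces the wall $\wt{(\psi')^{-m}}(\wt W_{i,R})$, and $W_{i,R}$ lies to the \emph{right} of $P_i$ by Proposition~\ref{prop: construction}(2); by Lemma~\ref{lemma: all in a row} (with indices shifted by one) this wall lies between $\wt P_i$ and $\wt P_{i+1}$, i.e.\ in the sector adjacent to your $R_i$, not inside $R_i$. Likewise the wall coming from the ``right'' version at $\wt P_{i-1}$ is $\wt{(\psi')^{-m}}(\wt W_{i-1,L})$, which lies between $\wt P_{i-2}$ and $\wt P_{i-1}$. (Lemma~\ref{lemma: left} cannot give the walls $W_{i,L}$, $W_{i-1,R}$ interior to $R_i$: its proof uses $\wt{\psi^m}(\wt P_i)\leq\wt W_{i,R}$, so the wall is necessarily on the far side of the prong from $q_0$.) Hence your two applications only yield that $q_1$ is strictly to the left of something $\leq\wt P_{i+1}$ and strictly to the right of something $\geq\wt P_{i-2}$; the claimed conclusion $q_1\in R_i$ does not follow, and the final step of homotoping $g_{-m,p}$ off all of $\wt{\S}$ collapses. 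What the two inequalities do show is that $q_0$ and $q_1$ are never separated by two \emph{consecutive} prongs of the same boundary component, and that weaker statement is exactly what $\Phi(g_{-m,p})=0$ requires, since a nonzero $\Phi$ forces such a separation; this is precisely how the paper argues (by contradiction, with one application of Lemma~\ref{lemma: left} together with $\wt{(\psi')^{-m}}(\wt W_{i,R})\leq\wt P_{i+1}$). So your strategy is repairable by discarding the collar reduction and the same-sector claim and arguing only against separation by consecutive prongs, but as written the proof contains genuine errors.
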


\begin{proof}
Suppose $\Phi(g_p)\not=0$. Then $g_{-m,p}$ needs to cross two
consecutive prongs $\wt{P}_i$ and $\wt{P}_{i+1}$ which emanate from
the same component of $\bdry \wt S$.  Suppose, without loss of
generality, that $\wt{\psi^{-m}} (p)$ is to the left of $\wt{P}_i$
and $\wt{(\psi')^{-m}} (p)$ is to the right of $\wt{P}_{i+1}$.  This
contradicts Lemma~\ref{lemma: left}, since $\wt{(\psi')^{-m}} (\wt
W_{i,R})\leq \wt{P}_{i+1}$.
\end{proof}

We now prove Proposition~\ref{prop: not contractible}.

\begin{proof}[Proof of Proposition~\ref{prop: not contractible}]
Suppose that $\gamma\subset\Sigma (S,\psi' )$.  By Lemma~\ref{lemma:
estimate2}, $\Phi (h_0^{-1}(\eta_j^{-1})\eta_j)\geq k-1$ for all
$j$. By (3) of Proposition~\ref{prop: rademacher}, we deduce that
$$\Phi (h_0^{-1} (\eta_1^{-1} )\eta_1 h_0^{-1} (\eta_2^{-1})\eta_2
\dots h_0^{-1} (\eta_m^{-1} )\eta_m )\geq m(k-1)-(m-1).$$ Since
$k\geq 2$, the right-hand side of the inequality is $\geq 1$. Hence
we know that there exist consecutive lifts $\wt P_i$ and $\wt
P_{i+1}$ starting on the same component $d$ of $\bdry \wt S$, so
that the initial point and the terminal point of a lift of the arc
$h_0^{-1} (\eta_1^{-1} )\eta_1 \dots h_0^{-1} (\eta_m^{-1}) \eta_m$
are respectively strictly to the left of $\wt P_i$ and strictly to
the right of $\wt P_{i+1}$.

As we saw in the proof of Proposition~\ref{prop: genuine
trajectory}, the endpoint of $\wt{Q}$ ($=$ the endpoint of the lift
$\widetilde\Gamma$ of $\Gamma$) is then strictly to the right of
$\wt{(\psi')^{-m+1}}(\wt W_{i+1,L})$, which starts on $d$ between
$\wt P_i$ and $\wt P_{i+1}$, provided $0<m\leq N$.  Now, by
Lemma~\ref{lemma: left}, the initial point of $\wt R'$ ($=$ the
initial point of $\widetilde\Gamma$) is strictly to the left of
$\wt{(\psi')^{-m}} (\wt W_{i,R} )$, which starts on $d$ between $\wt
P_i$ and $\wt P_{i+1}$. Since $\wt{(\psi')^{-m}} (\wt W_{i,R} ) \leq
\wt{(\psi')^{-m+1}} (\wt W_{i+1,L} )$, it follows that $\Gamma$ is
not contractible, which is a contradiction.

Next suppose that $\gamma$ lies in $M-\Sigma(S,\psi')$. In this
case, we retract $\Gamma$ using $\rho:S_0\rightarrow S$; this time
the endpoints of $\eta_i$ are moved to $\bdry S$.  The rest of the
argument is the same. This concludes the proof of
Proposition~\ref{prop: not contractible}.
\end{proof}

\s\n {\bf Case when $\gamma$ covers $\gamma_0$.} Finally consider
the case when $\gamma$ covers $\gamma_0$.  Let $N(\gamma_0)$ be a
small tubular neighborhood of $\gamma_0$ so that $(\Pi\circ
\overline u_*)(q) \not = 0$ for all $q$ with $\overline u(q)\in
N(\gamma_0)$. Also, by Lemma~\ref{lemma: perturbation}, we may take
$\bdry N(\gamma_0)$ to be foliated by Reeb orbits of irrational
slope $c$, where $\bdry N(\gamma_0)$ is identified with $\R^2/\Z^2$
so that the meridian has slope $0$ and $\bdry S_0'$ has slope
$\infty$. Consider $M-N(\gamma_0)=S_0'\times[0,1]/\sim$ as before.
As $\overline{u}|_{int(D^2)}$ is transverse to the Reeb vector field
$R$ away from finitely many branch points, it follows that the
component $\delta$ of $\overline{u}(D^2)\cap N(\gamma_0)$, parallel
to and oriented in the same direction as $\gamma$ in
$\overline{u}(D^2)$, would have slope ${1\over m_0}$ which satisfies
$m_0< {1\over c}$. On the other hand, all the other components of
$\overline{u}(D^2)\cap \bdry N(\gamma_0)$
--- those that bound
meridian disks in $N(\gamma_0)$ and are oriented as
$\bdry(\overline{u}(D^2)\cap (M-N(\gamma_0)))$ --- intersect $S_0'$
negatively. Since the oriented intersection number of
$\bdry(\overline{u}(D^2)\cap (M-N(\gamma_0)))$ with $S_0'$ is zero,
it follows that $m_0\geq 0$. Now, $m_0=0$ is impossible, since
$\delta$ could then be homotoped to $\partial S_0$ and
$\overline{u}(D^2)$ to a disk in $S_0$.  If $m_0>0$, then we can
apply the analysis of this section with a slightly smaller disk
whose boundary maps to $\delta$.  This time, $\Phi(Q)$ will
contribute positively, making $\Phi(\pi(\overline{u}(\partial
\mathcal{D}))$ more positive. This concludes the proof of
Theorem~\ref{theorem: nodisks}.

\section{Holomorphic cylinders} \label{cylinders}
In this section we give restrictions on holomorphic cylinders
between closed Reeb orbits.  We say that there is a holomorphic
cylinder {\em from $\gamma$ to $\gamma'$} if there is a holomorphic
cylinder in the symplectization $\R\times M$ whose asymptotic limit
at the positive end is $\gamma$ and whose asymptotic limit at the
negative end is $\gamma'$.

%Assume that $\bdry S$ is connected. The same discussion applies when
%$S$ has several boundary components, each of them having fractional
%Dehn twist coefficient $c_i\geq\frac{3}{n_i}$.  Write $c={k\over
%n}$, where $n$ is the number of prongs.

Let $\mathcal{P}_{\varepsilon,\varepsilon'}$ be the set of good
orbits of $R_{\varepsilon,\varepsilon'}$. A periodic orbit $\gamma$
which is an $m_0$-fold cover of the binding $\gamma_0$ will be
written as $m_0\gamma_0$. Let $\mathcal{P}^{>
0}_{\varepsilon,\varepsilon'}$ be the set of good orbits which are
not $m_0\gamma_0$ for any $m_0$. In other words, they nontrivially
intersect the pages of the open book.  We now define the {\em open
book filtration} $F:\mathcal{P}^{>
0}_{\varepsilon,\varepsilon'}\rightarrow \N$, which maps $\gamma$ to
the number of intersections with a given page. (This filtration was
pointed out to the authors by Denis Auroux.) Denote by $\gamma_b$
any periodic orbit in $\mathcal{P}^{> 0}_{\varepsilon,\varepsilon'}$
such that $F(\gamma_b)=b$.  The following lemma shows that the
boundary map is filtration nonincreasing.

\begin{lemma}\label{lemma: filtration restriction}
There are no holomorphic cylinders from $\gamma_{b}$ to
$\gamma_{b'}$ if $b<b'$.
\end{lemma}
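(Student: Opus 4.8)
The plan is to exploit the open book filtration together with the positivity of intersections between a holomorphic curve and the pages of the open book, viewed as (pieces of) pseudoholomorphic hypersurfaces in the symplectization. First I would recall that the binding $\gamma_0$ is a closed Reeb orbit, so a given page $S_0'\times\{t\}$, capped off along $\gamma_0$, defines a $2$-cycle in $M$; its preimage $\R\times (S_0'\times\{t\})$ in the symplectization $\R\times M$ is a $J$-holomorphic hypersurface precisely because $J$ is adapted to the contact form and $R$ is positively transverse to the interior of each page while tangent to $\gamma_0$ (so $\R\times S_t$ is $\R$-invariant and $J$-invariant). The number $F(\gamma_b)$ is exactly the algebraic, hence (after the nondegeneracy set-up of Lemma~\ref{lemma: perturbation}) geometric intersection number of $\gamma_b$ with such a page.

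The key steps, in order, would be: (1) fix a regular page $S_t$ so that $\R\times S_t$ is a $J$-holomorphic divisor in $\R\times M$ disjoint from the critical locus; (2) given a holomorphic cylinder $\tilde u=(a,u):\R\times S^1\to \R\times M$ from $\gamma_b$ to $\gamma_{b'}$, perturb $t$ if necessary so that $u$ is transverse to $S_t$ and the asymptotic ends meet $S_t$ transversally in exactly $F(\gamma_b)=b$ and $F(\gamma_{b'})=b'$ points respectively (this uses that near each puncture $u$ is $C^0$-close to a cover of a Reeb orbit crossing the page a fixed number of times); (3) invoke positivity of intersection for $J$-holomorphic curves in a $4$-manifold against a $J$-holomorphic curve (here $\R\times S_t$): every intersection point of $\tilde u$ with $\R\times S_t$ contributes a strictly positive local intersection number; (4) observe that $[\tilde u]\cdot[\R\times S_t]$ is computed either way by counting boundary intersections of the truncated curve with the slice $\{a=\text{const}\}\times S_t$ at the two ends, which by Stokes/homology gives $b-b'$ on the one hand and a sum of positive local contributions $\geq 0$ on the other. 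Hence $b-b'\geq 0$, i.e. there is no cylinder when $b<b'$. One must be slightly careful that the ``intersection number'' is finite: this follows from the asymptotic description of holomorphic curves near nondegenerate orbits (the curve enters the neighborhood $\R/\Z\times D^2_\delta$ of each orbit and winds a bounded number of times), so only finitely many intersections with any compact piece of $\R\times S_t$ occur, and the asymptotic ends contribute exactly $b$ and $b'$.

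The main obstacle I expect is step (2)–(4) done cleanly: one needs to verify that the relative homology class pairing $[\,\tilde u\,]\cdot[\R\times S_t]$ is well defined and equals $F(\gamma_b)-F(\gamma_{b'})$, which requires controlling the curve at the punctures (so that the intersection count ``at infinity'' is literally the winding number, equivalently the page-crossing number of the asymptotic orbit) — this is where the Hofer--Wysocki--Zehnder asymptotics and the specific form of $R_{\varepsilon,\varepsilon'}$ near $\gamma_0$ from Lemma~\ref{lemma: perturbation} are used. A subtlety is that a cylinder asymptotic to a cover $m_0\gamma_0$ of the binding at one end is excluded from the filtration domain $\mathcal{P}^{>0}_{\varepsilon,\varepsilon'}$; so the statement implicitly restricts to $\gamma_b,\gamma_{b'}\in\mathcal{P}^{>0}_{\varepsilon,\varepsilon'}$, and I would note that if the cylinder had an end on $m_0\gamma_0$ one argues separately (or such ends simply do not arise between two orbits of positive filtration, since $\gamma_0$ is isolated and the page count is still monotone). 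Modulo that bookkeeping, the argument is the standard ``holomorphic curves cross a holomorphic page positively, so the page count can only decrease along the boundary map'' principle.
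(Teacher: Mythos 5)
There is a genuine gap: the central claim of your plan is that $\R\times(S_0'\times\{t\})$ is a $J$-holomorphic hypersurface, and it is not. For $J$ adapted to the symplectization, $J(\partial_t)=R$; since $R$ is positively \emph{transverse} to the interior of the page $S_t$, the vector $R$ does \emph{not} lie in $T(\R\times S_t)$, so $\R\times S_t$ fails to be $J$-invariant. (You also refer to $\R\times S_t$ as a ``$J$-holomorphic curve'' in a $4$-manifold, but it is $3$-dimensional, so step (3) does not make sense dimensionally.) Positivity of intersection therefore cannot be invoked against $\R\times S_t$. There is a further problem with step (4): near the positive puncture, $\tilde u$ converges to the trivial cylinder over $\gamma_b$, which meets $\R\times S_t$ along $b$ copies of $\R$, so $\tilde u\cdot(\R\times S_t)$ is not a finite count to begin with.

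The correct $J$-holomorphic object to pair against is the trivial cylinder $\R\times\gamma_0$ over the binding, which \emph{is} $J$-invariant precisely because $R$ is tangent to $\gamma_0$. This is what the paper does: by positivity of intersection, $\tilde u$ meets $\R\times\gamma_0$ nonnegatively; after removing a small $N(\gamma_0)$, one gets a map $\overline u\colon P\to M-N(\gamma_0)$ whose boundary consists of $\gamma_b$, $-\gamma_{b'}$, and the meridian circles coming from the (positive) intersections with $\gamma_0$. The preimage $\overline u^{-1}(S_0')$ is a collection of properly embedded arcs, and an endpoint count shows $b=b'+\#(\text{meridian circles})\ge b'$. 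Your overall intuition --- ``the page count can only decrease along the boundary map'' --- is right, and the asymptotics/perturbation bookkeeping you flag is indeed used in the paper, but the mechanism is positivity against the binding cylinder $\R\times\gamma_0$, not against $\R\times S_t$; you would need to rebuild steps (1)--(4) around that.
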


\begin{proof}
The holomorphic cylinders intersect the binding positively.  (We may
need to perturb the holomorphic cylinder to also make it intersect
the binding transversely.) If there is a holomorphic cylinder
$\tilde u$ from $\gamma_b$ to $\gamma_{b'}$, then there is a map
$\overline{u}: P\rightarrow M-N(\gamma_0)$, obtained by a cutting-up
process given in Section~\ref{subsection: cutting up holom
cylinder}.  By examining the intersection number of $\bdry
\overline{u}(P)$ with $S_0'\times\{0\}$, we see that $b\geq b'$.
\end{proof}

The main theorem of this section is the following:

\begin{theorem}\label{theorem: cylinder}
Suppose $c_i\geq {3\over n_i}$ for each boundary component $(\bdry
S)_i$. Given $N\gg 0$, for sufficiently small $\varepsilon,
\varepsilon'>0$, there are no holomorphic cylinders from $\gamma$ to
$\gamma'$ if $\int_{\gamma}\alpha_{\varepsilon,\varepsilon'},
\int_{\gamma'}\alpha_{\varepsilon,\varepsilon'}\leq N$, and one of
the following holds: \be\item $\gamma=\gamma_{b}$,
$\gamma'=\gamma_{b'}$, and $b\not=b'$;
\item $\gamma=\gamma_b$ and $\gamma'=m_0\gamma_0$;
\item $\gamma=m_0\gamma_0$ and $\gamma'=\gamma_b$;
\item $\gamma=m_0\gamma_0$, $\gamma'=m_1\gamma_0$, and $m_0\not=m_1$.
\ee
\end{theorem}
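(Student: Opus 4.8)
The plan is to mimic the proof of Theorem~\ref{theorem: nodisks}. Given a holomorphic cylinder $\tilde u=(a,u)$ from $\gamma$ to $\gamma'$ with $\int_\gamma\alpha_{\varepsilon,\varepsilon'},\int_{\gamma'}\alpha_{\varepsilon,\varepsilon'}\leq N$, I would first perturb $u$ to be positively transverse to the binding $\gamma_0$, excise a small tubular neighborhood $N(\gamma_0)$ to get a planar surface $P$ with a map $\overline u\colon P\to M-N(\gamma_0)\simeq(S_0'\times[0,1])/{\sim}$, cut $P$ along $\overline u^{-1}(S_0'\times\{0\})$ (discarding the innermost closed components, which bound disks that can be pushed across the page) to obtain a polygon $\mathcal{D}$, and note that $\pi(\overline u(\partial\mathcal{D}))$ is then a \emph{contractible} loop in $S_0'$, since it bounds the disk $\pi\circ\overline u(\mathcal{D})$. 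The contradiction will come from computing the Rademacher function $\Phi$ of this boundary word. Two reductions are immediate: in case (1) with $b<b'$ there is nothing to prove by Lemma~\ref{lemma: filtration restriction}, leaving the subcase $b>b'$; and in case (4), a cylinder contained entirely in $N(\gamma_0)$ would give a homotopy inside a solid torus between $m_0$ and $m_1$ copies of its core, forcing $m_0=m_1$. So in every case we may assume $u$ meets $M-N(\gamma_0)$, and in cases (2)--(4) the end of $\tilde u$ limiting onto $m_i\gamma_0$ contributes, exactly as in the ``$\gamma$ covers $\gamma_0$'' case at the end of Section~\ref{disks}, a boundary-parallel component $\delta$ of $\overline u\cap\partial N(\gamma_0)$ of slope $1/m_i$ with $0<m_i<1/c$ on the torus $\partial N(\gamma_0)$ (foliated by Reeb orbits of slope $c$ after the perturbation of Lemma~\ref{lemma: perturbation}).

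Next I would analyze the cut-up boundary word. Following Section~\ref{subsection: cutting up}, $\pi(\overline u(\partial\mathcal{D}))$ breaks into consecutive blocks of three kinds: arcs $h^{-1}(\xi_j^{-1})\xi_j$, segments of the projected orbit $\gamma$ (or, in cases (2)--(4), of the boundary-parallel loop $\delta$), and segments of $(\gamma')^{-1}$ (or $(\delta')^{-1}$). After applying the appropriate power of $h_0$ and passing to $\widetilde{S_0}$ as in Section~\ref{subsection: noncontractible}, Lemma~\ref{lemma: estimate2} gives $\Phi\bigl(h_0^{-1}(\eta_j^{-1})\eta_j\bigr)\in\{k-1,k\}$ for each block, the $\gamma$-segments reassemble into a genuine trajectory $Q_\gamma$ with $\Phi(Q_\gamma)=0$ by Proposition~\ref{prop: genuine trajectory} (here one uses that $\varepsilon,\varepsilon'$ are taken small relative to $N$), the $(\gamma')^{-1}$-segments into $Q_{\gamma'}^{-1}$ with $\Phi=0$, and the loops $\delta,\delta'$ contribute to $\Phi$ with a definite sign dictated by $0<m_i<1/c$, exactly as in the binding case of Section~\ref{disks}. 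The number of blocks $h^{-1}(\xi_j^{-1})\xi_j$ produced on the $\gamma$-side equals $F(\gamma)=b$ (on the $m_0\gamma_0$-side it is instead controlled by $m_0$), and on the $\gamma'$-side it equals $F(\gamma')=b'$; since these counts are unequal in every case of the theorem ($b>b'$ in case (1); $b\geq 1>0$ in cases (2) and (3); $m_0\neq m_1$ in case (4)), the two sides cannot cancel.

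To convert this imbalance into a contradiction I would argue positionally, exactly in the style of Propositions~\ref{prop: genuine trajectory} and~\ref{prop: not contractible} and Lemma~\ref{lemma: left}. Writing $\Phi$ of the $\gamma$-side as a sum of $b$ terms $\geq k-1$, the error terms of Proposition~\ref{prop: rademacher}(3), and $\Phi(Q_\gamma)=0$, positivity of this quantity forces a lift of $\pi(\overline u(\partial\mathcal{D}))$ to cross two consecutive prongs $\widetilde P_i,\widetilde P_{i+1}$ emanating from one boundary component $d$ of $\bdry\widetilde{S_0}$, left to right; chasing the walls via Lemma~\ref{lemma: all in a row} pins the $\gamma$-side endpoint strictly to the right of $\widetilde P_{i+1}$. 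Running the same estimate on the $(\gamma')^{-1}$-side, whose $\Phi$ is bounded above because it carries only $b'<b$ blocks, pins the other endpoint strictly to the left of $\widetilde P_i$. These two conclusions contradict contractibility of $\pi(\overline u(\partial\mathcal{D}))$ --- provided the $b$ blocks of the $\gamma$-side leave one unit of $\Phi$ to spare after subtracting the $b'$ blocks of the other side and all concatenation errors, which is exactly why one needs $k\geq 3$ rather than the $k\geq 2$ of Theorem~\ref{theorem: nodisks}. In cases (2)--(4) the loops $\delta,(\delta')^{-1}$ replace (part of) $\gamma,(\gamma')^{-1}$ and, by the sign coming from $0<m_i<1/c$, only reinforce the imbalance, so the same argument applies verbatim.

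The step I expect to be the main obstacle is precisely the bookkeeping just sketched: unlike the disk, a cylinder has two ends, each of which may spiral into $\gamma_0$, and the cut produces meridian circles as well as the boundary-parallel loops $\delta$, so one must keep careful track of which $h^{-1}(\xi_j^{-1})\xi_j$-blocks attach to which orbit and verify that the $(\gamma')^{-1}$-side, together with every concatenation error of Proposition~\ref{prop: rademacher}(3), consumes at most the one unit of $\Phi$ that $k\geq 3$ provides over $k\geq 2$. The auxiliary slope estimate $0<m_i<1/c$ on $\partial N(\gamma_0)$ in cases (2)--(4) is the other point that needs care; the rest is a transcription of the machinery already built in Sections~\ref{section: construction} and~\ref{disks}.
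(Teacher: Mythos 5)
Your overall strategy --- cutting the cylinder along its intersections with a page and with $\bdry N(\gamma_0)$, projecting to $S_0'$, and estimating the Rademacher function of the resulting boundary word via Lemma~\ref{lemma: estimate2}, Proposition~\ref{prop: genuine trajectory} and a slope constraint at the binding --- is the same as the paper's. But the quantitative heart of the argument has a genuine gap. First, the combinatorics of the cut-open boundary is not what you describe: after removing $N(\gamma_0)$, the surface $P$ has one boundary circle on $\gamma_b$, one on $\gamma_{b'}$, and exactly $b-b'$ circles on $\bdry N(\gamma_0)$; cutting along the $b-b'$ arcs joining the latter to $\gamma_b$, together with a single arc $\tau$ from $\gamma_{b'}$ to $\gamma_b$, produces one disk whose projected boundary word is $h^{-1}(\xi_1^{-1})\xi_1\cdots h^{-1}(\xi_{b-b'}^{-1})\xi_{b-b'}\,\Gamma\, h^{-b'}(\tau^{-1})(\Gamma')^{-1}\tau$. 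So there are exactly $b-b'$ blocks of type $h^{-1}(\xi^{-1})\xi$, not $b$ of them ``attached to $\gamma$'' and $b'$ ``attached to $\gamma'$'', and the positivity does not come from an imbalance between two families of blocks (the $\gamma$- and $\gamma'$-parts each contribute $\approx 0$ by Proposition~\ref{prop: genuine trajectory}); it comes from the $b-b'$ blocks alone. Second, and more seriously, your account of where $k\geq 3$ enters (``one unit of $\Phi$ to spare'') does not close the argument: Lemma~\ref{lemma: estimate2} and Proposition~\ref{prop: rademacher}(3) give only $\Phi(\Gamma_1)\geq (b-b')(k-1)-(b-b')+1=(b-b')(k-2)+1$, i.e.\ a net gain of $k-2$ per block, while the remaining pieces $R'$, $h^{-b+1}(\Gamma)$, $h^{-b'}(\kappa^{-1})$, $(h^{-b+1}(\Gamma'))^{-1}$, $\kappa$ contribute $\approx 0$ but introduce a bounded number of additional concatenation errors of size $1$. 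For $k=3$ and small $b-b'$ (say $b-b'=1$), your spare unit is swamped by these errors. The paper's fix --- absent from your proposal --- is to pass to a large multiple cover of the cylinder, replacing $\gamma,\gamma'$ by $K\gamma,K\gamma'$, so that $b-b'$ becomes arbitrarily large and $(b-b')(k-2)+1$ dominates; this is precisely where $k\geq 3$, rather than the $k\geq 2$ of Theorem~\ref{theorem: nodisks}, is used. Without this step (or a fully carried-out positional wall argument replacing it, which you assert but do not execute), the proof is incomplete; indeed you flag this bookkeeping yourself as the unresolved ``main obstacle.''

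A smaller point: at the negative end the slope constraint is reversed. If $\gamma'=m_0\gamma_0$, positive transversality to the Reeb foliation of $\bdry N(\gamma_0)$ forces the retracted curve to have slope $m_0/m_1$ with $m_1/m_0>1/c$, not $<1/c$ as at a positive end; its contribution is still favorable only because $\Gamma'$ enters the boundary word with reversed orientation. Your uniform claim $0<m_i<1/c$ for both ends is therefore incorrect as stated, even though the conclusion you draw from it agrees with the paper's.
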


The rest of this section is devoted to proving Theorem~\ref{theorem:
cylinder}.  The basic idea is exactly the same as the proof of
Theorem~\ref{theorem: nodisks}.

\subsection{Cutting up the holomorphic cylinder} \label{subsection:
cutting up holom cylinder}

Suppose that $\gamma =\gamma_b$ and $\gamma' =\gamma_{b'}$. By
Lemma~\ref{lemma: filtration restriction}, $b<b'$ is not possible,
so assume that $b>b'$.

Suppose there is a holomorphic cylinder $\tilde u=(a,u): S^1\times
\R \rightarrow \R\times M$ from $\gamma$ to $\gamma'$. Again, with
the aid of the asymptotics from \cite{HWZ1}, we view $u$ as a smooth
map $\overline u: S^1\times [0,1]\rightarrow M$ where:
\begin{itemize}
\item $\overline u(S^1\times\{1\})=\gamma$ and $\overline
u(S^1\times\{0\})=\gamma'$.  Moreover, the orientation on
$u(S^1\times\{1\})$ induced from $S^1\times[0,1]$ agrees
with that of $\gamma$ and the orientation on $u(S^1\times\{0\})$
induced from $S^1\times[0,1]$ is opposite that of $\gamma'$.
\item $\overline{u}|_{int(S^1\times[0,1])}$ is immersed away from
a finite number of points in $int(S^1\times[0,1])$.
\item At points where $\overline{u}$ is immersed, $\overline{u}$
is positively transverse to $R$.
\item $\overline{u}(z)\not\in \operatorname{Im}(\gamma)\cup
\operatorname{Im}(\gamma')$ for $z\in S^1\times([0,r_0]\cup
[1-r_0,1])$, where $r_0$ is a small positive number.
\end{itemize}
As before, perturb $\overline{u}$ so that $\overline{u}$ is still
positively transverse to $R$ away from $\bdry (S^1\times [0,1])$ and
a finite set $F$ in $int(S^1\times[0,1])$, points in $F$ are complex
branch points, and $(\Pi\circ \overline{u}_*)(z)\not=0$ for all $z$
with $\overline{u}(z)$ in a sufficiently small neighborhood
$N(\gamma_0)$ of $\gamma_0$.  Let $P$ be the planar subsurface of
$S^1\times[0,1]$ obtained by excising $z\in S^1\times[0,1]$ such
that $\overline{u}(z)\in int (N(\gamma_0))$. We write $\bdry P =
\bdry_0 P +\bdry_1 P + \bdry_2 P$, where $\bdry_0 P$ maps to
$\gamma$, $\bdry_1 P$ maps to $-\gamma'$, and the components of
$\bdry_2 P$ map to $\bdry N(\gamma_0)$.

We now consider the intersection of $S_0'=S_0'\times\{0\}$ and
$\overline{u}(P)$, which we may assume to be a transverse
intersection.  Then the set of points of $P$ which map to $S_0'$
under $\overline{u}$ are properly embedded arcs and embedded closed
curves.  By the positivity of intersection of $\bdry_0 P$, $-\bdry_1
P$, and each component of $-\bdry_2 P$ with $S_0'$, we find that the
embedded closed curves bound disks in $P$, hence can be isotoped
away as before.

Therefore, the holomorphic cylinder $\tilde u$ from $\gamma_b$ to
$\gamma_{b'}$ gives rise to a map $\overline{u}:P\rightarrow M-
N(\gamma_0 )$, where $P$ is a planar surface and $N(\gamma_0)$ a
small tubular neighborhood of the binding $\gamma_0$, such that:
\begin{itemize}
\item $\overline{u}$ is an immersion away from a finite number
of points;
\item $\overline{u}(\partial P )=\gamma_b \cup \gamma_{b'}^{-1}
\cup \overline{u}(\partial_2 P)$, where $\overline{u}(\partial_2 P)
\subset \partial N(\gamma_0)$ and consists of $b-b'$ closed curves
which are parallel to and oriented in the opposite direction from
the boundary of the meridian disk of $N(\gamma_0)$ which intersects
$\gamma_0$ positively;
\item $\overline{u}(P) \cap (S_0'\times \{0\})$ consists of $b$
properly embedded arcs. Here $b'$ arcs begin on $\gamma_{b'}$,
$b-b'$ begin on $\partial_2 P$, and all end on $\gamma_b$.
\end{itemize}
The arcs of $\overline{u}(P) \cap (S_0'\times \{ 0\})$ cut $P$ into
$b'$ disks. See Figure~\ref{bdrymap}.

\begin{figure}[ht]
\begin{overpic}[height=2.7in]{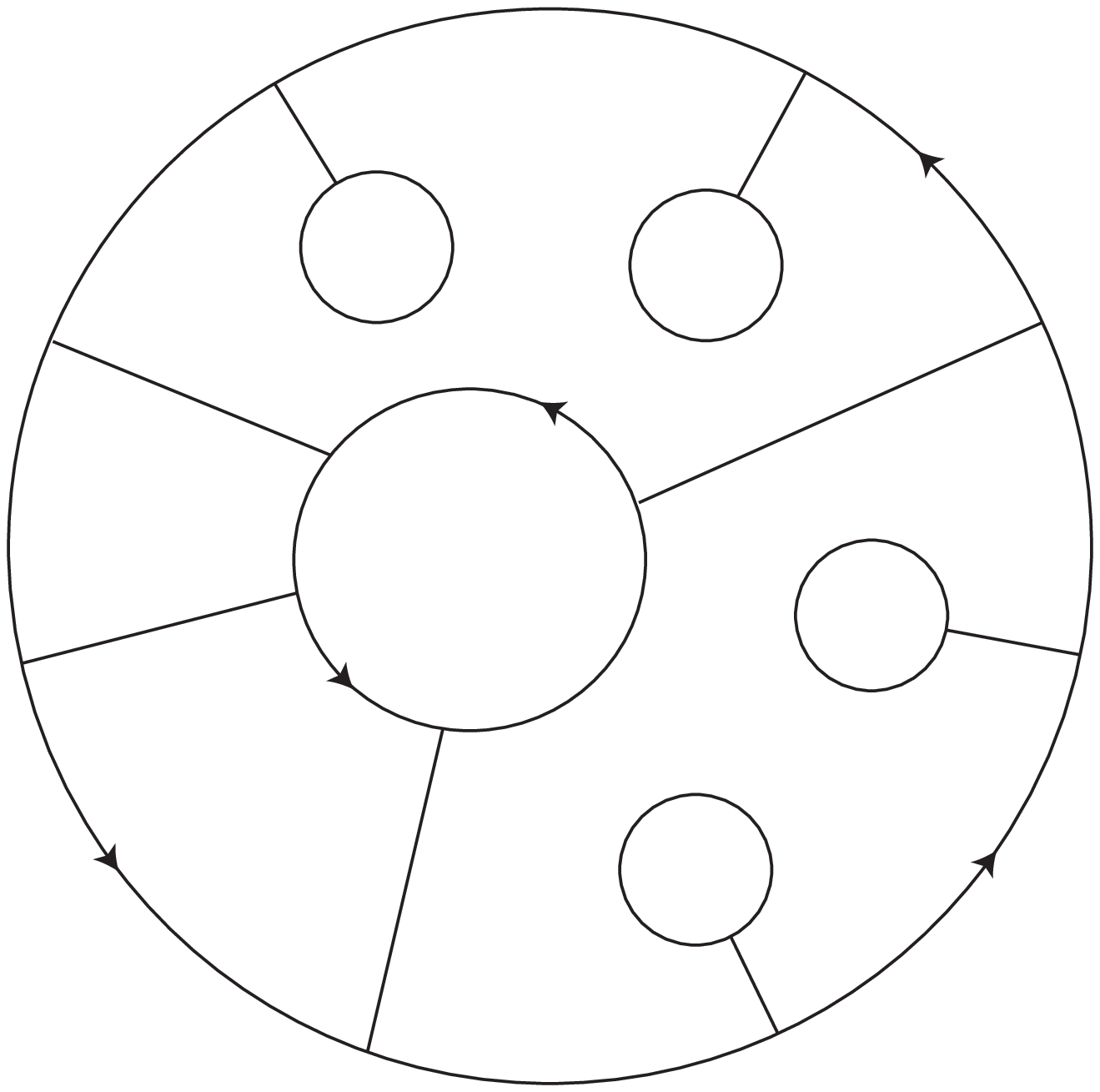}
\put(13.2,9.2){\tiny$\gamma$} % \put(13,38.33){\tiny$\alpha_8$}
%\put(13,59.8){\tiny$\alpha_7$}
\put(22.5,85){\tiny$\alpha_4$}
\put(38.7,19){\tiny$\tau$} \put(70,9.5){\tiny$\alpha_1$}
\put(57.2,38){\tiny$\gamma'$} \put(65,88){\tiny$\alpha_3$}
\put(90.15,43.1){\tiny$\alpha_2$} \put(84.5,19.7){\tiny$\gamma$}
%\put(74,57.7){\tiny$\alpha_4$}
\end{overpic}
\caption{} \label{bdrymap}
\end{figure}

We select one arc amongst the $b'$ arcs beginning on $\gamma_{b'}$,
and let $\tau$ be the image of the arc under $\overline{u}$.
Consider the disk $\mathcal{D}$ obtained by cutting $P$ along (the
arc that maps to) $\tau$ as well as along the $b-b'$ arcs from
$\overline{u}(\partial_1 P)$ to $\gamma_b$. Also denote the images
of the $b-b'$ arcs from $\bdry_2 P$ to $\gamma_b$ under
$\overline{u}$ by $\alpha_1,\dots,\alpha_{b-b'}$, in
counterclockwise order along $\gamma_b$.  Here, $\alpha_1$ is the
first arc reached from $\tau$, traveling counterclockwise along
$\gamma_b$. The disk $\overline{u}(\mathcal{D} )$ can be thought of
as living in $S_0'\times [0,b']$.  Consider the projection
$\pi:S_0'\times[0,b']\rightarrow S_0'$ onto the first factor.

We consider the curve $\pi (\overline{u} (\partial \mathcal{D}))$
and obtain a contradiction by showing that it is not contractible in
$S_0'$, in a manner analogous to Theorem~\ref{theorem: nodisks}. Let
$\Gamma$ (resp.\ $\Gamma'$) be the subarc of
$\pi(\overline{u}(\partial \mathcal{D}))$ which maps to
$\pi(\gamma_b) $ (resp.\ $\pi(\gamma_{b'})$) in $S_0'$. (By
$\pi(\gamma_b)$ we mean the projection of an appropriate lift of
$\gamma_b$ to $S_0'\times[0,b']$.)

\subsection{Completion of Proof of Theorem~\ref{theorem: cylinder}}
We now complete the proof of Theorem~\ref{theorem: cylinder}.
Suppose $k\geq 3$.

\s\n (1) Suppose $\gamma=\gamma_b$, $\gamma'=\gamma_{b'}$, and
$\gamma,\gamma'\subset \Sigma(S,\psi')$.  The case when at least one
of $\gamma,\gamma'\not\subset \Sigma(S,\psi')$ is similar.  As in
the proof of Theorem~\ref{theorem: nodisks}, we can write:
\begin{equation}
\pi(\overline{u} (\partial \mathcal{D} ))= h^{-1} (\xi_1^{-1} )\xi_1
h^{-1} (\xi_2^{-1} )\xi_2\cdots h^{-1} (\xi_{b-b'}^{-1})\xi_{b-b'}
\Gamma h^{-b'} (\tau^{-1}) (\Gamma')^{-1} \tau.
\end{equation}
Here we are writing $\tau$ for $\pi(\tau)$, $\Gamma=
h^{b-1}(\gamma_1)\cdots \gamma_b$ and
$\Gamma'=h^{b'-1}(\gamma_1')\cdots\gamma'_{b'}$.

Next, we apply $h^{-b+1}$ and rewrite our equation as:
\begin{equation}
h^{-b+1}(\pi(\overline{u}(\partial \mathcal{D}))) = R' \Gamma_1
(h^{-b+1}(\Gamma)) h^{-b'}(\kappa^{-1})(h^{-b+1}(\Gamma')^{-1})
\kappa,
\end{equation}
where
\begin{eqnarray*}
\Gamma_1 &= & h_0^{-1} (\eta_1^{-1})\eta_1 h_0^{-1} (\eta_2^{-1})
\eta_2\cdots h_0^{-1} (\eta_{b-b'}^{-1})\eta_{b-b'},
\end{eqnarray*}
and $R'$ is of the type which appears in Equation~\ref{equation:
concat with Q final version}.

We will apply the retraction $\rho:S_0\rightarrow S$ if necessary,
without further mention, and work on $S$. By taking a sufficiently
large cover of the holomorphic cylinder from $\gamma$ to $\gamma'$
(and replacing $\gamma$ and $\gamma'$ by $K\gamma$ and $K\gamma'$),
we may assume that $b-b'$ is sufficiently large. Hence,
$$\Phi(\Gamma_1)\geq
(b-b')(k-1)-(b-b')+1\gg 0.$$

Next we note that $\Phi(R')=0$ by Corollary~\ref{corollary: phi of g
is zero}.  Also, by Proposition~\ref{prop: genuine trajectory},
$\Phi((\psi')^{-b+1}(\Gamma))=0$, since
$(\psi')^{-b+1}(\Gamma)=\gamma_1\cdots h^{-b+1}(\gamma_{b})$.
Although $(\psi')^{-b+1}(\Gamma')=h^{b'-b}(\gamma'_1)\cdots
h^{-b+1}(\gamma'_{b'})$ is not quite a concatenation of type $Q$,
the same proof shows that $\Phi((\psi')^{-b+1}(\Gamma')))=0$.
Finally, $\Phi(\kappa)=-\Phi(\psi^{-b'}(\kappa^{-1}))$, and the
difference between $\psi^{-b'}(\kappa^{-1})$ and
$(\psi')^{-b'}(\kappa^{-1})$ is two arcs of the type $R'$.  Since
these two arcs of type $R'$ have $\Phi=0$, we have
$$\Phi(h^{-b+1}(\pi(\overline{u}(\partial \mathcal{D}))))
\approx \Phi(\Gamma_1)\gg 0.$$
This is a contradiction.

\s\n (2) Suppose $\gamma=\gamma_b$ and $\gamma'=m_0\gamma_0$. As in
Section~\ref{disks}, let $N(\gamma_0)$ be a small tubular
neighborhood of $\gamma_0$ so that $(\Pi\circ \overline
u_*)(q)\not=0$ for all $q$ with $\overline u(q)\in N(\gamma_0)$. We
retract the cylinder so that $\gamma$ is fixed but $\gamma'$ now is
on $\bdry N(\gamma_0)$. Since $\bdry N(\gamma_0)$ is foliated by
Reeb orbits of irrational slope $c$ and we require that the cylinder
be positively transverse to the Reeb vector field, it follows that
$\gamma'$ has slope ${m_0\over m_1}$ which satisfies ${m_1\over
m_0}>{1\over c}$. Therefore, $\Phi(\Gamma')$ contributes negatively,
and hence $\Phi((\Gamma')^{-1})$ contributes positively, which in
our favor.

\s\n (3) Suppose $\gamma=m_0\gamma_0$ and $\gamma'=\gamma_b$.
Consider $N(\gamma_0)$ as in (2). This time, we retract the cylinder
so that $\gamma$ is on $\bdry N(\gamma_0)$ and $\gamma'$ is fixed.
Then $\gamma$ has slope ${m_0\over m_1}$ which satisfies ${m_1\over
m_0}<{1\over c}$, and $\Phi(\Gamma)$ contributes more positively,
which is in our favor.

\s\n (4) This just combines the observations made in (2) and (3).

\s\n This completes the proof of Theorem~\ref{theorem: cylinder}.

\section{Direct limits} \label{section: direct limits}

\subsection{Direct limits in contact homology}
Let $\alpha$ and $\alpha'$ be contact $1$-forms for the same contact
structure $\xi$, with nondegenerate Reeb vector fields $R_\alpha,
R_{\alpha'}$. Denote by $\A_{\leq L}(\alpha)$ the supercommutative
$\Q$-algebra with unit generated by $\mathcal{P}^{\leq L}_\alpha$,
the set of good orbits of $R_\alpha$ with action $\int \alpha\leq
L$. The boundary map $\bdry$ is the restriction of
$\bdry:\A(\alpha)\rightarrow \A(\alpha)$ to $\A_{\leq
L}(\alpha)\subset \A(\alpha)$.

Write $\alpha'=f_0(x)\alpha$, where $f_0(x)$ is a positive function.
If $K$ is a constant satisfying $K> \sup_{x\in M} f_0(x)$, then a
sufficient condition for the existence of a chain map
$$\Phi_{\alpha\alpha'}:\A_{\leq L}(\alpha)\rightarrow \A_{\leq
L'}(\alpha')$$ is that $L'> KL$, as will be explained in the next
paragraph.

Consider $\R\times M$ with coordinates $(t,x)$.  We define a
function $f(t,x)$ for which (i) ${\bdry f\over \bdry t}>0$, (ii)
$f(t,x)\rightarrow K$ as $t\rightarrow +\infty$, (iii)
$f(t,x)\rightarrow f_0(x)$ as $t\rightarrow -\infty$, (iv) $f(t,x)$
does not depend on $x$ for $t$ large positive, and (v)
$f(t,x)=g(t)f_0(x)$ for $t$ large negative (this means that $g(t)$
is a function which approaches $1$ as $t\rightarrow -\infty$ and has
small positive derivative ${dg\over dt}$).  Then define the
symplectization $d(f(t,x)\alpha)$. Let $J$ be an almost complex
structure which is adapted to the symplectization at both ends. Take
the collections $\mathcal{P}_\alpha$, $\mathcal{P}_{\alpha'}$ of the
good orbits for $\alpha$ and $\alpha'$, respectively.  Let
$\mathcal{M}_{[Z]}(J,\gamma,\gamma'_1,\dots,\gamma'_m)$ be the
moduli space of $J$-holomorphic rational curves with (asymptotically
marked) punctures which limit to $\gamma\in \mathcal{P}_\alpha$ at
the positive end and to
$\gamma'_1,\dots,\gamma_m'\in\mathcal{P}_{\alpha'}$ at the negative
end. Then define
$$\Phi_{\alpha\alpha'}(\gamma)=\sum {n_{\gamma,\gamma'_1,\dots,\gamma_m'}
\over (i_1)!\dots (i_l)! \kappa(\gamma'_1)\dots
\kappa(\gamma'_m)}\gamma'_1\dots\gamma_m',$$ where the sum is over
all unordered tuples
$\overline{\gamma'}=(\gamma_1',\dots,\gamma_m')$ and homology
classes $[Z]\in H_2(M,\gamma\cup\gamma')$ so that the moduli space
$\mathcal{M}_{[Z]}(J,\gamma,\gamma'_1,\dots,\gamma'_m)$ is
$0$-dimensional. Here $n_{\gamma,\gamma'_1,\dots,\gamma'_m}$ is a
signed count of points in
$\mathcal{M}_{[Z]}(J,\gamma,\gamma'_1,\dots,\gamma'_m)$,
$\kappa(\gamma)$ is the multiplicity of $\gamma$, and
$i_1,\dots,i_l$ denote the number of occurrences of each orbit
$\gamma_i'$ in the list $\gamma_1',\dots,\gamma_m'$. By Stokes'
theorem applied to $d(f(t,x)\alpha)$, we see that if there is a
holomorphic curve from $\gamma$ to $\gamma'_1,\dots,\gamma'_m$, then
$K\int_\gamma\alpha \geq \sum_{i=1}^m\int_{\gamma'_i} f_0\alpha$.
Hence if $L'> KL$, then $\Phi_{\alpha\alpha'}$ is well-defined.
Moreover, $\Phi_{\alpha\alpha'}$ is a chain map, as can easily be
seen by analyzing the breaking of $1$-dimensional moduli spaces
$\mathcal{M}_{[Z]}(J,\gamma,\gamma'_1,\dots,\gamma'_m)$. There is an
induced map on the full contact homology:
$$\Phi_{\alpha\alpha'}:FHC_{\leq L}(M,\alpha)\rightarrow FHC_{\leq L'}(M,\alpha').$$
In this paper we will use the same notation $\Phi_{\alpha\alpha'}$
for the map on the chain level and map on the level of homology; it
should be clear from the context which we are referring to.

We now discuss direct limits. Fix a nondegenerate contact $1$-form
$\alpha$ for $(M,\xi)$. Let $\{\alpha_i=f_i\alpha\}$, $i=1,2,\dots$,
be a collection of contact $1$-forms and let $M_i=\sup \{f_i(x),
{1\over f_i(x)}~|~x\in M\}$. We say that the sequence $(\alpha_i,
L_i)$ is {\em exhaustive} if there is a sequence $L_i\rightarrow
\infty$ such that $$L_{i+1} > C M_i M_{i+1}L_i,$$ where $C>1$ is a
constant.

\begin{prop} \label{prop: invariance of direct limit}
Suppose the sequence $(\alpha_i,L_i)$ is exhaustive. Then the direct
limit $$\displaystyle\lim_{i\rightarrow \infty} FHC_{\leq
L_i}(\alpha_i)$$ exists.  Moreover,
$$\Phi:FHC(\alpha)\stackrel\sim\rightarrow \displaystyle\lim_{i\rightarrow
\infty} FHC_{\leq L_i}(\alpha_i).$$
\end{prop}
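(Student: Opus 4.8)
The plan is to realize $\{FHC_{\leq L_i}(\alpha_i)\}_{i}$ and $\{FHC_{\leq L}(\alpha)\}_{L}$ as cofinal subfamilies of one directed system of continuation maps, so that both have the same colimit, namely $FHC(\alpha)$.

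First I would build the directed system underlying the left-hand side. Writing $\alpha_{i+1}=(f_{i+1}/f_i)\alpha_i$, the conformal ratio $f_{i+1}/f_i$ is bounded above by $M_iM_{i+1}$, and exhaustiveness gives $L_{i+1}>CM_iM_{i+1}L_i$, in particular $L_{i+1}>M_iM_{i+1}L_i$; so the construction of the preceding subsection produces a chain map $\Phi_{\alpha_i\alpha_{i+1}}\colon\A_{\leq L_i}(\alpha_i)\to\A_{\leq L_{i+1}}(\alpha_{i+1})$ and, on homology, a sequence $FHC_{\leq L_1}(\alpha_1)\to FHC_{\leq L_2}(\alpha_2)\to\cdots$. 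A sequence of maps indexed by $\N$ always has a direct limit, which gives the existence statement. To identify this limit with $FHC(\alpha)$, I would enlarge the picture: let $I$ be the set of pairs $(\beta,L)$ with $\beta\in\{\alpha,\alpha_1,\alpha_2,\dots\}$ and $L>0$, partially ordered by $(\beta,L)\preceq(\beta',L')$ whenever the continuation chain map $\Phi_{(\beta,L)(\beta',L')}\colon\A_{\leq L}(\beta)\to\A_{\leq L'}(\beta')$ of the preceding subsection is defined (i.e. $L'$ large enough relative to $L$ and the conformal factor). Then $I$ is directed --- any two pairs are dominated by some $(\alpha,L)$ with $L$ large --- and the key structural input is that the continuation maps compose up to chain homotopy: gluing the exact cobordism from $\beta$ to $\beta'$ onto the one from $\beta'$ to $\beta''$ and deforming the interpolating data to the direct cobordism from $\beta$ to $\beta''$ gives a chain homotopy between $\Phi_{(\beta',L')(\beta'',L'')}\circ\Phi_{(\beta,L)(\beta',L')}$ and $\Phi_{(\beta,L)(\beta'',L'')}$. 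Granting this, $(\beta,L)\mapsto FHC_{\leq L}(\beta)$ is a functor on $I$ with values in $\Q$-vector spaces.

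It then remains to check cofinality. On the one hand $FHC(\alpha)=\lim_{L\to\infty}FHC_{\leq L}(\alpha)$, since every chain of $\A(\alpha)$ and its boundary involve only finitely many generators and so lie in some $\A_{\leq L}(\alpha)$, and homology commutes with such filtered colimits (the structure maps being those induced by the inclusions $\A_{\leq L}(\alpha)\hookrightarrow\A_{\leq L'}(\alpha)$ of subcomplexes, which agree on homology with the continuation maps). Thus $\{(\alpha,L)\}_L$ is a cofinal chain in $I$ with colimit $FHC(\alpha)$. On the other hand $\{(\alpha_i,L_i)\}_i$ is also cofinal in $I$: any $(\alpha_i,L_i)$ is dominated by $(\alpha,L)$ once $L$ is large; and conversely, iterating $L_{i+1}>CM_iM_{i+1}L_i$ and using $M_j\ge 1$ gives $L_{i+1}/M_{i+1}>C\,(L_i/M_i)$, whence $L_i/M_i\ge C^{\,i-1}(L_1/M_1)\to\infty$, so for any fixed $(\alpha,L)$ we get $L_i>M_iL\ge(\sup_M f_i)\,L$ for $i$ large, i.e. $(\alpha,L)\preceq(\alpha_i,L_i)$. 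Since a cofinal subsystem of a directed system has the same colimit, this yields
$$FHC(\alpha)=\lim_{L\to\infty}FHC_{\leq L}(\alpha)\;\cong\;\lim_{(\beta,L)\in I}FHC_{\leq L}(\beta)\;\cong\;\lim_{i\to\infty}FHC_{\leq L_i}(\alpha_i),$$
and by construction the composite isomorphism is the map $\Phi$ assembled from the continuation maps $\Phi_{\alpha\alpha_i}$.

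The hard part is not this homological bookkeeping but the analytic foundations it rests on: well-definedness of the continuation chain maps $\Phi_{(\beta,L)(\beta',L')}$ (transversality and compactness for the moduli spaces of punctured holomorphic curves in the symplectic cobordisms, including multiply covered curves), and above all the composition-up-to-homotopy property, which needs a gluing theorem for broken configurations in a two-level cobordism. These are precisely the portions of contact homology / symplectic field theory flagged in the Disclosure above; modulo them, the only non-formal elementary step is the estimate $L_i/M_i\ge C^{\,i-1}(L_1/M_1)\to\infty$ that makes the $(\alpha_i,L_i)$ cofinal.
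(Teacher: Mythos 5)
Your proposal is correct and is essentially the paper's own argument: the same Stokes/action estimate makes the truncated continuation maps well defined under the exhaustiveness condition, the same chain-homotopy compatibilities are invoked, and your cofinality check via $L_i/M_i\to\infty$ is exactly the interleaving $L_iM_i<N_i<L_{i'}/M_{i'}$ that the paper uses to produce $\Phi$ and its two-sided inverse. The only difference is packaging: the paper builds $\Phi$ from the universal property and constructs explicit right and left inverses, rather than appealing to the fact that cofinal subsystems of your directed system $I$ have the same colimit.
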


This implies that the direct limit calculates the full contact
homology of $(M,\xi)$, and is independent of the particular choice
of nondegenerate contact $1$-form.

\begin{proof}
Suppose $(\alpha_i,L_i)$ is exhaustive. Then the chain maps
$\Phi_{\alpha_i\alpha_{i+1}}:\A(\alpha_i)\rightarrow
\A(\alpha_{i+1})$ restrict to
$$\Phi_{\alpha_i\alpha_{i+1}}:\A_{\leq L_i}(\alpha_i)\rightarrow \A_{\leq
L_{i+1}}(\alpha_{i+1}),$$ since $L_{i+1} > (\sup_{x\in M}
{f_{i+1}(x)\over f_i(x)})\cdot L_i$ by the exhaustive condition.
Hence the direct limit exists.

Next we show that for any $N\gg 0$ there exists a pair
$(\alpha_i,L_i)$ so that $$\Phi_{\alpha\alpha_i}: \A_{\leq
N}(\alpha)\rightarrow \A_{\leq L_i}(\alpha_i),$$ obtained by
counting rigid marked rational holomorphic curves, is a chain map.
Since ${L_i\over M_i}> L_{i-1}$ and $L_{i-1}\rightarrow \infty$ as
$i\rightarrow \infty$, there is a symplectization from $\alpha$ to
$\alpha_i$ so that $\gamma$ with $A_\alpha(\gamma)\leq N$ is mapped
to $\overline{\gamma'}$ with $A_{\alpha_i}(\overline{\gamma'})\leq
L_i$. In fact, we simply need ${L_i\over M_i}>N$.

%Now, since $\A_{\leq L_i}(\alpha_i)$ is cylindrical, it follows that
%$\A_{\leq N}(\alpha)$ is also cylindrical by Lemma~\ref{lemma:
%transfer of cylindrical condition}. This implies that $\A(\alpha)$
%is cylindrical.

Now, $\Phi_{\alpha_i\alpha_{i+1}}\circ\Phi_{\alpha\alpha_i}$ and
$\Phi_{\alpha\alpha_{i+1}}$ are chain homotopic by the usual
argument, so the collection of maps $\Phi_{\alpha\alpha_i}$ induces
the map $$\Phi_{\leq N}: FHC_{\leq N}(\alpha) \rightarrow
\displaystyle\lim_{i\rightarrow \infty} FHC_{\leq L_i}(\alpha_i)$$
on the level of homology. Now, it is easy to see that
$FHC(\alpha)=\lim_{i\rightarrow \infty} FHC_{\leq N_i}(\alpha)$,
provided $N_i\rightarrow\infty$ (and is increasing).  By the usual
chain homotopy argument, $\Phi_{\leq N_i}$ is equal to the
composition of $FHC_{\leq N_i}(\alpha)\rightarrow FHC_{\leq
N_{i+1}}(\alpha)$ followed $\Phi_{\leq N_{i+1}}$. Hence, by the
universal property of direct limits, we obtain the map
$$\Phi: FHC(\alpha) \rightarrow \displaystyle\lim_{i\rightarrow
\infty} FHC_{\leq L_i}(\alpha_i).$$

Finally, to prove that $\Phi$ is an isomorphism, we use the usual
chain homotopy argument.  Given $(\alpha_i,L_i)$, there exist $N_i$
and $i'$ so that $L_i M_i< N_i <{L_{i'}\over M_{i'}}$.  Hence we
have maps $\A_{\leq L_i}(\alpha_i)\rightarrow \A_{\leq N_i}(\alpha)$
and $\A_{\leq N_i}(\alpha)\rightarrow \A_{\leq
L_{i'}}(\alpha_{i'})$, and their composition is chain homotopic to
$$\Phi_{\alpha_i \alpha_{i'}}: \A_{\leq L_i}(\alpha_i)\rightarrow
\A_{\leq L_{i'}}(\alpha_{i'}).$$ Therefore, the composition $$
\displaystyle\lim_{i\rightarrow \infty} FHC_{\leq
L_i}(\alpha_i)\stackrel{\Psi}\rightarrow FHC(\alpha)
\stackrel\Phi\rightarrow \displaystyle\lim_{i\rightarrow \infty}
FHC_{\leq L_i}(\alpha_i)$$ is equal to the identity map.  This gives
a right inverse of $\Phi$; the left inverse is argued similarly.
\end{proof}

\subsection{Verification of the exhaustive condition}
\label{subsection: verification of exhaustive}

The goal of this subsection is to show the existence of an
exhaustive sequence $(\alpha_i, L_i)$, where the $\alpha_i$ are all
adapted to the same open book $(S,h)$ with pseudo-Anosov monodromy
and fractional Dehn twist coefficient $c\geq {2\over n}$, so that
the direct limit process can be applied. Let $C_{\leq
L_i}(\alpha_i)$ be the $\Q$-vector space generated by
$\mathcal{P}^{\leq L_i}_{\alpha_i}$.

Let $\alpha_{\varepsilon,\varepsilon'}$ be the contact $1$-form
defined in Section~\ref{section: construction}.  In what follows, we
perturb $\alpha_{\varepsilon,\varepsilon'}$ with respect to a
suitable large constant $N\gg 0$, as in Lemma~\ref{lemma:
perturbation}.  For simplicity of notation, we will still call the
perturbed $1$-form $\alpha_{\varepsilon,\varepsilon'}$.

\begin{prop} \label{prop: direct limit}
Given a sequence $L_i$, $i=1,2,\dots$, going to $\infty$, there is a
sequence of contact $1$-forms
$\alpha_{\varepsilon_i,\varepsilon_i'}$, $i\in \N$, with
$\varepsilon_i$, $\varepsilon_i'\rightarrow 0$, so that:
\begin{enumerate}
\item The chain groups $C_{\leq L_i}(\alpha_{\varepsilon_i,
\varepsilon_i'} )$ are cylindrical.

\item There exists an isotopy $(\varphi^i_s )_{s\in [0,1]}$ of $M$
so that $(\varphi_1^i )^*\alpha_{\varepsilon_i ,\varepsilon_i'} =G_i
\alpha_{\varepsilon_0 ,\varepsilon_0'}$ and ${1\over 4^i}\leq
G_i\leq 4^i$.
\end{enumerate}
\end{prop}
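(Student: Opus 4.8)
\textbf{Proof proposal for Proposition~\ref{prop: direct limit}.}

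The plan is to construct the sequence $\alpha_{\varepsilon_i,\varepsilon_i'}$ by choosing the parameters $\varepsilon_i,\varepsilon_i'$ to decrease along the sequence, while keeping the underlying open book $(S,h)$ and the structural recipe of Section~\ref{section: construction} fixed. Part (1) should follow immediately from the results already established: given any $L_i$, we first apply Corollary~\ref{cor: cylindrical} (which uses $c\ge 2/n$, i.e. $k\ge 2$) to obtain $\varepsilon_i,\varepsilon_i'>0$ small enough that no closed orbit of $R_{\varepsilon_i,\varepsilon_i'}$ of action $\le L_i$ bounds a finite energy plane; this guarantees $\bdry a$ has no constant term for all generators $a$ of action $\le L_i$, so the trivial augmentation exists and $C_{\le L_i}(\alpha_{\varepsilon_i,\varepsilon_i'})$ is cylindrical. (One also invokes Lemma~\ref{lemma: perturbation} to make the form nondegenerate below action $N\gg L_i$ before taking the perturbed form.) Passing to a further subsequence of $(\varepsilon_i,\varepsilon_i')$ costs nothing since we are free to shrink.

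For part (2), the key point is that all the $\alpha_{\varepsilon,\varepsilon'}$ for fixed $\varepsilon'$ and varying $\varepsilon$ are built from the \emph{same} vector field direction: by Lemma~\ref{interpolation}(5) and the construction in Section~\ref{subsub: N} and Section~\ref{subsub: extension to binding}, the direction of $R_{\varepsilon,\varepsilon'}$ does not depend on $\varepsilon$, and the contact structure $\xi_{\varepsilon,\varepsilon'} = \ker\alpha_{\varepsilon,\varepsilon'}$ varies by a path of contactomorphic structures. More precisely, I would first fix $\varepsilon_0,\varepsilon_0'$ and then argue: (i) for fixed $\varepsilon'$, the family $\{\alpha_{\varepsilon,\varepsilon'}\}_{\varepsilon\in(0,\varepsilon_0]}$ is a path of contact forms on $S\times[0,1]$ (of the form $dt+\varepsilon\beta_t$, glued to the explicit binding model $a_\varepsilon(r)dz+b_\varepsilon(r)d\theta$ with coefficients depending smoothly on $\varepsilon$) all defining isotopic contact structures, by Gray stability; (ii) the $\varepsilon'$-dependence is likewise a smooth family (changing $\beta$ via $f_{\varepsilon'}$), again yielding isotopic contact structures by Gray stability, since for all $\varepsilon'\in(0,\varepsilon_0']$ the form $f_{\varepsilon'}\beta$ is a primitive of an area form, so $d\beta_t>0$ persists. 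Composing the two Gray isotopies gives an ambient isotopy $\varphi^i_s$ with $(\varphi^i_1)^*\alpha_{\varepsilon_i,\varepsilon_i'} = G_i\,\alpha_{\varepsilon_0,\varepsilon_0'}$ for some positive function $G_i$.

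It then remains to control the size of $G_i$. Here I would exploit the freedom in the construction: the $\varepsilon_i,\varepsilon_i'$ only need to be chosen \emph{some} amount smaller than $\varepsilon_0,\varepsilon_0'$ in order that the cylindricality of part (1) holds up to action $L_i$. The function $G_i$ measures the ratio of the two contact forms after pulling back; since $\alpha_{\varepsilon,\varepsilon'} = dt+\varepsilon\beta_t$ on the mapping torus part and $\varepsilon\beta_t$ is uniformly $C^0$-small as $\varepsilon\to 0$, while the binding model coefficients $a_\varepsilon,b_\varepsilon$ vary smoothly and are bounded above and below away from $0$ uniformly on $[0,1]$, one gets an estimate $G_i = G_i(\varepsilon_i,\varepsilon_i',\varepsilon_0,\varepsilon_0')$ which is continuous in the parameters and equals $1$ when $(\varepsilon_i,\varepsilon_i') = (\varepsilon_0,\varepsilon_0')$. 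The main obstacle — and the step I would be most careful about — is making the bound quantitative as $\tfrac14^i \le G_i \le 4^i$: a priori shrinking $\varepsilon$ does \emph{not} keep $G_i$ near $1$ because the Gray isotopy $\varphi^i_s$ can wander, and the pullback can distort the form substantially. The resolution I would pursue is to \emph{decouple} the two requirements: (a) fix a single ambient isotopy realizing the Gray stability, estimate its effect on the contact form to get $G_i$ within an explicit bound depending only on how far $(\varepsilon_i,\varepsilon_i')$ lies from $(\varepsilon_0,\varepsilon_0')$ along the chosen path, using compactness of $M$ and smoothness of the family to get a Lipschitz-type bound $\log G_i = O(|\varepsilon_0-\varepsilon_i| + |\varepsilon_0'-\varepsilon_i'|)$; and (b) observe that the cylindricality constraint from part (1) forces $\varepsilon_i,\varepsilon_i'$ only to be \emph{small}, and does not forbid them from being close to each other for consecutive $i$ — so by choosing the sequence $(\varepsilon_i,\varepsilon_i')$ to decrease slowly enough (inserting intermediate terms if necessary, which is harmless), the accumulated distortion from $\varepsilon_0$ stays within the exponential budget $[4^{-i},4^i]$. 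Since the bound needed grows exponentially in $i$ while one can always arrange $|\varepsilon_0 - \varepsilon_i|$ to grow at most linearly (indeed to be bounded by $\varepsilon_0$), the estimate is comfortably satisfied; the exponential slack in the statement is exactly what makes this robust. I would write this last step out carefully, as it is the only place where the specific numerology $4^i$ enters, and it is the genuine content of part (2) beyond soft Gray-stability arguments.
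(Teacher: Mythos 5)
Your overall architecture matches the paper's: part (1) from Theorem~\ref{theorem: nodisks}/Corollary~\ref{cor: cylindrical} together with Lemma~\ref{lemma: perturbation}, and part (2) from a Moser/Gray-type isotopy whose conformal factor must be controlled. The genuine gap is in the control of $G_i$. You assert a Lipschitz-type bound $\log G_i = O(|\varepsilon_0-\varepsilon_i|+|\varepsilon_0'-\varepsilon_i'|)$ ``by compactness of $M$ and smoothness of the family,'' but the family $\alpha_{\varepsilon,\varepsilon'}$ degenerates as $\varepsilon'\to 0$: the vector field $Y_{\varepsilon'}$ determined by $i_{Y}\omega_t=-\dot\beta_t$ blows up where $\omega_t$ has size of order $\varepsilon'$, and the contact condition forces $\varepsilon$ to be small relative to thresholds that depend on $\varepsilon'$. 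Hence no uniform Lipschitz constant comes from soft arguments; what must be bounded is exactly the Moser factor $G_s$ obtained by evaluating $\dot\alpha_s+i_{X_s}d\alpha_s=G_s\alpha_s$ on ${\bdry\over\bdry t}+Y_{\varepsilon'}$, and this requires the structural estimate that $|\beta_t(Y_{\varepsilon'})|$ is bounded independently of $\varepsilon'$ (the paper's Lemma~\ref{lemma: estimation}), which relies on the specific profile of $f_{\varepsilon'}$ (e.g.\ the condition ${\bdry f_{\varepsilon'}\over \bdry y}\leq -1$ on the transition annulus), not on general smoothness. Without it your step (a) is unsupported, and step (b) inherits the problem: ``decreasing slowly'' or ``inserting intermediate terms'' only stays within the budget $[4^{-i},4^i]$ if you already have a uniform per-step bound on the conformal factor, which is again the missing estimate; moreover the rate at which $\varepsilon_i,\varepsilon_i'$ must shrink is dictated by $L_i$ through Proposition~\ref{prop: genuine trajectory} and Theorem~\ref{theorem: nodisks}, so you are not free to slow it down, and inserting extra terms changes the number of steps and hence the exponent you can afford.

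The paper closes this gap in two moves you would need to reproduce. First, Lemma~\ref{lemma: estimation} gives the $\varepsilon'$-uniform bound on $\beta_t(Y_{\varepsilon'})$ on $\Sigma(S,\psi')$. Then two Moser arguments (Lemmas~\ref{lemma: estimation0} and~\ref{lemma: estimation1}) compare $\alpha_{\varepsilon_{i+1},\varepsilon'}$ with $\alpha_{\varepsilon_i,\varepsilon'}$, and $\alpha_{\varepsilon,\varepsilon_{i+1}'}$ with $\alpha_{\varepsilon,\varepsilon_i'}$, each with conformal factor in $[{1\over 2},2]$ provided $\varepsilon$ lies below thresholds $\delta_1(\varepsilon')$, $\delta_2(\varepsilon')$ depending on $\varepsilon'$. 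The bound ${1\over 4^i}\leq G_i\leq 4^i$ is therefore not a Lipschitz estimate relative to $\alpha_{\varepsilon_0,\varepsilon_0'}$ at all: it is the telescoped product of $i$ consecutive comparisons, each contributing a factor at most $4$. That is where the numerology $4^i$ comes from, and it is compatible with arbitrarily fast shrinking of $(\varepsilon_i,\varepsilon_i')$ precisely because each comparison is only between consecutive terms, never directly with $(\varepsilon_0,\varepsilon_0')$.
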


We now apply Proposition~\ref{prop: direct limit} to obtain an
exhaustive sequence: In our situation $M_i=4^i$. Pick $L_i$ so that
the exhaustive condition is satisfied. By Proposition~\ref{prop:
direct limit}, there exist
$\alpha_i=(\varphi_1^i)^*\alpha_{\varepsilon,\varepsilon'}=
G_i\alpha_{\varepsilon_0,\varepsilon_0'}$ so that $(\alpha_i,L_i)$
is exhaustive.
%By Proposition~\ref{prop: invariance of direct
%limit}, the direct limit $\displaystyle\lim_{i\rightarrow \infty}
%HC_{\leq L_i}(M,\alpha_i)$ is well-defined.

We first prove some preparatory lemmas, which are proved for the
{\em unperturbed} $\alpha_{\varepsilon,\varepsilon'}$; however, the
same results are also true for the perturbed
$\alpha_{\varepsilon,\varepsilon'}$, since the perturbation is a
$C^\infty$-small one.

\begin{lemma}\label{lemma: estimation}
On $\Sigma (S,\psi' )$, the quantity $|\beta_t(Y_{\varepsilon'})|$
is bounded from above by a constant which is independent of
$0<\varepsilon' <1$ (and of course independent of $\varepsilon$).
\end{lemma}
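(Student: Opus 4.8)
The plan is to trace through the construction of $Y_{\varepsilon'}$ in Section~\ref{subsub: N} and extract an $\varepsilon'$-independent bound for $|\beta_t(Y_{\varepsilon'})|$ on $\Sigma(S,\psi')$. Recall from Equation~\ref{equation: Y varepsilon} that $Y=Y_{\varepsilon'}$ satisfies $i_Y\omega_t=-\dot\beta_t$, where $\omega_t=d_2\beta_t$ is an area form on $S$ varying smoothly in $t$, and $\dot\beta_t$ is given explicitly in Equation~\ref{eqn: beta dot} (for $t\in[0,\tfrac12]$) and by the analogous formula on $[\tfrac12,1]$. The first step is to observe that $\dot\beta_t$ is a bounded family of $1$-forms: for $t\in[0,\tfrac12]$ it equals $\chi_0'(t)(\beta-f_{\varepsilon'}(g_*\beta))$, and since $\chi_0'$ is a fixed function and $0<f_{\varepsilon'}\le 1$, the form $\dot\beta_t$ is uniformly bounded in $C^0$ independently of $\varepsilon'$; the same holds on $[\tfrac12,1]$ where $\dot\beta_t=\chi_1'(t)(f_{\varepsilon'}\beta-\beta)$. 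The second step is to control $\omega_t$ from below: $\omega_t=(1-\chi_i(t))\,d_2\beta_{i\text{-endpoint}}+\chi_i(t)\,d_2\beta_{\text{other endpoint}}$ is a convex combination of the fixed area forms $d\beta_0=d(f_{\varepsilon'}g_*\beta)$, $d\beta=d\beta_{1/2}$, $d\beta_1=d(f_{\varepsilon'}\beta)$. Here one must be a little careful because $d(f_{\varepsilon'}\beta)$ degenerates as $\varepsilon'\to 0$ on the region where $f_{\varepsilon'}=\varepsilon'$ is constant (there $d(f_{\varepsilon'}\beta)=\varepsilon' d\beta$). The key point, however, is that $\beta_t(Y)$, not $Y$ itself, is what we need to bound, and $\beta_t$ scales by the \emph{same} factor $f_{\varepsilon'}$ as $\omega_t$ does on that region, so the $\varepsilon'$'s cancel.

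More precisely, the main step is a pointwise computation. Fix $x\in S$ and $t$. Away from the annular neighborhood $A$ of $\bdry S$, we have $\beta_0=\varepsilon' g_*\beta$, $\beta_{1/2}=\beta$, $\beta_1=\varepsilon'\beta$, so on $[\tfrac12,1]$, for instance, $\omega_t=\big((1-\chi_1)+\varepsilon'\chi_1\big)d\beta$ up to the $g_*\beta$ issue only at the endpoint, while $\dot\beta_t=\chi_1'(\varepsilon'-1)\beta$ and $\beta_t=\big((1-\chi_1)+\varepsilon'\chi_1\big)\beta$. Writing $Y$ in a local frame in which $d\beta$ is the standard area form, $i_Y\omega_t=-\dot\beta_t$ gives $|Y|\approx |\dot\beta_t|_{d\beta}/\big((1-\chi_1)+\varepsilon'\chi_1\big)$, hence $|\beta_t(Y)|\approx \big((1-\chi_1)+\varepsilon'\chi_1\big)\cdot |\beta|\cdot|\dot\beta_t|_{d\beta}/\big((1-\chi_1)+\varepsilon'\chi_1\big)=|\beta|\cdot|\dot\beta_t|_{d\beta}$, which is $\varepsilon'$-independent. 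On $[0,\tfrac12]$ the same cancellation occurs with $g_*\beta$ playing the role of $\beta$ in the relevant pieces; on $A$ itself (where $f_{\varepsilon'}$ interpolates between $1$ and $\varepsilon'$) one checks that $df_{\varepsilon'}\wedge\beta\ge 0$ keeps $\omega_t$ from being too small relative to $\dot\beta_t$ and $\beta_t$, using that $\beta=(C-y)d\theta$ there and $g=\psi'$ is a rigid rotation on $A$ (so $g_*\beta=\beta$ on the appropriate subregion), making $\dot\beta_t$ small near $\bdry S$. Assembling these local estimates over the compact surface $S$ and over $t\in[0,1]$ yields a single constant $C_0$ with $|\beta_t(Y_{\varepsilon'})|\le C_0$ everywhere on $\Sigma(S,\psi')$.

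The main obstacle is the degeneration of $\omega_t$ as $\varepsilon'\to 0$: one cannot simply bound $|Y_{\varepsilon'}|$ uniformly (indeed it blows up like $1/\varepsilon'$ on the region where $f_{\varepsilon'}\equiv\varepsilon'$), so the argument genuinely requires pairing with $\beta_t$ and exploiting that $\beta_t$ and $\omega_t$ carry a common factor of the cutoff profile. Making the cancellation rigorous on the transition annulus $A$ — where $f_{\varepsilon'}$ is nonconstant and $d(f_{\varepsilon'}\beta)=f_{\varepsilon'}d\beta+df_{\varepsilon'}\wedge\beta$ has two competing terms — is the delicate part; the inequality $df_{\varepsilon'}\wedge\beta\ge 0$ established in Section~\ref{subsub: N} together with the explicit product form $\beta=(C-y)d\theta$ is exactly what is needed to push it through. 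Note the lemma is stated only on $\Sigma(S,\psi')$, so no estimate near the binding $\gamma_0$ is required.
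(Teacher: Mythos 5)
Your approach — controlling $|\beta_t(Y_{\varepsilon'})|$ by observing that the small factor $\varepsilon'$ appears \emph{both} in the size of $\beta_t$ (as the coefficient of the $\beta_0$- and $\beta_1$-pieces) and in the lower bound for the area form $\omega_t$, so that the two factors cancel in the product $|\beta_t|\cdot|Y_{\varepsilon'}|$ — is exactly the paper's. The paper makes the cancellation rigorous via the mediant-type inequality
\[
|\beta_t(Y_{\varepsilon'})| \le \frac{(1-\chi_0(t))\varepsilon'C_3 + \chi_0(t)C_4}{(1-\chi_0(t))\varepsilon'C_5 + \chi_0(t)C_6} \le \max\!\left(\tfrac{C_3}{C_5},\tfrac{C_4}{C_6}\right),
\]
uniformly in $t$ and $\varepsilon'$, which is the clean form of your pointwise ``$\approx$''-computation.

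Where your write-up is genuinely imprecise is the transition annulus, and the specific mechanism you cite there is inadequate as stated. The inequality $df_{\varepsilon'}\wedge\beta\ge 0$ guarantees only that $d(f_{\varepsilon'}\beta)$ is a \emph{positive} area form, not that it is bounded below independently of $\varepsilon'$: if $f_{\varepsilon'}$ were allowed to descend very slowly from $1$ to $\varepsilon'$, the coefficient $f_{\varepsilon'}-\tfrac{\partial f_{\varepsilon'}}{\partial y}(C-y)$ could be $o(1)$ over a region where $|\beta_t|$ is not correspondingly small, and the cancellation would fail. The paper closes this by adding a \emph{specialization} of $f_{\varepsilon'}$ not present in the original construction — requiring $\partial f_{\varepsilon'}/\partial y\le -1$ on the interval $[y_0,y_1]$ where $f_{\varepsilon'}$ drops from $>\tfrac12$ to $2\varepsilon'$ — which forces $\omega_t$ to be bounded below by an $\varepsilon'$-\emph{independent} positive constant on $\{f_{\varepsilon'}\ge 2\varepsilon'\}$, so that $Y_{\varepsilon'}$ itself is bounded there and the cancellation argument is needed only on $\{f_{\varepsilon'}\le 2\varepsilon'\}$. (Alternatively, you half-notice but do not close the simpler observation: on the transition annulus $g=\psi'$ is a rigid rotation and $\beta=(C-y)d\theta$, so $g_*\beta=\beta$, everything is $\theta$-invariant, and hence $Y_{\varepsilon'}$ has no $\partial_\theta$-component and $\beta_t(Y_{\varepsilon'})\equiv 0$ there. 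Either this symmetry argument or the paper's specialization must be stated explicitly; invoking $df_{\varepsilon'}\wedge\beta\ge 0$ alone does not do it.) With one of those two fixes supplied, your proof coincides with the paper's.
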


\begin{proof}
For technical reasons, we specialize the function $f_{\varepsilon'}:
S\rightarrow \R$, defined in Section~\ref{subsub: N}, on the region
$A=S^1\times[0,1]$. In particular, we require that ${\bdry
f_{\varepsilon'}\over \bdry y}\leq -1$ when $y\in [y_0,y_1]$, where
$f_{\varepsilon'}(y_0)> {1\over 2}$ and
$f_{\varepsilon'}(y_1)=2\varepsilon'$.

We first restrict to the subset (away from $\bdry \Sigma(S,\psi')$)
where $f_{\varepsilon'}\leq 2\varepsilon'$.  Suppose $t\in[0,{1\over
2}]$. Recall that $i_{Y_{\varepsilon'}} \omega_t =-\dot{\beta}_t$
(Equation~\ref{equation: Y varepsilon}) and
$\dot\beta_t=\chi'_0(t)(\beta-f_{\varepsilon'}(g_*\beta))$
(Equation~\ref{eqn: beta dot}). The quantity $|\dot\beta_t|$ is
bounded above by a constant independent of $\varepsilon$, since
$\chi'_0(t)$, $\beta$, $g_*\beta$, and $f_{\varepsilon'}$ are all
bounded above. Next,
$$\omega_t=(1-\chi_0(t))d(f_{\varepsilon'}(g_*\beta))
+\chi_0(t)d\beta.$$  Clearly, $d\beta$ is bounded from below.  On
the other hand,
$d(f_{\varepsilon'}g_*\beta)=\varepsilon'd(g_*\beta)$ on $S-A$ and
$(f_{\varepsilon'}-{\bdry f_{\varepsilon'}\over \bdry
y}(C-y))d\theta dy$ on $S^1\times [y_1,1]$.  Hence
$d(f_{\varepsilon'}g_*\beta)$ is bounded below by $\varepsilon'$
times a positive constant. This means that $|Y_{\varepsilon'} (t)|$
is bounded above by $$\frac{C_0}{(1-\chi_0(t))\varepsilon' C_1
+\chi_0(t) C_2},$$ where $C_0, C_1, C_2>0$ are constants. Since
$\beta_t = (1-\chi_0(t))f_{\varepsilon'} (g_* \beta)
+\chi_0(t)\beta$, we obtain
$$|\beta_t (Y_{\varepsilon'})|\leq
\frac{(1-\chi_0(t))\varepsilon'C_3 +\chi_0(t)
C_4}{(1-\chi_0(t))\varepsilon' C_5 +\chi_0(t)C_6},$$ where
$C_3,\dots, C_6 >0$ are constants. The expression on the right-hand
side is bounded above by a constant independent of $\varepsilon'$.
The situation $t\in [\frac{1}{2} ,1]$ is treated similarly.

Now, on the subset $f_{\varepsilon'}\geq 2\varepsilon'$,
$d(f_{\varepsilon'}(g_*\beta))= (f_{\varepsilon'}-{\bdry
f_{\varepsilon'}\over \bdry y}(C-y))d\theta dy$, and ${\bdry
f_{\varepsilon'}\over \bdry y}<-1$ or $f_{\varepsilon'}>{1\over 2}$.
Hence $|d(f_{\varepsilon'}(g_*\beta))|$ is bounded below by a
positive constant which is independent of $\varepsilon'$.  Hence
$Y_{\varepsilon'}$ is bounded above by a constant independent of
$\varepsilon'$, and the conclusion follows easily.
\end{proof}

As a consequence of Lemma~\ref{lemma: estimation}, if $\varepsilon$
is sufficiently small, then
$\alpha_{\varepsilon,\varepsilon'}({\bdry\over \bdry t}
+Y_{\varepsilon'}) =1+\varepsilon \beta_t(Y_{\varepsilon'})$ is
bounded from below by a positive constant.

\s
Now we recall Moser's method: Let $(\alpha_s)_{s\in [0,1]}$ be a
path of contact $1$-forms. We are looking for an isotopy $(\phi_s
)_{s\in [0,1]}$ such that $\phi_s^* \alpha_s =H_s \alpha_0$. If
$X_s$ is a time-dependent vector field which generates $\phi_s$,
then it satisfies the equation:
$$\phi_s^*(\dot{\alpha}_{s} +\mathcal{L}_{X_s} \alpha_{s}) =\dot{H}_s \alpha_0,$$
where the dot means the derivative in the $s$-variable (at time
$s$). Using the relation $\phi_s^* \alpha_s =H_s \alpha_0$, this can
be rewritten as
\begin{equation} \label{eqn: moser}
\dot{\alpha}_{s} +\mathcal{L}_{X_s} \alpha_{s}= G_s\alpha_s,
\end{equation}
where $G_s=({d\over ds}\log H_s)\circ \phi_s^{-1}$. It will be
convenient to choose $X_s$ to be in $\ker \alpha_s$, in which case
$\mathcal{L}_{X_s} \alpha_s =i_{X_s} d\alpha_s$.

\begin{lemma}\label{lemma: estimation0}
For every $0<\varepsilon' <1$, one can find $\delta_1(\varepsilon' )
>0$ so that, for every $0<\varepsilon_1
<\varepsilon_0 <\delta_1(\varepsilon' )$, the $1$-forms
$\alpha_{\varepsilon_0, \varepsilon'}$ and $\alpha_{\varepsilon_1,
\varepsilon'}$ are contact and there exists an isotopy $(\phi_s
)_{s\in [0,1]}$ of $M$ starting from the identity such that
$\phi_1^* \alpha_{\varepsilon_1, \varepsilon'} =H
\alpha_{\varepsilon_0, \varepsilon'}$, with ${1\over 2} \leq H\leq
2$.
\end{lemma}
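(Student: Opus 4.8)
The plan is to apply Moser's method via Equation~\ref{eqn: moser}, treating the family $\alpha_s = \alpha_{(1-s)\varepsilon_0 + s\varepsilon_1,\varepsilon'}$ and exploiting the explicit form $\alpha_\varepsilon = dt + \varepsilon\beta_t$ on $\Sigma(S,\psi')$ (together with the explicit extension $a_\varepsilon(r)dz + b_\varepsilon(r)d\theta$ near the binding) given in Section~\ref{section: construction}. Since $\varepsilon'$ is fixed throughout, all the estimates we need come from controlling $\varepsilon$-dependence, and the key input is Lemma~\ref{lemma: estimation}: on $\Sigma(S,\psi')$ the quantity $|\beta_t(Y_{\varepsilon'})|$ is bounded above by a constant $C$ depending only on $\varepsilon'$, so that $\alpha_\varepsilon(\tfrac{\bdry}{\bdry t}+Y_{\varepsilon'}) = 1 + \varepsilon\beta_t(Y_{\varepsilon'}) \geq \tfrac12$ once $\varepsilon < \tfrac{1}{2C}$. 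This last inequality is what we will call $\delta_1(\varepsilon')$, up to also requiring that $\varepsilon_0$ be small enough that $\alpha_{\varepsilon_0,\varepsilon'}$ and hence every $\alpha_s$ satisfies the contact condition (recall from Lemma~\ref{interpolation} that the contact condition holds for $\varepsilon$ sufficiently small, and the direction of the Reeb field is $\varepsilon$-independent).

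First I would write $\dot\alpha_s = (\varepsilon_1 - \varepsilon_0)\beta_t$ on $\Sigma(S,\psi')$, which is bounded by $(\varepsilon_0 - \varepsilon_1)\cdot(\sup|\beta_t|)$, and similarly bound $\dot\alpha_s$ on $N(\gamma_0)$ using the smooth $\varepsilon$-dependence of $a_\varepsilon,b_\varepsilon$ recorded in Section~\ref{subsub: extension to binding}. Next I would solve Equation~\ref{eqn: moser} for $X_s \in \ker\alpha_s$ and $G_s$: feeding the equation through the Reeb field $R_{\alpha_s}$ gives $G_s = \dot\alpha_s(R_{\alpha_s})$ (since $i_{X_s}d\alpha_s$ kills $R_{\alpha_s}$ and $\alpha_s(R_{\alpha_s})=1$), and then $X_s$ is the unique section of $\ker\alpha_s$ with $i_{X_s}d\alpha_s = (G_s\alpha_s - \dot\alpha_s)|_{\ker\alpha_s}$, which exists because $d\alpha_s|_{\ker\alpha_s}$ is nondegenerate. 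The crucial estimate is the pointwise bound on $G_s = \dot\alpha_s(R_{\alpha_s})$: since $R_{\alpha_s}$ is collinear with $\tfrac{\bdry}{\bdry t}+Y_{\varepsilon'}$ with normalizing factor $\geq \tfrac12$ (on $\Sigma(S,\psi')$, by Lemma~\ref{lemma: estimation}), and $\dot\alpha_s = (\varepsilon_1-\varepsilon_0)\beta_t$, we get $|G_s| \leq 2(\varepsilon_0 - \varepsilon_1)\cdot|\beta_t(\tfrac{\bdry}{\bdry t}+Y_{\varepsilon'})| \leq 2(\varepsilon_0-\varepsilon_1)(1 + C)$, uniformly in $s$ and in the point of $M$; the analogous bound near $\gamma_0$ follows from smoothness of the extension. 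Thus $\sup_{s,x}|G_s(x)| \to 0$ as $\varepsilon_0 \to 0$.

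Then I would integrate: since $H_s = \exp\!\big(\int_0^s (G_r \circ \phi_r)\,dr\big)$ — equivalently $\tfrac{d}{ds}\log H_s = G_s\circ\phi_s$ — we get $\log H_1 = \int_0^1 G_r\circ\phi_r\,dr$, so $|\log H_1| \leq \sup_{r,x}|G_r(x)| \leq 2(\varepsilon_0 - \varepsilon_1)(1+C) < 2\varepsilon_0(1+C)$. Shrinking $\delta_1(\varepsilon')$ further so that $2\delta_1(\varepsilon')(1+C) \leq \log 2$ forces $\tfrac12 \leq H_1 \leq 2$ whenever $0 < \varepsilon_1 < \varepsilon_0 < \delta_1(\varepsilon')$. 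I should also note that the time-dependent vector field $X_s$ is compactly supported (it lives on the closed manifold $M$) and depends smoothly on $s$, so its flow $\phi_s$ is a genuine isotopy starting at the identity and $\phi_1^*\alpha_{\varepsilon_1,\varepsilon'} = H\alpha_{\varepsilon_0,\varepsilon'}$ with $H = H_1$.

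\textbf{Main obstacle.} The one place requiring care is the uniformity of the bound on $R_{\alpha_s}$ — and hence on $G_s$ — near the binding $\gamma_0$, where $\alpha_\varepsilon$ is not of the product form $dt + \varepsilon\beta_t$ but rather $a_\varepsilon(r)dz + b_\varepsilon(r)d\theta$ with $\varepsilon$-dependent coefficients. Lemma~\ref{lemma: estimation} is stated only on $\Sigma(S,\psi')$, so on $N(\gamma_0)$ I will need to argue separately that $|\dot\alpha_s(R_{\alpha_s})|$ is $O(\varepsilon_0 - \varepsilon_1)$ uniformly; this should follow from the fact, recorded in Section~\ref{subsub: extension to binding}, that $a_\varepsilon, b_\varepsilon$ and the boundary data $a_\varepsilon(1), b_\varepsilon(1), a_\varepsilon'(1), b_\varepsilon'(1)$ depend smoothly on $\varepsilon$ and that the contact condition $a_\varepsilon b_\varepsilon' - a_\varepsilon' b_\varepsilon > 0$ persists, giving a uniform lower bound on the normalizing factor of $R_\varepsilon$ on the compact piece $N(\gamma_0)$ once $\varepsilon < \delta_1(\varepsilon')$. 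Matching the two estimates across $\bdry\Sigma(S,\psi') = \bdry N(\gamma_0)$ is routine because $R_{\alpha_\varepsilon} = \tfrac{\bdry}{\bdry t}$ there. The rest of the argument is the standard Moser computation and poses no difficulty.
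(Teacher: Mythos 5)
Your proposal is correct and follows essentially the same route as the paper: Moser's method along the linear path $\varepsilon_s=(1-s)\varepsilon_0+s\varepsilon_1$, the identity $G_s=\dot\alpha_s(R_{\alpha_s})$ obtained by evaluating Equation~\ref{eqn: moser} on the Reeb direction ${\bdry\over \bdry t}+Y_{\varepsilon'}$, the bound on $|G_s|$ (hence on $\log H$) coming from Lemma~\ref{lemma: estimation} together with the lower bound on $1+\varepsilon_s\beta_t(Y_{\varepsilon'})$, and a separate routine estimate on $N(K)$ using the smooth $\varepsilon$-dependence of $a_\varepsilon,b_\varepsilon$. The only cosmetic difference is that the paper phrases the key estimate as $(\varepsilon_1-\varepsilon_0)\beta_t(Y_{\varepsilon'})=G_s(1+\varepsilon_s\beta_t(Y_{\varepsilon'}))$ rather than via the normalized Reeb field, which is the same computation.
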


\begin{proof}
We first work on $\Sigma (S,\psi' )$. Apply Moser's method to the
path of contact $1$-forms given by $\alpha_{\varepsilon_s,
\varepsilon'}$, where $\varepsilon_s =(1-s)\varepsilon_0
+s\varepsilon_1$ and $s\in [0,1]$. The infinitesimal generator $X_s
\in \ker \alpha_{\varepsilon_s,\varepsilon'}$ of the isotopy
$\phi_s$ satisfies: $\dot{\alpha}_{\varepsilon_s,\varepsilon'} +
i_{X_s} d\alpha_{\varepsilon_s,\varepsilon'} =G_s
\alpha_{\varepsilon_s,\varepsilon'}.$ If we evaluate this equation
on $\frac{\partial}{\partial t} +Y_{\varepsilon'}$, we obtain
$$(\varepsilon_1 -\varepsilon_0) \beta_t (Y_{\varepsilon'} )= G_s
(1+\varepsilon_s \beta_t (Y_{\varepsilon'} )).$$ By
Lemma~\ref{lemma: estimation}, $|\beta_t (Y_{\varepsilon'})|$ is
bounded above and $|1+\varepsilon_s \beta_t (Y_{\varepsilon'} )|$ is
bounded below by a positive constant, provided we take
$\varepsilon_0$ and $\varepsilon_1$ small enough. Hence $|G_s|$ and
$|{d\over ds}\log H_s|$ are bounded above by a small constant. This
implies that ${1\over C}<H_s<C$ for $C>1$, say $C=2$.

In the neighborhood $N(K)=\R/\Z\times D^2$ of the binding,
$\alpha_{\varepsilon,\varepsilon'}$ is of the form
$a_\varepsilon(r)dz+b_\varepsilon(r)d\theta$, according to
Section~\ref{subsub: extension to binding}.  For sufficiently small
$\varepsilon_0, \varepsilon_1$,
$\alpha_{\varepsilon_0,\varepsilon'}$ and
$\alpha_{\varepsilon_1,\varepsilon'}$ are close, and so are
$R_{\varepsilon_0,\varepsilon'}$ and
$R_{\varepsilon_1,\varepsilon'}$.  Hence, the left-hand side of
Equation~\ref{eqn: moser}, evaluated on the Reeb vector field $R_s$,
can be made arbitrarily small. Hence we conclude that $H$ is
arbitrarily close to $1$ near the binding.
\end{proof}

\begin{lemma}\label{lemma: estimation1}
For every $0<\varepsilon_1' <\varepsilon_0'<1$, there exists
$\delta_2 (\varepsilon_1' )>0$ so that, for every $0<\varepsilon_1
<\delta_2 (\varepsilon_1' )$, there exists an isotopy $(\phi_s'
)_{s\in [0,1]}$ of $M$ so that $(\phi_1')^* \alpha_{\varepsilon_1,
\varepsilon_1'} =H'\alpha_{\varepsilon_1,\varepsilon_0'}$, with
${1\over 2}\leq H'\leq 2$.
\end{lemma}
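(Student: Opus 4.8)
The plan is to prove the lemma by Moser's method, interpolating in the $\varepsilon'$-parameter in exact analogy with the proof of Lemma~\ref{lemma: estimation0} (which interpolates in $\varepsilon$). Fix $0<\varepsilon_1'<\varepsilon_0'<1$, put $\varepsilon_s'=(1-s)\varepsilon_0'+s\varepsilon_1'\in[\varepsilon_1',\varepsilon_0']$ for $s\in[0,1]$, and consider the path of $1$-forms $\alpha_{\varepsilon_1,\varepsilon_s'}$; for $\varepsilon_1$ small enough — depending only on $\varepsilon_1'$ and the fixed data $\beta,\psi',\dots$ — every member of this path is contact, by the contact inequality of Lemma~\ref{interpolation} and Section~\ref{subsub: N}. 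I would look for an isotopy $(\phi_s')_{s\in[0,1]}$ with $\phi_0'=id$ and $(\phi_s')^*\alpha_{\varepsilon_1,\varepsilon_s'}=H_s\,\alpha_{\varepsilon_1,\varepsilon_0'}$, and then take $\phi'=\phi_1'$, $H'=H_1$. As in Equation~\ref{eqn: moser}, choosing the generator $X_s$ of $\phi_s'$ inside $\ker\alpha_{\varepsilon_1,\varepsilon_s'}$ and evaluating the Moser equation on the Reeb vector field of $\alpha_{\varepsilon_1,\varepsilon_s'}$ (directed by $\partial_t+Y_{\varepsilon_s'}$) gives, on $\Sigma(S,\psi')$,
$$G_s:=\left(\frac{d}{ds}\log H_s\right)\circ(\phi_s')^{-1}=\frac{\varepsilon_1\,\partial_s\beta^{(\varepsilon_s')}_t(Y_{\varepsilon_s'})}{1+\varepsilon_1\,\beta^{(\varepsilon_s')}_t(Y_{\varepsilon_s'})},$$
where $\beta^{(\varepsilon')}_t$ is the interpolating $1$-form built from $f_{\varepsilon'}$. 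Since $\log H_1=\int_0^1 G_s\circ\phi_s'\,ds$, it suffices to arrange $\|G_s\|_\infty<\log 2$ for all $s$.

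The estimate of $G_s$ on $\Sigma(S,\psi')$ is the crux. By Lemma~\ref{lemma: estimation}, $|\beta^{(\varepsilon')}_t(Y_{\varepsilon'})|$ is bounded by a constant $C$ independent of $\varepsilon'$ and $\varepsilon$, so for $\varepsilon_1<1/(2C)$ the denominator of $G_s$ is $\geq 1/2$. For the numerator, $\partial_s\beta^{(\varepsilon_s')}_t=\dot\varepsilon_s'\,\partial_{\varepsilon'}\beta^{(\varepsilon_s')}_t$ with $|\dot\varepsilon_s'|=\varepsilon_0'-\varepsilon_1'\leq 1$, so I must control $\partial_{\varepsilon'}\beta^{(\varepsilon')}_t(Y_{\varepsilon'})$. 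Choosing the family $f_{\varepsilon'}$ to depend smoothly on $\varepsilon'$ with uniformly bounded $\varepsilon'$-derivative, the construction in Section~\ref{subsub: N} gives three regimes: for $t\in[{1\over 2},1]$ the form $\beta^{(\varepsilon')}_t=\bigl((1-\chi_1)+\chi_1 f_{\varepsilon'}\bigr)\beta$ is a positive multiple of $\beta$ and $Y_{\varepsilon'}$ directs $\ker\beta$ (Lemma~\ref{lemma:uniform}(4)), so both $\beta^{(\varepsilon')}_t(Y_{\varepsilon'})$ and $\partial_{\varepsilon'}\beta^{(\varepsilon')}_t(Y_{\varepsilon'})$ vanish and $G_s=0$ there; for $t\in[0,{1\over 2}]$ on $\{f_{\varepsilon'}\geq 2\varepsilon'\}$, both $Y_{\varepsilon'}$ and $\partial_{\varepsilon'}\beta^{(\varepsilon')}_t=(1-\chi_0)(\partial_{\varepsilon'}f_{\varepsilon'})(g_*\beta)$ are uniformly bounded (the former by the proof of Lemma~\ref{lemma: estimation}); and for $t\in[0,{1\over 2}]$ on $\{f_{\varepsilon'}\leq 2\varepsilon'\}$ (away from the binding) the relation $\bigl(f_{\varepsilon'}(g_*\beta)-\beta\bigr)(Y_{\varepsilon'})=0$ — valid because $Y_{\varepsilon'}$ directs $\ker\bigl(f_{\varepsilon'}(g_*\beta)-\beta\bigr)$ there — yields $\partial_{\varepsilon'}\beta^{(\varepsilon')}_t(Y_{\varepsilon'})=\frac{(1-\chi_0)\partial_{\varepsilon'}f_{\varepsilon'}}{f_{\varepsilon'}}\,\beta^{(\varepsilon')}_t(Y_{\varepsilon'})$, which is $O(1/\varepsilon')$ by Lemma~\ref{lemma: estimation}. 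Since $\varepsilon_s'\geq\varepsilon_1'$, this gives $|G_s|\leq C'\,\varepsilon_1/\varepsilon_1'$ on $\Sigma(S,\psi')$ with $C'$ depending only on the fixed data, and taking $\varepsilon_1<\delta_2(\varepsilon_1')$ small enough forces $|G_s|<\log 2$.

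Near the binding there should be nothing to do: in the collar $A'=\{0\leq y\leq\varepsilon''\}$ of $\partial S$ one has $f_{\varepsilon'}\equiv 1$, and on $N(K)=\R/\Z\times D^2$ the forms $a_{\varepsilon_1}(r)\,dz+b_{\varepsilon_1}(r)\,d\theta$ of Section~\ref{subsub: extension to binding} depend only on $\varepsilon_1$; hence $\alpha_{\varepsilon_1,\varepsilon_0'}$ and $\alpha_{\varepsilon_1,\varepsilon_1'}$ agree there and $\partial_s\alpha_{\varepsilon_1,\varepsilon_s'}\equiv 0$, so I would take $\phi_s'=id$ on that region. Since $Y_{\varepsilon_s'}=0$ on $A'\times[0,1]/\sim$ by Lemma~\ref{lemma:uniform}(1), the vector field $X_s$ produced on $\Sigma(S,\psi')$ can be chosen to vanish near $\partial\Sigma(S,\psi')$, so the two pieces glue to a global isotopy of $M$; with $H'=H_1$ one then obtains ${1\over 2}\leq H'\leq 2$. (By $C^\infty$-closeness, the same applies to the perturbed contact forms of Lemma~\ref{lemma: perturbation}.)

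The hard part is the numerator estimate. A crude bound fails: $|Y_{\varepsilon_s'}|$ can be of order $1/\varepsilon_1'$, so merely using that $\partial_{\varepsilon'}\beta^{(\varepsilon')}_t$ is a bounded form would give only $|G_s|=O(\varepsilon_1/(\varepsilon_1')^2)$, which is not obviously small once $\varepsilon_1$ is chosen after $\varepsilon_1'$. Obtaining the sharp bound $O(\varepsilon_1/\varepsilon_1')$ requires exploiting the same cancellation underlying Lemma~\ref{lemma: estimation} — namely that it is the \emph{pairing} $\beta^{(\varepsilon')}_t(Y_{\varepsilon'})$, and not $Y_{\varepsilon'}$ itself, that is uniformly bounded — together with the observation that on the degenerate region $\partial_{\varepsilon'}\beta^{(\varepsilon')}_t$ differs from $\beta^{(\varepsilon')}_t$ only by the explicit scalar factor $(1-\chi_0)\partial_{\varepsilon'}f_{\varepsilon'}/f_{\varepsilon'}=O(1/\varepsilon')$. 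Everything else is bookkeeping parallel to Lemma~\ref{lemma: estimation0}.
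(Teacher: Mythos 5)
Your proposal is correct and follows essentially the same route as the paper: Moser's method applied to the path $\alpha_{\varepsilon_1,\varepsilon_s'}$ with $\varepsilon_s'=(1-s)\varepsilon_0'+s\varepsilon_1'$, evaluation of Equation~\ref{eqn: moser} on $\partial_t+Y_{\varepsilon_s'}$ to get $\varepsilon_1\dot\beta_t(Y_{\varepsilon_s'})=G_s\,(1+\varepsilon_1\beta_t(Y_{\varepsilon_s'}))$, and the observation that nothing depends on $\varepsilon'$ on $N(K)$, so the isotopy is the identity near the binding. The one point to correct is your closing claim that the ``crude'' bound fails and that a sharper $O(\varepsilon_1/\varepsilon_1')$ estimate is the crux: this misreads the quantifier order. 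In the statement, $\varepsilon_0'$ and $\varepsilon_1'$ are fixed first and $\delta_2$ is allowed to depend on $\varepsilon_1'$, so any bound of the form $|G_s|\leq \varepsilon_1\cdot C(\varepsilon_0',\varepsilon_1')$ suffices, no matter how badly $C$ blows up as $\varepsilon_1'\to 0$. This is exactly what the paper exploits: $\dot\beta_t$ and $Y_{\varepsilon_s'}$ do not depend on $\varepsilon_1$, so $\dot\beta_t(Y_{\varepsilon_s'})$ is bounded on the compact manifold uniformly in $s\in[0,1]$ by a constant depending only on the fixed data and $\varepsilon_0',\varepsilon_1'$, while the denominator is bounded below (Lemma~\ref{lemma: estimation}) once $\varepsilon_1$ is small; shrinking $\delta_2(\varepsilon_1')$ then makes $|G_s|$ as small as desired. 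Your region-by-region analysis (using that $Y_{\varepsilon'}$ lies in $\ker(f_{\varepsilon'}(g_*\beta)-\beta)$ for $t\in(0,{1\over 2})$, and the vanishing of the pairing for $t\in({1\over 2},1)$) is sound as far as it goes, but it is not needed, and it quietly imposes an extra requirement absent from the paper, namely that the family $f_{\varepsilon'}$ be chosen to vary smoothly in $\varepsilon'$ with uniformly bounded $\varepsilon'$-derivative.
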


\begin{proof}
We can concentrate our attention on $\Sigma (S,\psi' )$, since
$\alpha_{\varepsilon ,\varepsilon'}$ does not depend on
$\varepsilon'$ on $N(K)$.  Given $0<\varepsilon_0',\varepsilon_1'<1$
and  $s\in [0,1]$, let $\varepsilon_s'=(1-s)\varepsilon_0'
+s\varepsilon_1'$. By Equation~\ref{eqn: moser} applied to the path
$\alpha_{\varepsilon_1, \varepsilon_s'}$, we obtain:
$$\varepsilon_1 \dot{\beta_t} (Y_{\varepsilon_s'} ) = G_s
(1+\varepsilon_1 \beta_t (Y_{\varepsilon_s'})),$$ where the dot is
the derivative in the $s$-variable. As before, we see that if
$\varepsilon_1$ is small enough, then $|1+\varepsilon_1 \beta_t
(Y_{\varepsilon_s'})|$ is bounded below by a positive constant.
%Now, $\dot\beta_t= (1-\chi_0(t))\dot f_{\varepsilon'_s} (g_*\beta)$ when
%$t\in[0,{1\over 2}]$. (The situation $t\in[{1\over 2}, 1]$ is
%similar.)
Since $\dot\beta_t$ and $Y_{\varepsilon'_s}$ do not depend
on $\varepsilon_1$, $|G_s|$ is bounded above by a small constant,
provided $\delta_2(\varepsilon'_1)$ is sufficiently small. Again,
this implies that ${1\over C}<H_s'<C$ for $C>1$, say $C=2$.
\end{proof}

We are now ready to prove Proposition~\ref{prop: direct limit}.

\begin{proof}[Proof of Proposition~\ref{prop: direct limit}]
First use Theorem~\ref{theorem: nodisks} to choose sequences
$(\varepsilon_i)_{i\in \N}$ and $(\varepsilon_i')_{i\in \N}$ so that
no closed orbit of the Reeb vector field $R_{\varepsilon_i,
\varepsilon_i'}$ which intersects a page $\leq L_i$ times bounds a
finite energy plane. (Here we are using the perturbed
$\alpha_{\varepsilon_i,\varepsilon'_i}$.) Next, after shrinking
$\varepsilon_i$ if necessary, suppose $\varepsilon_0 <\delta_1
(\varepsilon_0' )$ and $(\varepsilon_i )_{i\in \N}$ is a decreasing
sequence which satisfies $\varepsilon_i <\inf \{ \delta_1
(\varepsilon_{i-1}' ), \delta_1 (\varepsilon_i'), \delta_2
(\varepsilon_i' )\}$.  By Lemma~\ref{lemma:uniform}, we may also
assume that $\varepsilon_i$ is sufficiently small so that
$R_{\varepsilon_i,\varepsilon_i'}$ is arbitrarily close to
$\frac{\partial}{\partial t}+ Y_{\varepsilon_i'}$ on $\Sigma
(S,\psi' )$ and the action is almost the same as the number of
intersections with a page.  Hence $C_{\leq
L_i}(\alpha_{\varepsilon_i, \varepsilon_i'} )$ is cylindrical.  Now,
if we compose the two isotopies given by Lemmas~\ref{lemma:
estimation0} and \ref{lemma: estimation1}, we find an isotopy whose
time $1$ map pulls $\alpha_{\varepsilon_{i+1}, \varepsilon_{i+1}'}$
back to $H_i \alpha_{\varepsilon_i, \varepsilon_i'}$, with ${1\over
4}\leq H_i\leq 4$.  The composition of all these isotopies produces
an isotopy whose time $1$ map pulls $\alpha_{\varepsilon_i,
\varepsilon_i'}$ back to $G_i \alpha_{\varepsilon_0,
\varepsilon_0'}$ with ${1\over 4^i} \leq G_i \leq 4^i$.
\end{proof}

\subsection{Proof of Theorem~\ref{main}(1)}
Suppose $\bdry S$ is connected. Let $(S,h)$ be the open book, where
$h$ is freely homotopic to the pseudo-Anosov $\psi$ and has
fractional Dehn twist coefficient $c={k\over n}$.

Suppose $k\geq 2$.  By Proposition~\ref{prop: direct limit} there
exists an exhaustive sequence $\{(\alpha_i, L_i)\}_{i=1}^\infty$
adapted to $(S,h)$, so that each $\Q$-vector space $C_{\leq
L_i}(\alpha_i)$ is cylindrical. There are chain maps
$$\Phi_{\alpha_i\alpha_{i+1}}^{cyl}:C_{\leq L_i}(\alpha_i)\rightarrow C_{\leq
L_{i+1}}(\alpha_{i+1}),$$ which count rigid holomorphic cylinders in
the symplectization from $\alpha_i$ to $\alpha_{i+1}$. Let
$\displaystyle\lim_{i\rightarrow \infty} HC_{\leq L_i}(\alpha_i)$ be
the direct limit.

Next consider the chain maps
$$\Phi_{\alpha_i\alpha_{i+1}}: \A(\alpha_i)\rightarrow
\A(\alpha_{i+1})$$ which count rigid punctured rational curves in
the symplectization from $\alpha_i$ to $\alpha_{i+1}$. We claim that
no orbit $\gamma$ of $\mathcal{P}_{\alpha_i}^{\leq L_i}$ bounds a
finite energy plane in the symplectization from $\alpha_i$ to
$\alpha_{i+1}$. The argument is identical to that of
Theorem~\ref{theorem: nodisks}, by observing that the almost complex
structure $J$ can be chosen so that $\R$ times the binding
$\gamma_0$ is $J$-holomorphic. (This is possible since we can make
the binding an orbit of the Reeb vector field for each
$f(t_0,x)\alpha$ with $t_0$ fixed.) Therefore, under the maps
$\Phi_{\alpha_i\alpha_{i+1}}$, the trivial augmentation
$\varepsilon_{i+1}$ on $\A_{\leq L_{i+1}}(\alpha_{i+1})$ pulls back
to the trivial augmentation $\varepsilon_i$ of $\A_{\leq
L_i}(\alpha_i)$.

We now prove that $\A(\alpha)$ admits an augmentation $\varepsilon$.
Define $\Phi_{\alpha\alpha_1}$ in the same way as
$\Phi_{\alpha_i\alpha_{i+1}}$. Take $\gamma\in \A(\alpha)$. If we
let
$$\Phi_{\alpha_i}= \Phi_{\alpha_{i-1}\alpha_{i}}\circ
\dots\circ\Phi_{\alpha_1\alpha_2}\circ \Phi_{\alpha\alpha_1},$$
then, for sufficiently large $i$, each term of
$\Phi_{\alpha_i}(\gamma)$ has $\alpha_i$-action $\leq L_i$ by the
exhaustive condition. (Here
$A_{\alpha_i}(a\gamma_1\dots\gamma_m)=\sum_j
A_{\alpha_i}(\gamma_j)$, where $a\in\Q$.) We then define
$\varepsilon(\gamma)= \varepsilon_i\circ \Phi_{\alpha_i}(\gamma)$.
The definition of $\varepsilon(\gamma)$ does not depend on the
choice of sufficiently large $i$, due to the fact that
$\varepsilon_{i+1}$ pulls back to $\varepsilon_i$ under
$\Phi_{\alpha_i\alpha_{i+1}}:\A_{\leq L_i}(\alpha_i)\rightarrow
\A_{\leq L_{i+1}}(\alpha_{i+1})$.

It remains to see that $HC^\varepsilon(\alpha)\simeq
\displaystyle\lim_{i\rightarrow \infty} HC_{\leq L_i}(\alpha_i)$. We
use the same argument as in Proposition~\ref{prop: invariance of
direct limit}.
%By the definition of the augmentation $\varepsilon$,
%it is clear that there is a map
%$$\Phi:HC^\varepsilon(\alpha)\rightarrow
%\displaystyle\lim_{i\rightarrow \infty} HC_{\leq L_i}(\alpha_i).$$
Given $L_i$, there exist $N_i$ and $i'$ so that there are maps
$\Psi_i:\A_{\leq L_i}(\alpha_i)\rightarrow \A_{\leq N_i}(\alpha)$
and $\Phi_{i'}: \A_{\leq N_i}(\alpha)\rightarrow \A_{\leq
L_{i'}}(\alpha_{i'})$ so that
$\Phi_{i'}^*\varepsilon_{i'}=\varepsilon$.  Hence we have
$$HC^{\Psi_i^*\varepsilon}_{\leq L_i}(\alpha_i)\stackrel{\Psi_i}\rightarrow
HC^\varepsilon_{\leq N_i}(\alpha) \stackrel{\Phi_{i'}}\rightarrow
HC_{\leq L_{i'}}(\alpha_{i'}),$$ whose composition is the map
$HC_{\leq
L_i}(\alpha_i)\stackrel{\Phi_{\alpha_i\alpha_i'}}\longrightarrow
HC_{\leq L_{i'}}(\alpha_{i'})$ by Theorem~\ref{thm: aug}, since
$\Psi_i^*\varepsilon$ is homotopic to the trivial augmentation and
$HC_{\leq L_i}(\alpha_i)\simeq HC^{\Psi_i^*\varepsilon}_{\leq
L_i}(\alpha_i)$. The direct limit of the right-hand side yields
$$\Phi:HC^\varepsilon(\alpha)\rightarrow
\displaystyle\lim_{i\rightarrow \infty} HC_{\leq L_i}(\alpha_i).$$
As before, we have a right inverse of $\Phi$ and a left inverse
exists similarly.

\section{Exponential growth of contact homology} \label{section:
nonvanishing}

\subsection{Periodic points of pseudo-Anosov homeomorphisms}
We collect some known facts about the dynamics of pseudo-Anosov
homeomorphisms. Let $\Sigma$ be a {\em closed} oriented surface and
$\psi$ be a pseudo-Anosov homeomorphism on $\Sigma$. The
homeomorphism $\psi$ is smooth away from the singularities of the
stable/unstable foliations.

A pseudo-Anosov homeomorphism $\psi$ admits a Markov partition
$\{R_1,\dots, R_l\}$ of $\Sigma$, where $R_i=[0,1]\times[0,1]$ are
``birectangles'' with coordinates $(x,y)$, where $y=const$ are
leaves of the unstable foliation $\F^u$ and $x=const$ are leaves of
the stable foliation $\F^s$. (See \cite[Expos\'e~10]{FLP} for
details, including the definition of a Markov partition.)

The Markov partition gives rise to a graph $G$ as follows: the set
of vertices is $\{R_1,\dots, R_l\}$ and there is a directed edge
from $R_i$ to $R_j$ if $int(\psi(R_i))\cap int(R_j)\not=\emptyset$.
The periodic orbits of $\psi$ of order $m$ are in 1-1 correspondence
with cycles of $G$ of length $m$.  (The singular points of $\F^s$ or
$\F^u$ are omitted from this consideration.)  In particular, the
orbits which multiply cover a simple orbit correspond to cycles of
$G$ which multiply cover a ``simple'' cycle of $G$.  As a corollary,
we have the following exponential growth property:

\begin{thm} \label{thm: 1}
There exist constants $A,B>0$ so that the number of periodic orbits
of $\psi$ of period $m$ is greater than $Ae^{Bm}$.  The same is true
for simple periodic orbits or good periodic orbits, i.e., orbits
which are not even multiple covers of hyperbolic orbits with
negative eigenvalues.
\end{thm}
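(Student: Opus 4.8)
The plan is to derive Theorem~\ref{thm: 1} from the correspondence between periodic orbits of $\psi$ and cycles in the directed graph $G$ associated to a Markov partition, together with standard Perron--Frobenius theory for the transition matrix. First I would set up the transition matrix $A=(a_{ij})$ of $G$, where $a_{ij}=1$ if $int(\psi(R_i))\cap int(R_j)\neq\emptyset$ and $0$ otherwise, and recall that the number of closed (not necessarily simple) cycles of combinatorial length $m$ in $G$ equals $\operatorname{tr}(A^m)$. The key point is that, since $\psi$ is pseudo-Anosov with dilatation $\lambda>1$, the matrix $A$ is (after passing to a power, or after noting transitivity of the Markov partition) a nonnegative irreducible matrix whose Perron--Frobenius eigenvalue is exactly $\lambda$; this is the standard fact that the growth rate of a pseudo-Anosov map equals $\lambda$, and it is precisely the content of the Markov partition construction in \cite[Expos\'e~10]{FLP}. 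Consequently $\operatorname{tr}(A^m)$ grows like $\lambda^m$ up to a polynomial factor, and in particular $\operatorname{tr}(A^m)\geq A_0 e^{B_0 m}$ for suitable constants $A_0,B_0>0$ and all $m\geq 1$; taking $B$ slightly smaller than $\log\lambda$ absorbs the polynomial and low-order corrections.

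Next I would pass from all periodic orbits to \emph{simple} periodic orbits. Writing $N(m)$ for the number of periodic points of period exactly $m$ (equivalently, cycles of length $m$ counted with the appropriate multiplicity), one has $\operatorname{tr}(A^m)=\sum_{d\mid m} d\cdot P(d)$, where $P(d)$ is the number of simple (primitive) periodic orbits of period $d$; a M\"obius-inversion / inclusion--exclusion estimate then gives $d\cdot P(d)\geq \operatorname{tr}(A^d)-\sum_{d'\mid d,\,d'<d}\operatorname{tr}(A^{d'})$, and since $\operatorname{tr}(A^{d'})$ for $d'\leq d/2$ is at most a constant times $\lambda^{d/2}$ while $\operatorname{tr}(A^d)\sim\lambda^d$, the subtracted terms are negligible. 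This yields $P(m)\geq A_1 e^{B_1 m}$ for suitable positive constants and all large $m$, and hence for all $m$ after adjusting constants. Finally, for the statement about \emph{good} periodic orbits: the bad orbits are even multiple covers of orbits whose linearized return map has negative eigenvalues, so every bad orbit of period $m$ is the $2$-fold (or higher even) cover of an orbit of period $\leq m/2$; thus the number of bad orbits of period $m$ is bounded by $\sum_{d\mid m,\ d\leq m/2}\operatorname{tr}(A^d)$, again of size $O(\lambda^{m/2})$, which is dominated by $A_1 e^{B_1 m}$. Subtracting, the number of good orbits of period $m$ still grows like $e^{Bm}$ for a slightly smaller $B>0$.

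The main obstacle I anticipate is not the combinatorics but the bookkeeping at the singular points of $\F^s,\F^u$: the correspondence ``periodic orbits of $\psi$ of order $m$ $\leftrightarrow$ cycles of $G$ of length $m$'' is only literally true away from the finitely many singular prongs, and one must check that discarding these finitely many orbits (and being careful that a periodic point may lie on the boundary of several birectangles, which is exactly why one introduces the ``birectangle'' refinement and why $\operatorname{tr}(A^m)$ overcounts boundary points) does not affect the exponential lower bound. Since the singular set is finite and its orbit is finite, this is a bounded correction and does not change the growth rate, but it needs to be stated carefully; I would handle it by invoking the standard fact from \cite[Expos\'e~9--10]{FLP} that a Markov partition can be chosen so that the overcounting is controlled and the spectral radius of $A$ is $\lambda$, and then simply note that finitely many exceptional orbits are irrelevant to an estimate of the form $Ae^{Bm}$.
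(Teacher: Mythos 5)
Your proposal is correct and follows essentially the same route as the paper, which simply records the Markov-partition/cycle correspondence from \cite[Expos\'e~10]{FLP} and states the exponential growth as an immediate corollary; your Perron--Frobenius/trace estimate, the M\"obius-type passage to primitive cycles, and the observation that bad orbits are even covers of orbits of period at most $m/2$ (indeed, simple orbits are automatically good) are exactly the standard details the paper leaves implicit. The bookkeeping at singular points and rectangle boundaries that you flag is likewise dismissed in the paper as a bounded correction, so there is no substantive difference between the two arguments.
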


Next we transfer this property to an arbitrary diffeomorphism $h$ of
$\Sigma$ which is homotopic to $\psi$, using {\em Nielsen classes}.
Let $f,g$ be homotopic homeomorphisms of $\Sigma$.  If $x$ is a
periodic point of $f$ and $y$ is a periodic point of $g$, both of
order $m$, then we write $(f,x)\sim(g,y)$ if there exist lifts
$\tilde x$, $\tilde y$ of $x$, $y$ and lifts $\tilde f$, $\tilde g$
of $f,g$ to the universal cover $\wt\Sigma$ such that $d(\tilde
f^k(\tilde x), \tilde g^k (\tilde y))\leq K$ for all $k\in \Z$. Here
$K>0$ is a constant and $d$ is some equivariant metric on
$\wt\Sigma$. Elements $(f,x)$ and $(g,y)$ which satisfy $(f,x)\sim
(g,y)$ are said to be in the same {\em Nielsen class}. Since the
periodic points of $\psi$ belong to different Nielsen classes, we
have the following:

\begin{thm}\label{thm: 2}
For each periodic $(\psi,x)$, there exists at least one $(h,y)$ in
the same Nielsen class. Hence the number of periodic points $h$ of
period $m$ is greater than or equal to the number of periodic points
of $\psi$ of period $m$.
\end{thm}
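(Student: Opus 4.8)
The plan is to invoke standard Nielsen--Thurston fixed point theory, which says that for a pseudo-Anosov homeomorphism $\psi$ every periodic point sits in its own Nielsen class and that class has nonzero index, hence cannot be destroyed under homotopy. First I would recall the basic setup: for a homeomorphism $f$ of the closed surface $\Sigma$ and a fixed integer $m$, the periodic points of period $m$ (i.e.\ fixed points of $f^m$) partition into Nielsen classes according to the equivalence $(f,x)\sim(f,x')$ iff some lift $\tilde f^m$ of $f^m$ fixes lifts $\tilde x$, $\tilde x'$ of both; the $K$-bounded-orbit formulation given in the excerpt is equivalent to this on the closed surface by the Milnor--Švarc lemma, since $\pi_1(\Sigma)$ acts cocompactly on $\wt\Sigma$. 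The key input is that for $\psi$ pseudo-Anosov, each fixed point of $\psi^m$ is its \emph{own} Nielsen class and the fixed-point index of that class is $-1$ (at a regular point of the invariant foliations it is a topological saddle; at the singular $p$-prong points one gets $1-p$, still nonzero) --- this is classical, see Fathi--Laudenbach--Poénaru \cite{FLP} and Thurston's original work; I would cite it rather than reprove it.

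The second step is the homotopy-invariance of essential (nonzero-index) Nielsen classes. Given $h$ homotopic to $\psi$, fix the homotopy and hence a correspondence between lifts $\tilde h$ and $\tilde\psi$ of the two maps (equivalently between lifts of $h^m$ and $\psi^m$). For each periodic point $x$ of $\psi$ of period $m$, let $\mathbf{N}$ be its Nielsen class for $\psi^m$; since $\operatorname{ind}(\psi^m,\mathbf{N})\ne 0$, the corresponding Nielsen class $\mathbf{N}'$ for $h^m$ (the one picked out by the matched lift $\tilde h^m$) has the same index $\operatorname{ind}(h^m,\mathbf{N}')=\operatorname{ind}(\psi^m,\mathbf{N})\ne 0$ by the fact that the index of a Nielsen class is a homotopy invariant. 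A nonzero index forces $\mathbf{N}'$ to be nonempty, so $h^m$ has a fixed point $y$ in $\mathbf{N}'$; unwinding the definitions, $(h,y)$ lies in the same Nielsen class as $(\psi,x)$, i.e.\ $(h,y)\sim(\psi,x)$ in the sense of the excerpt. Since distinct periodic points of $\psi$ lie in distinct Nielsen classes, the assignment $x\mapsto \mathbf{N}'$ is injective on the set of period-$m$ points of $\psi$, and each $\mathbf{N}'$ contributes at least one period-$m$ point of $h$; hence $h$ has at least as many periodic points of period $m$ as $\psi$ does. Combining with Theorem~\ref{thm: 1} gives the exponential lower bound $Ae^{Bm}$ for $h$ as well.

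There is one genuine subtlety to be careful about, and I expect it to be the main obstacle: $\psi$ is only a homeomorphism (smooth away from the finitely many foliation singularities), and the periodic points of $\psi$ that we are counting are \emph{regular} points of the invariant foliations --- the theorem explicitly excludes the singular prong points. So I must be slightly careful that the index computation and the Nielsen-class bookkeeping are applied to these regular saddle-type periodic points, whose index is $-1$, and that passing to $h$ (which is a genuine diffeomorphism) one still lands in the correct class; the $h$-periodic point $y$ need not be a saddle, only a point of a nonzero-index class. A related point worth a sentence: a priori different $\psi^m$-periodic points could be matched to the same $h^m$-periodic point only if they were Nielsen-equivalent, which they are not, so injectivity is automatic --- but since a single Nielsen class $\mathbf{N}'$ of $h^m$ could contain several fixed points, we only get the inequality ``$\ge$'', which is all that is claimed. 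Finally I would remark that the translation between the ``$K$-bounded-orbit'' definition of Nielsen class used in the excerpt and the ``common-lift-fixed-by-$\tilde f^m$'' definition used in classical references is routine on a closed hyperbolic surface, and I would state it as a lemma with a one-line proof rather than belabor it.
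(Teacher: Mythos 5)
Your outline is correct, but it is worth noting that the paper itself gives no proof of this statement: it is quoted from Thurston \cite{Th} and the proof is deferred to Handel's global shadowing theorem \cite{Hn}, which produces for \emph{every} $\psi$-orbit (periodic or not) an $h$-orbit staying a bounded distance away in $\wt\Sigma$. Your route is the classical Nielsen fixed-point-theory one: each fixed point of $\psi^m$ is alone in its Nielsen class, that class is essential --- the index at a regular periodic point is $\pm 1$ (it is $+1$, not $-1$, when $d\psi^m$ reverses the orientations of the invariant line fields, and at a $p$-prong singularity it is $1-p$ or $+1$ according to whether the prongs are fixed or permuted, in every case nonzero) --- and essential classes, matched through the homotopy from $\psi^m$ to $h^m$, persist with the same index, so each corresponding class of $h^m$ is nonempty; distinctness of the $\psi$-classes then gives the inequality. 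This proves exactly the periodic-point statement needed here, at the price of citing the essentiality and separation of pseudo-Anosov Nielsen classes, which is of comparable depth to the global-shadowing result the authors cite; Handel's theorem buys more (shadowing of all orbits, closer in spirit to the lamination argument used later for Corollary~\ref{cor: infinitely many simple orbits}), while your argument stays inside standard Nielsen index theory. Your reduction of the paper's bounded-orbit relation to the usual lift-based relation for fixed points of the $m$-th iterates is indeed routine on a closed hyperbolic surface (if $\wt{f^m}$ fixes $\tilde x$ then $\tilde f^m\tilde x=\sigma\tilde x$ for a deck transformation $\sigma$ matched through the homotopy, and the orbit distance is periodic in $k$), and stating it as a short lemma, as you propose, is the right way to handle it. One small caveat: your argument compares fixed-point sets of $m$-th iterates rather than points of exact period $m$, but that is also how the estimate is used in the paper, so nothing is lost.
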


The above theorem is stated by Thurston in \cite{Th}. A proof can be
found in \cite{Hn}.

Given a diffeomorphism $f:\Sigma\rightarrow \Sigma$ and $x$ a
nondegenerate fixed point of $\Sigma$, its $\pm 1$ contribution to
the Lefschetz fixed point formula is calculated by the sign of
$\det(df(x)-id)$. More precisely, if $df(x)$ is of hyperbolic type
with positive eigenvalues then $\det(df(x)-id)<0$ and the
contribution is $-1$; if $df(x)$ is of hyperbolic type with negative
eigenvalues or of elliptic type, then $\det(df(x)-id)>0$ and the
contribution is $+1$. They correspond to even and odd parity,
respectively, in the contact homology setting. For a pseudo-Anosov
$\psi$, the sum of contributions (in the Lefschetz fixed point
theorem) of a periodic orbit $x$ of period $m$ in a particular
Nielsen class is $\pm 1$, if the orbit does not pass through a
singular point of the stable/unstable foliation, since there is only
one orbit in its Nielsen class.  On the other hand, if the orbit
passes through a singular point, then the sum of contributions is
still nonzero, but the orbit is counted with multiplicity.  Now, the
same holds for the sum of contributions from all the $(h,y)$ that
are in the same Nielsen class as $(\psi,x)$.

\s So far the discussion has been for $\Sigma$ closed. In our case,
the surface $S$ has nonempty boundary. Let $f:S\rightarrow S$ be the
first return map of the Reeb vector field
$R=R_{\varepsilon,\varepsilon'}$ constructed above. By construction,
$f|_{\bdry S}=id$ and is homotopic to $h$ and $\psi$. We cap off $S$
by attaching disks $D_i$ to obtain a closed surface $\Sigma$ and
extend $f$ to $\Sigma$ by extending by the identity map.  Since we
want to compare $f$ on $\Sigma$ to $\psi$ on $\Sigma$, we extend
$\psi$ to $\Sigma$ (as well as the stable and unstable foliations).
The extension of $\psi$ to $\Sigma$ is pseudo-Anosov, provided the
number of prongs on the boundary is not $n=1$. (Boundary monogons
could exist, although interior monogons do not.) We can avoid
monogons by passing to a ramified cover which is ramified at the
singular point of the monogon.  The nondegeneracy of $R$ implies the
nondegeneracy on the cover. Also, there is at most a finite-to-one
correspondence between periodic points on the cover of $S$ and the
periodic points on $S$. Hence, the number of Nielsen classes of $f$
with period $m$ grows exponentially with respect to $m$.  We will
then discard the fixed points of $f$ in the same Nielsen class as
$(f,x)$, where $x\in \bdry S$.

\subsection{Proof of Theorem~\ref{main}(2)}

Suppose $k\geq 3$. We prove that the direct limit
$\lim_{i\rightarrow\infty} HC_{\leq L_i}(\alpha_i)$ has exponential
growth with respect to the action. Recall we already proved the
isomorphism between $HC^\varepsilon(\alpha)$ and
$\displaystyle\lim_{i\rightarrow \infty} HC_{\leq L_i}(\alpha_i)$,
during the proof of Theorem~\ref{main}(1).

Let $C'=C'_{\leq L_i}(\alpha_i)$ be the subspace of $C=C_{\leq
L_i}(\alpha_i)$ generated by the orbits that are not covers of the
binding. Also let $C''=C''_{\leq L_i}(\alpha_i)$ be the subspace of
$C$ generated by the orbits which are covers of the binding. Then
$C=C'\oplus C''$ and $\bdry=\bdry'+\bdry''$, where
$\bdry':C'\rightarrow C'$ and $\bdry'':C''\rightarrow C''$, in view
of Theorem~\ref{theorem: cylinder}. Here $\bdry, \bdry', \bdry''$
only count holomorphic cylinders.  Also $C'$ is filtered by the open
book filtration (i.e., the number of times an orbit intersects a
page). Let $\F_j$ be the subspace of $C'$ generated by orbits which
intersect a page exactly $j$ times, and let $\F_{j,(\psi,x)}$ be the
subspace of $\F_j$ generated by orbits in the same Nielsen class as
$(\psi,x)$. Suppose $(\psi,x)$ is {\em good}, i.e., it is not an
even multiple of an orbit which has negative eigenvalues. The set of
such good Nielsen classes grows exponentially with respect to $j$,
provided $j< L_i$. (Recall that we can take the contact form so that
the action is arbitrarily close to the number of intersections with
the binding.) By Theorem~\ref{theorem: cylinder}, the boundary map
$\bdry:C\rightarrow C$, restricted to $\F_{j,(\psi,x)}$, has image
in $\F_{j,(\psi,x)}$. Since $\F_{j,(\psi,x)}$ can be split into even
and odd parity subspaces, and they have dimensions that differ by
one by Euler characteristic reasons, it follows that the homology of
$(\F_{j,(\psi,x)},\bdry)$ has dimension at least one. This proves
the exponential growth of $HC_{\leq L_i}(\alpha_i)$ with respect to
the action, provided we stay with orbits of action $\leq L_i$.
(Alternatively, one can say that the $E_1$-term of the spectral
sequence given by the open book filtration which converges to
$HC_{\leq L_i}(\alpha_i)$ has exponential growth with respect to the
action, and, moreover, the higher differentials of the spectral
sequence vanish.)

Let $\F_{j,(\psi,x)}(\alpha_i)$ be $\F_{j,(\psi,x)}$ for $\alpha_i$.
Suppose $(\psi,x)$ is good.  We claim that the map
$$\Phi_{\alpha_i\alpha_{i+1}}: C_{\leq L_i}(\alpha_i)\rightarrow
C_{\leq L_{i+1}}(\alpha_{i+1})$$ sends $\F_{j,(\psi,x)}(\alpha_i)$
to $\F_{j,(\psi,x)}(\alpha_{i+1})$.  In other words, no holomorphic
cylinder from a generator $\gamma$ of $\F_{j,(\psi,x)}(\alpha_i)$ to
a generator $\gamma'$ of $\F_{j,(\psi,x)}(\alpha_{i+1})$ intersects
$\R\times\gamma_0$. This follows from applying the same argument as
in the proof of Theorem~\ref{theorem: cylinder}.

Finally we show that, by choosing sufficiently large $L_i$, there is
a sequence $N_i\rightarrow \infty$ so that the map
\begin{equation}
\label{eqn: chain map on small portion} \Phi_{\alpha_i\alpha_{i+1}}:
H(\F_{j,(\psi,x)}(\alpha_i))\rightarrow
H(\F_{j,(\psi,x)}(\alpha_{i+1}))
\end{equation}
on the level of homology is injective, if $j\leq N_i$.  This is
sufficient to guarantee the exponential growth for the direct limit.
Recall that the orbits of $R_{\alpha_i}$ of action $K$ map to orbits
of $R_{\alpha_{i+1}}$ of action $\leq M_iM_{i+1} K$ under
$\Phi_{\alpha_i\alpha_{i+1}}$, and the orbits of $R_{\alpha_{i+1}}$
of action $K'$ map to orbits of $R_{\alpha_i}$ of action $\leq M_i
M_{i+1}K'$ under $\Phi_{\alpha_{i+1}\alpha_i}$.  Hence, in order to
compose
$\Phi_{\alpha_{i+1}\alpha_i}\circ\Phi_{\alpha_i\alpha_{i+1}}$ in the
cylindrical regime, we need $j\leq {L_i\over (M_iM_{i+1})^2}$.
Provided this holds, the usual chain homotopy proof shows that
$\Phi_{\alpha_i\alpha_{i+1}}$, restricted to
$H(\F_{j,(\psi,x)}(\alpha_i))$, has a left inverse and hence is
injective. Therefore, we choose $L_i$ so that, in addition to the
exhaustive condition, $N_i={L_i\over (M_iM_{i+1})^2}$ is strictly
increasing to $\infty$.

This completes the proof of Theorem~\ref{main}(2).

\subsection{Proof of Corollary~\ref{cor: infinitely many simple orbits}}
\label{subsection: proof of corollary}

Suppose $\alpha$ is nondegenerate. By Theorem~\ref{main}(1), there
is a linearized contact homology for any nondegenerate $\alpha$.
Observe that, if $R_\alpha$ only has finitely many simple orbits,
then $HC^\varepsilon(M,\alpha)$ will have at most polynomial growth
for any augmentation $\varepsilon$. The corollary then follows from
Theorem~\ref{main}(2).

Suppose $\alpha$ is degenerate and has a finite number of simple
orbits $\gamma_1,\dots,\gamma_l$.  Then, according to
Lemma~\ref{lemma: perturb finite number of orbits}, for any $N\gg 0$
there exists a $C^\infty$-small perturbation $\alpha_N$ of $\alpha$
so that the only periodic orbits of action $\leq N$ are isotopic to
multiple covers of $\gamma_i$. This means that the only free
homotopy classes which could possibly have generators in the
linearized contact homology group $\displaystyle\lim_{i\rightarrow
\infty} HC_{\leq L_i}(\alpha_i)$ are multiples of the $l$ simple
orbits. This contradicts the fact, sketched in the next two
paragraphs, that there are infinitely many simple free homotopy
classes in $M$ which have generators in
$\displaystyle\lim_{i\rightarrow \infty} HC_{\leq L_i}(\alpha_i)$.

We now sketch the proof, following
Gabai-Oertel~\cite[Lemma~2.7]{GO}. If $\gamma$ and $\gamma'$ are
closed orbits of the suspension flow of $\psi$, then $\gamma$ and
$\gamma'$ are both tangent to the suspension lamination
$\mathcal{L}$, which is an essential lamination if $c>{1\over n}$.
Let $u: \R\times S^1\rightarrow M$ be an immersed cylinder from
$\gamma$ to $\gamma'$. Then the lamination on $\R\times S^1$,
induced by pulling back $\mathcal{L}$ via $u$, cannot have any
$0$-gons or monogons, after normalizing/simplifying as in
\cite[Lemma~2.7]{GO}. Since, by Euler characteristic reasons, an
$m$-gon with $m>2$ implies the existence of a $0$-gon or a monogon,
it implies that $m$-gons with $m>2$ also do not exist. Hence the
only complementary regions of $u^{-1}(\mathcal{L})$ are annuli
$S^1\times[0,1]$ and $\R\times[0,1]$.

Now, if $c>{2\over n}$, then it is possible to replace $u$ by $u'$
which does not intersect the binding $\gamma_0$: Let $v: S^1\times
[0,1]\rightarrow M$ be an immersion whose interior maps to the
connected component $V$ of $M-\mathcal{L}$ that contains $\gamma_0$
and such that $S^1\times \{0,1\}$ maps to $\mathcal{L}$.  The map
$v$ is the restriction of $u$ to the closure of one connected
component of $u^{-1}(M-\mathcal{L})$. It is not hard to see that $v$
can be replaced by $v'$ so that they agree on $\bdry
(S^1\times[0,1])$ and $v'$ is disjoint from $\gamma_0$.  The same
technique works for $v:\R\times[0,1]\rightarrow M$. Therefore,
$\gamma$ and $\gamma'$ are freely homotopic in $M$ if and only they
correspond to the same Nielsen class.

\begin{rmk}
The above argument gives an easy proof of Theorem~\ref{main} if
$\psi$ is realized as a first return map of a Reeb vector field.
\end{rmk}

\s\n {\em Acknowledgements.} First and foremost, we thank Tobias
Ekholm's help throughout this project. We are also extremely
grateful to Francis Bonahon, Fr\'ed\'eric Bourgeois, Dragomir
Dragnev, Yasha Eliashberg, Fran\-\c cois Laudenbach, and Bob Penner
for invaluable discussions.  We thank Paolo Ghiggini and Michael
Hutchings for encouraging us to understand the relationship between
periodic monodromy and $S^1$-invariant contact structures. KH
wholeheartedly thanks Will Kazez and Gordana Mati\'c
--- this work would not exist without their collaboration in
\cite{HKM}.   KH also thanks l'universit\'e de Nantes for its
hospitality during his visit in the summer of 2005.

\end{document}